\newtheorem{thm}{Theorem}[section]
\newtheorem{cor}[thm]{Corollary}
\newtheorem{lem}[thm]{Lemma}
\newtheorem{prop}[thm]{Proposition}
\numberwithin{equation}{thm}
\theoremstyle{definition}
\newtheorem{defn}[thm]{Definition}
\newtheorem{rem}[thm]{Remark}
\newtheorem{rems}[thm]{Remarks}
\newtheorem{subs}[thm]{}
\newcommand{\D}{\mathcal{D}}
\newcommand{\fsG}{\mathscr{G}}
\newcommand{\US}{\boldsymbol{\mathfrak{A}}}
\newcommand{\fcY}{\mathcal{Y}}
\def\f#1{\mathfrak{#1}}
\newcommand{\fS}{\f{S}}
\newcommand{\C}{\mathcal{C}}
\def\MN(#1){ M_{#1}(\mathbb{N})}
\def\MNR(#1,#2){ M_{#1}(\mathbb{N})_{#2}}
\def\MNS(#1){ M_{#1}(\mathbb{N})^{\pm}}
\newcommand{\ZZ}{\mathbb{Z}}
\newcommand{\QQ}{\mathbb{Q}}
\newcommand{\NN}{\mathbb{N}}
\newcommand{\ZG}{{{\mathbb{Z}}}_2}
\newcommand{\Ng}{{{\mathbb{N}}}_2}
\def\MZ(#1){ M_{#1}(\ZG)}
\def\NZ(#1){ (\NN|\Ng)^{#1}}
\def\NZST(#1,#2,#3){ (\NN|\Ng)^{#1}_{#2|#3}}
\def\NZS(#1,#2){ {\NN}^{#1}_{#2}}
\def\MNZ(#1,#2){ M_{#1}(\NN | \Ng)_{#2}}
\def\MNZN(#1){M_{#1}(\NN | \Ng)}
\def\CMNZ(#1,#2,#3){\Lambda(#1,#2|#3)}
\def\CMN(#1,#2){\Lambda(#1,#2)}
\def\CMNP(#1,#2){\Lambda_{#1,#2}}
\def\MNZNS(#1){M_{#1}^{\pm}(\NN | \Ng)}
\def\MNZNp(#1){M_{#1}^+(\NN | \Ng)}
\def\MNZNz(#1){M_{#1}^0(\NN | \Ng)}
\def\MNZNm(#1){M_{#1}^-(\NN | \Ng)}
\def\SE#1{{#1}^{\bar{0}}}
\def\SO#1{{#1}^{\bar{1}}}
\def\SEE#1{{#1}^{\bar{0}}}
\def\SOE#1{{#1}^{\bar{1}}}
\def\SUP#1{\SE{#1}|\SO{#1}}
\def\SS(#1,#2){{#1}^{\ol{#2}}}
\def\SSE(#1,#2){{#1}^{\ol{#2}}}
\def\ESE(#1, #2, #3){ \SEE{#1}_{{#2},{#3}} }
\def\ESO(#1, #2, #3){ \SOE{#1}_{{#2},{#3}} }
\def\bs#1{\boldsymbol{#1}}
\def\Qqs(#1,#2){ \mathcal{Q}(#1,#2) }
\def\lcase#1{\MakeLowercase{#1}}
\newcommand{\HCR}{\mathcal{H}^c_{r, R}}
\newcommand{\Heck}{\mathcal{H}_{r,R}}
\newcommand{\QqnrR}{\mathcal{Q}_{\lcase{q}}(\lcase{n},\lcase{r}; R)}
\newcommand{\SQqnrR}{{\mathcal{Q}^s_{\lcase{q}}(\lcase{n},\lcase{r}; R)}}
\newcommand{\SQvnR}{\boldsymbol{\mathcal{Q}}_{\up}{(\lcase{n})}}
\newcommand{\SQvnQ}{\boldsymbol{\mathcal{Q}}_{\up}{(\lcase{n})}}
\newcommand{\bsSQvnr}{\boldsymbol{\mathcal{Q}}^s_{\up}(\lcase{n},\lcase{r})}
\newcommand{\bsSQvn}{\boldsymbol{\mathcal{Q}}^s_{\up}(\lcase{n})}
\newcommand{\USnv}{{\US_{v}(n)}}
\def\ol#1{\overline{#1}}
\newcommand{\ep}{\epsilon}
\newcommand{\Qv}{\mathbb Q({v})}
\def\normc{\mathfrak{o}}
\def\Qvs(#1){\mathcal{Q}_{\lcase{v}}{(\lcase{#1})}}
\def\Uvqn{{\boldsymbol U}_{\!{v}}(\mathfrak{\lcase{q}}_{n})}
\def\Uvq(#1){U_{\lcase{v}}(\mathfrak{\lcase{q}}_{\lcase{#1}})}
\def\USN(#1){{\US[#1]}_{v}}
\def\SABJR(#1,#2,#3,#4){({#1}|{#2})[\bs{#3}, #4]}
\def\SABJRS(#1,#2,#3,#4){({#1}|{#2})[#3, #4]}
\def\SABJS(#1,#2,#3){({#1}|{#2})[#3]}
\def\SAJRS(#1,#2,#3){{#1}[#2, #3]}
\def\SAJS(#1,#2){{#1}[#2]}
\def\SABJ(#1,#2,#3){({#1}|{#2})[\bs{#3}]}
\def\SAJR(#1,#2,#3){{#1}[\bs{#2}, #3]}
\def\SAJ(#1,#2){{#1}[\bs{#2}]}
\def\ABJR(#1,#2,#3,#4){({#1}|{#2})(\bs{#3}, #4)}
\def\ABJRS(#1,#2,#3,#4){({#1}|{#2})(#3, #4)}
\def\ABJS(#1,#2,#3){({#1}|{#2})(#3)}
\def\AJRS(#1,#2,#3){{#1}(#2, #3)}
\def\AJS(#1,#2){{#1}(#2)}
\def\ABJ(#1,#2,#3){({#1}|{#2})(\bs{#3})}
\def\AJR(#1,#2,#3){{#1}(\bs{#2}, #3)}
\def\AJ(#1,#2){{#1}(\bs{#2})}
\def\snorm#1{|{#1}|}
\def\STDUE(#1,#2){({#1}+E_{{#2},{#2}+1}-E_{{#2}+1,{#2}+1}|0)}
\def\STDUO(#1,#2){({#1}-E_{{#2}+1,{#2}+1}|E_{{#2},{#2}+1})}
\def\STDLE(#1,#2){({#1}-E_{{#2},{#2}}+E_{{#2}+1,{#2}}|0)}
\def\STDLO(#1,#2){({#1}-E_{{#2},{#2}}|E_{{#2}+1,{#2}})}
\def\STDDE(#1){(D_{#1}|0)}
\def\STDDO(#1,#2){({#1}-E_{{#2},{#2}}|E_{{#2},{#2}})}
\newcommand{\tspan}{\mathrm{span}}
\newcommand{\Tr}{\mathscr{D}\!spt}
\newcommand{\End}{\mathrm{End}}
\newcommand{\ro}{\mathrm{ro}}
\newcommand{\co}{\mathrm{co}}
\def\STEPX#1#2{{[\![{#1}]\!]}_{#2}}
\def\STEP#1{ {[\![{#1}]\!]}_{{q}} }
\def\STEPP#1{{[\![{#1}]\!]}_{{q}^2}}
\def\STEPPD#1{{[\![{#1}]\!]}_{{q},{q}^2}}
\def\STEPPDR#1{{[\![{#1}]\!]}_{{q}^2,{q}}}
\newcommand{\where}{\ \bs{|} \ }
\def\Hom{\mathrm{Hom}}
\def\wt{\mathrm{wt}}
\newcommand{\spaceintv}{}
\def\intd(#1,#2,#3){\left[\begin{matrix}{#1};{#2}\\{#3}\end{matrix}\right]}
\def\intds(#1,#2){\left[\begin{matrix}{#1}\\{#2}\end{matrix}\right]}
\def\intdss(#1,#2){\intd({#1},{0},{#2})}
\def\diag{{\rm{diag}}}
\def\ker{\rm{ker}}
\def\parity#1{\wp({#1})}
\def\TAIJ(#1,#2){ T^\lhd_{({#1}, {#2})} }
\def\TDIJ(#1,#2){ T^\rhd_{({#1}, {#2})} }
\def\rmText#1{}
\def\rmForm#1{}
\def\AK(#1,#2){ {\overleftarrow{\bf r}}^{#2}_{#1} }
\def\BK(#1,#2){ {\overrightarrow{\bf r}}^{#2}_{#1} }
\def\Adlk(#1,#2,#3){{}^{\ }_{#1}\!A^{#2}_{#3}}
\newcommand{\ft}{{\boldsymbol{\mathrm{t}}}}
\def\sdpNHk{{}_\text{\sc sdp}{\rm {H}}}
\def\sdpNHk{{}_\text{\sc sdp}{\rm {H}}\overline{\textsc{k}}}
\def\sdpNHe{{}_\text{\sc sdp}{\rm {H}}\overline{\textsc{e}}}
\def\sdpNHf{{}_\text{\sc sdp}{\rm {H}}\overline{\textsc{f}}}
\def\sdpCHk{{\rm {H}}\overline{\textsc{k}}}
\def\sdpCHe{{\rm {H}}\overline{\textsc{e}}}
\def\sdpCHf{{\rm {H}}\overline{\textsc{f}}}
\def\sdpCHHf{{\rm {HH}}\overline{\textsc{f}}}
\def\fkf{{\mathfrak f}}
\def\up{{\upsilon}}
\def\bsU{\boldsymbol U}
\def\sfK{\mathsf K}
\def\sfE{\mathsf E}
\def\sfF{\mathsf F}
\def\RZ{\hlt{{\mathsf{E}}}}
\def\genE{\hlt{\mathsf{E}}}
\def\genF{\hlt{\mathsf{F}}}
\def\genK{\hlt{\mathsf{K}}}
\def\RZ{{{\mathsf{E}}}}
\def\genE{{\mathsf{E}}}
\def\genF{{\mathsf{F}}}
\def\genK{{\mathsf{K}}}
\def\Ad{{A^{\!\star}}}
\def\Bd{{B^{\!\star}}}
\def\Cd{{C^{\!\star}}}
\def\Md{{M^{\!\star}}}
\def\Dd{D^{\star}}
\def\Ed{E^{\star}}
\def\Fd{F^{\star}}
\def\qSchvsZ{\mathcal Q^s_\up(n,r)}
\def\qSchvsQ{\boldsymbol{\mathcal Q}^s_\up(n,r)}
\def\llcm{A_{\text{\normalsize$\urcorner$}}}
\def\fkM{\mathfrak M}
\def\la{{\lambda}}
\def\Od{O^{\star}}
\def\Norm{\partial}
\def\OG{g^-}
\def\Og{g}
\def\preeq{\approx}
\def\ndelta{\dot{\delta}}
\def\sH{{\mathcal H}}
\def\sZ{{\mathcal Z}}
\def\bsal{{\alpha}}
\def\scc{\textsc{c}}
\def\scp{{\textsc{p}}}
\def\Ahkzp{A^{\ol{0},+}_{h,k}}
\def\Ahkzm{A^{\ol{0},-}_{h,k}}
\def\Ahkop{A^{\ol{1},+}_{h,k}}
\def\Ahkom{A^{\ol{1},-}_{h,k}}
\def\Ahhop{A^{\ol{1},+}_{h,h}}
\def\Ahhiop{A^{\ol{1},+}_{h,h+1}}
\def\Ahhom{A^{\ol{1},-}_{h,h}}
\def\Ahhiom{A^{\ol{1},-}_{h,h+1}}
\def\Ahkomm{A^{\ol{1},\approx}_{h,k}}
\def\Ahhomm{A^{\ol{1},\approx}_{h,h}}
\def\Ahhiomm{A^{\ol{1},\approx}_{h,h+1}}
\def\lc{\mathsf{lc}}
\def\bfm{\mathbf m}
\def\ttb{\mathsf{B}}
\def\ttm{\mathsf{M}}
\def\scm{\textsc{m}}
\def\scrR{\mathscr R}
\def\sfM{\mathsf M}
\def\wt{\texttt{wt}}
\title[The quantum queer supergroup]
{Constructing the quantum queer supergroup using Hecke-Clifford superalgebras}
\author{Jie Du, Haixia Gu, Zhenhua Li and Jinkui Wan}
\address{Jie Du, School of Mathematics, University of New South Wales, UNSW Sydney 2052, Australia}
\email{j.du@unsw.edu.au}
\address{Haixia Gu, School of Science, Huzhou University, Huzhou 313000, China}
\email{ghx@zjhu.edu.cn}
\address{Zhenhua Li, School of Mathematical Sciences, Xiamen University, Xiamen 361005, China}
\email{zhen-hua.li@qq.com}
\address{Jinkui Wan,  School of Mathematics and Statistics, Beijing Institute of Technology, Beijing 100081, China}
\email{wjk302@hotmail.com}
\keywords{quantum  queer  supergroup, Hecke-Clifford superalgebra,
	quantum queer Schur superalgebra, realization.
}
\subjclass[2020]{17B37, 17A70, 20G42, 20C08}
\date{\today}
\begin{document}
\maketitle

\begin{abstract}
In \cite{DGLW}, we use certain special elements and their commutation relations in the Hecke-Clifford algebras  $\HCR$ to derive some fundamental multiplication formulas associated with the natural bases in queer $q$-Schur superalgebras $\mathcal Q_q(n,r;R)$ introduced in \cite{DW2}. Here a natural basis element is defined by a special element $T_\Ad$ in $\HCR$ associated with a pair of certain $n\times n$ matrices $\Ad=(A^{\bar0}|A^{\bar1})$ over $\NN$ with entries sum to $r$. The definition of $T_\Ad$ consists of  an element $c_\Ad$ in the Clifford superalgebra and an element $T_A$ in the Hecke algebra, where $A=A^{\bar0}+A^{\bar1}$. Note that all $T_A$ can be used to define the natural basis for the corresponding $q$-Schur algebra ${\mathcal S}_q(n,r)$.

This paper is a continuation of \cite{DGLW}. We start with standardized queer $\up$-Schur superalgebras $\mathcal
Q^s_\up(n,r)$, for $R=\ZZ[\up,\up^{-1}]$ and $q=\up^2$, and their natural bases. With the $\up$-Schur algebra ${\mathcal S}_\up(n,r)$ at the background, the first key ingredient is a standardisation of the natural basis for $\mathcal
Q^s_\up(n,r)$ and their associated standard multiplication formulas.  By introducing some long elements of finite sums, we then extend the formulas to these long elements which allow us to explicitly define $\QQ(\up)$-superalgebra homomorphisms $\bs\xi_{n,r}$ from the quantum queer supergroup $\Uvqn$ to queer $q$-Schur superalgebras $\boldsymbol{\mathcal Q}^s_\up(n,r)$, for all $r\geq1$. Finally, taking limits of long elements yields certain infinitely long elements as formal infinite series which eventually lead to a new construction for $\Uvqn$.

\end{abstract}

\maketitle


\tableofcontents

\section{Introduction}\label{sec_introduction}

\begin{subs}
{\bf Finite algebra approach to quantum groups.}
Since the introduction of quantum groups as quantized enveloping algebras
 of semisimple Lie algebras in late eighties of the last century, their realizations and related
applications have achieved outstanding progress. First, C. M. Ringel developed a realization
for the positive part of such a quantum group via Ringel--Hall algebras
associated with the representation category of finite quiver (or path) algebras.
Then G. Lusztig's geometric approach to quantum groups and canonical bases advanced the theory to a new level.

On the other hand, almost at the same time,
Beilinson, Lusztig and MacPherson \cite{BLM} initiated a new method
of using a sequence of geometrically defined convolution algebras associated
with some (finite) partial flag varieties (i.e., finite $q$-Schur algebras) to approach quantum linear groups $\mathbf U=\bsU_{\!\up}(\mathfrak{gl}_n)$.
In their work, certain structure constants relative to the orbital basis satisfy a stabilisation property. This property results in the definition of an infinite dimensional idempotented algebra---the modified quantum group $\dot{\mathbf U}$.
Then, by a process of taking limit, it
 results in a new realization for the entire quantum group $\bsU_{\!\up}(\mathfrak{gl}_n)$
via its regular representation defined explicitly by
some multiplication formulas of a basis by generators.

The convolution algebra approach was soon generalized to quantum affine $\mathfrak{gl}_n$ in \cite{GV, L},
which motivated, with a modification of the original BLM approach in \cite{DF1}, an algebraic approach to a new realization of the quantum loop algebra of $\mathfrak{gl}_n$; see \cite{DDF, DF2}.
Recently,  Bao and Wang \cite{BW} investigated $i$-quantum groups $\boldsymbol U^\jmath$ and $\boldsymbol U^\imath$ (or more precisely, certain quantum symmetric pairs of type AIII) to give a reformulation of Kazhdan-Lusztig theory that provides a
perfect solution to the problem of character formulas of $\mathfrak{osp}$ Lie superalgebras. Motivated from the Bao-Wang's work,
the finite convolution algebra approach has also been generalized to
the type B/C geometry and to the modified $i$-quantum groups and their canonical bases
in \cite{BKLW}. Thus, the new Schur--Weyl duality in \cite{BW} further motivates
BLM type realizations for the above mentioned twin $i$-quantum groups; see \cite{DWu1,DWu2}.
\end{subs}

\begin{subs}{\bf A super construction $\bs\sH_r\rightsquigarrow\bsU_\up(\mathfrak{gl}_{m|n})$.}
Though the geometric (or convolution algebra) approach seems not available for  quantum supergroups,
 the idea of using finite dimensional superalgebras to approach quantum supergroups
continues to shed lights on the development of the algebraic approach. For example, the work \cite{DGZ} shows that
the desired fundamental multiplication formulas can be rooted out from the associated Hecke algebras (see \cite{DG, DGZ}) via the quantum Schur--Weyl duality.  In other words, these multiplication formulas can be derived by certain special elements and commutation relations in the associated Hecke algebras. Thus,  the new construction for $\mathbf U_\up(\mathfrak{gl}_n)$ or the quantum supergroup $\mathbf U_\up(\mathfrak{gl}_{m|n})$ can be built on some structures in the Hecke algebras. This tells that the structure of these quantum groups is actually hidden in the structure of Hecke algebras.

More precisely,
the Hecke algebra $\sH=\sH_{r,R}$ of the symmetric group $\fS_r$ over a commutative ring $R$ has the following fundamental structure: it is generated by $T_{s_1},\ldots, T_{s_{r-1}}$ and has basis $\{T_w\mid w\in\fS_r\}$ with respect to which we have the following matrix representation of the regular module $_{\sH}\sH$:
$$T_{s_i}T_w=\begin{cases}T_{s_iw},&\text{ if }s_iw>w;\\
(q-1)T_w+qT_{s_iw},&\text{ if }s_iw<w.\end{cases}$$
Building on the aforementioned special elements and commutation relations, one may construct a new basis  $\{A({\bs{j}})\}_{A,{\bs{j}}}$ for the quantum linear supergroups $\bsU=\bsU_{\!\up}(\mathfrak{gl}_{m|n})$, containing its generators $K_j,E_h,F_h$,
 such that
 the structure constants relative to the basis appearing in
 $$K_jA({\bs{j}}),\;\; E_hA({\bs{j}}),\;\;F_hA({\bs{j}})$$
 can be explicitly computed; see \cite[Lem.~5.3]{BLM} for the quantum linear group ($n=0$) case and \cite[Cor.~4.4]{DGZ} and \cite[Prop.~5.5-6, Th.~8.4]{DG} in general. In other words, this construction, symbolised as $\bs\sH_{r}\rightsquigarrow\bsU_\up(\mathfrak{gl}_{m|n})$ with $\bs\sH_{r}=\sH_{r,\QQ(\up)}$, shows that the structure of $\bsU_\up(\mathfrak{gl}_{m|n})$ is hidden in that of $\bs\sH_{r}$. It is natural to ask if there exists a similar hidden structural connection $\bs\sH_{r}^c\rightsquigarrow\bsU_{\!\up}(\mathfrak{q}_{n})$ for the quantum queer supergroup $\Uvqn$ and Hecke-Clifford superalgebras $\bs\sH_{r}^c:=\sH^c_{r,\QQ(\up)}$.

\end{subs}

\begin{subs}{\bf A roadmap for $\bs\sH_{r}^c\rightsquigarrow\bsU_{\!\up}(\mathfrak{q}_{n})$.}
 In this paper, we give an affirmative answer to the question. By displaying a construction $\bs\sH_{r}^c\rightsquigarrow\bsU_{\!\up}(\mathfrak{q}_{n})$ that presents the quantum queer supergroup $\bsU_{\!\up}(\mathfrak{q}_{n})$ via Hecke-Clifford algebras $\bs\sH_{r}^c$, we show
 that the structure of the quantum queer supergroup $\bsU_{\!\up}(\mathfrak q_n)$ is hidden in the Hecke--Clifford superalgebra $\boldsymbol\sH_{r}^c$.

  We now describe a roadmap for the construction $\bs\sH_{r}^c\rightsquigarrow\bsU_{\!\up}(\mathfrak{q}_{n})$ in the following steps\footnote{Note that the geometric approach initiated in \cite{BLM} is a shortcut to the construction $\bs{\sH}_r\rightsquigarrow\bsU(\mathfrak{gl}_{m|n})$, where steps (1)--(3) are combined in terms of convolution algebras and convolution product.}:
\begin{enumerate}
\item Constructing some special elements and
  commutation formulas  in the Hecke--Clifford superalgebra $\HCR$.
\item Defining queer $q$-Schur superalgebras $\mathcal Q_q(n,r)$ and its natural basis $\{\phi_\Ad\}_\Ad$.
\item Using (1) to compute some fundamental multiplication formulas (MFs) in $\mathcal Q_q(n,r)$.
\item Standardizing everything: $\mathcal Q^s_\up(n,r)$, standard basis $[\Ad]$, and new MFs.
\item Introducing long elements $\{\Ad(\bs j,r)\}_{\Ad,\bs j}$ and deriving their multiplication formulas in $\boldsymbol{\mathcal Q}^s_\up(n,r)$ to establish homomorphisms $\bs{\xi}_{n,r}:\bsU_{\!\up}(\mathfrak q_n)\rightarrow \boldsymbol{\mathcal Q}^s_\up(n,r)$ for $r\geq 1$.
\item Establishing triangular relations to determine the image of $\bs\xi_n:\bsU_{\!\up}(\mathfrak q_n)\rightarrow \prod_{r\geq 1}\boldsymbol{\mathcal Q}^s_\up(n,r)$.
\item Completing the construction by proving the injectivity of $\bs\xi_n$.
\end{enumerate}
Note that steps (1)--(3) have been completed in \cite{DW2} and \cite{DGLW}. Building on \cite{DGLW}, we complete the remaining steps in this paper.

The following theorem is the main result of the paper which gives a new basis for the quantum queer supergroup and their structure constants for generators.
Let $\ep_1,\ep_2,\ldots,\ep_n$ denote the standard basis for $\ZZ^n$ and let
$$\bsal_i=\ep_i-\ep_{i+1}\text{ and }\bsal^+_i=\ep_i+\ep_{i+1}.$$
Then $\bsal_i$, for $i\in[1,n-1]$ are known as the ``simple roots''.

\end{subs}
\begin{subs} \label{mthm}{\bf Main Theorem.} \it
The quantum queer supergroup ${\bsU}_{\!\up}(\mathfrak{q}_{n})$, generated by $\sfE_{h},\sfF_{h}, \sfK_i,\sfK_i^{-1}, \sfK_{\ol{n}}$, for $h,i\in[1,n],h\neq n$, has a basis\vspace{-1ex}
$$\{\Ad({\bs{j}})\mid \Ad=(a_{i,j}^{\ol{0}}|a_{i,j}^{\ol{1}})\in M_n({\mathbb N}|\Ng)^{\pm}, {\bs{j}}=(j_k)\in\mathbb Z^{n}\}\vspace{-1ex}$$
that satisfies the following multiplication rules:%
$$\begin{aligned}
(1)\quad&\sfK_i^{\pm1}\cdot \Ad(\bs{j})
	= {v}^{ \pm  \sum_{u=1}^n a_{h,u} } \Ad(\bs{j} \pm\ep_i),\\
(2)\quad &\sfE_h \cdot \Ad(\bs{j})=\\
&\sum_{k<h}
	{\up}^{f_{h,k}^{\ol{0}}}   [ \SEE{a}_{h,k} + 1] (\Ahkzp | \SO{A})(\bs{j}+\bsal_h)+\sum_{h+1<k}
	{\up}^{f_{h,k}^{\ol{0}}}   [ \SEE{a}_{h,k} + 1] (\Ahkzp | \SO{A})(\bs{j})\\
&\quad+ \frac{{\up}^{f_{h,h}^{\ol{0}}-j_h}}{\up-\up^{-1}}\left\{(\SE{A} - E_{h+1, h} | \SO{A}) (\bs{j}+\bsal_h)
-(\SE{A} - E_{h+1, h} | \SO{A}) (\bs{j}-\bsal_h^+)\right\}\\
&\quad+{\up}^{f_{h,h+1}^{\ol{0}}+j_{h+1}}   [ \SEE{a}_{h,h+1} + 1]  (\SE{A} + E_{h,h+1} | \SO{A})(\bs{j})\\
+\sum_{k<h}&{\up}^{f_{h,k}^{\ol{1}}}(\SE{A} | \SO{A} - E_{h+1, k}  + E_{h,k})(\bs{j}+\bsal_h)+{\up}^{f_{h,h}^{\ol{1}}}(\SE{A} | \SO{A} - E_{h+1, h}  + E_{h,h})(\bs{j}-\ep_{h+1})\\
+&{\up}^{f_{h,h+1}^{\ol{1}}}(\SE{A}  | \SO{A} - E_{h+1, h+1}  + E_{h,h+1})(\bs{j}-\ep_{h+1})+\sum_{k>h+1}{\up}^{f_{h,k}^{\ol{1}}}(\SE{A}  | \SO{A} - E_{h+1, k}  + E_{h,k})(\bs{j})\\
-\sum_{k<h}&{\up}^{f^{\bar1}_{h,k}}(\up-\up^{-1}) \left[a_{h,k}+1\atop 2\right](\SE{A}+ 2E_{h,k} | \SO{A}  -E_{h,k} - E_{h+1,k})(\bs{j}+\bsal_h)\\
-&\frac{{\up}^{f^{\bar1}_{h,h}-2j_h}}{(\up-\up^{-1})}\big\{\frac{\up^{-1}}{[2]}(\SE{A}| \SO{A}  -E_{h,h} - E_{h+1,h})(\bs{j}+2\ep_h-\ep_{h+1})\\
&\hspace{0.2cm}+\frac{\up}{[2]}(\SE{A}| \SO{A}  -E_{h,h} - E_{h+1,h})(\bs{j}-2\ep_h-\ep_{h+1})-(\SE{A}| \SO{A}  -E_{h,h} - E_{h+1,h})(\bs{j}-\ep_{h+1})\big\}\\
-&{\up}^{f^{\bar1}_{h,h+1} }(\up-\up^{-1})\left[{a_{h,h+1}}+1\atop 2\right](\SE{A}+ 2E_{h,h+1} | \SO{A}  -E_{h,h+1} - E_{h+1,h+1})(\bs{j}-\ep_{h+1})\\
-&\sum_{k>h+1}{\up}^{f^{\bar1}_{h,k}}(\up-\up^{-1})\left[{a_{h,k}}+1\atop 2\right](\SE{A}+ 2E_{h,k} | \SO{A}  -E_{h,k} - E_{h+1,k})(\bs{j}).
\end{aligned}
$$
$$
\begin{aligned}
(3)\quad&\sfF_h \cdot \Ad(\bs{j})= \\
&\sum_{k < h}\up^{g_{h,k}^{\bar 0}}
	[ \SEE{a}_{h+1, k} +1]
	\ABJS( \SE{A} - E_{h,k} + E_{h+1, k}, \SO{A}, \bs{j}) +
	{\up}^{g_{h,h}^{\bar 0}+j_h}
	[\SEE{a}_{h+1, h} +1]
	\ABJS( \SE{A} + E_{h+1, h}, \SO{A}, \bs{j}) \\
  +&\frac{{\up}^{g_{h,h+1}^{\bar 0}-j_{h+1}}} {\up-\up^{-1} }\{
	\ABJS( \SE{A} - E_{h,h+1}, \SO{A}, \bs{j}-\bsal_{h})
	-\ABJS( \SE{A} - E_{h,h+1}, \SO{A}, \bs{j}-\bsal^+_{h})  \}
	\\
	+&\sum_{k > h+1}
	{\up}^{g_{h,k}^{\bar 0}}
	[\SEE{a}_{h+1, k} +1]
	\ABJS( \SE{A} - E_{h,k} + E_{h+1, k}, \SO{A}, \bs{j}-\bsal_h)\\
	+\sum_{k<h}&
	{\up}^{g_{h,k}^{\ol{1}}}	\ABJRS(\SE{A}, \SO{A} - E_{h,k} + E_{h+1,k}, \bs{j})+
	{\up}^{g_{h,h}^{\ol{1}}}
	\ABJS(\SE{A}, \SO{A} - E_{h,h} + E_{h+1,h},  {\bs{j}} +\ep_h) \\
+&{{\up}^{g_{h,h+1}^{\ol{1}}} }
	\ABJS(\SE{A}, \SO{A} - E_{h,h+1} + E_{h+1,h+1}, \bs{j}-\ep_h)  +
	\sum_{k>h+1}
	{\up}^{g_{h,k}^{\ol{1}}}
	\ABJS(\SE{A}, \SO{A} - E_{h,k} + E_{h+1,k}, {\bs{j}} -\bsal_{h}) \\
-\sum_{k < h }&
	{\up}^{g_{h,k}^{\ol{1}}}(\up-\up^{-1})\left[{{a}_{h+1, k} +1}\atop 2\right]
	( \SE{A} + 2E_{h+1,k}| \SO{A}  - E_{h,k} - E_{h+1,k})(\bs{j}) \\
-&{\up}^{g_{h,h}^{\ol{1}}}
	(\up-\up^{-1})\left[{{a}_{h+1, h} +1}\atop 2\right]
	( \SE{A}  + 2E_{h+1,h}| \SO{A}  - E_{h,h} - E_{h+1,h})( \bs{j} + \ep_{h}) \\
-&\frac{{\up}^{g_{h,h+1}^{\ol{1}}-2j_{h+1}}}{(\up-\up^{-1})}\big\{\frac{\up^{-1}}{[2]}
	( \SE{A}| \SO{A}  - E_{h,h+1} - E_{h+1,h+1})(\bs{j} -\ep_{h}+ 2 \ep_{h+1}) \\
	&\hspace{0.2cm}+\frac{\up}{[2]}
	( \SE{A}| \SO{A}  - E_{h,h+1} - E_{h+1,h+1})(\bs{j}-\ep_{h}-2 \ep_{h+1}) -( \SE{A}| \SO{A}  - E_{h,h+1} - E_{h+1,h+1})
(\bs{j}-\ep_{h})\big\}\\
-&\sum_{k>h+1}{\up}^{g_{h,k}^{\ol{1}}}
	(\up-\up^{-1})\left[{{a}_{h+1, k} +1}\atop 2\right]( \SE{A} + 2E_{h+1,k}| \SO{A}  - E_{h,k} - E_{h+1,k})(\bs{j} -\bsal_{h} ).\\
(4)\quad&\sfK_{\ol{n}}\cdot\Ad(\bs{j})=\\
&\sum_{k<n}{(-1)}^{ {{\SOE{\widetilde{a}}}_{n,k}}+\parity{\Ad}}{\up}^{d_n(\Ad,k)}(\SE{A} -E_{n,k}|\SO{A}+ E_{n,k})(\bs{j}+\ep_n)\\
&\qquad+{(-1)}^{ {{\SOE{\widetilde{a}}}_{n,n}}+\parity{\Ad} }{\up}^{d_n(\Ad,n)+j_n}(\SE{A}|\SO{A}+ E_{n,n})(\bs{j})\\
+&\sum_{k<n}{(-1)}^{ {{\SOE{\widetilde{a}}}_{n,k}}+\parity{\Ad}}{\up}^{d_n(\Ad,k)}[a_{n,k}]_{\up^2}(\SE{A} + E_{n,k}| \SO{A} - E_{n,k})(\bs{j}+\ep_n)\\
&\qquad+{(-1)}^{ {{\SOE{\widetilde{a}}}_{n,n}}+\parity{\Ad} }\frac{{\up}^{d_n(\Ad,n)-j_n}}{\up^2-\up^{-2}}\{(\SE{A} | \SO{A} - E_{n,n})(\bs{j}+2\ep_n)-(\SE{A} | \SO{A} - E_{n,n})(\bs{j}-2\ep_n)\}.
\end{aligned}
$$
Here undefined notations follow \eqref{f_hk} and Propositions \ref{mulformzerocor}--\ref{mulformdiag}.
\end{subs}

\begin{subs}{\bf Remarks on the history of the project.}
This project was initiated almost ten years ago.  The main obstacles occurred in deriving fundamental multiplication formulas for odd generators. After some unsuccessful early attempts in 2014, we realised that the computation for the multiplication formulas in the odd case is almost impossible. Thus, we suspected the existence of such a construction! A couple of years later,  we tried to tackle the problem in two different pre-projects. First, establish the construction in the classical (i.e., ${v}=1$) case and second, develop a direct construction of the regular representation of $\bsU_\up(\mathfrak q_n)$, using $\up$-differential operators. Both projects were successful.

In \cite{GLL}, a new realisation of the enveloping superalgebra {$\mathcal U(\mathfrak{q}_n)$} using Sergeev superalgebras is obtained by following closely the aforementioned roadmap. On the other hand, motivated from the examples given in {\color{black}\cite[\S5A.7-6]{J}}, a direct construction of the regular representation, using $\up$-differential operators, was first tested for quantum $\mathfrak{gl}_{m|n}$ in \cite{DZ} and then generalised to the queer case in \cite{DLZ}. The two works convinced us such a construction $\bs\sH_r^c\rightsquigarrow\bsU_\up(\mathfrak q_n)$, using Hecke--Clifford algebras $\bs\sH_r^c$, must exist. Thus, we completed a preliminary version (see unpublished manuscript \cite{PV}) in 2022. In this work, standardization was not considered and a complicated order relation was used in the triangular relations! With another two-year efforts, we finally discovered how to normalize the natural basis to obtain the standard basis and to replace the ugly order with the order defined as an easy extension of the well-known order in \cite{BLM}.

It should be pointed out that, unlike the classical case in \cite{GLL}, we still could not write down completely all  multiplication formulas
 involving odd generators  because of the complexity caused by the Clifford part. This
 shows a drastic difference from the case of  $\mathfrak{gl}_{m|n}$.
However, by introducing the SDP condition
 (see \cite[Def.~4.2]{DGLW}), we derive some fundamental multiplication formulas
 under the SDP condition; see \cite[Th.~6.2--4]{DGLW}.\footnote{We will standardise these formula in this paper.}
 Fortunately, by an inductive approach (cf. Remark \ref{induction_N}),
 this is enough to guarantee that the generators and relations for the quantum queer supergroup $\bsU_{\!\up}(\mathfrak{q}_n)$ all hold in queer $\up$-Schur superalgebras.
 In this way, together with an application of the PBW type basis established in \cite{DW1}, we successfully completed Steps (4)--(8) in the roadmap.
Note that the combinatorics developed in \cite{DGZ} and \cite{DW1,DW2} played decisive roles.
\end{subs}

\begin{subs}{\bf Further applications.}
In forthcoming papers, we will apply
this new construction via queer $q$-Schur algebras to establish the integral Schur--Weyl--Olshanski duality and
to develop the modular representation theory of quantum queer supergroups at roots of unity, generalizing the work \cite{BK1, BK2} to the quantum case. We will also use a bar involution, whose definition relies heavily on our theory, to establish the canonical basis theory for polynomial representations (cf. \cite{GJKK,GJKKK}) and for the modified supergroup $\dot\bsU_{\!\up}(\mathfrak{q}_n)$.
\end{subs}

\begin{subs}{\bf Organization.}
This  paper is organized as follows.
In Section \ref{sec_qqschur_natural basis},
we recall some basics on the natural bases for queer $q$-Schur superalgebras $\mathcal Q_q(n,r;R)$ and their standardized (or signed) version
$\SQqnrR$. We further show the two superalgebras are isomorphism if $\sqrt{-1}\in R$.
Section \ref{sec_qqschur} starts with a Drinfeld--Jimbo type presentation for the quantum queer supergroup $\bsU_{\!\up}(\mathfrak{q}_n)$ and Olshanski's epimorphism from $\bsU_{\!\up}(\mathfrak{q}_n)$ to $\qSchvsQ:=\mathcal Q^s_q(n,r;R)$ for $R=\QQ(\up),q=\up^2$.
The integral version $\qSchvsZ:=\mathcal Q_{\up^2}^s(n,r;\sZ)$ ($\sZ:=\ZZ[\up,\up^{-1}]$) of $\qSchvsQ$ admits a standard basis. This basis is a normalization of the natural basis for $\qSchvsZ$, a key ingredient which has no geometric interpretation.
In Section \ref{SMFQS},
we derive the multiplication formulas of the standard basis by certain homogeneous generators for the superalgebra
$\qSchvsQ$.
 Similar to BLM's original method, we introduce
in Section \ref{sec_spanningsets} some ``long elements'' which span the algebra $\qSchvsQ$ over $\QQ(\up)$ uniformly on $r$ and derive the corresponding multiplication formulas.
Note that the ``structure constants'' in these multiplication formulas are independent of $r$. We then check all the defining relations of $\Uvqn$ in $\qSchvsQ$ in Section \ref{Quantum relations}.
 In this way, we obtain superalgebra homomorphisms $\bs\xi_{n,r}$ from $\Uvqn$ to $\qSchvsQ$ (Theorem \ref{qqschur_reltion-r}).

The aim of the remaining three sections is to prove the main Theorem \ref{mthm}. The homomorphisms $\bs\xi_{n,r}$ induces a homomorphism $\bs{\xi}_n$ from $\Uvqn$ to the direct product $\bsSQvn$ of  $\bsSQvnr$. The determination of the image of $\bs{\xi}_n$ requires a triangular transition matrix between a monomial basis and the standard basis for $\qSchvsQ$. This is done in Theorem \ref{triang-QqS}.
A key ingredient for the triangular relations is the selection of an order relation. For example, in a preliminary version of the work \cite{PV}, a very complicated order was used. The new order relation $\prec^\star$ in Definition \ref{preorder-2} is an easy extension of the order relation used for quantum $\mathfrak{gl}_n$ \cite{BLM}. In Section 8, by taking limits of long elements, we obtain infinitely long elements (as formal infinite  series) which span a subspace $ \USnv $ of $\bsSQvn$. We then use the extended triangular relations to prove that $\text{im}(\bs{\xi}_n)=\USnv$ (Theorem  \ref{triangular_relation_q}).
Finally, in Section \ref{sec_generators},
we use the triangular decomposition of $\Uvqn$ together with a certain weight space decomposition to prove that $\bs{\xi}_n$ is an isomorphism.
Thus, the new superalgebra {$\USnv$} becomes a new realization of {$\Uvqn$}, proving the main Theorem \ref{mthm}.
\end{subs}

\begin{subs}\label{sec_notations}{\bf Some standard notations.}
Denote by $\mathbb{N}$ (resp., $\ZZ$) the set of non-negative integers (resp., integers) and let $\mathbb{N}_{2}=\{0,1\}$, $\mathbb{Z}_{2}=\{\bar0,\bar1\}$. Let {${\up}$} be an indeterminate, $\sZ=\mathbb Z[\up,\up^{-1}]$, and $\QQ(\up)$ the fraction field of $\sZ$.

For any positive integers $m\in {\ZZ}$, let
\begin{equation}\label{stepd}
\begin{aligned}
&[m]=\frac{\up^m-\up^{-m}}{\up-\up^{-1}},\quad [m]!=[m][m-1]\cdots[1],\quad \left[m\atop s\right]=\frac{[m]!}{[s]![m-s]!}\\
&{\STEPX{m}{}} = 1 + {q} + \cdots + {q}^{m-1}
	 = \frac{{q}^{m} - 1}{{q} - 1}\;\;(q=\up^2).
\end{aligned}
\end{equation}

For {$i,j \in \ZZ$} with {$i \le j$}, let {$[i, j] = \{ i, i+1, \cdots, j \}$} and $[i,j)=[i,j-1]$, for $i<j$.
\end{subs}

\spaceintv
\section{Preliminaries}\label{sec_qqschur_natural basis}
We start with the definition of a Hecke-Clifford superalgebra.
Let $R$ be   a commutative ring such that {$2 \in R^\times$} (the group of units in $R$).
Assume $r \in \NN$ is a positive integer, and let
 {$\fS_{r}$} be the symmetric group on {$r$} letters with Coxeter generator $s_i=(i,i+1)$, for all $i\in[1,r)$.

 Let $q\in R^\times$.
The {\it Hecke-Clifford superalgebra}  {$\HCR$}  is a  superalgebra  over {${R}$}  generated by
even generators {$T_{i} = T_{s_{i}}$} for all  {$1 \le i < r$},  and odd generators {$c_1, \cdots, c_r$},
subject to the relations
\begin{equation}\label{hc_generators}
\begin{aligned}
&	c_i^2 = -1,  \quad c_i c_j = - c_j c_i  ,  \qquad (1 \le i, j \le r-1);  \\
&	(T_i - {q} )(T_i + 1) = 0, \qquad T_i T_j = T_j T_i , \qquad
	 (q\in R,\;|i-j| >1),\\
&	T_i T_{i+1} T_i = T_{i+1} T_i T_{i+1}, \qquad (i\neq r-1); \\
&T_i c_j = c_j T_i , \qquad (j\neq i, i+1); \\
&T_i c_i = c_{i+1} T_i, \qquad
 T_i c_{i+1} = c_i T_i - ({q}-1)(c_i - c_{i+1}).
\end{aligned}
\end{equation}
The subalgebra  {$\Heck$} generated by even generators is the Hecke algebra associated with {$\fS_{r}$} and has
basis $\{T_w\mid w\in\fS_r\}$, where $T_w=T_{i_1}\cdots T_{i_l}$ if $w=s_{i_1}\cdots s_{i_l}$ is reduced.

For any homogeneous element {$h \in \HCR$}, let {$\parity{h}\in \ZG$} be its parity, more precisely,  {$\parity{h} = \bar 0$} if  {$h$} is even, or {$\parity{h} = \ol{1}$} if {$h$} is odd. From Section \ref{sec_qqschur} onwards, we will only consider the generic version $\sH_r^c=\sH_{r,\sZ}^c$ and
$\boldsymbol\sH_r^c=\sH_{r,\mathbb Q(\up)}^c$, where $q=\up^2$.

We now introduce some special elements in $\HCR$ building on which the structure constants of the generators of $\bsU_{\!\up}(\mathfrak{q}_n)$ relative to a new basis can be computed near the end of the paper.

First, the Young (or parabolic) subgroups of $\fS_r$ are defined by compositions of $r$.
Recall the set of  compositions of $r$ into $n$ parts:
\begin{equation}\label{Lanr}
\CMN(n,r)=\Big\{\lambda=(\lambda_1,\lambda_2,\ldots,\lambda_n)\in \NN^n \where  |\lambda|:=\sum_i\lambda_i=r\Big \}.
\end{equation}
Associated with {${\lambda} = ({\lambda}_1, \cdots , {\lambda}_{n}) \in \CMN(n,r)$}, define its {\it partial sum sequence} $ (\widetilde{\lambda}_1,\ldots,\widetilde{\lambda}_n)$, where
\begin{equation}\label{latilde}
\widetilde{\lambda}_i = \sum_{k=1}^i \lambda_{k} \;\;(1\leq i\leq n), \text{ and set }\widetilde{\lambda}_0=0.
\end{equation}
Then the Young subgroup $\fS_\la$ is the subgroup
$$\fS_\la:=\langle s_i\mid i\in[1,r)\setminus\{\widetilde \la_j\mid j\in[1,n)\}\rangle.$$

Let $x_\la=\sum_{w\in\fS_\lambda}T_w$. These special elements are used to define  the right $\HCR$-module $M=\bigoplus_{\lambda \in \CMN(n,r)} x_{\lambda} \HCR$. An
$\HCR$-module homomorphisms $\phi:M\to M$ is defined to satisfy $\phi(mh)=\phi(m)h$, for all $m\in M$ and $h\in\HCR$ and all such homomorphisms define,
following \cite{DW2},  the {\it queer $q$-Schur superalgebra} over {$R$}
\begin{equation}\label{QqnrR}
	\QqnrR := \End_{\HCR}\Big(\bigoplus_{\lambda \in \CMN(n,r)} x_{\lambda} \HCR\Big).
\end{equation}

Second,
we introduce some special $c$-elements $c_{q,i,j}, c_\nu^\alpha$ in $\HCR$ to describe bases for $x_\la\HCR\cap\HCR x_\mu$ and, hence, for $\QqnrR$.
By definition, for  $1\leq i<j\leq r$, we set
\begin{equation}\notag
c_{q,i,j}=q^{j-i}c_i+q^{j-i-1}c_{i+1}+\cdots+qc_{j-1}+c_j,\quad
c'_{q,i,j}=c_i+q c_{i+1}+\cdots+q^{j-i}c_j
\end{equation}
Putting $c_{q,r}=c_{q,1,r}$ and $c_{q,r}'=c_{q,1,r}'$, we have the first commutation relations: $
x_{(r)}c_{q,r}=c'_{q,r}x_{(r)}$. 
Thus, the set $\{x_{(r)}, x_{(r)}c_{q,r}\}$ forms a basis for $x_{(r)}\HCR\cap\HCR x_{(r)}$.

For {$\lambda= (\lambda_1, \cdots , \lambda_m)\in {\NN}^m$} and {$\alpha \in \Ng^m $} with $m\geq 1$,  the following elements introduced in \cite[\S4]{DW2}
\begin{equation}\label{eq_c_a}
\begin{aligned}
	&c^{\alpha}_{\lambda} = {(c_{{q}, 1, \widetilde{\lambda}_1})}^{\alpha_1}
				{(c_{ {q}, \widetilde{\lambda}_1+1 , \widetilde{\lambda}_2} )}^{\alpha_2}
				\cdots
				{(c_{{q}, \widetilde{\lambda}_{m-1}+1,  \widetilde{\lambda}_m} )}^{\alpha_m}, \\
	&(c^{\alpha}_{\lambda})' = {(c'_{{q}, 1, \widetilde{\lambda}_1})}^{\alpha_1}
				{(c'_{{q}, \widetilde{\lambda}_1 + 1,  \widetilde{\lambda}_2} )}^{\alpha_2}
				\cdots
				{(c'_{{q}, \widetilde{\lambda}_{m-1} + 1,  \widetilde{\lambda}_m })}^{\alpha_m} .
\end{aligned}
\end{equation}
Here we use the convention $x^0=1$.
By \cite[Lem.~4.1, Cor.~4.3]{DW2},
we have commutation relations
\begin{align}\label{xc}
	x_{\lambda} c_{\lambda}^{\alpha} = (c_{\lambda}^{\alpha})'  x_{\lambda},
\end{align}
for all $\alpha\leq\lambda$ (meaning $\alpha_i\leq\lambda_i $ for any $i$),
and the set $\{x_{\la}c_\la^{\alpha}\mid \alpha\in\NN^n_2, \alpha\leq\la\}$ forms a basis for
$x_{\la}\sH_{\la,R}^c\cap\sH_{\la,R}^c x_{\la}$, where $\sH_{\la,R}^c$ is the parabolic subalgebra associated with $\la$.

Finally, we describe a basis for $\QqnrR$.

Denote by $\MN(n)$ the set of $n\times n$-matrices with entries being non-negative integers. For any {$A=(a_{i,j}) \in \MN(n)$}, set $|A|=\sum_{i,j}a_{i,j}$ and
\begin{equation}\label{roco}
\ro(A)=\Big(\sum_{j=1}^na_{1,j},\sum_{j=1}^na_{2,j},\ldots,\sum_{j=1}^na_{n,j}\Big),\quad
\co(A)=\Big(\sum_{i=1}^na_{i,1},\sum_{i=1}^na_{i,2},\ldots,\sum_{i=1}^na_{i,n}\Big).
\end{equation}

Let {$\Ng = \{0, 1\} \subset \NN$}. Recall the following matrix sets introduced in \cite{DW2}:
\begin{equation}\label{labelsets}
\aligned
&\MNR(n,r) =\{A=(a_{i,j})\in \MN(n)\mid r=|A|\},\\
	& \MNZN(n) = \{ \Ad=(\SUP{A}) \where \SE{A} \in \MN(n), \SO{A} \in M_n(\mathbb{N}_2) \}, \\
	& \MNZ(n,r) = \{ \Ad=(\SUP{A}) \in \MNZN(n) \where  \SE{A} + \SO{A} \in \MNR(n,r) \}.
	\endaligned
\end{equation}
Define the parity map $\wp$ $:\MNZN(n)\longrightarrow \mathbb{Z}_2$ as follows: for any $\Ad\in\MNZN(n)$,
\begin{equation}\label{parity}
\parity\Ad=|\SOE{A}|\; \mbox{mod }2=\sum_{i,j}a^{\bar1}_{i,j}\; \mbox{mod } 2 .
\end{equation}

For $\Ad =(A^{\bar0}|A^{\bar1}) \in\MNZ(n,r)$, let $A=A^{\bar0}+A^{\bar1}$, which is called the {\it base} of $\Ad$. Suppose
$A^{\bar0}=(\SEE{a}_{i,j})$, $A^{\bar1}=(\SOE{a}_{i,j}) $, and $A=(a_{i,j})$ with $a_{i,j}=a^{\bar0}_{i,j}+a^{\bar1}_{i,j}$.
By concatenating columns,  we set
\begin{align}\label{nuA}
\nu_A:=(a_{1,1},\ldots,a_{n,1},\ldots,a_{1,n},\ldots,a_{n,n}),
\end{align}
and its associated partial sum sequence (see \eqref{latilde})
\begin{align}\label{tnuA}
\widetilde \nu_A=(\widetilde a_{1,1},\ldots,\widetilde a_{i,j},\ldots,\widetilde a_{n,n}),\;\text{ where }\;\widetilde{a}_{i,j} := \sum_{p=1}^{j-1}\sum_{u=1}^{n} a_{u,p} + \sum_{u=1}^{i} a_{u,j}.
\end{align}

For $\nu=\nu_A$ and $\nu^{\bar1}=\nu_{A^{\bar1}}$, define
\begin{equation}
\begin{aligned}\label{eq cA}
	c_{\Ad} &=  c_{\nu}^{\nu^{\bar1}}, \qquad c'_{\Ad} =  (c_{\nu}^{\nu^{\bar1}})', \qquad\\
	T_{\Ad}& =  x_{\ro(A)} T_{d_{A}} c_{\Ad}
			\sum _{{\sigma} \in \D_{{\nu}_{A}} \cap \fS_{\co(A)}} T_{\sigma},
\end{aligned}
\end{equation}
where $d_A$ is the distinguished (or shortest) representative of the double coset $\fS_\la d_A\fS_\mu$ ($\la={\ro(A)},\mu={\co(A)}$) defined by matrix $A$; (see \cite[\S4.2]{DDPW}). Note that $\parity{T_\Ad}=\parity{c_\Ad}=\parity{\Ad}$.

By \cite[Prop.~5.2]{DW2}, for any $\lambda,\mu\in\Lambda(n,r)$,
the intersection $x_\lambda\HCR\cap \HCR x_\mu$ is a free {$R$-module} with basis $\{T_\Ad\mid \Ad  \in\MNZ(n,r), \lambda=\ro(A), \mu=\co(A)\}$. Thus, we have the following.

\begin{prop}[{\cite[Th.~5.3]{DW2}}]\label{DW-basis}
The queer $q$-Schur superalgebra $\QqnrR$ is a free $R$-module with {\sf natural basis} $\{ \phi_{\Ad} \where  \Ad \in \MNZ(n,r)\}$, where {$\phi_{\Ad} $} is defined by
\begin{equation*}
\phi_{\Ad}: \bigoplus_{\lambda \in \CMN(n,r)} x_{\lambda} \HCR \longrightarrow \bigoplus_{\lambda \in \CMN(n,r)} x_{\lambda} \HCR,\;\;
 x_{\mu}h \longmapsto \delta_{\mu, \co(A)} T_{\Ad}h,\;\forall  \mu \in \CMN(n,r), h \in \HCR,
\end{equation*}
 and $\phi_\Ad$ has parity $\parity{\Ad}$.
\end{prop}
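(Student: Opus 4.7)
The plan is to identify $\QqnrR$ as a direct sum of Hom-spaces between the cyclic right ideals $x_\la \HCR$, pass to a description of these Hom-spaces as intersections inside $\HCR$, and finally invoke the intersection basis theorem \cite[Prop.~5.2]{DW2}. First, I would record the canonical decomposition
\[
\QqnrR = \End_\HCR\Big(\bigoplus_{\la \in \CMN(n,r)} x_\la \HCR\Big) = \bigoplus_{\la,\mu \in \CMN(n,r)} \Hom_\HCR\bigl(x_\mu\HCR,\, x_\la\HCR\bigr),
\]
obtained by recording the components of an endomorphism across the summands; since $x_\mu \HCR$ is cyclic as a right module with generator $x_\mu$, each component $\phi_{\la,\mu}$ is determined by its value $\phi_{\la,\mu}(x_\mu) \in x_\la\HCR$.

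\textbf{Hom-space as an intersection.} The second step is to establish the $R$-linear isomorphism
\[
\Hom_\HCR(x_\mu \HCR,\, x_\la \HCR) \overset{\sim}{\longrightarrow} x_\la\HCR \cap \HCR x_\mu, \qquad \phi \longmapsto \phi(x_\mu).
\]
The inverse sends $y = h x_\mu \in x_\la\HCR \cap \HCR x_\mu$ to the well-defined map $x_\mu h' \mapsto y h'$. Injectivity is clear because $x_\mu$ generates $x_\mu \HCR$; for surjectivity one shows that any $y \in x_\la\HCR$ giving a well-defined homomorphism automatically lies in $\HCR x_\mu$, a standard argument based on the parabolic relations $y T_w = q^{\ell(w)} y$ for $w \in \fS_\mu$ (forced by $x_\mu T_w = q^{\ell(w)} x_\mu$ and well-definedness). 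The super setting introduces no new difficulty here, since the Clifford generators $c_i$ act $R$-linearly and their interaction with the $T_w$ through \eqref{hc_generators} is respected on both sides.

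\textbf{Conclusion, parity, and main obstacle.} Combining the previous step with \cite[Prop.~5.2]{DW2}, which asserts that $x_\la\HCR \cap \HCR x_\mu$ is a free $R$-module with basis $\{T_\Ad \mid \Ad \in \MNZ(n,r),\ \ro(A)=\la,\ \co(A)=\mu\}$, and then assembling over all pairs $(\la,\mu)$, yields the claimed basis $\{\phi_\Ad\}_{\Ad \in \MNZ(n,r)}$ of $\QqnrR$. Unwinding the identification shows that the basis element corresponding to $T_\Ad$ acts by $x_\mu h \mapsto \delta_{\mu,\co(A)} T_\Ad h$, matching the statement. For the parity, $x_{\ro(A)}$, $T_{d_A}$ and the sum over $\D_{\nu_A} \cap \fS_{\co(A)}$ are all even, whereas $c_\Ad$ has parity $|A^{\bar1}| \bmod 2 = \wp(\Ad)$ by \eqref{eq_c_a} and \eqref{parity}, so $\wp(\phi_\Ad) = \wp(\Ad)$. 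The real work is concentrated in the cited \cite[Prop.~5.2]{DW2}: that is where the shortest double-coset representative $T_{d_A}$ and the supplementary sum over $\D_{\nu_A} \cap \fS_{\co(A)}$ must be reconciled with the Clifford component $c_\Ad$ via the commutation rule \eqref{xc}, so as to produce elements that lie simultaneously in $x_\la \HCR$ and $\HCR x_\mu$. The present proposition is then essentially formal once that intersection basis is in hand.
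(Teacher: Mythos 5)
Your proposal is correct and takes essentially the same route as the paper, which reduces the statement to the intersection basis result \cite[Prop.~5.2]{DW2} via the standard identification $\Hom_\HCR(x_\mu\HCR, x_\lambda\HCR) \cong x_\lambda\HCR \cap \HCR x_\mu$ and then assembles over $\lambda,\mu$. The paper itself treats this as immediate once the intersection basis is cited, so your spelled-out Hom-to-intersection argument and the parity check (that $\wp(\phi_\Ad)=\wp(T_\Ad)=\wp(c_\Ad)=\wp(\Ad)$) are just a more explicit rendering of the same idea.
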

We observe that, for $\Ad,\Bd\in \MNZ(n,r)$,
\begin{equation}\label{co=ro}
\phi_\Bd\phi_\Ad \ne  0 \implies \co(B) = \ro(A).
\end{equation}
Note that this natural basis for $\QqnrR$ is an natural extension of the double coset basis of $q$-Schur algebras or superalgebras.

Fundamental multiplication formulas are referred to writing the product $\phi_\Bd\phi_\Ad$ as a linear combination of natural basis elements for certain choices of $\Bd$:
\begin{equation}\label{fMFs}
\phi_\Bd\cdot\phi_\Ad= \sum_{\Md} \gamma_{\Bd,\Ad}^{\Md} \phi_\Md.
\end{equation}
 According to the generators for the queer Lie superalgebra $\mathfrak q_n:=\mathfrak q(n,\QQ)$, $\Bd$ is one of the following six matrices
$$(\mu+E_{j,j}|O),(\mu+E_{h, h+1}|O), (\mu+E_{h+1,h}|O);\quad (\mu|E_{j,j}),
(\mu|E_{h, h+1}),
(\mu|E_{h+1,h}),$$
where $\mu\in\Lambda(n,r-1)$ stands for the diagonal matrix diag$(\mu)$ and {$E_{i,j}$} denotes the $n \times n$ matrix unit with entry $1$ at {$(i,j)$} position and {$0$} elsewhere. Here
we also use $O$ to denote the zero matrix in {$\MN(n)$} or  {$M_n(\Ng)$}, and set {${\Od} = ({O}|{O}) \in \MNZN(n)$}.

 In the even case, where $\Bd\in \{(\mu+E_{j,j}|O),(\mu+E_{h, h+1}|O), (\mu+E_{h+1,h}|O)\}$, \eqref{fMFs} is given in \cite[Th.~5.3]{DGLW}; while in the odd case, where $\Bd\in\{(\mu|E_{j,j}),(\mu|E_{h, h+1}),(\mu|E_{h+1,h})\}$, \eqref{fMFs} is given in \cite[Th.~6.2--4]{DGLW}.

  The multiplication formulas in the odd case only display ``the head terms'' defined by a so-called SDP condition. The SDP condition is about commuting a Clifford generator with an element in $\sH_{r,R}$ associated with the distinguished double coset representative.

Recall the following bijection (see e.g., \cite[Section 4.2]{DDPW})
\begin{equation}\label{jmath}
\aligned
	\mathfrak{j}: \MNR(n,r) &\longrightarrow \{ (\lambda,  d, \mu) \where \lambda,\mu \in \CMN(n,r), d \in \D_{\lambda, \mu} \},\quad
	A \longmapsto (\ro(A), d_A, \co(A)).
	\endaligned
\end{equation}
where $\D_{\lambda, \mu} $ is the set of distinguish representatives of the $(\mathfrak{S}_\lambda,\mathfrak{S}_\mu)$ double coset in $\mathfrak{S}_r$. The representative $d_A$ of minimal length in the double coset can be defined as follows.

For $A=(a_{ij})$, recall from \eqref{tnuA} the column entry sum $\widetilde a_{i,j}$ up to (and including) $a_{i,j}$. We now define
 $\widetilde a^r_{ij}$ to be the entry sum along row 1 and row 2, etc., up to (and including) $a_{i,j}$. Then\footnote{The row entry partial sum $\widetilde a^r_{ij}$ is denoted as $\widehat a_{i,j}$ in \cite{DGLW}.} $d_A({\widetilde a}_{h-1,k}+ p )=\widetilde a^r_{h,k-1} + p $,
for all $h,k\in[1,n] $ with $a_{h,k}>0$ and $p\in[1,a_{h,k}]$. In other words, as a permutation, we have
$$d_A=\displaystyle\prod_{h,k\atop a_{h,k}>0}\left(\begin{smallmatrix}
\widetilde a_{h-1,k}+1 &\widetilde a_{h-1,k}+2&\cdots&\widetilde a_{h-1,k}+a_{h,k}\\
\widetilde a^r_{h,k-1} +1
& \widetilde a^r_{h,k-1}+2
&\cdots&\widetilde a^r_{h,k-1}+a_{h,k}
\end{smallmatrix}\right).$$
We say that  {$A$}
satisfies the {\it semi-direct product} (SDP) {\it condition} at {$(h, k)$} with $a_{h,k}>0$ (or at a non-zero $(h,k)$-entry) if, for each {$p \in [1,a_{h,k}]$},
\begin{displaymath}
 c_{\widetilde a^r_{h,k-1}+p} {T_{d_A}}={T_{d_A}} c_{\widetilde{a}_{h-1,k} + p}
	 \qquad(\mbox{in }\HCR).
\end{displaymath}
 If $A$ satisfies the SDP condition at $(h,k)$, for all $k\in[1,n]$ with $a_{h,k}>0$, then we say that $A$ satisfies the SDP condition on the $h$-th row.
\begin{thm}[{\cite[Th.~4.6, Cor.~4.7]{DGLW}}]\label{CdA1}
Let $A\in \MN(n)$ and $h,k\in[1,n]$.
Then $A$ satisfies the SDP condition at $(h,k)$ if and only if $a_{h,k}>0$ and $a_{i,j}=0$, for $i>h$ and $j<k$ (equivalently, the lower left corner matrix $\llcm^{h,k}$ at $(h,k)$ is 0). In particular, $A$ satisfies the SDP condition on the $n$-th row.
\end{thm}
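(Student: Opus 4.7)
The plan is to translate the SDP equation
$c_{\widetilde a^r_{h,k-1}+p}\,T_{d_A}=T_{d_A}\,c_{\widetilde a_{h-1,k}+p}$
into a purely combinatorial statement about the permutation $d_A$, and then to check that statement directly from the explicit block structure of $d_A$.

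First, pick any reduced expression $d_A=s_{i_1}\cdots s_{i_\ell}$ and push $c_m$, with $m=\widetilde a^r_{h,k-1}+p$, rightward through $T_{i_1},T_{i_2},\ldots,T_{i_\ell}$. The Hecke--Clifford relations \eqref{hc_generators} give a clean commutation $c_a T_{i_j}=T_{i_j}\,c_{s_{i_j}(a)}$ when $a\neq i_j$, but produce an extra term $(q-1)(c_{i_j}-c_{i_j+1})$ whenever $a=i_j$. Tracking the indices shows that the main term always lands at $T_{d_A}\,c_{d_A^{-1}(m)}$, while the accumulated extras vanish precisely when the trajectory $m_0=m,\ m_j=s_{i_j}(m_{j-1})\ (j=1,\ldots,\ell)$ is non-increasing. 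A standard wiring-diagram argument shows that the number of ``bad steps'' (those with $m_{j-1}=i_j$, forcing $m_j=i_j+1>m_{j-1}$) equals the number of $j>m$ with $d_A^{-1}(j)<d_A^{-1}(m)$, which is independent of the chosen reduced expression, and that a positive count produces a genuinely non-vanishing residue. This yields the combinatorial reformulation
\[
\text{SDP at }m\ \Longleftrightarrow\ d_A^{-1}(j)>d_A^{-1}(m)\ \text{for every }j>m;
\]
the hypothesis $a_{h,k}>0$ is built into the very formulation since $p\in[1,a_{h,k}]$.

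It remains to translate this into a condition on the matrix $A$. For $m=\widetilde a^r_{h,k-1}+p$ we have $d_A^{-1}(m)=\widetilde a_{h-1,k}+p$, and each $j=\widetilde a^r_{h',k'-1}+p'$ with $j>m$ satisfies $d_A^{-1}(j)=\widetilde a_{h'-1,k'}+p'$. Classify such $j$ by the row-major block $(h',k')$ that contains it. If $(h',k')=(h,k)$ with $p'>p$, or $h'=h$ and $k'>k$, or $h'>h$ and $k'\geq k$, a routine comparison using the partial-sum formulas gives $d_A^{-1}(j)>d_A^{-1}(m)$. The only case that can fail is the lower-left corner $h'>h,\ k'<k$, in which
\[
d_A^{-1}(j)=\widetilde a_{h'-1,k'}+p'\ \leq\ \sum_{q\leq k'}\sum_{u=1}^n a_{u,q}\ \leq\ \sum_{q<k}\sum_{u=1}^n a_{u,q}\ \leq\ \widetilde a_{h-1,k}\ \leq\ d_A^{-1}(m),
\]
violating the required strict inequality. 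Hence SDP at $(h,k)$ fails iff some entry of $\llcm^{h,k}$ is non-zero; combined with the built-in hypothesis $a_{h,k}>0$, this is exactly the stated characterisation. The ``in particular'' assertion is then immediate: for $h=n$ the corner $\llcm^{n,k}$ lives in rows $i>n$ and is vacuous, so every $(n,k)$ with $a_{n,k}>0$ satisfies SDP automatically.

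The main obstacle lies in the first paragraph, namely the claim that the accumulated residue is genuinely non-zero whenever a bad step occurs. This can be handled by induction on $\ell(d_A)$: each extra produced at step $j$ is then only multiplied by $T_{i_{j+1}}\cdots T_{i_\ell}$, whose length is strictly smaller than $\ell(d_A)$, so that the $T_{d_A}$-component of $c_m T_{d_A}$ is unambiguously determined and cannot be rescued by cancellation among lower-order pieces. Equivalently, one reads off the invariance of the bad-step count directly from the wiring diagram of $d_A$, which in our setting is controlled by the column-to-row-major block bijection appearing in the definition of $d_A$.
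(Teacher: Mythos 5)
The paper does not give its own proof of this statement; it is quoted directly from \cite[Th.~4.6, Cor.~4.7]{DGLW}. So I can only assess your argument on its own merits.

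Your second paragraph — translating the combinatorial condition on $d_A^{-1}$ into the vanishing of the lower–left corner $\llcm^{h,k}$ via the explicit partial–sum description of $d_A$ — is correct, and so is the ``in particular'' observation for $h=n$. The reduction of the SDP equation to ``no bad step in the trajectory of $m$,'' together with the wiring-diagram identification of the bad-step count with $\#\{b>m : d_A^{-1}(b)<d_A^{-1}(m)\}$, is also the right strategy and is a standard reduced-word fact.

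The gap is in your claim that a positive bad-step count forces the residue to be nonzero. Your last paragraph says the $T_{d_A}$-component of $c_m T_{d_A}$ is ``unambiguously determined and cannot be rescued by cancellation among lower-order pieces.'' That is a true but vacuous observation: the $T_{d_A}$-component of both $c_m T_{d_A}$ and $T_{d_A}c_{d_A^{-1}(m)}$ is $c_m$, so comparing $T_{d_A}$-components says nothing about whether their difference $R=c_m T_{d_A}-T_{d_A}c_{d_A^{-1}(m)}$ is zero. The residue $R=(q-1)\sum_{j\text{ bad}}T_{i_1}\cdots T_{i_{j-1}}(c_{i_j}-c_{i_j+1})T_{i_{j+1}}\cdots T_{i_\ell}$ lives entirely below $T_{d_A}$ in the length filtration, and one must rule out cancellation \emph{among} those terms, which your argument does not address. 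The appeal to ``invariance of the bad-step count'' is a statement about the permutation, not about elements of $\HCR$.

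The gap is closable, for instance as follows. For $w\in\fS_r$ and $a\in[1,r]$ set $E(a,w)=c_aT_w-T_wc_{w^{-1}(a)}$, and do induction on $\ell(w)$. Writing $w=s_iu$ with $\ell(w)=\ell(u)+1$, one has the recursion $E(a,w)=T_iE(s_i(a),u)+\delta_{a,i}(q-1)(c_i-c_{i+1})T_u$. If $a\neq i$, then $E(a,w)=T_iE(s_i(a),u)$ and $T_i$ is invertible, so $E(a,w)=0$ iff $E(s_i(a),u)=0$; induct. If $a=i$ and $E(i+1,u)=0$, then $E(a,w)=(q-1)(c_i-c_{i+1})T_u\neq0$. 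If $a=i$ and $E(i+1,u)\neq0$, suppose toward a contradiction that $E(a,w)=0$; then $T_iE(i+1,u)=-(q-1)(c_i-c_{i+1})T_u$, and the identity $T_i^{-1}(c_i-c_{i+1})=-q^{-1}(c_i-c_{i+1})T_i$ (which follows directly from \eqref{hc_generators} and $T_i^{-1}=q^{-1}(T_i-(q-1))$) yields $E(i+1,u)=q^{-1}(q-1)(c_i-c_{i+1})T_w$. But expanding $T_uc_{u^{-1}(i+1)}$ shows $E(i+1,u)$ lies in $\text{span}\{c^\alpha T_v:\ell(v)<\ell(u)\}$, while the right-hand side has a nonzero component at $T_w$ with $\ell(w)=\ell(u)+1$; contradiction. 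This is the kind of length-filtration argument you gesture at, but your write-up does not actually carry it out, and as stated the non-vanishing of the residue remains unproved.
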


For $\Ad=(A^{\ol{0}}|A^{\ol{1}})\in  \MNZ(n,r)$, $i\in\Ng$, and {$1 \le h \le n-1, 1 \le k \le n$}, let
\begin{align}\label{A01hk}
	& A^{\ol{i},+}_{h,k} = A^{\ol{i}} + E_{h,k} - E_{h+1, k}, \qquad
	  A^{\ol{i},-}_{h,k} = A^{\ol{i}} - E_{h,k} + E_{h+1, k} \in \MN(n).
\end{align}
If $A=A^{\ol{0}}+A^{\ol{1}}$, then $(A^{\ol{0},+}_{h,k}|A^{\ol{1}})$ and $(A^{\ol{0}}|A^{\ol{1},+}_{h,k})$, resp., $(A^{\ol{0},-}_{h,k}|A^{\ol{1}})$ and $(A^{\ol{0}}|A^{\ol{1},-}_{h,k})$ has base
\begin{equation}\label{Ahk}
A^+_{h,k}= A+ E_{h,k} - E_{h+1, k}\;\;\text{ resp. } A^-_{h,k}= A- E_{h,k} +E_{h+1, k}.
\end{equation}
Each of these matrices is obtained by moving 1  up or down a row in column $k$. We call them a {\it 1-up matrix} or {\it 1-down matrix}.
\begin{rem}\label{Md}
If we only consider the fundamental multiplication formulas \eqref{fMFs} as given in \cite[Ths.~5.3, 6.3--4]{DGLW} for $\Bd\in\{(\mu+E_{h, h+1}|O), (\mu+E_{h+1,h}|O),
(\mu|E_{h, h+1}),
(\mu|E_{h+1,h})\}$ (See Lemmas \ref{phiupper-even-norm} and \ref{standard-phiupper1} below), then the matrices $\Md$, appearing in \eqref{fMFs}, or in the leading term if $\Bd$ is odd, take the {\color{black}form}
$(\SE{A} + E_{h,k}  - E_{h+1, k} | \SO{A} ),(\SE{A}  | \SO{A}+ E_{h,k}  - E_{h+1, k} ), (\SE{A} + 2E_{h,k}  | \SO{A} - E_{h, k} - E_{h+1, k} )$ etc.
Note that the base matrices of $\Md$ are all of the {\color{black}form} $A^+_{h,k}$ or $A^-_{h,k}$.
\end{rem}

We end this preliminary section with the definition of the superalgebras $\SQqnrR$ of superhomomorphisms.
More precisely,
given ${R}$-free supermodule\footnote{By an $R$-free supermodule, we mean an $R$-free module with a basis consisting of homogeneous elements.}  $V,W$,
we used    $\Hom_{{R}}(V,W)$ to denote the set of  all ${R}$-linear  maps from $V$ to $W$.
We also made $\Hom_{{R}}(V,W)$  into an $R$-free supermodule by declaring
${\Hom_{{R}}(V,W)}_{\ol{i}}$
to be  the set of  homogeneous map of parity $\bar i\in \ZG$,
that is,
the linear maps $\theta:V\rightarrow W$ with $\theta(V_{\ol{j}})\subset W_{\overline{i}+\ol{j}}$ for {$\bar j\in\ZG$}.

If $R$ is also a superalgbera, we may define $\Hom^s_{{R}}(V,W)$ to be the set of all $R$-superhomomorphisms $f:V\to W$ such that
$$f(rv)=(-1)^{\parity{f}\parity{r}}rf(v),\;\;\text{ for all }r\in R, v\in V.$$
Note that it is typical in a category of super objects to write the expressions
which only make sense for homogeneous elements,
and the expected meaning for arbitrary elements
 is obtained by extending linearly from the homogeneous case.

 Thus, following \cite[Definition 1.1]{CW},
 we define, for right $\HCR$-modules $M,N$,
 $\Hom^s_{\HCR}(M,N)$ to be the $R$-supermodule generated by
 all $\HCR$-superhomomorphisms $\Phi:M\to N$ satisfying
\begin{equation}\label{eq:twist}
 \Phi(mh)=(-1)^{\parity{\Phi}\parity{h}}\Phi(m)h\;\; (m\in M, h\in\HCR).
\end{equation}
In particular, for $M=\bigoplus_{\lambda \in \CMN(n,r)} x_{\lambda} \HCR$,
let $\End^s_{\HCR}(M):=\Hom^s_{\HCR}(M,M)$.
 One checks easily that, if
 $\Phi\in\End^s_{\HCR}(M)_{\bar i}$ and $\Psi\in\End^s_{\HCR}(M)_{\bar j}$, then $\Psi\Phi\in\End^s_{\HCR}(M)_{\bar i+\bar j}$. Thus, we obtain a new (associative) superalgebra
\begin{equation}\label{SQqnrR}
	\SQqnrR
	:= \End^s_{\HCR}\Big(\bigoplus_{\lambda \in \CMN(n,r)} x_{\lambda} \HCR\Big)
	=\bigoplus_{\lambda, \mu \in \CMN(n,r)} \Hom^s_{\HCR}(x_{\lambda}\HCR,x_{\mu}\HCR).
\end{equation}
This is called the {\it standardised queer $q$-Schur superalgebra}.

The following result shows that the standardised queer $q$-Schur superalgebra $\SQqnrR$ is just a sign modification of the original queer $q$-Schur superalgebra $\QqnrR$.

\begin{prop}\label{prop_PhiAPhiB}
The standardised queer $q$-Schur superalgebra $\SQqnrR$ is $R$-free with basis $\{ \Phi_{\Ad} \where  \Ad \in \MNZ(n,r)\}$, where {$\Phi_{\Ad}: \oplus_{\mu \in \CMN(n, r)} x_{\mu}\HCR  \rightarrow \oplus_{\mu \in \CMN(n, r)} x_{\mu}\HCR $} is defined by
\begin{equation}\label{PhiA}
\Phi_\Ad(x_{\mu} h)= {(-1)}^{\parity{\Ad} \cdot \parity{ h}} \delta_{\mu, \co(A)} \Phi_{\Ad}(x_{\co(A)})  h=
 {(-1)}^{\parity{\Ad} \cdot \parity{ h}} \delta_{\mu, \co(A)} T_\Ad h,
\end{equation}
for all $\mu\in\CMN(n,r)$ and $h\in\HCR$.

Moreover,
for any  {$\Ad, \Bd \in \MNZ(n, r)$} with  {$\ro(A) = \co(B)$},
if $\phi_\Bd \phi_\Ad= \sum_{\Md} { \gamma^{\Md}_{\Bd,\Ad}} \phi_\Md$ and $\Phi_\Bd \Phi_\Ad= \sum_{\Md}{  \scc_{\Bd,\Ad,\Md} }\Phi_\Md$ with structure respective constants
${ \gamma^{\Md}_{\Bd,\Ad},\scc_{\Bd,\Ad,\Md}}\in R$, then
\begin{equation}\label{Phi_structure}
{ \scc_{\Bd,\Ad,\Md}
 =  {(-1)}^{\parity{\Ad} \cdot  \parity{ \Bd } } \gamma^{\Md}_{\Bd,\Ad} =\begin{cases}-\gamma^{\Md}_{\Bd,\Ad},&\text{ if both $\Ad,\Bd$ are odd};\\
\;\;\, \gamma^{\Md}_{\Bd,\Ad},&\text{ otherwise.}\end{cases}
}
\end{equation}
\end{prop}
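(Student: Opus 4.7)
The plan is to establish the three assertions---$\Phi_\Ad\in\SQqnrR$ with $\parity{\Phi_\Ad}=\parity{\Ad}$, the basis property, and the structure-constant identity---by constructing a natural $R$-module isomorphism $\sigma:\QqnrR\to\SQqnrR$ that sends $\phi_\Ad\mapsto\Phi_\Ad$ and converts ordinary compositions into super compositions with a parity-sign twist.

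First I would check directly that each $\Phi_\Ad$ from \eqref{PhiA} lies in $\SQqnrR$: it is manifestly $R$-linear and $\ZG$-graded (since $x_\mu$ is even and $T_\Ad$ has parity $\parity{\Ad}$), and for homogeneous $h,h'\in\HCR$, using $\parity{hh'}=\parity{h}+\parity{h'}$, a short computation gives
\begin{align*}
\Phi_\Ad\bigl((x_\mu h)\,h'\bigr)
 &= (-1)^{\parity{\Ad}(\parity{h}+\parity{h'})}\,\delta_{\mu,\co(A)}\,T_\Ad\,hh' \\
 &= (-1)^{\parity{\Ad}\parity{h'}}\,\Phi_\Ad(x_\mu h)\,h',
\end{align*}
which is the super twist \eqref{eq:twist} with $\parity{\Phi_\Ad}=\parity{\Ad}$. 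For the basis claim, I would define $\sigma:\QqnrR\to\SQqnrR$ on a homogeneous $\phi$ of parity $\bar i$ and a homogeneous $v\in\bigoplus_{\mu\in\CMN(n,r)}x_\mu\HCR$ of parity $\bar j$ by $\sigma(\phi)(v)=(-1)^{\bar i\bar j}\phi(v)$, and extend linearly. A parity calculation identical to the one above shows $\sigma(\phi)$ satisfies \eqref{eq:twist}, and the same formula yields a two-sided inverse; hence $\sigma$ is an $R$-module isomorphism. Since $x_\mu$ is even, evaluating $\sigma(\phi_\Ad)$ at $v=x_\mu h$ with $h$ homogeneous reproduces the formula \eqref{PhiA} for $\Phi_\Ad$, so $\sigma(\phi_\Ad)=\Phi_\Ad$; combined with Proposition~\ref{DW-basis}, this gives the basis assertion.

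For the structure-constant identity, I would use the factorisation $T_\Ad=x_{\ro(A)}\,h_\Ad$ with $h_\Ad:=T_{d_A}\,c_\Ad\sum_{\sigma}T_\sigma$ homogeneous of parity $\parity{\Ad}$ (cf.~\eqref{eq cA}) and evaluate both compositions at $x_{\co(A)}$:
\begin{align*}
\Phi_\Bd\Phi_\Ad(x_{\co(A)})
 &= \Phi_\Bd(T_\Ad)=\Phi_\Bd(x_{\ro(A)}\,h_\Ad) \\
 &= (-1)^{\parity{\Bd}\parity{\Ad}}\,\delta_{\ro(A),\co(B)}\,T_\Bd\,h_\Ad
   = (-1)^{\parity{\Bd}\parity{\Ad}}\,\phi_\Bd\phi_\Ad(x_{\co(A)}).
\end{align*}
Expanding both sides using $\phi_\Bd\phi_\Ad=\sum_\Md\gamma^{\Md}_{\Bd,\Ad}\phi_\Md$ and $\Phi_\Bd\Phi_\Ad=\sum_\Md\scc_{\Bd,\Ad,\Md}\Phi_\Md$, noting that $\phi_\Md(x_{\co(A)})=\Phi_\Md(x_{\co(A)})=\delta_{\co(A),\co(M)}T_\Md$, and invoking the linear independence of the relevant $T_\Md$ from \cite[Prop.~5.2]{DW2}, I would match coefficients to obtain $\scc_{\Bd,\Ad,\Md}=(-1)^{\parity{\Bd}\parity{\Ad}}\gamma^{\Md}_{\Bd,\Ad}$, which specialises to the $\pm\gamma^{\Md}_{\Bd,\Ad}$ dichotomy of \eqref{Phi_structure}. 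The only real subtlety is the super-twist verification for $\sigma$; once that is in hand, the basis statement follows from Proposition~\ref{DW-basis} and the sign formula reduces to a single evaluation at $x_{\co(A)}$.
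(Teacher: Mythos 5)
Your proposal is correct and follows essentially the same route as the paper: the map $\sigma$ you construct is exactly the isomorphism $(\;)^\dagger$ from \cite[(1.1)]{CW} used in the paper's proof, and your structure-constant computation uses the same factorisation $T_\Ad=x_{\ro(A)}h_\Ad$ and the same key observation that the twist introduces the sign $(-1)^{\parity{\Ad}\parity{\Bd}}$. The only cosmetic difference is that you compare coefficients by evaluating at the single even element $x_{\co(A)}$ and invoking linear independence of the $T_\Md$, whereas the paper carries the full argument through $\Phi_\Bd\Phi_\Ad(x_\mu h)$ for arbitrary homogeneous $h$; both are valid.
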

\begin{proof} The freeness follows easily from the linear isomorphism
\begin{equation}\label{dagger}
(\;\;)^\dagger:\QqnrR\to\SQqnrR
\end{equation} as defined in \cite[(1.1)]{CW} or, more precisely, for a homogeneous $\phi\in\QqnrR$ of parity $\parity{\phi}$ and $M=\bigoplus_{\lambda \in \CMN(n,r)} x_{\lambda} \HCR$, $\phi^\dagger:M\to M$ is defined by \begin{align}\label{phi-dag}
\phi^{\dag}(m)=(-1)^{\parity{\phi}\cdot\parity{m}}\phi(m)
\end{align}
for homogeneous $m\in M$. It is straightforward to check  $\phi^{\dag}\in\SQqnrR $ and $\dagger\dagger=1$. The basis assertion is clear by setting  $\Phi_\Ad=\phi_\Ad^{\dag}$ for
all {$\Ad \in \MNZ(n, r)$}.

Finally,
 if we write $T_\Ad=x_{\ro(A)}h'$,
where $h'={T_{d_A}} c_{\Ad}  \sum _{{\sigma} \in \D_{{\nu}_{_A}} \cap \fS_{\co(A)}} T_{\sigma}$ (see \eqref{eq cA}),
then $h'$ is homogeneous with parity $\wp(h')=\wp(\Ad)$
and the structure constants  $ { \gamma^{\Md}_{\Bd,\Ad}}$
appearing in $\phi_\Bd\phi_\Ad = \sum_{\Md}  { \gamma^{\Md}_{\Bd,\Ad}} \phi_\Md$
are determined by writing $T_\Bd h'\in x_{\ro(B)}\HCR\cap \HCR x_{\co(A)}$
as a linear combination of $T_\Md$'s. In other words,
\begin{equation}\label{eq_gB}
z(\Bd,\Ad):=\phi_\Bd\phi_\Ad(x_{\co(A)})=T_\Bd h'= \sum_{\Md \in \MNZ(n,r)} { \gamma^{\Md}_{\Bd,\Ad}}  T_\Md,
\end{equation}
and, for each {$\Md$}, we have {$\parity{\Md} = \parity{\Ad} + \parity{\Bd}$}.

Thus,
for any homogeneous element {$h \in \HCR$}
and  {$ \mu \in \CMN(n, r)$}, by \eqref{PhiA},
we have
\begin{align*}
\Phi_\Bd \Phi_\Ad (x_{\mu} h)
& = \delta_{\mu, \co(A)} {(-1)}^{\parity{\Ad} \cdot \parity{ h }}  {(-1)}^{\parity{\Bd} \cdot ( \parity{ \Ad } + \parity{h})}   \phi_\Bd (T_{\Ad} h ) \\
& = \delta_{\mu, \co(A)} {(-1)}^{\parity{\Ad} \cdot \parity{ h }}  {(-1)}^{\parity{\Bd} \cdot ( \parity{ \Ad } + \parity{h})}    \sum_{\Md}{  \gamma^{\Md}_{\Bd,\Ad}}  T_\Md \cdot h  \\
& = \delta_{\mu, \co(A)}  {(-1)}^{\parity{\Bd} \cdot  \parity{ \Ad } }   \sum_{\Md} {(-1)}^{\parity{\Md}\cdot \parity{ h }}   {  \gamma^{\Md}_{\Bd,\Ad}}  \phi_\Md(x_\mu) \cdot h \\
& = \delta_{\mu, \co(A)} {(-1)}^{\parity{\Ad} \cdot  \parity{ \Bd } }   \sum_{\Md}  {  \gamma^{\Md}_{\Bd,\Ad}} \Phi_\Md(x_{\mu} h ).
\end{align*}
Hence, ${ \scc_{\Bd,\Ad,\Md} ={(-1)}^{\parity{\Ad} \cdot  \parity{ \Bd } }  \gamma^{\Md}_{\Bd,\Ad} }$,
as desired.
\end{proof}
 We have seen above that the  standardised version $\SQqnrR$ is linearly isomorphic to $\QqnrR$ as free $R$-modules. The next result shows that if, in addition, $\sqrt{-1}\in R$, then $\SQqnrR$ is isomorphic to $\QqnrR$ as $R$-superalgebras.

\begin{cor}\label{super_al_iso}
If {$\sqrt{-1} \in R$},
then the $R$-linear isomorphism $(\;\;)^\dagger$ in \eqref{dagger} induces  an
isomorphism of $R$-superalgebras
$${ \mathfrak{\tau}}:  \QqnrR  \longrightarrow \SQqnrR,\;\;
{\phi}_{\Ad}  \longmapsto  {(\sqrt{-1})}^{\parity{\phi_\Ad}}  {\Phi}_{\Ad},\;\;
\forall \Ad\in\MNZ(n,r).$$
\end{cor}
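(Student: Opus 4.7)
The plan is to verify directly, on the natural basis, that the proposed map $\tau:\phi_\Ad\mapsto(\sqrt{-1})^{\parity{\Ad}}\Phi_\Ad$ is a superalgebra isomorphism. By Proposition~\ref{prop_PhiAPhiB}, both $\{\phi_\Ad\}$ and $\{\Phi_\Ad\}$ are $R$-bases of $\QqnrR$ and $\SQqnrR$, respectively, and each scalar $(\sqrt{-1})^{\parity{\Ad}}$ is a unit in $R$. Thus $\tau$ is automatically an $R$-linear bijection. Moreover $\Phi_\Ad=\phi_\Ad^\dagger$ (see \eqref{phi-dag}) inherits the parity $\parity{\Ad}$ from $\phi_\Ad$, so $\tau$ is even, hence parity preserving. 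It therefore only remains to check multiplicativity on basis elements.

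Fix $\Ad,\Bd\in\MNZ(n,r)$; by \eqref{co=ro} we may assume $\ro(B)=\co(A)$, as otherwise both $\phi_\Bd\phi_\Ad$ and $\Phi_\Bd\Phi_\Ad$ vanish. Using \eqref{Phi_structure} one computes
\begin{align*}
\tau(\phi_\Bd)\tau(\phi_\Ad)
&=(\sqrt{-1})^{\parity{\Bd}+\parity{\Ad}}\,\Phi_\Bd\Phi_\Ad
=(\sqrt{-1})^{\parity{\Bd}+\parity{\Ad}}(-1)^{\parity{\Ad}\parity{\Bd}}\sum_{\Md} \gamma^{\Md}_{\Bd,\Ad}\Phi_\Md,
\end{align*}
where the exponents of $\sqrt{-1}$ are interpreted via the natural integer lift $\ZG\to\{0,1\}\subset\ZZ$. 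On the other hand, $\tau(\phi_\Bd\phi_\Ad)=\sum_{\Md}\gamma^{\Md}_{\Bd,\Ad}(\sqrt{-1})^{\parity{\Md}}\Phi_\Md$, and for each $\Md$ with $\gamma^{\Md}_{\Bd,\Ad}\neq 0$ the parity relation $\parity{\Md}=\parity{\Ad}+\parity{\Bd}$ in $\ZG$ holds by \eqref{eq_gB}. Matching the two expressions term by term reduces to the numerical identity, for $a,b\in\{0,1\}$,
\[
(\sqrt{-1})^{a+b}(-1)^{ab}=(\sqrt{-1})^{(a+b)\bmod 2},
\]
whose only nontrivial case is $a=b=1$, where $(\sqrt{-1})^{2}(-1)=(-1)(-1)=1=(\sqrt{-1})^{0}$.

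The only subtlety I foresee is the careful bookkeeping between parities viewed in $\ZG$ and their integer lifts, since $(\sqrt{-1})^{a+b}$ and $(\sqrt{-1})^{(a+b)\bmod 2}$ genuinely differ when $a=b=1$. This discrepancy is exactly what the sign factor $(-1)^{\parity{\Ad}\parity{\Bd}}$ in \eqref{Phi_structure} is there to compensate for, and it is also the reason the hypothesis $\sqrt{-1}\in R$ cannot be dropped: the rescaling of the natural basis by powers of $\sqrt{-1}$ is precisely what converts the sign-twisted multiplication in $\SQqnrR$ into the untwisted multiplication in $\QqnrR$. No deeper commutation computation is required beyond what Proposition~\ref{prop_PhiAPhiB} already provides.
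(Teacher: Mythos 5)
Your proof is correct and takes essentially the same route as the paper: both reduce multiplicativity of $\tau$ on the natural basis to Proposition~\ref{prop_PhiAPhiB} together with the scalar identity $(\sqrt{-1})^{\parity{\Ad}}(\sqrt{-1})^{\parity{\Bd}}(-1)^{\parity{\Ad}\parity{\Bd}}=(\sqrt{-1})^{\parity{\Ad}+\parity{\Bd}\bmod 2}$ and the parity relation $\parity{\Md}=\parity{\Ad}+\parity{\Bd}$ from \eqref{eq_gB}. Your explicit remark on the $\ZG$ versus integer-lift bookkeeping, and on why $\sqrt{-1}\in R$ is indispensable, is a sound elaboration of what the paper leaves implicit in its facts (a) and (b).
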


\begin{proof} Clearly, ${ \mathfrak{\tau}}$ is an $R$-linear isomorphism.
It remains to check {$ { \mathfrak{\tau}}({\phi}_{\Ad} {\phi}_{\Bd}) =   { \mathfrak{\tau}}( {\phi}_{\Ad})  { \mathfrak{\tau}}( {\phi}_{\Bd}) $}
for all $\Ad,\Bd\in\MNZ(n,r)$. By writing $\phi_\Bd \phi_\Ad
= \sum_{\Md} {  \gamma^{\Md}_{\Bd,\Ad}}  \phi_\Md$, as in Lemma \ref{prop_PhiAPhiB},
this can be easily proved by the following facts: \\
\qquad(a)\;\;${(\sqrt{-1})}^{\parity{\phi_\Ad}}  {(\sqrt{-1})}^{\parity{\phi_\Bd}}
	 {(-1)}^{ \parity{\phi_\Ad} \parity{\phi_\Bd} }
 = {(\sqrt{-1})}^{\parity{{\phi}_{\Ad} {\phi}_{\Bd}}}$;\\
 \qquad(b)\;\; {  {$\parity{{\phi}_{\Bd}{\phi}_{\Ad}}  = \parity{{\phi}_{\Md}} $} for each $\Md$ with $\gamma^{\Md}_{\Bd,\Ad}\neq 0$}.
\end{proof}

\spaceintv
\section{The quantum queer supergroup and standard basis for $\qSchvsZ$}\label{sec_qqschur}

We first introduced the quantum queer supergroups $\Uvqn$ and their associated Schur--Weyl duality (see \cite{Ol}). Then we investigate the standard basis, as a normalization of the natural basis, for the generic superalgebra $\qSchvsZ$ over $\sZ:=\ZZ[\up,\up^{-1}]$.

The following definition of $\Uvqn$, introduced by Olshanski \cite{Ol},  is taken from
\cite[Prop.~5.2]{DW1} (cf. \cite[\S2]{GJKK}).
Recall  the standard basis {$\{ \ep_i \where  i \in [1,n]\}$} for ${\ZZ}^n$ and define the ``dot product'' $\ep_i\centerdot \ep_j=\delta_{i,j}$, linearly extended to ${\ZZ}^n$.

\begin{defn}\label{defqn}
The quantum queer supergroup $\Uvqn$ ($n\geq2$) is the (Hopf) superalgebra
over $\Qv$  generated by
even generators  {${\genK}_{i}$}, {${\genK}_{i}^{-1}$},  {${\genE}_{j}$},  {${\genF}_{j}$},
and odd generators  {${\genK}_{\ol{i}}$},  {${\genE}_{\ol{j}}$}, {${\genF}_{\ol{j}}$},
for   {$ 1 \le i \le n$}, {$ 1 \le j \le n-1$}, subject to the following relations:
\begin{align*}
({\rm QQ1})\quad
&	{\genK}_{i} {\genK}_{i}^{-1} = {\genK}_{i}^{-1} {\genK}_{i} = 1,  \qquad
	{\genK}_{i} {\genK}_{j} = {\genK}_{j} {\genK}_{i} , \qquad
	{\genK}_{i} {\genK}_{\ol{j}} = {\genK}_{\ol{j}} {\genK}_{i}, \\
&	{\genK}_{\ol{i}} {\genK}_{\ol{j}} + {\genK}_{\ol{j}} {\genK}_{\ol{i}}
	= 2 {\delta}_{i,j} \frac{{\genK}_{i}^2 - {\genK}_{i}^{-2}}{{v}^2 - {v}^{-2}}; \\
({\rm QQ2})\quad
& 	{\genK}_{i} {\genE}_{j} = {v}^{\ep_i\centerdot\alpha_j} {\genE}_{j} {\genK}_{i}, \qquad
	{\genK}_{i} {\genE}_{\ol{j}} = {v}^{\ep_i\centerdot\alpha_j} {\genE}_{\ol{j}} {\genK}_{i}, \\
& 	{\genK}_{i} {\genF}_{j} = {v}^{-(\ep_i\centerdot\alpha_j)} {\genF}_{j} {\genK}_{i}, \qquad
	{\genK}_{i} {\genF}_{\ol{j}} = {v}^{-(\ep_i\centerdot \alpha_j)} {\genF}_{\ol{j}} {\genK}_{i}; \;
	(\mbox{here }\alpha_j=\ep_j-\ep_{j+1})\\
({\rm QQ3})\quad
& {\genK}_{\ol{i}} {\genE}_{i} - {v} {\genE}_{i} {\genK}_{\ol{i}} = {\genE}_{\ol{i}} {\genK}_{i}^{-1}, \qquad
	{v} {\genK}_{\ol{i}} {\genE}_{i-1} -  {\genE}_{i-1} {\genK}_{\ol{i}} = - {\genK}_{i}^{-1} {\genE}_{\ol{i-1}}, \\
& {\genK}_{\ol{i}} {\genF}_{i} - {v} {\genF}_{i} {\genK}_{\ol{i}} = - {\genF}_{\ol{i}} {\genK}_{i}, \qquad
	{v} {\genK}_{\ol{i}} {\genF}_{i-1} -  {\genF}_{i-1} {\genK}_{\ol{i}} = {\genK}_{i} {\genF}_{\ol{i-1}},\\
& {\genK}_{\ol{i}} {\genE}_{\ol{i}} + {v} {\genE}_{\ol{i}} {\genK}_{\ol{i}} = {\genE}_{i} {\genK}_{i}^{-1}, \qquad
	{v} {\genK}_{\ol{i}} {\genE}_{\ol{i-1}} +  {\genE}_{\ol{i-1}} {\genK}_{\ol{i}} =   {\genK}_{i}^{-1} {\genE}_{i-1}, \\
& {\genK}_{\ol{i}} {\genF}_{\ol{i}} + {v} {\genF}_{\ol{i}} {\genK}_{\ol{i}} =   {\genF}_{i} {\genK}_{i}, \qquad
	{v} {\genK}_{\ol{i}} {\genF}_{\ol{i-1}} +  {\genF}_{\ol{i-1}} {\genK}_{\ol{i}} = {\genK}_{i} {\genF}_{i-1}, \\
&
 {\genK}_{\ol{i}} {\genE}_{j} - {\genE}_{j} {\genK}_{\ol{i}} =  {\genK}_{\ol{i}} {\genF}_{j} - {\genF}_{j} {\genK}_{\ol{i}}
 = {\genK}_{\ol{i}} {\genE}_{\ol{j}} + {\genE}_{\ol{j}} {\genK}_{\ol{i}} =  {\genK}_{\ol{i}} {\genF}_{\ol{j}} + {\genF}_{\ol{j}} {\genK}_{\ol{i}}
	= 0, \mbox{ for } j \ne i, i-1; \\
({\rm QQ4}) \quad
& {\genE}_{i} {\genF}_{j} - {\genF}_{j} {\genE}_{i}
	= \delta_{i,j}  \frac{{\genK}_{i} {\genK}_{i+1}^{-1} - {\genK}_{i}^{-1}{\genK}_{i+1}}{{v} - {v}^{-1}}, \\
&
{\genE}_{\ol{i}} {\genF}_{\ol{j}} + {\genF}_{\ol{j}} {\genE}_{\ol{i}}
	= \delta_{i,j}  ( \frac{{\genK}_{i} {\genK}_{i+1} - {\genK}_{i}^{-1} {\genK}_{i+1}^{-1}}{{v} - {v}^{-1}}
	 + ({v} - {v}^{-1}) {\genK}_{\ol{i}} {\genK}_{\ol{i+1}} )  ,\\
& {\genE}_{i} {\genF}_{\ol{j}} - {\genF}_{\ol{j}} {\genE}_{i}
	= \delta_{i,j}  ( {\genK}_{i+1}^{-1} {\genK}_{\ol{i}}  - {\genK}_{\ol{i+1}} {\genK}_{i}^{-1} ) , \qquad
 {\genE}_{\ol{i}} {\genF}_{j} - {\genF}_{j} {\genE}_{\ol{i}}
	= \delta_{i,j}  ( {\genK}_{i+1} {\genK}_{\ol{i}}  - {\genK}_{\ol{i+1}} {\genK}_{i} ) ;\\
({\rm QQ5}) \quad
&{\genE}_{\ol{i}}^2 = -\frac{ {v} - {v}^{-1} }{{v} + {v}^{-1}} {\genE}_{i}^2, \quad
	{\genF}_{\ol{i}}^2 = \frac{ {v} - {v}^{-1} }{{v} + {v}^{-1}} {\genF}_{i}^2, \\
&
{\genE}_{i} {\genE}_{\ol{j}} - {\genE}_{\ol{j}} {\genE}_{i}
	=  {\genF}_{i} {\genF}_{\ol{j}} - {\genF}_{\ol{j}} {\genF}_{i}
	= 0 ,  \quad \mbox{ for } |i - j| \ne 1,
\\
&
{\genE}_{i} {\genE}_{j} - {\genE}_{j} {\genE}_{i} = {\genF}_{i} {\genF}_{j} - {\genF}_{j} {\genF}_{i}
= {\genE}_{\ol{i}}{\genE}_{\ol{j}}  + {\genE}_{\ol{j}}  {\genE}_{\ol{i}}= {\genF}_{\ol{i}} {\genF}_{\ol{j}}  + {\genF}_{\ol{j}} {\genF}_{\ol{i}}
	= 0 \quad \mbox{ for }\quad |i-j|  > 1, 	
\\
& {\genE}_{i} {\genE}_{i+1} - {v} {\genE}_{i+1} {\genE}_{i}
	= {\genE}_{\ol{i}} {\genE}_{\ol{i+1}} + {v} {\genE}_{\ol{i+1}} {\genE}_{\ol{i}}, \qquad
  {\genE}_{i} {\genE}_{\ol{i+1}} - {v} {\genE}_{\ol{i+1}} {\genE}_{i}
	= {\genE}_{\ol{i}} {\genE}_{i+1} - {v} {\genE}_{i+1} {\genE}_{\ol{i}}, \\
& {\genF}_{i} {\genF}_{i+1} - {v} {\genF}_{i+1} {\genF}_{i}
	= - ({\genF}_{\ol{i}} {\genF}_{\ol{i+1}} + {v} {\genF}_{\ol{i+1}} {\genF}_{\ol{i}}), \qquad
  {\genF}_{i} {\genF}_{\ol{i+1}}  - {v} {\genF}_{\ol{i+1}}  {\genF}_{i}
	=  {\genF}_{\ol{i}} {\genF}_{i+1} - {v} {\genF}_{i+1} {\genF}_{\ol{i}} ;\\
({\rm QQ6}) \quad
& {\genE}_{i}^2 {\genE}_{j} - ( {v} + {v}^{-1} ) {\genE}_{i} {\genE}_{j} {\genE}_{i} + {\genE}_{j}  {\genE}_{i}^2 = 0, \qquad
	{\genF}_{i}^2 {\genF}_{j} - ( {v} + {v}^{-1} ) {\genF}_{i} {\genF}_{j} {\genF}_{i} + {\genF}_{j}  {\genF}_{i}^2 = 0, \\
& {\genE}_{i}^2 {\genE}_{\ol{j}} - ( {v} + {v}^{-1} ) {\genE}_{i} {\genE}_{\ol{j}} {\genE}_{i} + {\genE}_{\ol{j}}  {\genE}_{i}^2 = 0, \qquad
	{\genF}_{i}^2 {\genF}_{\ol{j}} - ( {v} + {v}^{-1} ) {\genF}_{i} {\genF}_{\ol{j}} {\genF}_{i} + {\genF}_{\ol{j}}  {\genF}_{i}^2 = 0,  \\
& \qquad \mbox{ where } \quad |i-j| = 1.
\end{align*}
\end{defn}

\begin{rem}\label{induction_N}
Observe that the relations in (QQ3) imply
\begin{align*}
{\genE}_{\ol{j}}
&= - {v} {\genK}_{j+1} {\genK}_{\ol{j+1}} {\genE}_{j} + {v}^{-1} {\genE}_{j} {\genK}_{j+1}  {\genK}_{\ol{j+1}} , \\
{\genF}_{\ol{j}}
&= {v}  {\genK}_{j+1}^{-1} {\genK}_{\ol{j+1}} {\genF}_{j} - {v}^{-1} {\genF}_{j} {\genK}_{j+1}^{-1}  {\genK}_{\ol{j+1}},
\end{align*}
and the third relation  in  (QQ4) implies
\begin{align*}
{\genK}_{\ol{j}}
&=
	{\genE}_{j} {\genF}_{\ol{j}}  {\genK}_{j+1}  - {\genF}_{\ol{j}} {\genE}_{j} {\genK}_{j+1}  +  {\genK}_{j}^{-1} {\genK}_{\ol{j+1}} {\genK}_{j+1}\\
&= {\genE}_{j}  {\genK}_{\ol{j+1}} {\genF}_{j}
	 - {v}^{-1} {\genE}_{j}  {\genF}_{j}  {\genK}_{\ol{j+1}}
		- {v}  {\genK}_{\ol{j+1}} {\genF}_{j}  {\genE}_{j}
		+  {\genF}_{j}  {\genK}_{\ol{j+1}} {\genE}_{j}
		+  {\genK}_{j}^{-1} {\genK}_{\ol{j+1}} {\genK}_{j+1} .
\end{align*}
Hence, given $\genK_{\bar n}$ and by induction on {$j$} in descending order  from {$n-1$} to $1$,
one can obtain all other odd generators in $\Uvqn$. In other words,
$\Uvqn$ can be generated by $\{ {\genE}_j, {\genF}_j, {\genK}_i^{\pm1}, {\genK}_{\ol{n}}\where  1\le j \le n-1, \  1 \le i \le n \}$; see \cite{GJKKK} (cf., \cite[Rem.~2.2]{DLZ}).
This fact shows that it suffices to use the even generators together with ${\genK}_{\ol{n}}$ to describe the multiplication formulas for the regular representation.
\end{rem}
The supergroup $\Uvqn$ and Hecke-Clifford algebra $\bs\sH^c_r$ enjoys a beautiful duality. This involves the queer $q$-Schur superalgebras introduced in the previous section.

{\it From now on, we assume the ground ring $R=\sZ(:=\ZZ[\up,\up^{-1}])$ or $R=\QQ(\up)$ and define the following standardised queer $\up$-Schur superalgebras:
\begin{equation}\label{v-Sch}
\qSchvsZ:=\mathcal Q_q^s(n,r;\mathcal{Z}),\qquad\qSchvsQ:=\mathcal Q_q^s(n,r;\QQ(\up))\;\;(q=\up^2).
\end{equation}}
\quad Following \cite{Ol},
there is  a Schur-Weyl type duality
between $\Uvqn$ and $\boldsymbol\sH_{r}^c$ 
 on the tensor space ${\bs V}(n|n)^{\otimes r}$, where
 ${\bs V}(n|n):=V(n|n)_{\mathbb Q(\up)}$ is the superspace of both even and odd dimension $n$.
Meanwhile by \cite[Cor.~6.3]{DW2}, the tensor space $\bs V(n|n)^{\otimes r}$
can be identified with $\bigoplus_{\mu\in\Lambda(n,r)}x_{\mu}\boldsymbol{\sH}_{r}^c$ as $\boldsymbol\sH_{r}^c$-supermdoules.
 The following result is taken from \cite[Th.~5.3]{Ol} (with $\mathbb C$ replaced by $\QQ$) and  \cite[Th.~9.2]{DW1}.

\begin{prop}\label{DW2}
There is a superalgebra epimorphisms\footnote{The homomorphism considered in \cite[Prop.~6.4]{DW2} is a composition of $\Phi_r$, base change to $\mathbb C(\up)$, with the isomorphism $\mathcal Q^s_\up(n,r;\mathbb C(\up))\cong \mathcal Q_\up(n,r;\mathbb C(\up))$ given by Corollary \ref{super_al_iso}.} ${\Phi}_r:\Uvqn \to\text{\rm End}^s_{\bs{\sH}^c_{r}}\big(\bs V(n|n)^{\otimes r}\big)$ and
${\Psi}_r:\bs{\sH}_r^c \to\text{\rm End}^s_{\Uvqn}\big(\bs V(n|n)^{\otimes r}\big)$. Moreover,  ${\ker}({\Phi}_r)$ is generated by the elements
\begin{equation}\label{SvRelations}
{\genK}_1\cdots {\genK}_n-\up^r,\;\; ({\genK}_i-1)({\genK}_i-\up)\cdots({\genK}_i-\up^{r}), \;\;{\genK}_{\bar i} ({\genK}_i-\up)({\genK}_i-\up^2)\cdots({\genK}_i-\up^{r}),
\end{equation}
for all $1\leq i\leq n$.
\end{prop}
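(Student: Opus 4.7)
The plan is to prove the proposition in three stages: first construct the two commuting actions and thereby obtain the superalgebra maps $\Phi_r$ and $\Psi_r$; second establish surjectivity of $\Phi_r$ (the double centralizer property); and third pin down $\ker(\Phi_r)$ explicitly.

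For the first stage, I would begin by exhibiting the natural vector representation of $\Uvqn$ on $\bs V(n|n)$ via Olshanski's explicit formulas in terms of an even basis $\{v_i\}_{i=1}^n$ and an odd basis $\{v_{\bar i}\}_{i=1}^n$, and then extend to $\bs V(n|n)^{\otimes r}$ via iterated coproduct. Dually, I would let $\bs{\sH}^c_r$ act by place operators: each $T_i$ through the quantum super $R$-matrix on the $i$-th and $(i+1)$-st factors, and each $c_i$ through the Clifford generator on the $i$-th factor with appropriate parity signs. The defining relations \eqref{hc_generators} then reduce to a two-factor check, and the super-commutativity of the two actions reduces to verifying that $\Phi_r(\Uvqn)$ centralizes each $T_i$ and each $c_i$, again an essentially two-factor computation on $\bs V(n|n)\otimes \bs V(n|n)$.

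For the second stage, the surjectivity of $\Phi_r$ is Olshanski's double centralizer theorem \cite[Th.~5.3]{Ol}, proved originally over $\mathbb C$. Since the centralizer algebra and the image are described by flat conditions and the relevant structure constants already live in $\QQ(\up)$, the statement descends from $\mathbb C(\up)$ via base change. One then identifies $\End^s_{\bs{\sH}^c_r}(\bs V(n|n)^{\otimes r})$ with $\qSchvsQ$ via \cite[Cor.~6.3]{DW2} together with Corollary~\ref{super_al_iso}, and the surjectivity of $\Psi_r$ follows dually from the Schur--Weyl argument on the Hecke--Clifford side.

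For the third stage, the containment of \eqref{SvRelations} in $\ker(\Phi_r)$ is routine: on a weight-$\la$ vector with $\la\in\CMN(n,r)$, the element $\genK_i$ acts by $\up^{\la_i}$ with $\la_i\in[0,r]$ and $\sum_i\la_i=r$, which yields the first two families of relations; the third family follows from observing that $\genK_{\bar i}$ exchanges even and odd components in the $i$-th coordinate (up to scalars) and therefore annihilates any weight space with $\la_i=0$. The reverse containment is the main obstacle and constitutes the content of \cite[Th.~9.2]{DW1}: one must show that the listed relations already suffice to present $\qSchvsQ$ as a quotient of $\Uvqn$. The hard part is the proof of that presentation theorem, which relies on a PBW-type basis of $\Uvqn$ combined with a monomial-by-monomial matching against the natural basis of $\qSchvsQ$ furnished by Proposition~\ref{DW-basis}.
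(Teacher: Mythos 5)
Your proposal is correct and follows essentially the same route as the paper, which offers no proof of its own but simply attributes the duality to \cite[Th.~5.3]{Ol} and the kernel description to \cite[Th.~9.2]{DW1}---exactly the two references your stages two and three defer to. The extra detail you supply (the two-factor commutation checks and the weight-space verification that the elements of \eqref{SvRelations} annihilate $\bs V(n|n)^{\otimes r}$) is a reasonable unpacking of those citations rather than a departure from the paper's treatment.
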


Note that $\qSchvsQ\cong \text{\rm End}^s_{\bs{\sH}^c_{r}}\big(\bs V(n|n)^{\otimes r}\big)$. We identify the two in the sequel.
A more structure-preserving $\QQ(\up)$-superalgebra homomorphism $\bs\xi_{n,r}$ from $\Uvqn$ to $\qSchvsQ$ will be constructed in \S6. For this purpose, we need a process of standardisation of the natural basis and fundamental multiplication formulas in $\qSchvsZ$.

We first standardise the defining basis for the generic Hecke-Clifford superalgebra $\sH_r^c$ over $\sZ$ and $\bs\sH_r^c$ over $\QQ(\up)$.
For any $i\in [1,r-1]$, set $\mathcal{T}_i=\up^{-1}T_i$ as usual, then the new generators $\mathcal{T}_i \in \sH_r^c$ satisfy the following relations (cf. \eqref{hc_generators}):
\begin{equation}\label{Hecke}
	\aligned &(\mathcal{T}_i - \up )(\mathcal{T}_i + \up^{-1}) = 0, \quad \\
	&\mathcal{T}_i\mathcal{T}_j =\mathcal{T}_j\mathcal{T}_i\,\;
	 (|i-j| >1),\quad\\
	&\mathcal{T}_i\mathcal{T}_{i+1}\mathcal{T}_i =\mathcal{T}_{i+1}\mathcal{T}_{i} \mathcal{T}_{i+1}\; (i\neq r),\endaligned
	\quad
	\mathcal{T}_i c_j=\begin{cases}c_j \mathcal{T}_i\,&\text{ if }j\neq i, i+1;\\
c_{i+1} \mathcal{T}_i, &\text{ if }j=i;\\
 c_i \mathcal{T}_i - (\up-\up^{-1})(c_i - c_{i+1}),&\text{ if }j=i+1.
 \end{cases}
\end{equation}

We now standardise the natural basis $\{\Phi_\Ad\}_\Ad$ defined in Proposition \ref{prop_PhiAPhiB}. For any subset $D\subseteq \mathfrak{S}_r$, let
$T_D=\sum_{w\in D}T_w$.

Recall from the definition of $T_\Ad$ in \eqref{eq cA}. It is obtained from the even part
$$T_A:=T_{\mathfrak{S}_{\ro(A)} d_A \mathfrak{S}_{\ro(A)}}=x_{\ro(A)}T_{d_A}\Sigma_A$$
by inserting the $c$-element $c_\Ad$ between $d_A$ and $\Sigma_A$, where $\Sigma_A=T _{ \D_{{\nu}_{A}} \cap \fS_{\co(A)}} $ denotes the ``tail term''.
Motivated by the standard basis element $\xi_A$ for $q$-Schur algebra $S_q(n,r)$ in \cite[(9.3.1)]{DDPW}, we may standardise the even part $T_A$,  by setting
\begin{equation}\label{mcTA-TA}
\mathcal{T}_A=\up^{-\ell(d_A^+)}T_A,
\end{equation}
 where
 $d^+_A$ is the longest element in ${\mathfrak{S}_{\ro(A)} d_A \mathfrak{S}_{\ro(A)}}$.

We need to define the standardisation of $c_\Ad$. Recall
the $c$-elements $c_{{q},i,j} $  and $c'_{{q},i,j}$ in \cite[Section 4]{DW2}:
\begin{equation*}
\begin{aligned}
	c_{{q},i,j} &= {q}^{j-i}c_i  + \cdots + {q} c_{j-1} + c_j=\up^{j-i}({\up}^{j-i}c_i  + \cdots + {\up^{-(j-i)+2}} c_{j-1} + {\up}^{-(j -i)}c_j),\\
c'_{{q},i,j} &= c_i + {q} c_{i+1} + \cdots + {q}^{j-i}c_j=\up^{j-i}({\up}^{-(j-i)}c_i  + \cdots + {\up^{(j-i)-2}} c_{j-1} + {\up}^{(j-i)}c_j).
\end{aligned}
\end{equation*}
Since $q=\up^2$, we may introduce the standard version $\normc_{{\up},i,j}$ and $\normc'_{{\up},i,j}$ such that
\begin{equation*}
	c_{{q},i,j}=\up^{j-i}\normc_{{\up},i,j}, \qquad c'_{{q},i,j}=\up^{j-i}\normc'_{{\up},i,j}.
\end{equation*}
In other words, we have
\begin{equation}\label{normc}
\begin{aligned}
\normc_{{\up},i,j}&={\up}^{j-i}c_i  + \cdots + {\up^{-(j-i)+2}} c_{j-1} + {\up}^{-(j -i)}c_j,\\
\normc'_{{\up},i,j}&={\up}^{-(j-i)}c_i  + \cdots + {\up^{(j-i)-2}} c_{j-1} + {\up}^{(j-i)}c_j,
\end{aligned}
\end{equation}
Thus, for any $\Ad=(A^{\bar 0}|A^{\bar 1})=(a^{\bar 0}_{i,j}|a^{\bar 1}_{i,j})\in \MNZ(n,r)$, substituting every $c_{q,i,j}$ in $c_{\Ad}$ by $\up^{(j-i)}\normc_{\up,i,j}$, we obtain $c_{\Ad}=\up^{\sum_{i,j}a^{\bar1}_{i,j}a^{\bar0}_{i,j}}\normc_{\Ad}.$
Putting $A^{\ol{0}}* A^{\ol{1}}:=\sum_{i,j}a^{\bar0}_{i,j}a^{\bar1}_{i,j}$ (a ``dot product''), we obtain the standard form of $c_\Ad$:
\begin{equation}\label{CA-MCA}
\normc_{\Ad}=\up^{-A^{\ol{0}}* A^{\ol{1}}}c_\Ad.
\end{equation}

Combining the two standardisations \eqref{mcTA-TA} and \eqref{CA-MCA} gives the following standard form of $T_\Ad$:
  $$\mathcal{T}_{\Ad}=\up^{-\ell(d^+_A)-{A^{\ol{0}}* A^{\ol{1}}}}T_\Ad.$$
In particular, if $A=(\mu|O)$ for some $\mu\in\Lambda(n,r)$, then $\mathcal T_{(\mu|O)}=\up^{-\ell(w_{0,\mu})}x_\mu$, where $w_{0,\mu}$ is the longest element in $\fS_\mu$.

\begin{defn}\label{def[A]} For $\Ad=(A^{\bar 0}|A^{\bar 1})=(a^{\bar 0}_{i,j}|a^{\bar 1}_{i,j})\in \MNZ(n,r)$, define
$[\Ad]\in \SQqnrR$ such that
\begin{equation}
[\Ad](\mathcal{T}_{(\mu|O)}h)=\delta_{\co(A),\mu}(-1)^{\wp(\Ad)\wp(h)}\mathcal{T}_{\Ad}h, \forall h\in \HCR.
\end{equation}
\end{defn}

\begin{prop}\label{relation-[A]-PhiA}
 For $\Ad=(a^{\bar 0}_{i,j}|a^{\bar 1}_{i,j})\in \MNZ(n,r)$ with base $A=(a_{i,j})$, let
 \begin{equation}\label{norm-fl}
\Norm(\Ad):=\sum_{i\geq k,j<l}a_{i,j}a_{k,l}+\sum_{i,j}a^{\bar 1}_{i,j}a^{\bar0}_{i,j}.
\end{equation}
Then
 $[\Ad]=\up^{-\Norm(\Ad)}\Phi_\Ad $.\underline{}
\end{prop}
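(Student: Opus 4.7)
The plan is a direct comparison: both $[\Ad]$ and $\Phi_\Ad$ are determined by their value on the single homogeneous component $x_{\co(A)}\sH_r^c$ (they annihilate $x_\mu\sH_r^c$ for $\mu\neq\co(A)$), so it suffices to evaluate them on a typical element $x_{\co(A)}h$, match the scalars, and then the desired equality propagates to the whole $M$ by $\sH_r^c$-(super)linearity.

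First I would unwind the definitions. By Definition \ref{def[A]},
\[
[\Ad](\mathcal{T}_{(\co(A)|O)}\,h)=(-1)^{\wp(\Ad)\wp(h)}\,\mathcal{T}_{\Ad}\,h .
\]
Substituting $\mathcal{T}_{(\co(A)|O)}=\up^{-\ell(w_{0,\co(A)})}x_{\co(A)}$ and
$\mathcal{T}_{\Ad}=\up^{-\ell(d^+_A)-A^{\bar 0}\ast A^{\bar 1}}T_{\Ad}$, and comparing with
$\Phi_\Ad(x_{\co(A)}h)=(-1)^{\wp(\Ad)\wp(h)}T_{\Ad}h$, one finds
\[
[\Ad]=\up^{\ell(w_{0,\co(A)})-\ell(d^+_A)-A^{\bar 0}\ast A^{\bar 1}}\,\Phi_\Ad .
\]
Thus the statement reduces to the purely combinatorial identity
\[
\ell(d^+_A)-\ell(w_{0,\co(A)})=\sum_{i\geq k,\;j<l}a_{i,j}a_{k,l},
\]
since by definition $A^{\bar 0}\ast A^{\bar 1}=\sum_{i,j}a^{\bar 0}_{i,j}a^{\bar 1}_{i,j}$ exactly supplies the second summand of $\Norm(\Ad)$.

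To verify the length identity I would use the standard relation (e.g.\ \cite{DDPW})
\[
\ell(d^+_A)=\ell(w_{0,\ro(A)})+\ell(d_A)+\ell(w_{0,\co(A)})-\ell(w_{0,\nu_A}),
\]
where $\nu_A$ is the composition whose parts are the $a_{i,j}$, together with the BLM formula $\ell(d_A)=\sum_{i<k,\,j>l}a_{i,j}a_{k,l}$ and the binomial expansions $\ell(w_{0,\ro(A)})=\sum_i\binom{\sum_j a_{i,j}}{2}$ and $\ell(w_{0,\nu_A})=\sum_{i,j}\binom{a_{i,j}}{2}$. The $\binom{\cdot}{2}$ contributions cancel and the cross terms in $\ell(w_{0,\ro(A)})$ produce $\sum_i\sum_{j<l}a_{i,j}a_{i,l}$, which accounts for the $i=k$ part of the target sum; the contribution from $\ell(d_A)$, after relabeling $(i,j)\leftrightarrow(k,l)$, is precisely the $i>k$ part. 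Combining both gives the required $\sum_{i\geq k,\,j<l}a_{i,j}a_{k,l}$.

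Combining the scalar $\up^{\ell(w_{0,\co(A)})-\ell(d^+_A)-A^{\bar 0}\ast A^{\bar 1}}=\up^{-\Norm(\Ad)}$ with the expression for $[\Ad]$ in terms of $\Phi_\Ad$ yields the proposition. I do not anticipate a serious obstacle: the odd (Clifford) part of $\Norm(\Ad)$ is handled directly by the normalization \eqref{CA-MCA}, and the even part is a routine length count in the Hecke algebra. The only delicate point is to keep track of signs in the super structure, but since the scalar factor involves no sign and commutes with the twist in \eqref{phi-dag}, the identity $[\Ad]=\up^{-\Norm(\Ad)}\Phi_\Ad$ of $R$-linear maps extends automatically to an identity of $R$-superhomomorphisms.
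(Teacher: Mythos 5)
Your proof is correct and follows the same skeleton as the paper's: unwind Definition \ref{def[A]} and \eqref{PhiA} to get $[\Ad]=\up^{\ell(w_{0,\co(A)})-\ell(d_A^+)-A^{\bar 0}\ast A^{\bar 1}}\Phi_\Ad$, then identify $\ell(d_A^+)-\ell(w_{0,\co(A)})$ with $\sum_{i\geq k,\,j<l}a_{i,j}a_{k,l}$. The only difference is that the paper disposes of the length identity by citing \cite[(13.2.5), Lem.~13.10]{DDPW}, whereas you rederive it from the double-coset length formula $\ell(d_A^+)=\ell(w_{0,\ro(A)})+\ell(d_A)+\ell(w_{0,\co(A)})-\ell(w_{0,\nu_A})$, the BLM formula $\ell(d_A)=\sum_{i<k,\,j>l}a_{i,j}a_{k,l}$, and binomial bookkeeping; this makes your argument self-contained but adds nothing conceptually new. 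Your closing remark about the even scalar commuting with the $\dagger$-twist is a sensible sanity check, though it is automatic since $\up^{-\Norm(\Ad)}$ lies in the base ring.
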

See \cite[(13.2.5)]{DDPW} for a geometric interpretation of the number
$\sum_{i\geq k,j<l}a_{i,j}a_{k,l}$.

\begin{proof} By definition (see Proposition \ref{prop_PhiAPhiB}), it is clear (cf. \cite[(9.3.1)]{DDPW}) that
$$[\Ad]=\up^{-\ell(d^+_A)+\ell(w_{0,\co(A)})-A^{\ol{0}}* A^{\ol{1}}}\Phi_\Ad.$$
On the other hand, \cite[(13.2.5), Lem.~13.10]{DDPW} implies
 \begin{equation}\label{hatell}
 \widehat\ell(A):=\ell(d^+_A)-\ell(w_{0,\co(A)})=\sum_{i\geq k,j<l}a_{i,j}a_{k,l}=\partial(\Ad)-A^{\ol{0}}* A^{\ol{1}},
 \end{equation}
proving the assertion.
\end{proof}

By Propositions \ref{relation-[A]-PhiA} and \ref{prop_PhiAPhiB}, we have immediately the following.
\begin{cor}\label{sdbasis}The set
$\{ [\Ad] \where  \Ad \in \MNZ(n,r)\}$ forms a basis for the $\sZ$-superalgebra $\mathcal Q_\up^s(n,r)$.
\end{cor}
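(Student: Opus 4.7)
The plan is to deduce the corollary immediately from the combination of Propositions \ref{prop_PhiAPhiB} and \ref{relation-[A]-PhiA}. Proposition \ref{prop_PhiAPhiB} already establishes that $\{\Phi_\Ad \mid \Ad \in \MNZ(n,r)\}$ is a free $R$-basis of the standardised queer $q$-Schur superalgebra $\SQqnrR$ for any admissible $R$, and in particular for $R = \sZ = \ZZ[\up,\up^{-1}]$. Proposition \ref{relation-[A]-PhiA} provides the precise scalar relation $[\Ad] = \up^{-\Norm(\Ad)}\Phi_\Ad$, where $\Norm(\Ad) \in \ZZ$ is the integer defined in \eqref{norm-fl}.

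The execution then reduces to an invertibility observation. Since $\up$ is a unit in $\sZ$, the scalar $\up^{-\Norm(\Ad)}$ lies in $\sZ^\times$ for every $\Ad \in \MNZ(n,r)$. Thus, when both families are indexed by the common set $\MNZ(n,r)$, the change-of-basis ``matrix'' between $\{\Phi_\Ad\}$ and $\{[\Ad]\}$ is diagonal with all diagonal entries invertible in $\sZ$. Consequently the two families span the same $\sZ$-submodule of $\qSchvsZ$, and $\sZ$-linear independence of one is equivalent to $\sZ$-linear independence of the other. Applying this to the basis supplied by Proposition \ref{prop_PhiAPhiB} yields at once that $\{[\Ad] \mid \Ad \in \MNZ(n,r)\}$ is itself a free $\sZ$-basis of $\qSchvsZ$.

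No genuine obstacle is anticipated: the corollary is formally a diagonal rescaling of a previously established basis by units in $\sZ$. The only nontrivial bookkeeping, namely the identity $\widehat\ell(A) = \partial(\Ad) - A^{\ol{0}} * A^{\ol{1}}$ recorded in \eqref{hatell}, has already been absorbed into the statement of Proposition \ref{relation-[A]-PhiA}. Hence the proof really does collapse to a one-line invocation of the two prior results.
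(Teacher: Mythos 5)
Your proof is correct and matches the paper exactly: the corollary is stated in the paper as an immediate consequence of Propositions \ref{relation-[A]-PhiA} and \ref{prop_PhiAPhiB}, and your diagonal-rescaling-by-units observation is precisely the content of that "immediately."
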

This basis is called the {\it standard (integral) basis} for $\mathcal Q_\up^s(n,r)$. In the next section, we will turn the fundamental multiplication formulas into standard multiplication formulas, especially for those discussed in Remark \ref{Md}. The following result will simplify  the computation for the number $\partial(\Md)-\partial(\Ad)$, where $\Md$ is the matrix mentioned in Remark \ref{Md} with base  $A^+_{h,k}$ or $A^-_{h,k}$.

For any $A\in M_n(\NN)$, let
\begin{equation}\label{partialrow}
\BK(h,k):=\BK(h,k)(A)=\sum_{j>k}a_{h,j},\qquad
\AK(h,k):=\AK(h,k)(A)=\sum_{j<k}a_{h,j}.
\end{equation}
\begin{lem}\label{hatell1}
Maintain the notation above. We have
$$
\widehat\ell(A^+_{h,k})-\widehat\ell(A)=\AK(h,k)-\BK(h+1,k),\quad
\widehat\ell(A^-_{h,k})-\widehat\ell(A)=-\AK(h,k)+\BK(h+1,k).$$
\end{lem}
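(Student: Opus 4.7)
The plan is to exploit the explicit quadratic expansion $\widehat\ell(A) = \sum_{i \geq p,\, j < q} a_{i,j}\,a_{p,q}$ recorded in \eqref{hatell}, and to compute the change directly under the rank-one perturbations $A \rightsquigarrow A^+_{h,k}$ and $A \rightsquigarrow A^-_{h,k}$. Since $\widehat\ell$ is a bilinear form in the entries and the perturbation $A^+_{h,k}-A = E_{h,k}-E_{h+1,k}$ is a signed sum of two matrix units sitting in the same column $k$, the entire calculation collapses to two linear contributions plus a quadratic cross-term that vanishes automatically.

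Concretely, write $b_{i,j} = a_{i,j} + \Delta_{i,j}$ with $\Delta_{i,j} = \delta_{(i,j),(h,k)} - \delta_{(i,j),(h+1,k)}$. Expanding $\widehat\ell(A^+_{h,k}) - \widehat\ell(A)$ bilinearly produces three pieces:
$$\sum_{i \geq p,\, j < q} a_{i,j}\Delta_{p,q},\qquad \sum_{i \geq p,\, j < q} \Delta_{i,j}\,a_{p,q},\qquad \sum_{i \geq p,\, j < q} \Delta_{i,j}\Delta_{p,q}.$$
Pinning $(p,q)$ to $(h,k)$ and $(h+1,k)$ in the first sum and subtracting gives
$\sum_{i \geq h,\, j < k} a_{i,j} - \sum_{i \geq h+1,\, j < k} a_{i,j} = \sum_{j < k} a_{h,j} = \AK(h,k)$.
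Symmetrically, pinning $(i,j)$ in the second sum yields
$\sum_{p \leq h,\, q > k} a_{p,q} - \sum_{p \leq h+1,\, q > k} a_{p,q} = -\sum_{q > k} a_{h+1,q} = -\BK(h+1,k)$.
The quadratic piece vanishes because both $(i,j)$ and $(p,q)$ are forced into $\{(h,k),(h+1,k)\}$, which have common column $k$, making the required inequality $j<q$ impossible in all four cases.

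Assembling these contributions proves the first identity. For the second, observe that $A^-_{h,k}$ arises from $A$ by exactly the opposite perturbation $-\Delta$, so the two linear pieces flip sign while the quadratic piece remains zero, producing $-\AK(h,k) + \BK(h+1,k)$ as claimed. There is no genuine obstacle here: the argument is pure bookkeeping of Kronecker deltas against the restriction $i \geq p$, $j < q$, and the fact that the perturbation is confined to a single column makes even the potentially awkward quadratic term trivially drop out.
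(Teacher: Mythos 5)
Your proof is correct and mirrors the paper's argument: both expand the quadratic form $\widehat\ell$ around the rank-two perturbation $E_{h,k}-E_{h+1,k}$ and collect the resulting linear pieces. You are slightly more explicit than the paper in writing out the $\Delta$-notation and in observing that the quadratic cross-term vanishes (since both perturbed entries sit in column $k$, the constraint $j<q$ kills it), but the bookkeeping is identical.
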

\begin{proof}Let $A'=A^+_{h,k}=A+E_{h,k}-E_{h+1,k}=(a_{i,j}')$. Then
$$\aligned
\widehat\ell(A^+_{h,k})-\widehat\ell(A)&=\sum_{i\geq k,j<l}a'_{i,j}a'_{k,l}-\sum_{i\geq k,j<l}a_{i,j}a_{k,l}\\
&=\sum_{i\geq h,j<k}a_{i,j}-\sum_{i\geq h+1,j<k}a_{i,j}+\sum_{i\leq h,j>k}a_{i,j}-\sum_{i\leq h+1,j>k}a_{i,j}=\AK(h,k)-\BK(h+1,k),
\endaligned$$
as desired. The proof for the other case is similar.
\end{proof}

\section{Standard multiplication formulas in $\mathcal Q^s_\up(n,r)$ }\label{SMFQS}
In this section, we standardise the fundamental multiplication formulas for $\Phi_\Ad$ arising from \cite[\S\S5-7]{DGLW}  via Proposition \ref{prop_PhiAPhiB}.  In other words, we derive multiplication formulas in $\qSchvsZ$ relative to the standard basis $\{[\Ad]\}_{\Ad}$ introduced in the previous section by modifying those in \cite{DGLW}.

The modification process is in two steps. First, we use Proposition \ref{prop_PhiAPhiB} to turn a formula of the form $\phi_\Bd\cdot\phi_\Ad= \sum_{\Md} {  \gamma^{\Md}_{\Bd,\Ad}} \phi_\Md$ in the natural basis $\{\phi_\Ad\}_{\Ad}$ to a formula of the form $\Phi_\Bd\cdot\Phi_\Ad= \sum_{\Md}{  \scc_{\Bd,\Ad,\Md} }\Phi_\Md$ in the superhomomorphism basis $\{\Phi_\Ad\}_{\Ad}$, the natural basis for $\qSchvsZ$. Second, compute the number
$\partial(\Md)-\partial(\Ad)-\partial(\Bd)$ in
the structure constants ${ \scp_{\Bd,\Ad,\Md}:=\up^{\partial(\Md)-\partial(\Ad)-\partial(\Bd)}\scc_{\Bd,\Ad,\Md}}$ relative to the standard basis $\{[\Ad]\}_{\Ad}$. (In other words, the first step determines the sign and the second step determines the power of $\up$.)
The standard multiplication formulas have the form
\begin{equation}\label{SMF}
[\Bd]\cdot[\Ad]=\sum_{\Md\in\MNZ(n,r)}{ \scp_{\Bd,\Ad,\Md}}[\Md].
\end{equation}

{\it As usual, for notational simplicity, we make the following convention: if a term ${ \scp_{\Bd,\Ad,\Md}}[\Md]$ with $\Md\notin \MNZ(n,r)$ appears in a multiplication formula in $\qSchvsZ$, then we regard the entire term as 0.} 

Keeping the notations in \cite{DGLW}, we also regard any $\lambda \in \Lambda(n,r)$
 as a diagonal matrix
 when writing $\lambda+B:={\diag}(\lambda_1,\ldots,\lambda_n)+B$, for $B\in\MN(n)$.

We first observe that,  for any $a\in\mathbb{Z}_{\geq0}$ and $q=\up^2$,
\begin{equation}\label{QN2}
[\![a+1]\!]_{q,q^2}:=[\![a+1]\!]_{q}-[\![a+1]\!]_{q^2}=-(\up-\up^{-1})\up^{2a+1}\left[a+1\atop 2\right].
\end{equation}
This polynomial occurs in some fundamental multiplication formulas in \cite{DGLW}.

For $\lambda,\mu\in\Lambda(n,r)$, let $\Bd$ in \eqref{SMF} be one of the following even matrices
 \begin{equation}\label{EVEN}
\Dd_\mu:=(\mu|O),\;\;
 \Ed_{h,\lambda}:=(\lambda + E_{h, h+1}-E_{h+1, h+1}|O),\;\; \Fd_{h,\lambda}:=(\lambda-E_{h,h} + E_{h+1,h} |O).
 \end{equation}
Following the two step modification process, we obtain three standard multiplication formulas in this even case.
For notational simplicity, we also introduce the following, using \eqref{partialrow},
\begin{equation}\label{f_h/g_h}
  f_h(A,k)=\BK(h,k)({ A})-\BK(h+1,k)({ A});\qquad g_h(A,k)=-\AK(h,k)({ A})+\AK(h+1,k)({ A}).
\end{equation}

\begin{lem}\label{phiupper-even-norm}
Let {$h \in [1,n-1]$} and {$\Ad =  ({\SEE{a}_{i,j}} | {\SOE{a}_{i,j}})   \in \MNZ(n,r)$}.
Then, for $\lambda,\mu\in\Lambda(n,r)$, the following multiplication formulas hold in $\qSchvsZ$:
\begin{align*}
(1)\;\;\;\;[\Dd_\mu] [\Ad] =& \delta_{\mu,\ro(A)}[\Ad],\qquad
	 [\Ad] [\Dd_\mu]  =\delta_{\mu,\co(A)}[\Ad].\\ 
(2)\;\; {[\Ed_{h,\lambda}][\Ad]} = &\delta_{\lambda,\ro(A)}\sum_{k=1}^n \up^{f_h(A,k)}\Big\{\up^{\SOE{a}_{h+1,k}}[\SEE{a}_{h,k} + 1]
[\SE{A} + E_{h,k}  - E_{h+1, k} | \SO{A} ]\\
	&+\up^{-a^{\bar0}_{h+1,k}}[\SE{A}  | \SO{A} + E_{h,k} - E_{h+1, k}  ]\\
&-(\up-\up^{-1})\up^{-a^{\bar0}_{h+1,k}}\left[{a_{h, k} +1}\atop 2\right][\SE{A} + 2E_{h,k} | \SO{A}  -E_{h,k} - E_{h+1,k} ]\Big\}.\\ 
(3)\;\;{[\Fd_{h,\lambda}]  [\Ad] }=&\delta_{\lambda,\ro(A)}\sum_{k=1}^n \up^{g_h(A,k)}\Big\{\up^{-a^{\bar1}_{h,k}}[\SEE{a}_{h+1, k} +1]
	[\SE{A} - E_{h,k} + E_{h+1, k} | \SO{A} ]\\
	 &+{\up}^{ a^{\bar0}_{h, k} }
	[\SE{A}  |\SO{A} - E_{h,k} + E_{h+1,k}]\\
&-(\up-\up^{-1}){\up}^{ a^{\bar0}_{h, k} }\left[{a_{h+1,k}+1}\atop 2\right]
	[\SE{A} + 2E_{h+1,k} | \SO{A}  - E_{h,k} - E_{h+1,k}]
\Big\}.
\end{align*}
\end{lem}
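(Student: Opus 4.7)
The plan is to execute the two-step modification process described at the start of Section \ref{SMFQS}. Starting from the fundamental multiplication formulas for $\phi_{\Ed_{h,\lambda}}\phi_\Ad$ and $\phi_{\Fd_{h,\lambda}}\phi_\Ad$ established in \cite[Th.~5.3]{DGLW}, I would first pass to the superhomomorphism basis $\{\Phi_\Ad\}$ using the sign rule \eqref{Phi_structure} in Proposition \ref{prop_PhiAPhiB}, and then to the standard basis $\{[\Ad]\}$ using the rescaling $[\Ad]=\up^{-\partial(\Ad)}\Phi_\Ad$ from Proposition \ref{relation-[A]-PhiA}.

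Formula (1) is essentially by inspection: $\phi_{\Dd_\mu}$ acts as the identity on the summand $x_\mu\HCR$ and vanishes elsewhere, so $\phi_{\Dd_\mu}\phi_\Ad=\delta_{\mu,\ro(A)}\phi_\Ad$ and symmetrically for right multiplication. Since $\wp(\Dd_\mu)=0$ and $\partial(\Dd_\mu)=0$, both the sign rule and the rescaling leave the identity unchanged.

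For (2) and (3), the matrix $\Bd\in\{\Ed_{h,\lambda},\Fd_{h,\lambda}\}$ has zero odd part, so $\wp(\Bd)=0$, and by \eqref{Phi_structure} the $\Phi$-basis structure constants coincide with those in the $\phi$-basis. The rescaling then multiplies each term by $\up^{\partial(\Md)-\partial(\Bd)-\partial(\Ad)}$. I split this exponent into the geometric piece $\widehat\ell(M)-\widehat\ell(A)-\widehat\ell(B)$ and the Clifford piece $M^{\bar 0}*M^{\bar 1}-A^{\bar 0}*A^{\bar 1}$, using $B^{\bar 0}*B^{\bar 1}=0$. All three $\Md$-terms in (2) have the same base $A^+_{h,k}$, so Lemma \ref{hatell1} produces the common geometric shift $\AK(h,k)-\BK(h+1,k)-\widehat\ell(B)$; a direct computation of $\widehat\ell(B)$ for $\Bd=\Ed_{h,\lambda}$ yields a constant depending only on $\lambda$ and $h$, and combining with the row identity $\AK(h,k)+a_{h,k}+\BK(h,k)=\lambda_h$ reassembles the geometric shift into $f_h(A,k)$. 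The $\Fd$-case is entirely parallel, with $A^+_{h,k}$ replaced by $A^-_{h,k}$ and $f_h$ by $g_h$.

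The remaining work is term-by-term bookkeeping: for each of the three $\Md$-types (the even $1$-up/down, the odd $1$-up/down, and the mixed ``$2$-up/down''), compute the Clifford shift $M^{\bar 0}*M^{\bar 1}-A^{\bar 0}*A^{\bar 1}$ and combine it with the original DGLW structure constant to recover the coefficient stated. In the mixed term one must also invoke identity \eqref{QN2} to rewrite the $[\![a_{h,k}+1]\!]_{q,q^2}$ factor appearing in the DGLW formula as $-(\up-\up^{-1})\up^{2a_{h,k}+1}\left[{a_{h,k}+1}\atop 2\right]$; the $\up^{2a_{h,k}+1}$ prefactor then combines with the Clifford shift to produce the displayed clean exponent. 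The main obstacle is precisely this exponent bookkeeping — verifying that all three $\Md$-classes share the same overall factor $\up^{f_h(A,k)}$ (resp.\ $\up^{g_h(A,k)}$) after rewriting, and simultaneously tracking the parity constraints $a^{\bar 1}_{h,k}\in\{0,1\}$ that determine when each term actually contributes.
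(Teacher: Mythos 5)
Your proposal is correct and follows essentially the same route as the paper's proof: both pass to the $\Phi$-basis via Proposition~\ref{prop_PhiAPhiB} (noting the sign drops because $\Bd$ is even), rescale via Proposition~\ref{relation-[A]-PhiA}, split $\partial(\Md)-\partial(\Ad)-\partial(\Bd)$ into the geometric shift from Lemma~\ref{hatell1} (common to all three $\Md$ since they share base $A^{\pm}_{h,k}$) plus per-term Clifford shifts $M^{\ol0}*M^{\ol1}-A^{\ol0}*A^{\ol1}$, compute $\partial(\Ed_{h,\lambda})=\lambda_h$ and use the row identity $\AK(h,k)+a_{h,k}+\BK(h,k)=\lambda_h$ to assemble $f_h(A,k)$, and invoke \eqref{QN2} for the mixed term. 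The only (inessential) organizational difference is that the paper carries out the exponent computation term-by-term in one pass rather than explicitly isolating a ``common geometric shift,'' but the bookkeeping is identical.
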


\begin{proof} In all three cases, the even parity of $\Bd$ implies that the sign in \eqref{Phi_structure} can be dropped.
Thus, the formula in (1) follows easily from \cite[Th.~5.3(1)]{DGLW}.

For (2),
by \cite[Th.~5.3(2)]{DGLW} and Proposition \ref{prop_PhiAPhiB}, we have for $\la=\ro(A)$
\begin{equation}\label{DGLW-thm-5.3-2}
\begin{aligned}
 \Phi_{\Ed_{h,\lambda}}  \Phi_\Ad &=\sum_{k=1}^n \Big\{
	{q}^{\BK(h,k) + \SOE{a}_{h+1,k}}  \STEP{ \SEE{a}_{h,k} + 1}
		\Phi_{(\SE{A} + E_{h,k}  - E_{h+1, k} | \SO{A} )}\\
	&\quad+{q}^{\BK(h,k)}  \Phi_{(\SE{A}  | \SO{A} + E_{h,k} - E_{h+1, k}  )}  +
	{q}^{\BK(h,k)-1} \STEPPD{a_{h,k}+1}
	\Phi_{(\SE{A} + 2E_{h,k} | \SO{A}  -E_{h,k} - E_{h+1,k} )}\Big
\}.
\end{aligned}
\end{equation}
Thus, in this case, the matrices $\Md$ in \eqref{SMF} have the form
$$\Md=\begin{cases}
(\SE{A} + E_{h,k}  - E_{h+1, k} | \SO{A} ),&\text{ if }a^{\bar0}_{h+1,k}>0;\\
(\SE{A}  | \SO{A} + E_{h,k} - E_{h+1, k}  ),&\text{ if }a^{\bar1}_{h,k}=0,a^{\bar1}_{h+1,k}=1;\\
(\SE{A} + 2E_{h,k} | \SO{A}  -E_{h,k} - E_{h+1,k} ),&\text{ if }a^{\bar1}_{h,k}=1,a^{\bar1}_{h+1,k}=1.
\end{cases}$$
To compute the number $\partial(\Md)-\partial(\Ad)-\partial(\Bd)$, by \eqref{norm-fl} and Lemma \ref{hatell1},
\begin{equation}\label{DGLW-thm-5.3-2-pf-1}
\begin{aligned}
&\Norm(\Ed_{h,\lambda})=\lambda_h=\sum_{j=1}^n a_{h,j}=:{\bf r}_h,\\
&\Norm(\SE{A} + E_{h,k}  - E_{h+1, k} | \SO{A} )-\Norm(\Ad)=\widehat\ell(A^+_{h,k})-\widehat\ell(A)+A^{\ol{0},+}_{h,k}*A^{\ol{1}}-A^{\ol{0}}*A^{\ol{1}}
\\
&\qquad=\AK(h,k)-\BK(h+1,k)+a^{\bar1}_{h,k}-a^{\bar1}_{h+1,k}.
\end{aligned}
\end{equation}

Observing that $(\SE{A}  | \SO{A}+ E_{h,k}  - E_{h+1, k} ) \mbox{ and }(\SE{A} + 2E_{h,k} | \SO{A}-E_{h,k}  - E_{h+1, k} )$  have the same base matrix $(\SE{A} +\SO{A}+ E_{h,k}  - E_{h+1, k} )$ as $(\SE{A} + E_{h,k}  - E_{h+1, k} | \SO{A} )$, applying \eqref{DGLW-thm-5.3-2-pf-1} leads to
 \begin{equation}\label{DGLW-thm-5.3-2-pf-2}
\begin{aligned}
&\Norm(\SE{A}  | \SO{A}+ E_{h,k}  - E_{h+1, k} )=\Norm(\Ad)+\AK(h,k)-\BK(h+1,k)+a^{\bar0}_{h,k}-a^{\bar0}_{h+1,k},\\
&\Norm(\SE{A} + 2E_{h,k} | \SO{A}-E_{h,k}  - E_{h+1, k} )=\Norm(\Ad)+\AK(h,k)-\BK(h+1,k)-a^{\bar0}_{h,k}-a^{\bar0}_{h+1,k}.
\end{aligned}
\end{equation}

Continuing the computations on the coefficients,  for  $\Md=(\SE{A} + E_{h,k}  - E_{h+1, k} | \SO{A}) $, the coefficient in \eqref{DGLW-thm-5.3-2} has the form  ($q=\up^2$):
$$
{q}^{\BK(h,k) + \SOE{a}_{h+1,k}}  \STEP{ \SEE{a}_{h,k} + 1}=\up^{2\BK(h,k) + 2\SOE{a}_{h+1,k}+\SEE{a}_{h,k} }[\SEE{a}_{h,k} + 1].
$$
By \eqref{DGLW-thm-5.3-2-pf-1}, we obtain the power of $\up$
\begin{equation}
\begin{aligned}
&2\BK(h,k) + 2\SOE{a}_{h+1,k}+\SEE{a}_{h,k}+\Norm(\Md) -(\Norm(\Ed_{h,\lambda})+\Norm(\Ad))\\
&=2\BK(h,k) + 2\SOE{a}_{h+1,k}+\SEE{a}_{h,k}+ \Norm(\Ad)+\AK(h,k)-\BK(h+1,k)+a^{\bar1}_{h,k}-a^{\bar1}_{h+1,k}-({\bf r}_h+\Norm(\Ad))\\
&=\BK(h,k)-\BK(h+1,k)+\SOE{a}_{h+1,k}=f_h(A,k)+\SOE{a}_{h+1,k}.
\end{aligned}
\end{equation}

For  $\Md=(\SE{A}  | \SO{A}+ E_{h,k}  - E_{h+1, k} )\in \MNZ(n,r)$ with $a^{\bar1}_{h,k}=0$ and $a^{\bar1}_{h+1,k}=1$.
By \eqref{DGLW-thm-5.3-2-pf-1} and \eqref{DGLW-thm-5.3-2-pf-2}, the power of $\up$ becomes
\begin{equation}
\begin{aligned}
&2\BK(h,k)+\Norm(\Md)-(\Norm(\Ed_{h,\lambda})+\Norm(\Ad))\\
&=2\BK(h,k)+\Norm(\Ad)+\AK(h,k)-\BK(h+1,k)+a^{\bar0}_{h,k}-a^{\bar0}_{h+1,k}-(\Norm(\Ed_{h,\lambda})+\Norm(\Ad))\\
&=\BK(h,k)-\BK(h+1,k)-a^{\bar0}_{h+1,k}=f_h(A,k)-a^{\bar0}_{h+1,k}.
\end{aligned}
\end{equation}

Finally, for $\Md=(\SE{A} + 2E_{h,k} | \SO{A}-E_{h,k}  - E_{h+1, k} )\in \MNZ(n,r)$ with $a^{\bar1}_{h,k}=a^{\bar1}_{h+1,k}=1$,
We have, by \eqref{QN2},
$${q}^{\BK(h,k)-1} \STEPPD{a_{h, k} +1}=-\up^{2(\BK(h,k) -1)+2a_{h, k} +1}(\up-\up^{-1})\left[{a_{h, k} +1}\atop 2\right].$$
The power of $\up$ has the form, by \eqref{DGLW-thm-5.3-2-pf-1} and \eqref{DGLW-thm-5.3-2-pf-2},
\begin{equation}
\begin{aligned}
&2(\BK(h,k) -1)+2a_{h, k} +1+\Norm(\SE{A} + 2E_{h,k} | \SO{A}-E_{h,k}  - E_{h+1, k} )-(\Norm(\Ed_{h,\lambda})+\Norm(\Ad))\\
&=2(\BK(h,k) -1)+2a_{h, k} +1+\Norm(\Ad)+\AK(h,k)-\BK(h+1,k)-a^{\bar0}_{h,k}-a^{\bar0}_{h+1,k}-{\bf r}_h-\Norm(\Ad)\\
&={\AK(h,k)-\AK(h+1,k)-a^{\bar0}_{h+1,k}}=f_h(A,k)-a^{\bar0}_{h+1,k}.
\end{aligned}
\end{equation}
Substituting all three cases into \eqref{DGLW-thm-5.3-2} yields (2).
Formula (3) can be proved similarly.\end{proof}

We now derive the standard multiplication formulas \eqref{SMF} with $\Bd$ being one of the following three matrices with odd parity:
$(\la|E_{h,h}),\qquad(\mu|E_{h, h+1}),\qquad(\nu|E_{h+1,h}).$

Recall the preorder  {$\preceq$} over $\MN(n)$ from \cite{BLM} and its associated equivalence relation $\preeq$: for {$A=(a_{i,j}), B=(b_{i,j}) \in \MN(n)$},
\begin{equation}\label{preorder}
\aligned
	B \preceq A &\iff
\begin{cases}
	\sum_{i \le s, j\ge t } b_{i,j} \le \sum_{i \le s, j\ge t } a_{i,j}, & \mbox{ for all } s<t; \\
	\sum_{i \ge s, j\le t } b_{i,j} \le \sum_{i \ge s, j\le t } a_{i,j}, & \mbox{ for all } s>t.
\end{cases}\\
B \preeq A &\iff B \preceq A,\;\; A \preceq B.
\endaligned
\end{equation} Then,
$B\preeq A$  if and only if  {$B^{\pm}= A^{\pm}$}, where $X^\pm$ is the matrix obtained from $X$ with all diagonal entries replaced by 0s. We also write {$B \prec A $} for {$B \preceq A$} but {$B^{\pm} \ne A^{\pm}$}.

Similar to \eqref{EVEN}, we introduce the following matrices:
 \begin{equation}\label{ODD}
\Dd_{\ol{i},\lambda}:=(\lambda-E_{i,i}|E_{i,i}),\;\;
 \Ed_{\ol{h},\lambda}:=(\lambda-E_{h+1, h+1}|E_{h,h+1}),\;\; \Fd_{\ol{h},\lambda}:=(\lambda-E_{h,h}  |E_{h+1,h}).
 \end{equation}
We also set (cf. \eqref{f_h/g_h}), for  $A$ as above and $h,k\in[1,n]$,
\begin{equation}\label{d_hA}
d_{h}(A,k):=\BK(h,k)(A)-\AK(h,k)(A).
\end{equation}
The following is the standardisation of \cite[Th.~6.2]{DGLW}.

\begin{lem}\label{normlized-phidiag1}
For {$\Ad =  (A^{\bar0}|A^{\bar 1})=(a^{\bar0}_{i,j}|a^{\bar 1}_{i,j})   \in \MNZ(n,r)$} with base $A=A^{\bar0}+A^{\bar 1},$ { $ \lambda=\ro(A)$} and  $h\in[1,n]$.
\begin{enumerate}
\item	If {$A$}  satisfies the SDP condition on the $h$-th row,
	 then we have in $\qSchvsZ$:
\begin{equation*}
\begin{aligned}
&{{ [{\Dd_{\bar h,\lambda}}]} [\Ad]}=\\
	&\;\;
\sum_{ k=1}^n
{(-1)}^{\wp(\Ad)+{\widetilde{a}}^{\ol{1}}_{h,k} } {\up}^{d_{h}(A,k) }
\Big\{
	[{\SE{A} -E_{h,k}|\SO{A}+ E_{h,k}}]
	+[a_{h,k}]_{\up^2}
	[{\SE{A} +  E_{h,k}| \SO{A} - E_{h,k}}]
\Big\}\\
&\;\;=:\sdpNHk(h,\Ad).
\end{aligned}
\end{equation*}
\item In general, we have
$${{ [{\Dd_{\bar h,\lambda}}]} [\Ad]}=\begin{cases}
\sdpNHk(h,\Ad)+ \sum_{\Bd,
 				 {B} \prec { {A} }
 			} { \scp_{{ {\Dd_{\bar h,\lambda}},\Ad},\Bd}}  [{\Bd}],&\text{ if }h<n;\\
			\sdpNHk(n,\Ad),&\text{ if }h=n,
			\end{cases}$$
\end{enumerate}
where $\Bd\in \MNZ(n, r)$ and ${ \scp_{{ {\Dd_{\bar h,\lambda}}},\Ad,\Bd}} \in\sZ$.
\end{lem}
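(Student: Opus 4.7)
The plan is to derive both parts by applying the two-step standardization process laid out at the start of \S\ref{SMFQS} to the fundamental multiplication formula for $\phi_{\Dd_{\bar h,\lambda}}\phi_{\Ad}$ given in \cite[Th.~6.2]{DGLW}.

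First, consider part (1), assuming $A$ satisfies the SDP condition on row $h$. By \cite[Th.~6.2]{DGLW}, $\phi_{\Dd_{\bar h,\lambda}}\phi_{\Ad}$ expands as a sum, indexed by $k\in[1,n]$, of $q$-weighted terms $\phi_{\Md}$, where $\Md$ is either $(\SE{A}-E_{h,k}\mid\SO{A}+E_{h,k})$ (which requires $a^{\bar 0}_{h,k}\geq 1$ and $a^{\bar 1}_{h,k}=0$) or $(\SE{A}+E_{h,k}\mid\SO{A}-E_{h,k})$ (which requires $a^{\bar 1}_{h,k}=1$). Applying Proposition \ref{prop_PhiAPhiB} converts these into a formula for $\Phi_{\Dd_{\bar h,\lambda}}\Phi_{\Ad}$: since $\Dd_{\bar h,\lambda}$ is odd and $\Md$ has the same parity as $\Ad$, the global sign $(-1)^{\wp(\Dd_{\bar h,\lambda})\,\wp(\Ad)}=(-1)^{\wp(\Ad)}$ appears uniformly on every term, which matches the sign in $\sdpNHk(h,\Ad)$. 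The combinatorial sign $(-1)^{\widetilde{a}^{\bar 1}_{h,k}}$ was already present in \cite[Th.~6.2]{DGLW}.

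Second, convert each $\Phi_{\Md}$ to $[\Md]$ via Proposition \ref{relation-[A]-PhiA}. The key observation is that both families of matrices $\Md$ share the \emph{same} base matrix $A=\SE{A}+\SO{A}$, since the operation merely transfers a unit between the even and odd parts at position $(h,k)$. Consequently $\widehat{\ell}(\Md)=\widehat{\ell}(A)$ by \eqref{hatell}, so Lemma \ref{hatell1} contributes nothing, and only the defect $A^{\bar 0}*A^{\bar 1}$ moves: for $\Md=(\SE{A}-E_{h,k}\mid\SO{A}+E_{h,k})$ it changes by $a^{\bar 0}_{h,k}-1$, and for $\Md=(\SE{A}+E_{h,k}\mid\SO{A}-E_{h,k})$ it changes by $-a^{\bar 0}_{h,k}$. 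Combining this change with $-\Norm(\Dd_{\bar h,\lambda})=-(\lambda_h-1)$ and the original $q$-power from \cite[Th.~6.2]{DGLW} (which involves $\widetilde{a}^r_{h,k}$), a direct bookkeeping shows that the exponent of $\up$ collapses to $d_h(A,k)=\BK(h,k)(A)-\AK(h,k)(A)$ in both families. The $q$-bracket $[a_{h,k}]_{q}$ in the second family is $[a_{h,k}]_{\up^2}$, which is the form appearing in $\sdpNHk(h,\Ad)$.

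For part (2) with $h=n$, Theorem \ref{CdA1} asserts that $A$ automatically satisfies the SDP condition on the last row, so part (1) applies verbatim and there are no error terms. For $h<n$ without SDP, the commutation of $c_{\widetilde{a}^r_{h,k-1}+p}$ past $T_{d_A}$ generates extra summands beyond the SDP head terms; these correspond in \cite[Th.~6.2]{DGLW} to contributions whose base matrix differs from $A$ by a nontrivial redistribution inside the corner $\llcm^{h,k}$. Using \eqref{preorder}, one checks that such redistributions decrease strictly in the order $\prec$, so the resulting matrices $\Bd$ satisfy $B\prec A$. Packaging the standardization of these correction terms into coefficients $\scp_{\Dd_{\bar h,\lambda},\Ad,\Bd}\in\sZ$ yields the stated formula.

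The main obstacle is the $\up$-exponent bookkeeping in the second step: four separate contributions (the original $q$-weight, the change in $\widehat{\ell}$ supplied by Lemma \ref{hatell1} but equal to $0$ here, the change in $A^{\bar 0}*A^{\bar 1}$, and the normalization by $\Norm(\Dd_{\bar h,\lambda})$) must telescope to the clean value $d_h(A,k)$; any sign or shift error at this stage destroys the uniform form of $\sdpNHk(h,\Ad)$. The secondary difficulty, in part (2), is controlling the correction terms purely via the order $\prec$ without having to write them down explicitly, which requires extracting enough structural information from the SDP failure on the lower-left corner $\llcm^{h,k}$ of $A$.
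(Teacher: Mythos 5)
Your proposal follows essentially the same route as the paper: start from \cite[Th.~6.2]{DGLW}, push the product through Proposition~\ref{prop_PhiAPhiB} to get the $\Phi$-formula with the uniform sign $(-1)^{\wp(\Ad)}$, then convert $\Phi_{\Md}$ to $[\Md]$ via Proposition~\ref{relation-[A]-PhiA}, using the key observation that all $\Md$ share the base $A$ so that $\widehat\ell$ is unchanged and only $A^{\bar0}*A^{\bar1}$ and $\Norm(\Dd_{\bar h,\lambda})$ drive the exponent to collapse to $d_h(A,k)$, and dispatch part (2) via Theorem~\ref{CdA1} for $h=n$ and \cite[Th.~6.2(2)]{DGLW} for $h<n$. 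One small mislabel in passing: the $q$-exponent coming from \cite[Th.~6.2]{DGLW} is $\BK(h,k)(A)$ (the right-tail row sum), not $\widetilde a^r_{h,k}$, but this does not affect the bookkeeping or the conclusion.
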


\begin{proof} We first prove (1). Since $\wp({ {\Dd_{\bar h,\lambda}}})=1$, by
\cite[Th.~6.2]{DGLW}(1) and Proposition \ref{prop_PhiAPhiB},
\begin{equation}\label{DGLW-thm-6.2}
\begin{aligned}
&{{ [{\Dd_{\bar h,\lambda}}]}[\Ad]}=\up^{-\Norm(\Dd_{\bar h})-\Norm(\Ad)}\Phi_{\Dd_{\bar h}} \Phi_\Ad\\
	&=(-1)^{\wp(\Ad)}\up^{-\Norm(\Dd_{\bar h})-\Norm(\Ad)}
\sum_{ k=1}^n
{(-1)}^{ {\SOE{\widetilde{a}}}_{h,k} } {q}^{\BK(h,k) }
\Big\{
	 \Phi_{(\SE{A} -E_{h,k}|\SO{A}+ E_{h,k})}
	+ {\STEPP{a_{h,k}}}
	\Phi_{(\SE{A} +  E_{h,k}| \SO{A} - E_{h,k})}
\Big\}.\\
\end{aligned}
\end{equation}
We want to write this as a linear combination of $[\Md]$ as in \eqref{SMF}. First, we see from
 \eqref{norm-fl}
\begin{equation}\label{DGLW-thm-6.2-pf1}
\begin{aligned}
&\Norm({ {\Dd_{\bar h,\lambda}}})=\lambda_h-1=\ro(A)_h-1=\AK(h,k)+\BK(h,k)+a_{h,k}-1,\\
&\Norm(\SE{A} -E_{h,k}|\SO{A}+ E_{h,k})=\Norm(\Ad)+a^{\bar0}_{h,k}-1,\\
&\Norm(\SE{A} +E_{h,k}|\SO{A}-E_{h,k})=\Norm(\Ad)-a^{\bar0}_{h,k}.
\end{aligned}
\end{equation}

For $\Md=(\SE{A} -E_{h,k}|\SO{A}+ E_{h,k})$ (forcing $a^{\bar1}_{h,k}=0$),  \eqref{DGLW-thm-6.2-pf1} implies
\begin{equation}\label{DGLW-thm-6.2-pf2}
\begin{aligned}
&2\BK(h,k)+\Norm(\SE{A} -E_{h,k}|\SO{A}+ E_{h,k})-(\Norm({ {\Dd_{\bar h,\lambda}}})+\Norm(\Ad))\\
&=2\BK(h,k)+\Norm(\Ad)+a^{\bar0}_{h,k}-1-(\AK(h,k)+\BK(h,k)+a_{h,k}-1)-\Norm(\Ad)\\
&=\BK(h,k)-\AK(h,k)=d_{h}(A,k).
\end{aligned}
\end{equation}

For $\Md=(\SE{A} +  E_{h,k}| \SO{A} - E_{h,k})$ (forcing $a^{\bar1}_{h,k}=1$), since ${\STEPP{a_{h,k}}}=\up^{2(a_{h,k}-1)}[a_{h,k}]_{\up^2}$, we have by \eqref{DGLW-thm-6.2-pf1},
\begin{equation}\label{DGLW-thm-6.2-pf3}
\begin{aligned}
&2\BK(h,k)+2(a_{h,k}-1)+\Norm(\SE{A} +E_{h,k}|\SO{A}-E_{h,k})-\Norm({ {\Dd_{\bar h,\lambda}}})+\Norm(\Ad))\\
&=2\BK(h,k)+2(a_{h,k}-1)+\Norm(\Ad)-a^{\bar0}_{h,k}-(\AK(h,k)+\BK(h,k)+a_{h,k}-1)-\Norm(\Ad)\\
&=\BK(h,k)-\AK(h,k)=d_{h}(A,k).
\end{aligned}
\end{equation}
Substituting \eqref{DGLW-thm-6.2-pf2} and \eqref{DGLW-thm-6.2-pf3} into \eqref{DGLW-thm-6.2} yields (1).

{ We now prove (2). The case  when $h=n$  follows from the last assertion of Theorem \ref{CdA1} and (1). The general case when $h<n$ follows from \cite[Th.~6.2(2)]{DGLW} by setting
${ \scp_{{ {\Dd_{\bar h,\lambda}}},\Ad,\Bd}} =(-1)^{\wp(\Ad)}\up^{\Norm(\Bd)-\Norm({ {\Dd_{\bar h,\lambda}}})-\Norm(\Ad)}{f}^{{ {\Dd_{\bar h,\lambda}}},\Ad}_{\Bd}$.}
\end{proof}

The next result is the standardisation of  \cite[Ths.~6.3--4]{DGLW}. In addition to the notation $g_h(A,k)=-\AK(h,k)+\AK(h+1,k)$ defined in \eqref{f_h/g_h},  we also introduce the new notation
\begin{equation}\label{g-hk}
\OG_h(A,k)=-\AK(h,k)-\AK(h+1,k),\;\;\text{ where}\;\; \AK(j,k)=\AK(j,k)(A).
\end{equation}

\begin{lem}\label{standard-phiupper1}
	For {$h \in [1,n-1]$}  and {$\Ad =(A^{\bar0}|A^{\bar 1})=  ({\SEE{a}_{i,j}} | {\SOE{a}_{i,j}})   \in \MNZ(n,r)$} with base $A=A^{\bar0}+A^{\bar 1}$ {\color{black} and $\lambda=\ro(A)$}.
\begin{enumerate}
\item	Suppose that, for every $k\in[1,n]$ such that $a_{h+1,k}>0$ and $A$ satisfies the SDP condition at $(h,k)$ if $a_{h,k}>0$ and  $\llcm^{h,k}=0$ if $a_{h,k}=0$.
	Then
	we have in $\qSchvsZ$
\begin{equation}\label{oddUP}
\aligned
	{{ [{\Ed_{\bar h,\lambda}}]}[\Ad]}
	=& (-1)^{\wp(\Ad)}\sum_{k=1}^n \up^{f_h(A,k)}\Big \{
	{(-1)}^{{\SOE{\widetilde{a}}}_{h-1,k}} \up^{a^{\bar1}_{h+1,k}} [{\SE{A} - E_{h+1, k}| \SO{A}  + E_{h,k} }] \\
	& +
	{(-1)}^{{\SOE{\widetilde{a}}}_{h,k} + 1}
	\up^{-a^{\bar0}_{h+1,k}} [\SEE{a}_{h,k} +1]
		[{\SE{A} + E_{h,k}| \SO{A} - E_{h+1, k} }]  \\
	&  +
	{(-1)}^{{\SOE{\widetilde{a}}}_{h-1,k} } \up^{\SOE{a}_{h+1,k}}(\up-\up^{-1}) \left[{a_{h,k}+1}\atop 2\right]
		[{\SE{A} + 2 E_{h,k}  -E_{h+1, k} |\SO{A} - E_{h,k}}]
\Big\}\\
=&:\sdpNHe(h,\Ad)\\
\endaligned
\end{equation}
\hspace{-.5cm} In general, we have
$${ [{\Ed_{\bar h,\lambda}}]} [\Ad]=\sdpNHe(h,\Ad) + \sum_{
 			\substack{\Bd \in \MNZ(n, r) \\
 				 {\exists k,\, B} \prec { {A^+_{h.k}} }
 			}
 			}  { \scp_{{ {\Ed_{\bar h,\lambda}}},\Ad,\Bd}} [{\Bd}]\;\;\;({ \scp_{{ {\Ed_{\bar h,\lambda}}},\Ad,\Bd}} \in\mathcal{Z}).$$
			
\item	  If $A$ satisfies the SDP condition on the $h$-th row, 
then we have in $\qSchvsZ$
\begin{equation}
 \begin{aligned}
 {{ [{\Fd_{\bar h,\lambda}}]} [\Ad]}=&(-1)^{\wp(\Ad)}\sum_{k=1}^n{(-1)}^{{\SOE{\widetilde{a}}}_{h,k}} \up^{\OG_h(A,k)}\Big\{
	\up^{-a^{\bar1}_{h,k}}	[{\SE{A} - E_{h,k}| \SO{A}+ E_{h+1, k}}] \\
	& \qquad -
	{\up}^{-a^{\bar1}_{h,k}}(\up-\up^{-1}) \left[{a_{h+1, k} +1}\atop 2\right]
	[{\SE{A} - E_{h,k} + 2E_{h+1, k} | \SO{A}  -E_{h+1, k}}] \\
	& \qquad + {\up}^{a^{\bar0}_{h,k}}
	[\SEE{a}_{h+1,k}+1]
	[{\SE{A} + E_{h+1,k} | \SO{A}  - E_{h,k} }]
\Big\}=:\sdpNHf(h,\Ad).
 \end{aligned}
 \end{equation}
\hspace{-.5cm} In general, we have, for some $ { \scp_{\Fd_{\bar h,\lambda},\Ad,\Bd}} \in \mathcal{Z}$,
$${{ [{\Fd_{\bar h,\lambda}}]}[{\Ad}]}=\sdpNHf(h,\Ad)+(\up-\up^{-1})\text{\rm HH}\overline{\textsc f}(h,\Ad)+\sum_{\Bd\in \MNZ(n,r)\atop \exists k, B \prec{A}^-_{h,k}}  { \scp_{\Fd_{\bar h,\lambda},\Ad,\Bd}} [{\Bd}],$$
where, with notation \eqref{A01hk} and $\Ahkomm:=\SO{A}  -E_{h,k} - E_{h+1,k}$,
\begin{equation*}
\begin{aligned}
&\text{\rm HH}\overline{\textsc f}(h,\Ad)=(-1)^{\wp(\Ad)}\sum_{k=1}^n\sum_{l=1}^{k-1}(-1)^{\tilde{a}^{\bar1}_{h,l}}~\cdot \up^{2(\overrightarrow{\bf r}^{l}_{h+1}{-\overrightarrow{\bf r}_{h+1}^{k-1}})+\OG_h(A,k)+a_{h+1,l}+a^{\bar0}_{h,k}} ~\Big\{\up^{-a_{h,k}}[\SEE{a}_{h+1, k} +1] \\
&\quad\qquad\cdot([{\Ahkzm -E_{h+1,l}| \SO{A}+E_{h+1,l}}]-{[{a_{h+1,l}}]_{\up^2}}[{\Ahkzm +E_{h+1,l}| \SO{A}-E_{h+1,l}}])\\
&\quad\quad+([{{\SE{A} -E_{h+1,l} |\Ahkom+E_{h+1,l}}}]-{[{a_{h+1,l}}]_{\up^2}}[{{\SE{A} +E_{h+1,l} |\Ahkom-E_{h+1,l}}}])\\	
&\quad\quad-(\up-\up^{-1})\left[{a_{h+1, k} +1}\atop 2\right]\big(
	[{\SE{A} + 2E_{h+1,k}-E_{h+1,l} |\Ahkomm+E_{h+1,l}}]\\
&\hspace{6cm}-{[{a_{h+1,l}}]_{\up^2}}[{\SE{A} + 2E_{h+1,k}+E_{h+1,l} |\Ahkomm-E_{h+1,l}}]\big)
\Big\}.
\end{aligned}
\end{equation*}
	
\end{enumerate}
\end{lem}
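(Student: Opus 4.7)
The plan is to mirror the two-step modification carried out in the proofs of Lemmas \ref{phiupper-even-norm} and \ref{normlized-phidiag1}. Starting from the natural-basis multiplication formulas in \cite[Ths.~6.3--6.4]{DGLW} for $\phi_{\Ed_{\bar h,\lambda}}\phi_{\Ad}$ and $\phi_{\Fd_{\bar h,\lambda}}\phi_{\Ad}$, I would first apply Proposition \ref{prop_PhiAPhiB}: since both $\Ed_{\bar h,\lambda}$ and $\Fd_{\bar h,\lambda}$ are odd, \eqref{Phi_structure} extracts the uniform sign $(-1)^{\wp(\Ad)}$ in front of the resulting $\Phi$-sum. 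Then, applying Proposition \ref{relation-[A]-PhiA} converts each $\Phi_\Md$ on the right into $\up^{\partial(\Md)}[\Md]$ and the left-hand side into $\up^{\partial(\Bd)+\partial(\Ad)}[\Bd][\Ad]$, producing the conversion rule $\scp_{\Bd,\Ad,\Md} = \up^{\partial(\Md) - \partial(\Bd) - \partial(\Ad)}\scc_{\Bd,\Ad,\Md}$.

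The substantive work is to compute the $\up$-exponents. Using $\partial = \widehat\ell + A^{\bar 0} * A^{\bar 1}$ from \eqref{hatell} together with Lemma \ref{hatell1}, every main-term matrix $\Md$ in part (1) has base $A^+_{h,k}$, so the $\widehat\ell$-shift from $A$ to the base of $\Md$ is uniformly $\AK(h,k) - \BK(h+1,k)$; the dot-product shift is a short bookkeeping of which of $\SEE{a}_{h,k}, \SOE{a}_{h,k}, \SEE{a}_{h+1,k}, \SOE{a}_{h+1,k}$ are altered when moving from $(\SE{A}|\SO{A})$ to the specific $\Md$. One further verifies $\partial(\Ed_{\bar h,\lambda}) = \lambda_h$ and $\partial(\Fd_{\bar h,\lambda}) = \lambda_{h+1}$ by exactly the computation used for the even $\Ed_{h,\lambda}$ in the proof of Lemma \ref{phiupper-even-norm}(2): the single off-diagonal entry of the base contributes the same term to $\widehat\ell$ whether it sits in $\SE{A}$ or $\SO{A}$, and $A^{\bar 0} * A^{\bar 1}$ vanishes because the diagonal of $\lambda$ never overlaps the off-diagonal odd entry. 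Combining these with the $q=\up^2$-exponents imported from \cite[Th.~6.3]{DGLW} and the identity \eqref{QN2} applied to the $\STEPPD{a_{h,k}+1}$-factor, the common prefactor $\up^{f_h(A,k)}$ emerges outside the brace and the three stated corrections emerge inside. Part (2)'s three-term main contribution $\sdpNHf(h,\Ad)$ is handled identically, with base $A^-_{h,k}$ and $\widehat\ell$-shift $-\AK(h,k) + \BK(h+1,k)$, producing $\up^{\OG_h(A,k)}$.

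The main obstacle is the $\text{\rm HH}\overline{\textsc f}$ correction in part (2), which has no analogue in the even case or in part (1). This term arises in \cite[Th.~6.4]{DGLW} from commuting Clifford generators past $T_{d_A}$ when the SDP condition fails at some $(h+1,l)$ with $l < k$; the resulting double sum over $(k,l)$ produces matrices whose base differs from $A^-_{h,k}$ by an additional $\pm E_{h+1,l}$, each appearing in four parity subtypes. For each subtype the $\widehat\ell$-shift now involves two rank-one updates of $A$, and must be combined carefully with the tail factor $\up^{2(\overrightarrow{\bf r}^{l}_{h+1} - \overrightarrow{\bf r}^{k-1}_{h+1})}$ and the sign $(-1)^{\widetilde a^{\bar 1}_{h,l}}$ inherited from \cite{DGLW}; the prefactor $(\up - \up^{-1})$ factored out in front of $\text{\rm HH}\overline{\textsc f}(h,\Ad)$ traces back to the $q-1$ in the commutation $T_i c_{i+1} = c_i T_i - (q-1)(c_i - c_{i+1})$ of \eqref{Hecke}. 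Finally, the lower-order tails indexed by $\Bd \prec A^{\pm}_{h,k}$ in both parts are inherited term-by-term from \cite[Ths.~6.3--6.4]{DGLW} by the same sign-and-shift rule; only the structural fact that each $\scp \in \sZ$ and each such $\Bd$ satisfies $B \prec A^\pm_{h,k}$ is needed for subsequent applications, not the explicit coefficients.
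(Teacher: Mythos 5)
Your overall strategy is exactly the paper's: apply Proposition~\ref{prop_PhiAPhiB} to pick up the uniform sign $(-1)^{\wp(\Ad)}$, then use $\scp_{\Bd,\Ad,\Md}=\up^{\partial(\Md)-\partial(\Bd)-\partial(\Ad)}\scc_{\Bd,\Ad,\Md}$ via Proposition~\ref{relation-[A]-PhiA}, compute $\partial(\Ed_{\bar h,\lambda})=\lambda_h$, $\partial(\Fd_{\bar h,\lambda})=\lambda_{h+1}$ and the $\widehat\ell$-shifts through Lemma~\ref{hatell1}, and fold in \eqref{QN2}. The paper itself omits the details of part~(1) precisely because they are the same bookkeeping as in Lemma~\ref{phiupper-even-norm}(2), and your description of that bookkeeping is accurate.

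However, your treatment of the $\text{\rm HH}\overline{\textsc f}$ term contains a concrete error that would derail the computation. You assert that the double sum ``produces matrices whose base differs from $A^-_{h,k}$ by an additional $\pm E_{h+1,l}$'' and that ``the $\widehat\ell$-shift now involves two rank-one updates of $A$.'' This is false. In every pair of matrices appearing in $\text{\rm HH}\overline{\textsc f}$, the shift $\pm E_{h+1,l}$ occurs with \emph{opposite} signs in the even and odd components; for instance $\Md=(\Ahkzm - E_{h+1,l}\mid \SO{A}+E_{h+1,l})$ has base
\begin{equation*}
\bigl(A^{\bar 0}-E_{h,k}+E_{h+1,k}-E_{h+1,l}\bigr)+\bigl(A^{\bar 1}+E_{h+1,l}\bigr)=A-E_{h,k}+E_{h+1,k}=A^-_{h,k},
\end{equation*}
and likewise for the other five matrix types. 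So every matrix in $\text{\rm HH}\overline{\textsc f}$ has base exactly $A^-_{h,k}$, hence identical $\widehat\ell$, and only the dot-product $A^{\bar0}*A^{\bar1}$ piece of $\partial$ varies. This is precisely the structural observation the paper flags (``the base matrices of the labelling matrices $\Md$ in $\text{\rm HH}\overline{\textsc f}$ are all the same as $A^-_{h,k}$'') and is what makes the standardization of the $\text{\rm HH}\overline{\textsc f}$ tail a routine repetition of the main-term computation rather than a genuinely new one. With your version the exponents would come out wrong. (A smaller slip: you count ``four parity subtypes'' per $(k,l)$; there are six matrix types, three $\pm$-pairs.)

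\end{document}
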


\begin{proof} With the SDP condition, the proof of (1) is similar to that of Lemma \ref{phiupper-even-norm}(2) and is omitted.

We now prove (2). First, assume that $A$ satisfies the SDP condition at $(h,k)$, for all $k\in[1,n]$ with $a_{h,k}>0$. By \cite[Th.~6.4]{DGLW}(1) together with Proposition \ref{prop_PhiAPhiB} and noting $\wp({ {\Fd_{\bar h,\lambda}}})=\bar1$, we have 
 \begin{equation}\label{philower1-pf-2}
 \begin{aligned}
 {{ [{\Fd_{\bar h,\lambda}}]} [\Ad]}=&\up^{-\Norm(\Fd_{\bar h})-\Norm(\Ad)}\Phi_{\Fd_{\bar h}}\Phi_\Ad\\
 =&(-1)^{\wp(\Ad)}\up^{-\Norm(\Fd_{\bar h})-\Norm(\Ad)}\sum_{{1\leq k\leq n}\atop {a_{h,k}>0}} \Big\{
	{(-1)}^{{\SOE{\widetilde{a}}}_{h,k} } 	\Phi_{(\SE{A} - E_{h,k}| \SO{A}+ E_{h+1, k})} \\
	& \qquad +
	{(-1)}^{{\SOE{\widetilde{a}}}_{h,k}+1}
	{q}^{-1}  \STEPPDR{a_{h+1, k} +1}
	\Phi_{(\SE{A} - E_{h,k} + 2E_{h+1, k} | \SO{A}  -E_{h+1, k})} \\
	& \qquad +
	{(-1)}^{{\SOE{\widetilde{a}}}_{h,k}} {q}^{a_{h,k}-1}
	\STEP{ \SEE{a}_{h+1,k}+1}
	\Phi_{(\SE{A} + E_{h+1,k} | \SO{A}  - E_{h,k} )}
\Big\}.
 \end{aligned}
 \end{equation}
Let $\lambda=\ro(A)$. Then, by \eqref{norm-fl}, $\Norm({ {\Fd_{\bar h,\lambda}}})=\lambda_{h+1}$. Observe that every $\Phi_\Md$ occurring
on the right hand side has the same base matrix $A^-_{h,k}$. By \eqref{hatell} and Lemma \ref{hatell1}, $\partial(\Md)$ is given as follows:
\begin{equation}\label{philower1-pf-1}
\begin{aligned}
\Norm(\SE{A} - E_{h,k}| \SO{A}+ E_{h+1, k})&=\Norm(\Ad)-\AK(h,k)+\BK(h+1,k)-a^{\bar 1}_{h,k}+a_{h+1,k},\\
\Norm(\SE{A} - E_{h,k} + 2E_{h+1, k} | \SO{A}  -E_{h+1, k})&=\Norm(\Ad)-\AK(h,k)+\BK(h+1,k)-a^{\bar0}_{h+1,k}-a^{\bar1}_{h,k},\\
\Norm(\SE{A} + E_{h+1,k} | \SO{A}  - E_{h,k} )&=\Norm(\Ad)-\AK(h,k)+\BK(h+1,k)-a^{\bar0}_{h,k}+a^{\bar1}_{h+1,k}.
\end{aligned}
\end{equation}
Thus, we may determine the powers of $\up$ in the coefficients of $[\Md]$ as follows:
\begin{equation}\label{philower1-pf-3}
\begin{aligned}
&\Norm(\SE{A} - E_{h,k}| \SO{A}+ E_{h+1, k})-\Norm(\Fd_{\bar h,\lambda})-\Norm(\Ad)=-\AK(h,k)-\AK(h+1,k)-a^{\bar1}_{h,k}=\OG_h(A,k)-a^{\bar1}_{h,k},\\
&\Norm(\SE{A} - E_{h,k} + 2E_{h+1, k} | \SO{A}  -E_{h+1, k})-2+2a_{h+1,k}+1-\Norm({ {\Fd_{\bar h,\lambda}}})-\Norm(\Ad)=\OG_h(A,k)-a^{\bar1}_{h,k},\\
&\Norm(\SE{A} + E_{h+1,k} | \SO{A}  - E_{h,k} )+2(a_{h,k}-1)+\SEE{a}_{h+1,k}-\Norm({ {\Fd_{\bar h,\lambda}}})-\Norm(\Ad)=\OG_h(A,k)+a^{\bar0}_{h,k}.
\end{aligned}
\end{equation}
Here, in the third case, $\SEE{a}_{h+1,k}$ comes from  $\STEP{ \SEE{a}_{h+1,k}+1}=\up^{\SEE{a}_{h+1,k}}[\SEE{a}_{h+1,k}+1]$.
By \eqref{QN2}, we have $\STEPPDR{a_{h+1, k} +1}=(\up-\up^{-1})\up^{2a_{h+1, k} +1}\left[a_{h+1, k} +1\atop 2\right].$
Substituting gives the first formula in (2).

 For the general case, the term $\text{\rm HH}\overline{\textsc f}$ in \cite[Th.~6.4]{DGLW}(2) can be similarly standardized to obtain
 HH$\ol{\textsc{f}}(h,\Ad)$ in (2), noting that the base matrices of the labelling matrices $\Md$ in $\text{\rm HH}\overline{\textsc f}$ are all the same as  $A^-_{h,k}$.
\end{proof}

We now standardise some special cases discussed in \cite[\S7]{DGLW}, noting the sign change for a product of two odd elements in each case. Recall the convention that we regard a term $[\Md]$ as $0$ if $\Md\not\in \MNZ(n,r)$.
\begin{lem}[cf. {\cite[Prop.~7.1]{DGLW}}]\label{phiupper2}
	For any given {$\mu \in \CMN(n,r-1)$}, {$h \in [1,n-1]$}, the following formulas hold in {$\qSchvsZ$}:
	\begin{align*}
{\rm {(1)}}& \quad
[\mu | E_{h, h+1}]\cdot
[{\mu}+  E_{h+1, h}|O]
= 			[{\mu}-E_{h+1, h+1}   + E_{h+1, h}| E_{h, h+1}]
	+	 \up^{\mu_{h+1}} [ {\mu}   | E_{h,h}] \\
	& \qquad\qquad\qquad\qquad\qquad\qquad
	- 	({\up}-\up^{-1}) [{{\mu}_{h}+1}]	[{\mu}-E_{h+1, h+1}   + E_{h, h}  | E_{h+1,h+1}], \\
{\rm {(2)}}& \quad
[{\mu}| E_{h, h+1}]\cdot
[{\mu} | E_{h+1, h}]
=
 {\up}^{ {\mu}_{h+1} }[{\mu}_{h} + 1]
		[{\mu}  +  E_{h,h}|{O}] + [{\mu}-E_{h+1, h+1}   | E_{h, h+1} + E_{h+1, h}]\\
	& \qquad\qquad\qquad\qquad\qquad\qquad
	-	({\up}-\up^{-1}) [{\mu}-E_{h+1, h+1}    | E_{h,h} + E_{h+1,h+1}] \\
{\rm {(3)}}& \quad
[{\mu}| E_{h+1, h}]\cdot
[{\mu}+  E_{h, h+1}|O]
= [{\mu} - E_{h,h} + E_{h, h+1}| E_{h+1, h}]
+\up^{-\mu_h}[{\mu} | E_{h+1, h+1}], \\
{\rm (4)}& \quad
 [{\mu}| E_{h+1, h}]\cdot
[{\mu} | E_{h, h+1}]
= -[{\mu} - E_{h,h} |   E_{h, h+1} +  E_{h+1, h}]
	+ \up^{-\mu_h}[{\mu}_{h+1} + 1 ][{\mu} + E_{h+1, h+1} | O].
\end{align*}
\end{lem}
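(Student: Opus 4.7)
The four formulas are the standardisations of the corresponding identities in \cite[Prop.~7.1]{DGLW}, which were proved there by direct computation using the commutation relations in $\HCR$. My plan is not to rederive the identities from scratch but to pass each one through the two-step translation scheme used throughout this section: first from the natural basis $\{\phi_\Ad\}$ to the superhomomorphism basis $\{\Phi_\Ad\}$ via Proposition~\ref{prop_PhiAPhiB}, and then from $\{\Phi_\Ad\}$ to the standard basis $\{[\Ad]\}$ via Proposition~\ref{relation-[A]-PhiA}.

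In cases (1) and (3) the left factor is odd and the right factor is even, so Proposition~\ref{prop_PhiAPhiB} contributes no sign; in cases (2) and (4) both factors are odd and the parity sign $(-1)^{\wp(\Bd)\wp(\Ad)}=-1$ must be applied to every summand on the right-hand side. The $\up$-power correction $\partial(\Md)-\partial(\Bd)-\partial(\Ad)$ is then computed term-by-term from the formula $\partial(\Ad)=\widehat\ell(A)+A^{\bar0}*A^{\bar1}$ of \eqref{norm-fl}--\eqref{hatell} together with Lemma~\ref{hatell1}. Since all base matrices $B,A,M$ here differ from the diagonal $\mu$ in only a $2\times 2$ window around rows and columns $h,h+1$, the quantities $\AK(j,k)$ and $\BK(j,k)$ reduce to short combinations of $\mu_h$ and $\mu_{h+1}$, producing precisely the exponents $\mu_{h+1}$ and $-\mu_h$ displayed on the right-hand sides. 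The conversion of the Hecke constants $[\![m]\!]_q$ to $[m]$ and of $[\![m]\!]_{q,q^2}$ to $-(\up-\up^{-1})\up^{2m-1}\left[m\atop 2\right]$ via \eqref{QN2} accounts for the explicit factors $[\mu_h+1]$ and the leading $\up-\up^{-1}$ in the quadratic correction terms of (1) and (2).

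I expect the main obstacle to be purely bookkeeping: one must simultaneously track (i) the parity sign, (ii) the Clifford signs $(-1)^{\widetilde a^{\bar1}_{h,k}}$ inherited from \cite[Prop.~7.1]{DGLW}, and (iii) the $\up$-power correction for each surviving summand. There is no genuine structural difficulty, because each of the four products has at most three terms on the right (the small support of the second factor forces most potential summands either to be excluded by the positivity convention or to contribute a matrix equal to one of those already listed). Hence the verification reduces to a finite and entirely mechanical case analysis, with no new input required beyond the standardisation mechanism developed earlier in this section.
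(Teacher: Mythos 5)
Your proposal is correct and is exactly the two-step standardisation scheme the paper uses implicitly (it states Lemma~\ref{phiupper2} by reference to \cite[Prop.~7.1]{DGLW} without writing out a proof, relying on the general translation procedure explained at the start of Section~\ref{SMFQS}): first apply Proposition~\ref{prop_PhiAPhiB} to pass to the superhomomorphism basis with the parity sign $(-1)^{\wp(\Ad)\wp(\Bd)}$, then correct each coefficient by $\up^{\partial(\Md)-\partial(\Ad)-\partial(\Bd)}$ via Proposition~\ref{relation-[A]-PhiA} and \eqref{QN2}. Your identification of when the sign occurs (both factors odd, so cases (2) and (4) only) and of the mechanical nature of the $\up$-power bookkeeping matches the paper's intent.
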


Using the standardisation of $\phi_{A^\star}$, the relations in \cite[Lem.7.2]{DGLW} can be modified as follows.
\begin{lem}\label{relation_eei}Let $h\in[1,n-1]$ and
 {${\lambda}  \in \CMN(n, r - 1)$}. Then the following relations hold in {$\qSchvsZ$}:
\begin{align}\notag
 (1)&\quad[ {\lambda}  | E_{h, h+1} ]\cdot
		[ {\lambda}  |  E_{h+1, h+2} ]+{\up}  [{\lambda}   + \alpha_h | E_{h+1, h+2}]\cdot
		[ {\lambda} -\alpha_{h+1} | E_{h, h+1} ]=\\
		\notag
&\hspace{1.5cm}[  {\lambda} + E_{h, h+1} | O]\cdot
		[{\lambda}  +  E_{h+1, h+2} | O]	-  {\up}  [ {\lambda}  + \alpha_{h} + E_{h+1, h+2} |O  ]\cdot
		[{\lambda} -\alpha_{h+1} + E_{h, h+1}  | O] .\\
(2)&\quad[\lambda|E_{h,h+1}]\cdot[\lambda+E_{h+1,h+2}|O]
-\up[\lambda+\alpha_h+E_{h+1,h+2}|O]\cdot[\lambda-\alpha_{h+1}|E_{h,h+1}]=\notag\\
&\hspace{2.5cm}[  {\lambda} + E_{h, h+1} | O]\cdot
		[{\lambda}  |  E_{h+1, h+2}] -  {\up}  [ {\lambda}  + \alpha_{h} | E_{h+1, h+2}]\cdot
		[{\lambda} -\alpha_{h+1} + E_{h, h+1}  | O].\notag
\end{align}
\end{lem}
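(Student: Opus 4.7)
The plan is to deduce identities (1) and (2) from the corresponding natural-basis relations established in \cite[Lem.~7.2]{DGLW}, via two standardisation correspondences. By Proposition \ref{relation-[A]-PhiA}, $[\Ad]=\up^{-\partial(\Ad)}\Phi_\Ad$; and by Proposition \ref{prop_PhiAPhiB} (formula \eqref{Phi_structure}), passage from the product $\phi_\Bd\phi_\Ad$ to $\Phi_\Bd\Phi_\Ad$ introduces exactly the parity sign $(-1)^{\wp(\Ad)\wp(\Bd)}$. Applying these rules in sequence converts each $\phi$-basis identity of \cite[Lem.~7.2]{DGLW} into a $[\cdot]$-basis identity with modified signs and $\up$-exponents that should match the statement.

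First, I will compute $\partial(\Ad)$ for each of the eight matrices in (1)--(2). In every case the support of $A^{\bar 0}$ (on the diagonal) is disjoint from the support of $A^{\bar 1}$ (strictly off-diagonal), so by \eqref{norm-fl} we have $A^{\bar 0}\ast A^{\bar 1}=0$, and hence, by \eqref{hatell}, $\partial(\Ad)=\widehat\ell(A)$ depends only on the base $A$. A direct computation yields $\partial((\lambda|E_{h,h+1}))=\lambda_h=\partial((\lambda+E_{h,h+1}|O))$ and $\partial((\lambda|E_{h+1,h+2}))=\lambda_{h+1}=\partial((\lambda+E_{h+1,h+2}|O))$, together with the analogous equalities $\lambda_{h+1}-1$ and $\lambda_h$ for the two $\lambda\pm\alpha$-shifted pairs. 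In particular, the four partnered pairs (purely odd versus purely even version of the same base matrix) share identical $\partial$-values, so the $\up$-power $\up^{-\partial(\Bd)-\partial(\Ad)}$ attached to each product $[\Bd]\cdot[\Ad]$ matches that of its partner on the other side of the identity, and those common factors pull out.

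Second, I will track the parity-induced signs. In identity (1), both products on the LHS are odd$\,\cdot\,$odd, each acquiring a factor $(-1)^{1\cdot 1}=-1$ in \eqref{Phi_structure} when converted from $\phi$-basis to $\Phi$-basis; both products on the RHS are even$\,\cdot\,$even, which contribute no sign. This discrepancy is precisely what rearranges the corresponding $\phi$-basis relation into the stated form of (1). In identity (2), every product mixes an odd factor and an even factor, so the parity product is zero and no sign adjustment arises; hence (2) is the direct standardisation of its $\phi$-basis predecessor.

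The principal technical concern is the simultaneous bookkeeping of $\up$-powers and parity signs across all eight products. Once the vanishing $A^{\bar 0}\ast A^{\bar 1}=0$ and the coincidence of base matrices within each of the four partnered pairs are verified, the $\up$-exponents cancel uniformly and the signs distribute as described, so that the identities (1) and (2) follow from \cite[Lem.~7.2]{DGLW} by a routine rearrangement.
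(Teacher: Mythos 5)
Your proposal is correct and takes essentially the same approach the paper intends: the paragraph immediately preceding the lemma says the relations are obtained by "standardisation" of \cite[Lem.~7.2]{DGLW}, and your proof makes that standardisation explicit by applying Propositions \ref{relation-[A]-PhiA} and \ref{prop_PhiAPhiB} to each of the eight products, checking that $A^{\bar 0}\ast A^{\bar 1}=0$ in every case so that $\partial(\Ad)=\widehat\ell(A)$ (yielding $\lambda_h$, $\lambda_{h+1}$, $\lambda_{h+1}-1$, $\lambda_h$ for the four base matrices), and tracking the parity signs $(-1)^{\wp(\Ad)\wp(\Bd)}$. One small point worth spelling out: the normalisation factors for the two $\alpha$-shifted products are $\up^{1-\lambda_h-\lambda_{h+1}}$ rather than $\up^{-\lambda_h-\lambda_{h+1}}$, so after factoring out the common $\up^{-\lambda_h-\lambda_{h+1}}$ the explicit $\up$ in the statement combines with the leftover $\up$ to give $\up^2=q$, which is the coefficient one expects in the $\phi$-basis relation of \cite{DGLW}; your phrase "cancel uniformly" slightly glosses over this, but the bookkeeping is otherwise sound.
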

\medskip

We will prove in Corollary \ref{generators}, using a triangular relation, that the multiplication formulas in Lemmas \ref{phiupper-even-norm}--\ref{standard-phiupper1} determine the structure of queer $q$-Schur superalgebras. More precisely, the structure constants from these formulas provide the matrix representation of the regular module $_{\bs {\mathcal A}}\bs {\mathcal A}$ for  the $\QQ(\up)$-superalgebra $\bs {\mathcal A}=\qSchvsQ$.

\section{Long multiplication formulas in $\qSchvsQ$}\label{sec_spanningsets}
So far, we used certain elements and relations in the Hecke-Clifford superalgebras to determine the structure of their associated queer $q$-Schur superalgebras. In other words, we have completed Step 5 in the roadmap set in the introduction. We now show that these structure can be lifted to the quantum queer supergroup $\Uvqn$ via a formal limit process. So, we work with base changed queer $\up$-Schur superalgebras $\qSchvsQ=\qSchvsZ\otimes_{\mathcal{Z}}\QQ(\up)$ from now on.

We first observe that, for any $\Ad=(A^{\bar 0}|A^{\bar 1})=(a^{\bar 0}_{i,j}|a^{\bar 1}_{i,j})\in M_n(\NN|\Ng)$, there is an associated square matrix $A^{\star\!\!\!\!\square}={A^{\bar 0}\;A^{\bar 1}\choose A^{\bar 1}\;A^{\bar 0}}$. By mimicking the construction in \cite{BLM}, we introduce
the subset $\MNZNS(n) $ of $M_n(\NN|\Ng)$ consisting of $\Ad=(A^{\bar 0}|A^{\bar 1})$ such that the diagonal of $A^{\bar 0}$ is 0:
\begin{equation}\label{Mnpm}
\MNZNS(n) :=\{\Ad\in M_n(\NN|\Ng)\mid A^{\star\!\!\!\!\square}\text{ has a 0 diagonal}\}.
\end{equation}

For any   {$\Ad=(\SUP{A}) \in \MNZNS(n)$}, {$\bs{j} \in {\ZZ}^{n} $},
 define the following ``long elements'' in {$\qSchvsQ$}:
\begin{equation}\label{def_ajr}
\begin{aligned}
	\AJRS(\Ad, \bs{j}, r) =
\left\{
\begin{aligned}
	& \sum_{\substack{\lambda \in \CMN(n, r-\snorm{A})} }
	 {v}^{\lambda\centerdot \bs{j}} [ \SE{A} + \lambda | \SO{A} ],
		\ &\mbox{if } \snorm{A} \le r; \\
	&0, &\mbox{otherwise}.
\end{aligned}
\right.
\end{aligned}
\end{equation}
where $\lambda \centerdot \bs{j}=\sum_{i=1}^n\lambda_ij_i$ is the dot product.

The first set of the long (element) multiplication formulas are straightforward.

\begin{prop}\label{mulformzerocor}
Let 
{$h \in [1,n]$}, $\bs{j},\bs{k}\in {\ZZ}^n$, and {$O^\star:=(O|O),\Ad \in  \MNZNS(n)$}. Then,  for all  {$r\geq\snorm{A}$},
the following hold in {$\qSchvsQ$}:
\begin{itemize}
    \item[{\rm(1)}]$ \AJRS({O^\star}, \bs{k}, r) \cdot \AJRS(\Ad, \bs{j}, r)
		 = {\up}^{\ro(A)\centerdot\bs{k}}   \AJRS(\Ad, \bs{k} + \bs{j}, r);$
   \item[ {\rm(2)}] $\AJRS(\Ad, \bs{j}, r)  \cdot  \AJRS({O^\star}, \bs{k}, r)
	={\up}^{\co(A)\centerdot\bs{k} }   \AJRS(\Ad, \bs{k} + \bs{j}, r).$
\end{itemize}
In particular, we have
\begin{enumerate}
\item[{\rm{(a)}}] $\AJRS({O^\star}, \bs{0}, r) \cdot  \AJRS(\Ad, \bs{j}, r)
		=  \AJRS(\Ad, \bs{j}, r) \cdot \AJRS({O^\star}, \bs{0}, r)
		=  \AJRS(\Ad, \bs{j}, r); $
\item[{\rm{(b)}}] $ \AJRS({O^\star}, \pm \ep_h, r) \cdot \AJRS(\Ad, \bs{j}, r)
	= {v}^{ \pm  \sum_{u=1}^n a_{h,u} }   \AJRS(\Ad, \bs{j} \pm\ep_h, r); $
\item[{\rm{(c)}}] $ \AJRS(\Ad, \bs{j}, r) \cdot  \AJRS({O^\star}, \pm \ep_h, r)
	= {v}^{ \pm  \sum_{u=1}^n a_{u, h} }   \AJRS(\Ad, \bs{j} \pm\ep_h, r).$
\end{enumerate}
\end{prop}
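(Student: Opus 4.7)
The plan is to reduce everything to the action of the ``diagonal'' basis elements $[\mu\vert O]$ on the standard basis and to an elementary summation manipulation. The key observation is that for any $\Ad\in\MNZ(n,r)$ and $\mu\in\Lambda(n,r-|A|)$, the element $[\SE{A}+\mu\vert \SO{A}]$ has row sum vector $\ro(A)+\mu$ and column sum vector $\co(A)+\mu$, since $\mu$ is interpreted as a diagonal matrix added to $\SE{A}$ while $\SO{A}$ is unchanged. By Lemma~\ref{phiupper-even-norm}(1) applied to $\Bd=\Dd_\nu=(\nu\vert O)$ with $\nu\in\Lambda(n,r)$, we have the orthogonality relations
\begin{equation*}
[\nu\vert O]\cdot[\SE{A}+\mu\vert \SO{A}]=\delta_{\nu,\ro(A)+\mu}\,[\SE{A}+\mu\vert \SO{A}],\qquad
[\SE{A}+\mu\vert \SO{A}]\cdot[\nu\vert O]=\delta_{\nu,\co(A)+\mu}\,[\SE{A}+\mu\vert \SO{A}].
\end{equation*}

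To prove (1), expand both factors using the definition \eqref{def_ajr}:
\begin{equation*}
\AJRS(O^\star,\bs{k},r)\cdot\AJRS(\Ad,\bs{j},r)=\sum_{\nu\in\Lambda(n,r)}\sum_{\mu\in\Lambda(n,r-|A|)}{\up}^{\nu\centerdot\bs{k}+\mu\centerdot\bs{j}}\,[\nu\vert O]\cdot[\SE{A}+\mu\vert \SO{A}].
\end{equation*}
By the first orthogonality relation above, only the summand with $\nu=\ro(A)+\mu$ survives for each $\mu$, giving
\begin{equation*}
\sum_{\mu\in\Lambda(n,r-|A|)}{\up}^{(\ro(A)+\mu)\centerdot\bs{k}+\mu\centerdot\bs{j}}\,[\SE{A}+\mu\vert \SO{A}]={\up}^{\ro(A)\centerdot\bs{k}}\sum_{\mu}{\up}^{\mu\centerdot(\bs{k}+\bs{j})}[\SE{A}+\mu\vert \SO{A}]={\up}^{\ro(A)\centerdot\bs{k}}\,\AJRS(\Ad,\bs{k}+\bs{j},r),
\end{equation*}
as required. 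Part (2) is proved identically using the second orthogonality relation, with $\ro(A)$ replaced by $\co(A)$. The specialisations (a)--(c) are then immediate by taking $\bs{k}=\bs{0}$, $\bs{k}=\pm\ep_h$ in either (1) or (2) and observing that $\ro(A)\centerdot\ep_h=\sum_{u}a_{h,u}$ and $\co(A)\centerdot\ep_h=\sum_{u}a_{u,h}$.

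There is no real obstacle here: the proof is essentially a one-line calculation once the orthogonality of the $[\mu\vert O]$'s (which are the images of the identity idempotents $\phi_{\Dd_\mu}$ under the standardisation up to a power of $\up$ that is absorbed by the diagonal case of \eqref{hatell}) is in hand. The mild bookkeeping point worth checking is that the definition of $[\SE{A}+\mu\vert\SO{A}]$ makes sense precisely when $\mu\in\Lambda(n,r-|A|)$, which is compatible with the convention declared just before \eqref{SMF} that any out-of-range term is zero; this is what allows us to index the sums in $\AJRS(\Ad,\bs j,r)$ without worrying about boundary issues.
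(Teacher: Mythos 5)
Your proof is correct, and since the paper explicitly calls these formulas ``straightforward'' and omits a proof, your argument supplies exactly the calculation the authors had in mind: expand both factors via \eqref{def_ajr}, apply the idempotent relations from Lemma~\ref{phiupper-even-norm}(1) to kill all but one summand, and re-index. Your parenthetical remark that the power of $\up$ in the standardisation is trivial for diagonal matrices (i.e.\ $\Norm(\Dd_\mu)=0$, so $[\Dd_\mu]=\Phi_{\Dd_\mu}$) is a nice touch and is indeed why $[\nu|O]$ acts as a genuine orthogonal idempotent rather than a scalar multiple of one.
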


 We need the following short form of  the notation in \eqref{f_h/g_h}:
\begin{equation}\label{f_hk}
\aligned
f_{h,k}^{\ol{0}}&=f_h(A,k) +\SOE{a}_{h+1,k},\quad
f_{h,k}^{\ol{1}}=\begin{cases}f_h(A,k)  - \SEE{a}_{h+1,k},&\text{ if }k\neq h+1;\\ f_h(A,h+1),&\text{ if }k=h+1,
\end{cases}\\
g_{h,k}^{\bar 0}&=g_h(A, k)-a^{\bar 1}_{h,k},\qquad
g_{h,k}^{\ol{1}}=\begin{cases}g_h(A,k) + \SEE{a}_{h,k},&\text{ if }k\neq h;\\ g_h(A,h),&\text{ if }k=h.
\end{cases}
\endaligned
\end{equation}

\begin{prop}\label{mulformeven-2}
	For {$h \in [1,n-1]$} and {$\Ad=(\SE{A}|\SO{A})=(a^{\bar 0}_{i,j}|a^{\bar 1}_{i,j}) \in \MNZNS(n)$} with base $A=(a_{i,j})$,
let $\Ahkomm:=\SO{A}  -E_{h,k} - E_{h+1,k}$ and $\boxed{a}_{h,k}:=(\up-\up^{-1}) \left[a_{h,k}+1\atop 2\right]$. Then,  for all  {$r\geq\snorm{A}$}, the following multiplication formulas hold in {$\qSchvsQ$}:
\begin{equation*}\label{EA-up}
\begin{aligned}
(1)&\quad \ABJRS( E_{h, h+1}, {O}, \bs{ 0 },  r ) \cdot\Ad( \bs{j},  r )\\
&=\sum_{k<h}
	{\up}^{f_{h,k}^{\ol{0}}}   [ \SEE{a}_{h,k} + 1] (\Ahkzp | \SO{A})(\bs{j}+\alpha_h,r)
+\sum_{h+1<k}
	{\up}^{f_{h,k}^{\ol{0}}}   [ \SEE{a}_{h,k} + 1] (\Ahkzp | \SO{A})(\bs{j},r)	\\
&\hspace{1.5cm}+ \frac{{\up}^{f_{h,h}^{\ol{0}}-j_h}}{\up-\up^{-1}}\left\{(\SE{A} - E_{h+1, h} | \SO{A}) (\bs{j}+\alpha_h,r)
-(\SE{A} - E_{h+1, h} | \SO{A}) (\bs{j}-\alpha^+_h,r)\right\}\\
&\hspace{1.5cm}+{\up}^{f_{h,h+1}^{\ol{0}}+j_{h+1}}   [ \SEE{a}_{h,h+1} + 1]  (\SE{A} + E_{h,h+1} | \SO{A})(\bs{j},r)\\
&+\sum_{k<h}{\up}^{f_{h,k}^{\ol{1}}}(\SE{A} |\Ahkop)(\bs{j}+\alpha_h,r)+\sum_{h+1<k}{\up}^{f_{h,k}^{\ol{1}}}(\SE{A}  |\Ahkop)(\bs{j},r)\\
&\hspace{1.5cm}+{\up}^{f_{h,h}^{\ol{1}}}(\SE{A} | \Ahhop)(\bs{j}-\ep_{h+1},r)+{\up}^{f_{h,h+1}^{\ol{1}}}(\SE{A}  | \Ahhiop)(\bs{j}-\ep_{h+1},r)\\
&-\sum_{k<h}{\up}^{f_{h,k}^{\ol{1}}}\boxed{a}_{h,k}(\SE{A}+ 2E_{h,k} | \Ahkomm)(\bs{j}+\alpha_h,r)-\sum_{h+1<k}{\up}^{f_{h,k}^{\ol{1}}}\boxed{a}_{h,k}(\SE{A}+ 2E_{h,k} |\Ahkomm)(\bs{j},r)\\
&\hspace{1.5cm}-\frac{{\up}^{f_{h,h}^{\ol{1}}-2j_h}}{\up-\up^{-1}}\bigg\{\frac{\up^{-1}}{[2]}(\SE{A}|\Ahhomm)(\bs{j}+2\ep_h-\ep_{h+1},r)+\frac{\up}{[2]}(\SE{A}|\Ahhomm)(\bs{j}-2\ep_h-\ep_{h+1},r)\\
&\hspace{1.5cm}-(\SE{A}|\Ahhomm)(\bs{j}-\ep_{h+1},r)\bigg\}-{\up}^{f_{h,h+1}^{\ol{1}} }\boxed{a}_{h,h+1}(\SE{A}+ 2E_{h,h+1} | \Ahhiomm)(\bs{j}-\ep_{h+1},r),
\end{aligned}
\end{equation*}
\begin{equation*}\label{EA-Low}
\begin{aligned}
(2)&\quad \ABJRS( E_{h+1, h}, O, \bs{ 0 },  r ) \cdot \Ad(\bs{j},  r ) \\
&=\sum_{k < h}\up^{g_{h,k}^{\ol{0}}}
	[ \SEE{a}_{h+1, k} +1]
	\ABJRS(\Ahkzm, \SO{A}, \bs{j},  r )  +
	\sum_{h+1<k}
	{\up}^{g_{h,k}^{\ol{0}}}
	[\SEE{a}_{h+1, k} +1]
	\ABJRS(\Ahkzm, \SO{A}, \bs{j}-\alpha_h, r )\\
	&\hspace{1.5cm}+
	{\up}^{g_{h,h}^{\ol{0}}+j_h}
	[\SEE{a}_{h+1, h} +1]
	\ABJRS( \SE{A} + E_{h+1, h}, \SO{A}, \bs{j},  r ) \\
	&\hspace{1.5cm} +
	\frac{{\up}^{g_{h,h+1}^{\ol{0}}-j_{h+1}}} {\up-\up^{-1} }\bigg\{
	\ABJRS( \SE{A} - E_{h,h+1}, \SO{A}, \bs{j}-\alpha_h,  r )
	-\ABJRS( \SE{A} - E_{h,h+1}, \SO{A}, \bs{j}-\alpha^+_h,  r )\bigg  \}\\
	&+\sum_{k<h}
	{\up}^{g_{h,k}^{\ol{1}}}	\ABJRS(\SE{A}, \Ahkom, \bs{j}, r)+
	\hspace{-0.2cm}\sum_{h+1<k}
	{\up}^{g_{h,k}^{\ol{1}}}
	\ABJRS(\SE{A}, \Ahkom, {\bs{j}} -\alpha_h,  r )\\
	&\hspace{1.5cm}+
	{\up}^{g_{h,h}^{\ol{1}}}
	\ABJRS(\SE{A}, \Ahhom,  {\bs{j}} +\ep_h, r)  +
	{{\up}^{g_{h,h+1}^{\ol{1}}} }
	\ABJRS(\SE{A}, \Ahhiom, \bs{j}-\ep_h, r)  \\
-&\!\sum_{k < h }
	{\up}^{g_{h,k}^{\ol{1}}}\boxed{a}_{h+1,k}
	\ABJRS( \SE{A} + 2E_{h+1,k},\Ahkomm, \bs{j}, r )-
	\!\sum_{h+1<k}
	{\up}^{g_{h,k}^{\ol{1}}}\boxed{a}_{h+1,k} \ABJRS( \SE{A} + 2E_{h+1,k}, \Ahkomm, \bs{j} -\alpha_h, r ) \\
	& \hspace{-0.4cm} -
	{\up}^{g_{h,h}^{\ol{1}}}\boxed{a}_{h+1,h}
	\ABJRS( \SE{A}  + 2E_{h+1,h}, \Ahhomm, \bs{j} + \ep_{h},  r  )  -
	\frac{{\up}^{g_{h,h+1}^{\ol{1}}-2j_{h+1}}}{\up-\up^{-1}}\bigg\{\frac{\up^{-1}}{[2]}
	\ABJRS( \SE{A}, \Ahhiomm, \bs{j} -\ep_{h}+ 2 \ep_{h+1},  r ) \\
	&\hspace{1.5cm}+\frac{\up}{[2]}
	\ABJRS( \SE{A},\Ahhiomm, \bs{j}-\ep_{h}-2 \ep_{h+1},  r ) -\ABJRS( \SE{A}, \Ahhiomm, \bs{j}-\ep_{h},  r )\bigg\}
	 .
	\end{aligned}
\end{equation*}

\end{prop}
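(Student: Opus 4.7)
The plan is to derive both formulas by expanding $\Ad(\bs{j},r)$ via \eqref{def_ajr}, applying the standard multiplication formulas of \lemref{phiupper-even-norm}(2)--(3) termwise, and reassembling the resulting double sums into long elements. Since the arguments for (1) and (2) are parallel (with rows $h$ and $h+1$ interchanged), I will sketch~(1).

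Substituting \eqref{def_ajr} into the left-hand side gives
\[
\ABJRS(E_{h,h+1},O,\bs{0},r)\cdot\Ad(\bs{j},r)
 = \sum_{\lambda\in\Lambda(n,r-|A|)}\up^{\lambda\cdot\bs{j}}\,\Ed_{h,\ro(A+\lambda)}\cdot[\SE{A}+\lambda\mid\SO{A}],
\]
where the row label on the left factor is forced by \lemref{phiupper-even-norm}(1). Applying \lemref{phiupper-even-norm}(2) with base $A+\lambda$ produces nine families of contributions, indexed by a column $k\in[1,n]$ and by move-type (even-shift, odd-move, Clifford-correction via \eqref{QN2}). A first observation is that $f_h(A+\lambda,k)=f_h(A,k)$ whenever $k\notin\{h,h+1\}$, so in this range the coefficient is $\lambda$-independent and the $\lambda$-sum collapses directly into a long element; the row-sum condition then dictates the shift of $\bs{j}$ by $\alpha_h$ when $k<h$ (and by $-\ep_{h+1}$ in the Clifford cases), producing every ``clean'' summand in the statement.

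For $k\in\{h,h+1\}$, however, the move perturbs a diagonal entry of $\SE{A}+\lambda$, so to restore the normalisation of $\MNZNS(n)$ I would absorb that cell into a new composition $\lambda'=\lambda+\ep_k$, which forces $\lambda'_k\geq 1$. Under this substitution, the quantum integer $[\SEE{a}_{h,k}+1]$ becomes $[\lambda'_k]$, and $f_h(A+\lambda,k)$ picks up a correction $-\lambda'_{h+1}$ or $-\lambda'_h$ (for $k=h$ or $k=h+1$ respectively). The telescoping identity
\[
\frac{\AJRS(\Cd,\bs{j}+\alpha_h,r)-\AJRS(\Cd,\bs{j}-\alpha_h^+,r)}{\up-\up^{-1}}
 = \sum_{|\lambda'|=r-|C|}[\lambda'_h]\,\up^{\lambda'\cdot(\bs{j}-\ep_{h+1})}\,[\SE{C}+\lambda'\mid\SO{C}],
\]
valid because $\up^{\lambda'_h}-\up^{-\lambda'_h}=(\up-\up^{-1})[\lambda'_h]$ and $[0]=0$, then converts the restricted $\lambda'$-sum into a difference of two unrestricted long elements at shifts $\bs{j}+\alpha_h$ and $\bs{j}-\alpha_h^+$, producing the $(\up-\up^{-1})^{-1}$-prefactor term in the formula. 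The three-summand $\Ahhomm$-contribution with $1/[2]$-coefficients arises from the analogous expansion
\[
\left[\lambda'_h\atop 2\right]_{\up} = \frac{\up^{-1}\up^{2\lambda'_h} + \up\,\up^{-2\lambda'_h} - [2]}{[2](\up-\up^{-1})^2},
\]
which generates the three shifts $\bs{j}+2\ep_h-\ep_{h+1}$, $\bs{j}-2\ep_h-\ep_{h+1}$, and $\bs{j}-\ep_{h+1}$, weighted by $\up^{-1}/[2]$, $\up/[2]$, and $-1$ respectively.

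The main obstacle will be the bookkeeping in the diagonal cases: one must verify that the weight $[\lambda'_k]$, the $\lambda$-dependence of $f_h(A+\lambda,k)$, and the telescope's prefactor all conspire to give exactly $\up^{f^{\bar 0}_{h,h}-j_h}/(\up-\up^{-1})$ (and its odd analogue $\up^{f^{\bar 1}_{h,h}-2j_h}/(\up-\up^{-1})$). Once this is verified for $k=h$ in the even case, the $k=h+1$ case is symmetric, the odd and Clifford-correction cases reduce to it via \eqref{QN2}, and formula~(2) is obtained from the mirror-image argument using \lemref{phiupper-even-norm}(3) in place of (2).
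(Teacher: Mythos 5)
Your overall strategy is the same as the paper's: expand $\Ad(\bs{j},r)$ via \eqref{def_ajr}, apply \lemref{phiupper-even-norm}(2) (resp.\ (3)) to each summand $[\SE{A}+\lambda\mid\SO{A}]$, split into the three families $\fcY_1,\fcY_2,\fcY_3$, and re-index the $\lambda$-sum into long elements. The telescoping identity you state for the $(\up-\up^{-1})^{-1}$-prefactor term is exactly the mechanism the paper uses for the $k=h$ contribution, and your expansion of $\left[\lambda'_h\atop 2\right]$ matches the one used to produce the three $\Ahhomm$-summands.

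However, your description of the $\lambda$-dependence of $f_h(A+\lambda,k)$ contains errors that would derail the bookkeeping you flag as the main obstacle. You claim $f_h(A+\lambda,k)=f_h(A,k)$ for all $k\notin\{h,h+1\}$, but by \eqref{f_h/g_h} one has $f_h(A+\lambda,k)=f_h(A,k)+\lambda_h-\lambda_{h+1}$ when $k<h$. It is precisely this $\lambda_h-\lambda_{h+1}$ term (not a ``row-sum condition'') that is absorbed into the $\bs{j}\mapsto\bs{j}+\alpha_h$ shift in the $k<h$ summands; if you believe the coefficient is $\lambda$-independent there, you cannot produce that shift. Similarly, your treatment of $k=h+1$ has the substitution and the $f_h$-correction reversed: the diagonal entry $(h+1,h+1)$ of $\SE{A}+\lambda+E_{h,h+1}-E_{h+1,h+1}$ decreases by $1$, so the correct re-indexing is $\mu=\lambda-\ep_{h+1}$ (not $\lambda+\ep_{h+1}$), and $f_h(A+\lambda,h+1)=f_h(A,h+1)$ carries no $\lambda$-correction at all — the $+j_{h+1}$ in that summand comes from $\up^{\lambda\cdot\bs{j}}=\up^{\mu\cdot\bs{j}+j_{h+1}}$ after the substitution. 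With these two corrections to the coefficient tracking, your plan does reproduce the paper's proof.
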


\begin{proof}
We only prove (1).  The proof of
{\rm(2)} is similar.
By Definition \eqref{def_ajr} and Lemma \ref{phiupper-even-norm}(2), we may expand the product on the LHS as a sum of three parts
each of which is a sum on the $h$-th row of $A^{\bar0}$ or $A^{\bar1}$:
\begin{align*}
& \ABJRS( E_{h, h+1}, O, \bs{ 0 },  r ) \cdot \ABJRS( \SE{A}, \SO{A}, \bs{j},  r ) \\
& =
	\sum_{\substack{
		\lambda \in \CMN(n,r-\snorm{A}) \\
	} }
	{v}^{\lambda \centerdot {\bs{j}}}
	[ \ro(A+\lambda)  - E_{h+1, h+1} + E_{h, h+1} |O ] [\SE{A} + \lambda | \SO{A}]= {\fcY}_1 + {\fcY}_2 - {\fcY}_3,
\end{align*}
where, denoting the $(h,k)$ entry of a matrix $X$ by $X_{h,k}$,
\begin{align*}
{\fcY}_1 &=
	\sum_{\substack{
		\lambda \in \CMN(n,r-\snorm{A}) \\
	} }
	{v}^{\lambda \centerdot {\bs{j}}}
	\sum_{k=1}^n
	{\up}^{f_h(A+\lambda,k)  + \SOE{a}_{h+1,k}}  [ {(\SEE{A}+\lambda)}_{h,k} + 1]
		[\SE{A} + \lambda  - E_{h+1, k} + E_{h,k} | \SO{A}] , \\
{\fcY}_2 &=
	\sum_{\substack{
		\lambda \in \CMN(n,r-\snorm{A}) \\
	} }
	{v}^{\lambda \centerdot {\bs{j}}}
	\sum_{k=1}^n
	{\up}^{f_h(A+\lambda,k)  - (\SE{A} + \lambda )_{h+1,k}}  [\SE{A} + \lambda   | \SO{A} - E_{h+1, k}  + E_{h,k}] , \\
{\fcY}_3 &=\!\!
	\sum_{\substack{
		\lambda \in \CMN(n,r-\snorm{A}) \\
	} }
	{v}^{\lambda \centerdot {\bs{j}}}
	\sum_{k=1}^n
	{\up}^{f_h(A+\lambda,k)  -(\SE{A} + \lambda )_{h+1,k}} (\up-\up^{-1})\left[{(A+\lambda)_{h,k}}+1\atop 2\right][\SE{A} + \lambda  + 2E_{h,k} | \Ahkomm].
\end{align*}
Clearly, $(\SEE{A}+\lambda)_{i,j}=\SEE{a}_{i,j}$ for $i\neq j$ and $(\SEE{A}+\lambda)_{i,i}=\lambda_{i}$.
 By  \eqref{f_h/g_h} and \eqref{partialrow},
\begin{equation}\label{f_h(A,k)}
f_h(A+\lambda,k)
=
\left\{
\begin{aligned}
& {\BK(h,k)}  -\BK(h+1,k)+\lambda_h-\lambda_{h+1} , & \mbox{ if } k< h, \\
& {\BK(h,k)}  -\BK(h+1,k)-\lambda_{h+1} , & \mbox{ if } k = h,\\
&{\BK(h,k)}  -\BK(h+1,k),&\mbox{ if } k > h.
\end{aligned}
\right.
\end{equation}
 Using the notations in \eqref{f_hk} and \eqref{A01hk},
we have after swapping two $\Sigma$ sums
\begin{equation}\label{Comput-Y1}
\begin{aligned}
{\fcY}_1
=&\sum_{k<h}
	{\up}^{f_{h,k}^{\ol{0}}}   [ \SEE{a}_{h,k} + 1]  \sum_{\substack{
		\lambda \in \CMN(n,r-\snorm{A}) \\
	} }
	{v}^{\lambda \centerdot {\bs{j}}+\lambda_h-\lambda_{h+1}}
		[\Ahkzp + \lambda  | \SO{A}] \\
&+\sum_{\substack{
		\lambda \in \CMN(n,r-\snorm{A}) \\
	} }
	{\up}^{f_{h,h}^{\ol{0}}}   [ \lambda_h + 1]
	{v}^{\lambda \centerdot {\bs{j}}-\lambda_{h+1}}
		[A^{\bar0,+}_{h,h} + \lambda  | \SO{A}] \\&
+{\up}^{f_{h,h+1}^{\ol{0}}}   [ \SEE{a}_{h,h+1} + 1]  \sum_{\substack{
		\lambda \in \CMN(n,r-\snorm{A}) \\
	} }
	{v}^{\lambda \centerdot {\bs{j}}}
		[A^{\bar0,+}_{h,h+1} + \lambda| \SO{A}] \\
&+\sum_{k>h+1}
	{\up}^{f_{h,k}^{\ol{0}}}   [ \SEE{a}_{h,k} + 1]  \sum_{\substack{
		\lambda \in \CMN(n,r-\snorm{A}) \\
	} }
	{v}^{\lambda \centerdot {\bs{j}}}
		[\Ahkzp + \lambda  | \SO{A}]
\end{aligned}
\end{equation}
The first and last summations have summands $(\Ahkzp | \SO{A})(\bs{j}+\bsal_h,r)$ and $(\Ahkzp | \SO{A})(\bs{j},r)$, respectively. The middle two summations correspond to the $k=h$ and $k=h+1$ cases.

For $k=h$, a direct calculation shows
$$\begin{aligned}
&\sum_{\substack{
		\lambda \in \CMN(n,r-\snorm{A}) \\
	} }
	{\up}^{f_{h,h}^{\ol{0}}}   [ \lambda_h + 1]
	{v}^{\lambda \centerdot {\bs{j}}-\lambda_{h+1}}
		[\SE{A} + \lambda  - E_{h+1, h} + E_{h,h} | \SO{A}]\\
=&\sum_{\substack{
		\mu \in \CMN(n,r-\snorm{A}+1) \\
	} }
	{\up}^{f_{h,h}^{\ol{0}}}   [ \mu_h]
	{v}^{(\mu-\ep_h) \centerdot {\bs{j}}-\mu_{h+1}}
		[\SE{A} + \mu - E_{h+1, h}| \SO{A}]\;\;(\mbox{set } \;\mu=\lambda+\ep_h)\\
=&{\up}^{f_{h,h}^{\ol{0}}-\bs{j}_h}   \sum_{\substack{
		\mu \in \CMN(n,r-\snorm{A}+1) \\
	} }
\frac{\up^{\mu \centerdot {(\bs{j}+\ep_h-\ep_{h+1})}}-\up^{\mu \centerdot {(\bs{j}-\ep_h-\ep_{h+1})}}}{\up-\up^{-1}}
		[\SE{A} + \mu - E_{h+1, h}| \SO{A}]\\
=&\frac{{\up}^{f_{h,h}^{\ol{0}}-\bs{j}_h}}{\up-\up^{-1}} \left\{(\SE{A} - E_{h+1, h}| \SO{A})(\bs{j}+\bsal_h,r)-
(\SE{A} - E_{h+1, h}| \SO{A})(\bs{j}-\bsal^+_h,r)\right\},
\end{aligned}
$$
while, for $k=h+1$,
$$
\begin{aligned}
&{\up}^{f_{h,h+1}^{\ol{0}}}   [ \SEE{a}_{h,h+1} + 1]  \sum_{\substack{
		\lambda \in \CMN(n,r-\snorm{A}) \\
	} }
	{v}^{\lambda \centerdot {\bs{j}}}
		[\SE{A} + \lambda  - E_{h+1, h+1} + E_{h,h+1} | \SO{A}]\\
=&{\up}^{f_{h,h+1}^{\ol{0}}}   [ \SEE{a}_{h,h+1} + 1]  \sum_{\substack{
		\mu \in \CMN(n,r-\snorm{A}-1) \\
	} }
	{v}^{(\mu+\ep_{h+1}) \centerdot {\bs{j}}}
		[\SE{A} +\mu+ E_{h,h+1} | \SO{A}](\mbox{set }  \mu=\lambda-\ep_{h+1})\\
=&{\up}^{f_{h,h+1}^{\ol{0}}+\bs{j}_{h+1}}[\SEE{a}_{h,h+1} + 1](\SE{A}+ E_{h,h+1}| \SO{A})(\bs{j},r).
\end{aligned}
$$
Altogether shows that $\fcY_1$ is the first part of the multiplication formula in (1).

We now compute $\fcY_2$.
Observe $(\SEE{A}+\lambda)_{h+1,k}=\SEE{a}_{h+1,k}$ for $k\neq h+1$ and $(\SEE{A}+\lambda)_{h+1,h+1}=\lambda_{h+1}$.
Similarly, by the notation in \eqref{A01hk},  we have after swapping two $\Sigma$ sums
\begin{align*}
{\fcY}_2
=&\sum_{k<h}{\up}^{f_h(A,k)  - \SEE{a}_{h+1,k}}\sum_{\substack{
		\lambda \in \CMN(n,r-\snorm{A}) \\
	} }
	{v}^{\lambda \centerdot {\bs{j}}+\lambda_h-\lambda_{h+1}} [\SE{A} + \la   | \Ahkop]\\
&+{\up}^{f_h(A,h)  - \SEE{a}_{h+1,h}}\sum_{\substack{
		\lambda \in \CMN(n,r-\snorm{A}) \\
	} }
	{v}^{\lambda \centerdot {\bs{j}}-\lambda_{h+1}} [\SE{A} + \lambda   | A^{\bar1,+}_{h,h}]\\
&+{\up}^{f_h(A,h+1)}\sum_{\substack{
		\lambda \in \CMN(n,r-\snorm{A}) \\
	} }
	{v}^{\lambda \centerdot {\bs{j}} -\lambda_{h+1}} [\SE{A} + \lambda   | A^{\bar1,+}_{h,h+1}]\\
&+\sum_{k>h+1}{\up}^{f_h(A,k)  - \SEE{a}_{h+1,k}}\sum_{\substack{
		\lambda \in \CMN(n,r-\snorm{A}) \\
	} }
	{v}^{\lambda \centerdot {\bs{j}}} [\SE{A} + \lambda   | \Ahkop],
\end{align*}
which implies the middle part of the formula in (1) after applying \eqref{f_h(A,k)} and definition \eqref{def_ajr}.

Finally, we compute ${\fcY}_3=\sum_{k=1}^n\fcY_{3,k}$, where
\begin{align}\label{Y3}
{\fcY}_{3,k}= \sum_{\substack{
		\lambda \in \CMN(n,r-\snorm{A}) \\
	} }
	{\up}^{\lambda \centerdot {\bs{j}}}
	&{\up}^{f_h(A+\lambda,k)  - (\SEE{A}+\lambda)_{h+1,k}}(\up-\up^{-1}) \left[{({A}+\lambda)_{h,k}}+1\atop 2\right]
	[\SE{A} + \lambda  + 2E_{h,k} | \Ahkomm].\end{align}

Since $(\SE{A} + \lambda  + 2E_{h,k} | \SO{A}  -E_{h,k} - E_{h+1,k})\in \MNZ(n,r)$ implies  $\SOE{a}_{h,k}=\SOE{a}_{h+1,k}=1$ and  $(A+\lambda)_{i,j}=a_{i,j}$ for $i\neq j$ and $(A+\lambda)_{i,i}=\lambda_i+1$, the computation of $\fcY_{3,k}$ depends on $k$.

If $k<h$, applying \eqref{f_h(A,k)}, definition \eqref{def_ajr} and $\boxed{a}_{h,k}=(\up-\up^{-1}) \left[a_{h,k}+1\atop 2\right]$, \eqref{Y3} becomes
\begin{align*}
{\fcY}_{3,k}&={\up}^{f_{h,k}^{\ol{1}}}\boxed{a}_{h,k} \sum_{\substack{
		\lambda \in \CMN(n,r-\snorm{A}) \\
	} }
	{v}^{\lambda \centerdot {\bs{j}}+\lambda_h-\lambda_{h+1}}[\SE{A} + \lambda  + 2E_{h,k} | \Ahkomm]\\
&={\up}^{f_{h,k}^{\ol{1}}}\boxed{a}_{h,k}(\SE{A}+ 2E_{h,k} | \Ahkomm)(\bs{j}+\ep_h-\ep_{h+1}).
\end{align*}

If $k=h$,  then $f_h(A+\lambda,h)=f_h(A,h)-\lambda_{h+1}$.
Let $\mu=\lambda+2\ep_h$. Then $\lambda_h+2=\mu_h $ and
$$\left[{({A}+\lambda)_{h,h}}+1\atop 2\right]=\left[\lambda_h+2\atop 2\right]=\left[\mu_h\atop 2\right]=\frac{1}{[2](\up-\up^{-1})^2}(\up^{2\mu_h-1}+\up^{-2\mu_h+1})-\frac{1}{(\up-\up^{-1})^2},$$
which is 0 for those $\mu$ with $\mu_h=0$ or 1. Hence, by  \eqref{def_ajr},
\begin{align*}
{\fcY}_{3,h}&={\up}^{f_{h,h}^{\ol{1}}}\hspace{-0.5cm}\sum_{\substack{
		\mu \in \CMN(n,r-\snorm{A}+2) \\
	} }\hspace{-0.5cm}{v}^{(\mu-2\ep_h) \centerdot {\bs{j}}-\mu_{h+1}}\Big(\frac{\up^{2\mu_h-1}+\up^{-2\mu_h+1}}{[2](\up-\up^{-1})}-\frac{1}{(\up-\up^{-1})}\Big)
[\SE{A} + \mu|A^{\bar1,\approx}_{h,h}]\\
&=\frac{{\up}^{f_{h,h}^{\ol{1}}-2j_h}}{\up-\up^{-1}}\Big\{ \frac{\up^{-1}}{[2]}(\SE{A}| A^{\bar1,\approx}_{h,h})(\bs{j}+2\ep_h-\ep_{h+1},r)+\frac{\up}{[2]}(\SE{A}| A^{\bar1,\approx}_{h,h})(\bs{j}-2\ep_h-\ep_{h+1},r)\\
&\qquad\qquad\qquad-(\SE{A}| A^{\bar1,\approx}_{h,h})(\bs{j}-\ep_{h+1},r)\Big\}.
\end{align*}

If  $k=h+1$,  then $f_h(A+\lambda,h+1)=f_h(A,h+1)=f^{\bar1}_{h,h+1}$. Thus,
\begin{align*}
{\fcY}_{3,h+1}&={\up}^{f_{h,h+1}^{\ol{1}}}\boxed{a}_{h,h+1}\sum_{\substack{
		\lambda \in \CMN(n,r-\snorm{A}) \\
	} }{v}^{\lambda \centerdot {\bs{j}} -\lambda_{h+1}}[\SE{A} + \lambda  + 2E_{h,h+1} | A^{\bar1,\approx}_{h,h+1}]\\
&={\up}^{f_{h,h+1}^{\ol{1}}}\boxed{a}_{h,h+1}(\SE{A}+ 2E_{h,h+1} | A^{\bar1,\approx}_{h,h+1})(\bs{j}-\ep_{h+1},r).
\end{align*}

Finally, if $k>h+1$, then \eqref{f_h(A,k)}, $f_h(A+\lambda,k)=f_h(A,k)$ and $(\SEE{A}+\lambda)_{h+1,k}=a^{\bar0}_{h+1,k}$. Due to \eqref{def_ajr},
\begin{align*}
{\fcY}_{3,k}&={\up}^{f_{h,k}^{\ol{1}} }\boxed{a}_{h,k}\sum_{\substack{
		\lambda \in \CMN(n,r-\snorm{A}) \\
	} }{v}^{\lambda \centerdot {\bs{j}}}[\SE{A} + \lambda  + 2E_{h,k} |\Ahkomm]={\up}^{f_{h,k}^{\ol{1}} }\boxed{a}_{h,k}(\SE{A}+ 2E_{h,k} |\Ahkomm)(\bs{j},r).
\end{align*}
Combining the four cases gives $\fcY_3$, the last part of the formula in (1). This proves (1).
\end{proof}

We now derive long multiplication formulas in the odd case. If we regard Proposition \ref{mulformzerocor} as the even Cartan case, then the odd Cartan case below, building on Lemma \ref{normlized-phidiag1} is a bit more complicated. Moreover, we only compute the formula under the SDP condition. See Remark \ref{longtail} below for the general case.
\begin{prop}\label{mulformdiag}
Let
{$h \in [1,n]$} and {$\Ad=(\SE{A}|\SO{A})=(a^{\bar 0}_{i,j}|a^{\bar 1}_{i,j}) \in \MNZNS(n)$} with base $A=(a_{i,j})$.
Assume, for any  {$\lambda \in \CMN(n, r-\snorm{A})$},
	 {$A+\lambda$}  satisfies the SDP condition on the $h$-th row if $h<n$.
Then,  for all  {$r\geq\snorm{A} $}, the following multiplication formula holds in {$\qSchvsQ$}:
\begin{align*}
&\ABJRS(O, E_{h,h}, \bs{ 0 },  r ) \cdot \Ad(\bs{j},  r )  \\
=&\sum_{k<h}{(-1)}^{ {{\SOE{\widetilde{a}}}_{h,k}}+\parity{\Ad}}{\up}^{d_h(A,k)}(\SE{A} -E_{h,k}|\SO{A}+ E_{h,k})(\bs{j}+\ep_h,r)\\
&\qquad+{(-1)}^{ {{\SOE{\widetilde{a}}}_{h,h}}+\parity{\Ad} }{\up}^{d_h(A,h)+j_h}(\SE{A}|\SO{A}+ E_{h,h})(\bs{j},r)\\
&\qquad\qquad+\sum_{h<k}{(-1)}^{ {{\SOE{\widetilde{a}}}_{h,k}}+\parity{\Ad} }{\up}^{d_h(A,k)}(\SE{A}-E_{h,k}|\SO{A}+ E_{h,k})(\bs{j}-\ep_h,r)\\
&+\sum_{k<h}{(-1)}^{ {{\SOE{\widetilde{a}}}_{h,k}}+\parity{\Ad}}{\up}^{d_h(A,k)}[a_{h,k}]_{\up^2}(\SE{A} + E_{h,k}| \SO{A} - E_{h,k})(\bs{j}+\ep_h,r)\\
&\qquad+{(-1)}^{ {{\SOE{\widetilde{a}}}_{h,h}}+\parity{\Ad} }\frac{{\up}^{d_h(A,h)-j_h}}{\up^2-\up^{-2}}\{(\SE{A} | \SO{A} - E_{h,h})(\bs{j}+2\ep_h,r)-(\SE{A} | \SO{A} - E_{h,h})(\bs{j}-2\ep_h,r)\}\\
&\qquad\qquad+\sum_{h<k}{(-1)}^{ {{\SOE{\widetilde{a}}}_{h,k}}+\parity{\Ad} }{\up}^{d_h(A,k)}[a_{h,k}]_{\up^2}(\SE{A} + E_{h,k}| \SO{A} - E_{h,k})(\bs{j}-\ep_h,r)
\\&=:\sdpCHk(h,\Ad(\bs{j},r)).
\end{align*}
\end{prop}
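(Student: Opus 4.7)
The plan is to expand both factors on the left-hand side using Definition \eqref{def_ajr}, apply Lemma~\ref{normlized-phidiag1}(1) termwise (using the SDP hypothesis, together with the last assertion of Theorem~\ref{CdA1} when $h=n$), and then reassemble the result into long elements by re-indexing the sum. More precisely, note first that $\ABJRS(O,E_{h,h},\bs 0,r)=\sum_{\mu\in\Lambda(n,r-1)}[(\mu|E_{h,h})]=\sum_{\lambda\in\Lambda(n,r)}[\Dd_{\bar h,\lambda}]$, where $\Dd_{\bar h,\lambda}=(\lambda-E_{h,h}|E_{h,h})$ in the notation of \eqref{ODD}. Combining with the expansion $\Ad(\bs j,r)=\sum_{\mu\in\Lambda(n,r-|A|)}\up^{\mu\centerdot\bs j}[\SE{A}+\mu|\SO{A}]$ and using the column/row matching condition \eqref{co=ro} (so $\lambda=\ro(A)+\mu$), one finds
\[
\ABJRS(O,E_{h,h},\bs 0,r)\cdot\Ad(\bs j,r)=\sum_{\mu}\up^{\mu\centerdot\bs j}\,[\Dd_{\bar h,\ro(A)+\mu}]\cdot[\SE{A}+\mu|\SO{A}].
\]

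Under the SDP hypothesis, Lemma~\ref{normlized-phidiag1}(1) applies to each summand, so the inner product equals $\sdpNHk(h,(\SE{A}+\mu|\SO{A}))$. The next step is to swap the $\sum_\mu$ and $\sum_{k=1}^n$, then exploit the fact that $\mu$ is diagonal: the partial sums $\tilde a^{\bar1}_{h,k}$ and the parity $\wp$ are unchanged by $\mu$, and the quantity $d_h(A+\mu,k)$ differs from $d_h(A,k)$ only through the contribution of $\mu_h$ at the diagonal position $(h,h)$, giving respectively $d_h(A,k)+\mu_h$ for $k<h$, $d_h(A,h)$ for $k=h$, and $d_h(A,k)-\mu_h$ for $k>h$. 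Similarly, $a_{h,k}$ picks up a $\mu_h$ only when $k=h$. These three regimes for $k$ produce exactly the three column-blocks appearing in the statement.

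The off-diagonal cases ($k\ne h$) are then immediate: after factoring out the $k$-dependent scalar, the remaining sum over $\mu$ becomes $\sum_{\mu}\up^{\mu\centerdot(\bs j\pm\ep_h)}[\SE{M}+\mu|\SO{M}]$ for the appropriate $\Md$, which is recognized as the desired long element via \eqref{def_ajr}. The diagonal cases $k=h$ require a shift of summation index: for the first $\sdpNHk$-term (Term 1) we set $\mu'=\mu-\ep_h$ to absorb the $-E_{h,h}$ in the even part, which produces the factor $\up^{j_h}$ and the long element $(\SE{A}|\SO{A}+E_{h,h})(\bs j,r)$; for the second $\sdpNHk$-term (Term 2) we set $\mu'=\mu+\ep_h$ to absorb $+E_{h,h}$, turning $[a_{h,h}+\mu_h]_{\up^2}=[\mu'_h]_{\up^2}$ into $\tfrac{\up^{2\mu'_h}-\up^{-2\mu'_h}}{\up^2-\up^{-2}}$, and splitting this as a difference of two geometric sums yields precisely $\frac{\up^{d_h(A,h)-j_h}}{\up^2-\up^{-2}}\{(\SE{A}|\SO{A}-E_{h,h})(\bs j+2\ep_h,r)-(\SE{A}|\SO{A}-E_{h,h})(\bs j-2\ep_h,r)\}$.

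The main bookkeeping obstacle is the diagonal case $k=h$ of Term 2, where a single $\up$-integer $[a_{h,k}]_{\up^2}$ (a constant for $k\ne h$) becomes an index-dependent quantity $[\mu'_h]_{\up^2}$ after shifting, forcing one to split it into two exponential sums and thereby producing the characteristic $(\up^2-\up^{-2})^{-1}$-denominator and the double shift $\bs j\pm 2\ep_h$. Terms that would require entries to become negative in the Clifford part (e.g.\ $a^{\bar1}_{h,h}=0$ in Term 2 at $k=h$) are discarded by the convention preceding the statement, which is consistent with our long-element formalism since then $(\SE{A}|\SO{A}-E_{h,h})\notin\MNZNS(n)$ makes those terms vanish automatically. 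Assembling the six subcases completes the proof.
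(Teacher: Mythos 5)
Your proposal follows the same line as the paper's proof: expand both long elements via \eqref{def_ajr}, use \eqref{co=ro} to collapse the double sum to a single sum over $\mu$ (equivalently $\lambda$) as in the paper's displayed sum, invoke Lemma~\ref{normlized-phidiag1}(1) termwise under SDP (with Theorem~\ref{CdA1} covering $h=n$), split by the three regimes $k<h$, $k=h$, $k>h$ of $d_h(A+\mu,k)$, and recognize the off-diagonal sums directly as long elements while handling $k=h$ by the shift $\mu\mapsto\mu\mp\ep_h$, with the $[\,\cdot\,]_{\up^2}$-integer at $k=h$ splitting into two geometric sums that produce the $(\up^2-\up^{-2})^{-1}$ denominator and the $\bs j\pm 2\ep_h$ shifts. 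The paper organizes this as $\fcY_1+\fcY_2$ (grouping by the two term-types of $\sdpNHk$ and then iterating over $k$), while you group by $k$ first and term-type second, but the computations, re-indexings, and the observation that negative-entry matrices vanish by convention are identical in substance.
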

\begin{proof}
Observe that for any {$\mu \in \CMN(n, r-1)$},
{${(-1)}^{\parity{\mu|E_{h,h}} \cdot \parity{\Ad}} = {(-1)}^{\parity{\Ad}}$}.
Recalling the notation $d_h(A,k)$ in \eqref{d_hA},  we have
\begin{equation}
d_h(A+\lambda,k)=\left\{
\begin{aligned}
&d_h(A,k)+\lambda_h, &\mbox{ if }k<h,\\
&d_h(A,k), &\mbox{ if }k=h,\\
&d_h(A,k)-\lambda_h, &\mbox{ if }k>h.
\end{aligned}
\right.
\end{equation}

If {$A+\lambda$}  satisfies the SDP condition on the $h$-th row  for any  {$\lambda \in \CMN(n, r-\snorm{A})$}. By  \eqref{def_ajr} and Lemma \ref{normlized-phidiag1}(1), one can obtain
\begin{align*}
	&\ABJRS( O, E_{h,h}, \bs{ 0 },  r ) \cdot \ABJRS( \SE{A}, \SO{A}, \bs{j},  r ) \\
	& = \sum_{\substack{
			\mu \in \CMN(n, r-1) \\
			\lambda \in \CMN(n, r-\snorm{A}) \\
			\co(\mu + E_{h,h}) = \ro(A+\lambda)
			}
		}
		{v}^{{\mu} \centerdot {\bs{0}}}
		{v}^{\lambda \centerdot {\bs{j}}}
		\cdot
		[{\mu}|E_{h,h}] [ \SE{A} + \lambda | \SO{A} ] \\
	& = \sum_{\substack{ \lambda \in \CMN(n,r-\snorm{A}) } }
		{v}^{ \lambda \centerdot {\bs{j}} }
		{(-1)}^{\parity{\Ad}}
		\sum_{ k=1}^n
			{(-1)}^{ {{\SOE{\widetilde{a}}}_{h,k}} }{\up}^{d_h(A+\lambda,k)}\{
			[\SE{A} + \lambda -E_{h,k}|\SO{A}+ E_{h,k}]\\
&\quad\quad+[{(A+\lambda)}_{h,k}]_{\up^2}[\SE{A} + \lambda+  E_{h,k}| \SO{A} - E_{h,k}]\}\\
&= {(-1)}^{\parity{\Ad}}( {\fcY}_1 + {\fcY}_2) ,
\end{align*}
where
\begin{align*}
{\fcY}_1
=&\sum_{\substack{ \lambda \in \CMN(n,r-\snorm{A}) } }
		{v}^{ \lambda \centerdot {\bs{j}} }
		\sum_{ k=1}^n
			{(-1)}^{ {{\SO{\widetilde{a}}}_{h,k}} }{\up}^{d_h(A+\lambda,k)}
			[\SE{A} + \lambda -E_{h,k}|\SO{A}+ E_{h,k}]\\
=&\sum_{k<h}{(-1)}^{ {{\SO{\widetilde{a}}}_{h,k}} }{\up}^{d_h(A,k)}\sum_{\substack{ \lambda \in \CMN(n,r-\snorm{A}) } }
		{v}^{ \lambda \centerdot {\bs{j}}+\lambda_h }[\SE{A}+\lambda -E_{h,k}|\SO{A}+ E_{h,k}]\\
&+{(-1)}^{ {{\SOE{\widetilde{a}}}_{h,h}} }{\up}^{d_h(A,h)}\sum_{\substack{ \lambda \in \CMN(n,r-\snorm{A}) } }
		{v}^{ \lambda \centerdot {\bs{j}}}[\SE{A}+\lambda -E_{h,h}|\SO{A}+ E_{h,h}]\\
&+\sum_{k>h}{(-1)}^{ {{\SOE{\widetilde{a}}}_{h,k}} }{\up}^{d_h(A,k)}\sum_{\substack{ \lambda \in \CMN(n,r-\snorm{A}) } }
		{v}^{ \lambda \centerdot {\bs{j}}-\lambda_h }[\SE{A}+\lambda -E_{h,k}|\SO{A}+ E_{h,k}]\\
=&\sum_{k<h}{(-1)}^{ {{\SOE{\widetilde{a}}}_{h,k}} }{\up}^{d_h(A,k)}(\SE{A} -E_{h,k}|\SO{A}+ E_{h,k})(\bs{j}+\ep_h,r)\\
&+{(-1)}^{ {{\SOE{\widetilde{a}}}_{h,h}} }{\up}^{d_h(A,h)+j_h}(\SE{A}|\SO{A}+ E_{h,h})(\bs{j},r)\\
&+\sum_{k>h}{(-1)}^{ {{\SOE{\widetilde{a}}}_{h,k}} }{\up}^{d_h(A,k)}(\SE{A}-E_{h,k}|\SO{A}+ E_{h,k})(\bs{j}-\ep_h,r),
\end{align*}
{  and }
\begin{align*}
{\fcY}_2 & = \sum_{\substack{ \lambda \in \CMN(n,r-\snorm{A}) } }
		{v}^{ \lambda \centerdot {\bs{j}} }
		\sum_{ k=1}^n
			{(-1)}^{ {{\SO{\widetilde{a}}}_{h,k}} }{\up}^{d_h(A+\lambda,k)}
[{(A+\lambda)}_{h,k}]_{\up^2}[\SE{A} + \lambda+  E_{h,k}| \SO{A} - E_{h,k}]\\
&=\sum_{k<h}{(-1)}^{ {{\SO{\widetilde{a}}}_{h,k}} }{\up}^{d_h(A,k)}[a_{h,k}]_{\up^2}\sum_{\substack{ \lambda \in \CMN(n,r-\snorm{A}) } }
		{v}^{ \lambda \centerdot {\bs{j}}+\lambda_h }[\SE{A} + \lambda+  E_{h,k}| \SO{A} - E_{h,k}]\\
&+{(-1)}^{ {{\SO{\widetilde{a}}}_{h,h}} }{\up}^{d_h(A,h)}\sum_{\substack{ \lambda \in \CMN(n,r-\snorm{A}) } }
		{\up}^{ \lambda \centerdot {\bs{j}} }\frac{\up^{2(\lambda_h+1)}-\up^{-2(\lambda_h+1)}}{\up^2-\up^{-2}}[\SE{A}+\lambda + E_{h,h}| \SO{A} - E_{h,h}](\mbox{set $\lambda+\ep_h=\mu$})\\
&+\sum_{k>h}{(-1)}^{ {{\SO{\widetilde{a}}}_{h,k}} }{\up}^{d_h(A,k)}[a_{h,k}]_{\up^2}\sum_{\substack{ \lambda \in \CMN(n,r-\snorm{A}) } }
		{v}^{ \lambda \centerdot {\bs{j}}-\lambda_h }[\SE{A} + \lambda+  E_{h,k}| \SO{A} - E_{h,k}]\\
&=\sum_{k<h}{(-1)}^{ {{\SO{\widetilde{a}}}_{h,k}} }{\up}^{d_h(A,k)}[a_{h,k}]_{\up^2}(\SE{A} + E_{h,k}| \SO{A} - E_{h,k})(\bs{j}+\ep_h,r)\\
&+{(-1)}^{ {{\SO{\widetilde{a}}}_{h,h}} }\frac{{\up}^{d_h(A,h)-j_h}}{\up^2-\up^{-2}}\{(\SE{A} | \SO{A} - E_{h,h})(\bs{j}+2\ep_h,r)-(\SE{A} | \SO{A} - E_{h,h})(\bs{j}-2\ep_h,r)\}\\
&+\sum_{k>h}{(-1)}^{ {{\SOE{\widetilde{a}}}_{h,k}} }{\up}^{d_h(A,k)}[a_{h,k}]_{\up^2}(\SE{A} + E_{h,k}| \SO{A} - E_{h,k})(\bs{j}-\ep_h,r).
\end{align*}
In the above computation for $\fcY_1$ and $\fcY_2$, we observe that if $(\SE{A} + \lambda -E_{h,k}|\SO{A}+ E_{h,k})\in\MNZN(n)$ for any $1\leq k\leq n$, then $\SO{a}_{h,k}=0$  and $\SO{\tilde{a}}_{h-1,k}=\SO{\tilde{a}}_{h,k}$
and if $(\SE{A} + \lambda+  E_{h,k}| \SO{A} - E_{h,k})\in\MNZN(n)$ for any $1\leq k\leq n$, then $\SOE{a}_{h,k}=1$.
Putting {${\fcY}_1$} and {${\fcY}_2$} together,  the proposition is proved.
\end{proof}

Applying Lemma \ref{standard-phiupper1}  
together with the notations in \eqref{A01hk} and \eqref{f_hk}, we obain
the following multiplication formulas in Propositions \ref{mulformodd1} and \ref{mulformodd2} under the SDP condition. Their detailed proofs are omitted.
See Remark \ref{longtail} for the general case.
\begin{prop}\label{mulformodd1}
	Let
	{$h \in [1,n)$} and  {$\Ad=(\SE{A}|\SO{A})=(a^{\bar 0}_{i,j}|a^{\bar 1}_{i,j}) \in \MNZNS(n)$} with base $A=(a_{i,j})$.
Assume, for each $1\leq k\leq n$ and $\lambda \in \CMN(n, r-\snorm{A})$,
{$({A+\lambda})^+_{h,k}$} is in $M_n(\NN)$ and satisfies SDP condition on the  $h$-th row.
Then,  for all  {$r\geq \snorm{A}$}, the following multiplication formula  holds in {$\qSchvsQ$}:
\begin{align*}
(-1)^{\wp(\Ad)}& \ABJRS(O, E_{h, h+1}, \bs{ 0 },  r  ) \cdot \Ad(\bs{j},r)=\\
&\sum_{k<h}
	{(-1)}^{{ {\SOE{\widetilde{a}}}_{h-1,k}}}
	{\up}^{f^{\ol{0}}_{h,k}}
	\ABJRS( \SE{A} - E_{h+1, k}, \SO{A}  + E_{h,k}, \bs{j} +\bsal_h,  r ) \\
	& \hspace{1cm} +
	{(-1)}^{{ {\SOE{\widetilde{a}}}_{h-1,h}}}
	{\up}^{f^{\ol{0}}_{h,h}}
	\ABJRS( \SE{A} - E_{h+1, h}, \SO{A}  + E_{h,h}, \bs{j}- \ep_{h+1}, r )\\
	& \hspace{1cm}+
	{(-1)}^{{ {\SOE{\widetilde{a}}}_{h-1,h+1}}}
	{\up}^{f^{\ol{0}}_{h,h+1}+j_{h+1}}
	\ABJRS( \SE{A}, \SO{A}  + E_{h,h+1}, \bs{j},  r ) \\
	& \hspace{1cm} +
	\sum_{ \substack{ k > h+1}  }
	{(-1)}^{{ {\SOE{\widetilde{a}}}_{h-1,k}}}
	{\up}^{f^{\ol{0}}_{h,k}}
	\ABJRS( \SE{A} - E_{h+1, k}, \SO{A}  + E_{h,k}, \bs{j}, r )\\
+&\sum_{k < h}
	{(-1)}^{{ {\SOE{\widetilde{a}}}_{h+1,k}}}
	{\up}^{f^{\ol{1}}_{h,k} }
	[ \SEE{a}_{h,k} +1]
	\ABJRS( \SE{A}+ E_{h,k}, \SO{A} - E_{h+1, k}, \bs{j} + \bsal_h,  r )  \\	
	& \hspace{1cm} +{(-1)}^{{ {\SOE{\widetilde{a}}}_{h+1,h}}}
	\frac{\up^{f^{\ol{1}}_{h,h}-j_h}}{\up-\up^{-1}}
		\Big\{\ABJRS( \SE{A}, \SO{A}- E_{h+1, h} , {\bs{j}} + \bsal_h, r )- \ABJRS(  \SE{A}, \SO{A}- E_{h+1, h} , {\bs{j}}-\bsal_h^+ , r )\Big\} \\
	& \hspace{1cm} +
	{(-1)}^{{ {\SOE{\widetilde{a}}}_{h+1,h+1}}}
	{\up}^{ f^{\ol{1}}_{h,h+1} }
	[\SEE{a}_{h,h+1} +1]
	\ABJRS( \SE{A}  + E_{h,h+1}, \SO{A} - E_{h+1, h+1}, \bs{j}-\ep_{h+1},  r )\\
	& \hspace{1cm}+
	\sum_{k > h+1}
	{(-1)}^{{ {\SOE{\widetilde{a}}}_{h+1,k}}}
	{\up}^{f^{\ol{1}}_{h,k}}
	[\SEE{a}_{h,k} +1]
	\ABJRS( \SE{A}  + E_{h,k}, \SO{A} - E_{h+1, k}, \bs{j},  r )\\
+&\sum_{k < h}
	{(-1)}^{{ {\SOE{\widetilde{a}}}_{h-1,k}} }
	{\up}^{f^{\ol{0}}_{h,k}}\boxed{a}_{h,k}
	\ABJRS( \SE{A} -E_{h+1, k} + 2 E_{h,k}, \SO{A} - E_{h,k}, {\bs{j}} + \bsal_h ,  r )  \\
	& \hspace{1cm} +
	{(-1)}^{{ {\SOE{\widetilde{a}}}_{h-1,h}} }\frac{{\up}^{f^{\ol{0}}_{h,h}-2j_h}}{(\up-\up^{-1})}
	\Big\{\frac{\up^{-1}}{[2]}
	\ABJRS( \SE{A}  -E_{h+1, h}, \SO{A}  -E_{h,h}, \bs{j} +2\ep_{h}-\ep_{h+1},  r ) \\
	\hspace{0.2cm}+&\frac{\up}{[2]}
	\ABJRS( \SE{A} -E_{h+1, h}, \SO{A}  -E_{h,h}, \bs{j} - 2 \ep_{h}-\ep_{h+1},  r )  -
	\ABJRS( \SE{A} -E_{h+1, h}, \SO{A}  -E_{h,h}, \bs{j}-\ep_{h+1},  r )\Big\}
	\\
	& \hspace{1cm} +
	{(-1)}^{{ {\SOE{\widetilde{a}}}_{h-1,h+1}} }
	{\up}^{f^{\ol{0}}_{h,h+1}+j_{h+1}}(\up-\up^{-1})\left[{{a}_{h,h+1}+1}\atop 2\right]
	\ABJRS( \SE{A}  + 2 E_{h,h+1}, \SO{A} - E_{h,h+1}, \bs{j},  r  )  \\
	& \hspace{1cm}+
	\sum_{k>h+1}
	{(-1)}^{{ {\SOE{\widetilde{a}}}_{h-1,k}} }
	{\up}^{f^{\ol{0}}_{h,k}}(\up-\up^{-1})\left[{{a}_{h,k}+1}\atop 2\right]
	\ABJRS( \Ahkzp +  E_{h,k}, \SO{A} - E_{h,k}, \bs{j},  r )\\
=:& \sdpCHe(h,\Ad(\bs{j},r)).
\end{align*}
\end{prop}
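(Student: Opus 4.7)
The plan is to mirror the strategy used in the proof of Proposition \ref{mulformeven-2}(1), with Lemma \ref{standard-phiupper1}(1) replacing Lemma \ref{phiupper-even-norm}(2) as the input. By the definition \eqref{def_ajr} of the long elements and the SDP hypothesis, we may expand
\begin{equation*}
(-1)^{\wp(\Ad)} \ABJRS(O, E_{h,h+1}, \bs{0}, r)\cdot\Ad(\bs{j},r) = \sum_{\lambda\in\CMN(n,r-\snorm{A})} \up^{\lambda\centerdot\bs{j}}\,\sdpNHe(h, (\SE{A}+\lambda\,|\,\SO{A})),
\end{equation*}
where each summand is given explicitly by \eqref{oddUP}. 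This decomposes naturally into three pieces $\fcY_1 + \fcY_2 + \fcY_3$ according to the three types of matrix replacements appearing in $\sdpNHe$: one where a $1$ moves up from the even part into the odd part, one where a $1$ moves up within the odd part (producing a coefficient $[\SEE{a}_{h,k}+1]$), and one where two odd entries are annihilated at the cost of a $2$ in the even part (producing $(\up-\up^{-1})\bigl[{a_{h,k}+1\atop 2}\bigr]$).

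After swapping the two $\Sigma$'s (over $k$ and over $\lambda$), each piece becomes a sum over $k$ of a $\lambda$-sum. For each $k$ we use the identity \eqref{f_h(A,k)} (already derived in the previous proof) to rewrite $f_h(A+\lambda,k)$ in terms of $f_h(A,k)$ plus a linear correction in $\lambda_h,\lambda_{h+1}$. The correction is absorbed into the weight $\up^{\lambda\centerdot\bs{j}}$ as a shift of $\bs{j}$, and the resulting sum is precisely a long element after an index change $\mu=\lambda\pm\ep_i$ or $\mu=\lambda+2\ep_h$. The four sub-cases $k<h$, $k=h$, $k=h+1$, $k>h+1$ must be treated separately because $(\SEE{A}+\lambda)_{i,j}$ equals $a_{i,j}$ only off-diagonal; on the diagonal it equals $\lambda_i$, which introduces the extra $\lambda$-dependence. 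The signs ${(-1)}^{\SOE{\widetilde{a}}_{h-1,k}}$ and ${(-1)}^{\SOE{\widetilde{a}}_{h+1,k}}$ in $\sdpNHe$ are independent of $\lambda$, so they pass through the sum unchanged.

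The two genuinely subtle points are identical to the obstacles in the proof of Proposition \ref{mulformeven-2}(1). First, in the middle piece $\fcY_2$ the case $k=h$ produces a factor $[\lambda_h+1]$ (from $[\SEE{a}_{h,h}+1]$ with $\SEE{a}_{h,h}=\lambda_h$); after the substitution $\mu = \lambda+\ep_h$, this $q$-integer is rewritten as $\frac{\up^{\mu_h}-\up^{-\mu_h}}{\up-\up^{-1}}$, and pairing the two exponentials with $\bs{j}\pm\bsal_h$ yields the characteristic difference $(\SE{A}\,|\,\SO{A}-E_{h+1,h})(\bs{j}+\bsal_h,r)-(\SE{A}\,|\,\SO{A}-E_{h+1,h})(\bs{j}-\bsal_h^+,r)$. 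Second, in the last piece $\fcY_3$ the case $k=h$ produces a Gaussian binomial $\bigl[{\lambda_h+2\atop 2}\bigr]$; after the substitution $\mu=\lambda+2\ep_h$ we use the identity
\begin{equation*}
\left[\mu_h\atop 2\right] \;=\; \frac{\up^{2\mu_h-1}+\up^{-2\mu_h+1}}{[2](\up-\up^{-1})^2} \;-\; \frac{1}{(\up-\up^{-1})^2},
\end{equation*}
which annihilates the $\mu_h=0,1$ terms and regroups the remaining sum into the three-term expression with the $\frac{\up^{\pm 1}}{[2]}$ coefficients that appears in the statement.

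The main obstacle is purely bookkeeping: aligning the signs ${(-1)}^{\SOE{\widetilde{a}}_{h\pm 1,k}}$ from Lemma \ref{standard-phiupper1}(1) with the $(-1)^{\wp(\Ad)}$ on the left, and then matching each of the twelve groups of terms (three pieces $\times$ four positional sub-cases) against the corresponding line in the statement. The SDP hypothesis ensures that Lemma \ref{standard-phiupper1}(1) applies without the extraneous lower-order tail, so no additional correction terms appear; this is what makes the closed form possible. Once the correspondence is set up, each sub-case reduces to a single application of \eqref{f_h(A,k)}, an index shift, and the definition \eqref{def_ajr}.
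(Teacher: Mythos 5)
Your proposal is correct and takes essentially the same route the paper intends: the paper explicitly states, just before Propositions \ref{mulformodd1} and \ref{mulformodd2}, that they follow by applying Lemma \ref{standard-phiupper1} with the notations \eqref{A01hk} and \eqref{f_hk}, and that ``their detailed proofs are omitted'' --- so the expected proof is indeed the one you sketch, namely mirroring the proof of Proposition \ref{mulformeven-2}(1) with Lemma \ref{standard-phiupper1}(1) as the input, swapping the two sums, invoking \eqref{f_h(A,k)}, and treating the four positional sub-cases with the shifts $\mu=\lambda\pm\ep_i$ and $\mu=\lambda+2\ep_h$. One small descriptive imprecision worth noting (though it affects nothing mathematically): your characterisations of the second and third pieces of $\sdpNHe$ describe the even case of Proposition \ref{mulformeven-2} rather than the odd case here --- in \eqref{oddUP} the second piece moves a $1$ from odd $(h+1,k)$ to even $(h,k)$, and the third piece adds $2E_{h,k}$ to the even part while removing $E_{h+1,k}$ from the even part and $E_{h,k}$ from the odd part (only one odd entry is annihilated). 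You should also observe that the sign $(-1)^{\SOE{\widetilde{a}}_{h,k}+1}$ in Lemma \ref{standard-phiupper1}(1) matches the $(-1)^{\SOE{\widetilde{a}}_{h+1,k}}$ in the statement because the term is nonzero only when $\SOE{a}_{h+1,k}=1$, so $\SOE{\widetilde{a}}_{h+1,k}=\SOE{\widetilde{a}}_{h,k}+1$; your sketch correctly notes these signs are $\lambda$-independent but does not spell out this reconciliation.
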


Recall $\OG_h(A,k)=-\AK(h+1,k)-\AK(h,k)$ from \eqref{g-hk}. Similar to \eqref{f_hk}, 
we introduce the notations
\begin{equation}\label{OG_h,k}
\Og^{-,\ol{0}}_{h,k}=\OG_h(A,k)-\SO{a}_{h,k},\quad\quad
\Og^{-,\ol{1}}_{h,k}=\left\{
\begin{aligned}
&\OG_h(A,k)+\SE{a}_{h,k}, &\mbox{if } h\neq k;\\
&\OG_h(A,k), &\mbox{if } h=k.
\end{aligned}
\right.
\end{equation}

\begin{prop}\label{mulformodd2}
	Let
	{$h \in [1,n)$},  {$\Ad=(\SE{A}|\SO{A})=(a^{\bar 0}_{i,j}|a^{\bar 1}_{i,j}) \in \MNZNS(n)$} with base $A=(a_{i,j})$, and $\boxed{a}_{\,i,j}:=(\up-\up^{-1})\left[{{a}_{i,j}+1}\atop 2\right]$.
Assume, for any   $\lambda \in \CMN(n, r-\snorm{A})$,
{${A+\lambda}$}  satisfies the SDP condition on the $h$-th row,
then the following multiplication formulas hold in {$\qSchvsQ$} for all  {$r\geq \snorm{A} $}:
\begin{align*}
(-1)^{\wp(\Ad)} & \ABJRS(O, E_{h+1, h}, \bs{ 0 },  r  ) \cdot \Ad(\bs{j},r)=\\
&
\sum_{k<h}
	{(-1)}^{{\SOE{\widetilde{a}}}_{h,k}}
 {\up}^{\Og^{-,\ol{0}}_{h,k}}
	\ABJRS( \SE{A} - E_{h,k}, \SO{A}+ E_{h+1, k}, \bs{j},  r ) \\
	&\hspace{1.2cm} +
	{(-1)}^{{\SOE{\widetilde{a}}}_{h,h}}
 {\up}^{\Og^{-,\ol{0}}_{h,h}+j_h}
	\ABJRS( \SE{A} , \SO{A}+ E_{h+1, h}, \bs{j},  r ) \\
	&\hspace{1.2cm} +
	{(-1)}^{{\SOE{\widetilde{a}}}_{h,h+1}}
	 {\up}^{\Og^{-,\ol{0}}_{h,h+1}}
	\ABJRS( \SE{A} - E_{h,h+1}, \SO{A}+ E_{h+1, h+1}, \bs{j}-\ep_h,  r ) \\
	&\hspace{1.2cm} +
 	\sum_{k>h+1}
	{(-1)}^{{\SOE{\widetilde{a}}}_{h,k}}
 {\up}^{\Og^{-,\ol{0}}_{h,k}}
	\ABJRS( \SE{A} - E_{h,k}, \SO{A}+ E_{h+1, k}, \bs{j} - \bsal_h^+,  r )\\
+&\sum_{k<h}
	{(-1)}^{{\SOE{\widetilde{a}}}_{h,k}+1}
	 {\up}^{\Og^{-,\ol{0}}_{h,k}}\boxed{a}_{h+1,k}
	\ABJRS( \Ahkzm + E_{h+1, k}, \SO{A}  -E_{h+1, k}, \bs{j},  r ) \\
	& \hspace{1.2cm}  +
	{(-1)}^{{\SOE{\widetilde{a}}}_{h,h}+1}
	{\up}^{\Og^{-,\ol{0}}_{h,h}+j_h}\boxed{a}_{h+1,h}
	\ABJRS( \SE{A} + 2E_{h+1, h}, \SO{A}  -E_{h+1, h}, \bs{j},  r ) \\
	 \hspace{1.2cm} +
	{(-1)}^{{\SOE{\widetilde{a}}}_{h,h+1}+1}&
\frac{ {\up}^{\Og^{-,\ol{0}}_{h,h+1}-2j_{h+1}}}{\up-\up^{-1}} \Big \{\frac{\up^{-1}}{[2]}
	\ABJRS( \SE{A}  - E_{h,h+1}, \SO{A}  -E_{h+1, h+1}, \bs{j} -\ep_{h}+ 2\ep_{h+1},  r ) \\
	&\hspace{1.2cm}  +
	 \frac{\up}{[2]} \ABJRS( \SE{A}  - E_{h,h+1}, \SO{A}  -E_{h+1, h+1}, \bs{j}-\ep_{h}-2 \ep_{h+1},  r ) \\
	&\hspace{1.2cm}  -
	 \ABJRS( \SE{A}  - E_{h,h+1}, \SO{A}  -E_{h+1, h+1}, \bs{j}-\ep_h,  r )
	\Big \}
	 \\
	& \hspace{1.2cm}  +
 	\sum_{k>h+1}
	{(-1)}^{{\SOE{\widetilde{a}}}_{h,k}+1}
	{\up}^{\Og^{-,\ol{0}}_{h,k}}\boxed{a}_{h+1,k}
	 \ABJRS( \Ahkzm + E_{h+1, k}, \SO{A}  -E_{h+1, k}, \bs{j}-\bsal_h^+,  r ) \\
+&\sum_{k < h}
 	{(-1)}^{{\SOE{\widetilde{a}}}_{h,k}} {\up}^{\Og^{-,\ol{1}}_{h,k}}
	[ \SEE{a}_{h+1,k}+1]
	\ABJRS( \SE{A} +  E_{h+1,k}, \SO{A}  - E_{h,k}, \bs{j},  r ) \\
	&  \hspace{1.2cm}  +
 	{(-1)}^{{\SOE{\widetilde{a}}}_{h,h}} {\up}^{\Og^{-,\ol{1}}_{h,h}}
	[\SEE{a}_{h+1,h}+1]
	\ABJRS( \SE{A} +  E_{h+1,h}, \SO{A}  - E_{h,h}, \bs{j} +  \ep_{h},  r ) \\
	 \hspace{1.2cm} +
 	{(-1)}^{{\SOE{\widetilde{a}}}_{h,h+1}}&\frac{{\up}^{\Og^{-,\ol{1}}_{h,h+1}-j_{h+1}}}{\up-\up^{-1}}\Big\{
	\ABJRS( \SE{A}, \SO{A}  - E_{h,h+1}, \bs{j} - \bsal_h,  r )-
	\ABJRS( \SE{A}, \SO{A}  - E_{h,h+1}, \bs{j}-\bsal_h^+,  r )
	\Big\}
	 \\
	&  \hspace{1.2cm}  +
	\sum_{k>h+1}
 		{(-1)}^{{\SOE{\widetilde{a}}}_{h,k}} {\up}^{\Og^{-,\ol{1}}_{h,k}}
	[{ \SEE{a}_{h+1,k}+1}]
	\ABJRS( \SE{A}  + E_{h+1,k}, \SO{A}  - E_{h,k}, \bs{j} -\bsal_h^+,  r )\\
=:&\sdpCHf(h,\Ad(\bs{j},r)).
\end{align*}
\end{prop}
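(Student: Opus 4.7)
The plan is to follow closely the template already established in the proofs of Propositions \ref{mulformeven-2}, \ref{mulformdiag}, and \ref{mulformodd1}, using the short multiplication formula from Lemma \ref{standard-phiupper1}(2) (the standardised formula for $[\Fd_{\bar h,\lambda}][\Ad]$) as the principal input. First I would expand the left-hand side using definition \eqref{def_ajr}: since $\ABJRS(O,E_{h+1,h},\bs{0},r) = \sum_{\mu \in \Lambda(n,r-1)} [\mu | E_{h+1,h}]$ and the inner product $[\mu | E_{h+1,h}] \cdot [\SE{A}+\lambda | \SO{A}]$ vanishes unless $\mu = \co(A+\lambda) - \ep_{h+1} + \ep_h$, the product reduces to
$$\sum_{\lambda \in \Lambda(n,r-\snorm{A})} \up^{\lambda \centerdot \bs{j}}\, [\ro(A+\lambda) - E_{h+1,h+1} | E_{h+1,h}] \cdot [\SE{A}+\lambda | \SO{A}].$$
The SDP hypothesis on $A+\lambda$ at the $h$-th row permits direct application of Lemma \ref{standard-phiupper1}(2) term-by-term, producing three families of standard basis elements indexed by $k \in [1,n]$, each prefixed by the sign $(-1)^{\wp(\Ad) + \SOE{\tilde a}_{h,k}}$. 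The overall factor $(-1)^{\wp(\Ad)}$ factors out cleanly because $\wp(\mu|E_{h+1,h}) = \bar 1$ for every $\mu$, and the sign $(-1)^{\SOE{\tilde a}_{h,k}}$ is independent of $\lambda$ since $\SO{A}$ is unaffected by the diagonal perturbation.

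Next I would swap the order of summation so each family becomes $\sum_k (\cdots) \sum_\lambda \up^{\lambda \centerdot \bs{j}}(\cdots)$, and evaluate the $\lambda$-dependence of the exponents. Using the piecewise identity
$$\OG_h(A+\lambda,k) - \OG_h(A,k) = \begin{cases} 0, & k \le h, \\ -\lambda_h, & k = h+1, \\ -\lambda_h - \lambda_{h+1}, & k > h+1, \end{cases}$$
together with the observations $(\SE{A}+\lambda)_{h,k} = \SE{a}_{h,k}$ for $k \ne h$ (and $=\lambda_h$ for $k=h$), and similarly for row $h+1$, I split each family into four subcases $k < h$, $k = h$, $k = h+1$, $k > h+1$. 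In each off-diagonal subcase, a re-indexing of $\lambda$ (e.g.\ $\lambda \to \lambda + \ep_{h+1} - \ep_h$ when the new matrix has shifted diagonal entries) repackages the inner sum into a long element $\Bd(\bs{j}', r)$ for the correct base $\Bd$ and shift $\bs{j}'$; the shift $\bs{j}'$ collects contributions from both the piecewise exponent above and from the transformation of $\lambda$.

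The main obstacle will be the diagonal subcases $k = h$ and $k = h+1$, which produce the telescoping terms involving $(\up-\up^{-1})^{-1}$ and $[2]^{-1}$. For the $k=h+1$ subcase of the first family, the base matrix gains $E_{h+1,h+1}$, forcing a shift $\lambda \to \lambda - \ep_{h+1}$ which, after summing the contributions from two distinct $\lambda$-translates, yields the difference $\Bd(\bs{j}-\bsal_h, r) - \Bd(\bs{j} - \bsal_h^+, r)$ with coefficient $(\up-\up^{-1})^{-1}$. For the $k=h+1$ subcase of the second family (involving $\SO{A} - E_{h+1,h+1}$ and the quadratic bracket $\boxed{a}_{h+1,h+1} = (\up-\up^{-1}) \left[{(a_{h+1,h+1}+\lambda_{h+1})+1 \atop 2}\right]$), one expands
$$\left[{\lambda_{h+1}+2} \atop 2\right] = \frac{\up^{2\lambda_{h+1}-1} + \up^{-2\lambda_{h+1}+1}}{[2](\up-\up^{-1})^2} - \frac{1}{(\up-\up^{-1})^2},$$
after which re-indexing $\lambda \to \lambda + 2\ep_{h+1}$ produces three geometric sums in $\lambda_{h+1}$ that combine into the three-term block with coefficients $\frac{\up^{-1}}{[2]}$, $\frac{\up}{[2]}$, and $-1$ displayed in the statement. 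The $k=h$ subcases are similar but simpler since the diagonal entry $\lambda_h$ appears linearly; they are handled by the analogous identity $[\lambda_h + 1] = \frac{\up^{\lambda_h+1} - \up^{-\lambda_h - 1}}{\up - \up^{-1}}$ and a shift $\lambda \to \lambda + \ep_h$.

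Once the three families are reassembled from their four subcases and the accumulated shifts in $\bs{j}$ are collated, the expression reproduces exactly the three blocks $\sdpCHk$-type summation shown in the statement, completing the proof. The bookkeeping of exponents is the only source of complexity; no new algebraic identity beyond those already exploited in the proofs of Propositions \ref{mulformeven-2} and \ref{mulformodd1} is required.
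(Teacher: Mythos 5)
Your high-level strategy---expand the long element via \eqref{def_ajr}, apply Lemma \ref{standard-phiupper1}(2) to each $[\SE{A}+\lambda\,|\,\SO{A}]$ under the SDP hypothesis, swap the $k$- and $\lambda$-sums, and track how $\OG_h(A+\lambda,k)$ and the bracket factors depend on $\lambda$---is exactly the template the paper intends (the proof is omitted in the text, but the sentence preceding Proposition \ref{mulformodd1} attributes both odd formulas to Lemma \ref{standard-phiupper1}), and your piecewise formula for $\OG_h(A+\lambda,k)-\OG_h(A,k)$ is correct. However, your attribution of the telescoping terms is confused. The first family of Lemma \ref{standard-phiupper1}(2), with base $(\SE{A}-E_{h,k}\,|\,\SO{A}+E_{h+1,k})$, produces at $k=h+1$ the single term $(\SE{A}-E_{h,h+1},\SO{A}+E_{h+1,h+1})(\bs{j}-\ep_h,r)$ with no $(\up-\up^{-1})^{-1}$ factor: since the diagonal of $\SE{A}+\lambda-E_{h,h+1}$ is exactly $\lambda$, no re-indexing is required and the extra $\up^{-\lambda_h}$ from $\OG_h(A+\lambda,h+1)$ is simply absorbed into $\bs{j}-\ep_h$. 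The $(\up-\up^{-1})^{-1}$ telescoping difference you describe arises instead from the \emph{third} family (the one carrying the factor $[\SE{a}_{h+1,k}+1]$) at $k=h+1$: there $\SE{a}_{h+1,h+1}=0$ turns the bracket into $[\lambda_{h+1}+1]$, and after substituting $\mu=\lambda+\ep_{h+1}$ this equals $[\mu_{h+1}]=\frac{\up^{\mu_{h+1}}-\up^{-\mu_{h+1}}}{\up-\up^{-1}}$, whose two exponential halves repackage into the long elements at $\bs{j}-\bsal_h$ and $\bs{j}-\bsal_h^+$. You have correctly placed the quadratic $[2]^{-1}$ block with the second family at $k=h+1$. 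Your last paragraph is also off: for this $\sfF$-type formula none of the $k=h$ subcases carries a $\lambda$-dependent bracket---families 1 and 2 need only the shift $\mu=\lambda-\ep_h$ (absorbing the $-E_{h,h}$), and family 3 simply converts $\up^{\lambda_h}$ into $\bs{j}+\ep_h$. The $[\lambda_h+1]$ phenomenon at $k=h$ that you invoke belongs to the $\sfE$-type case in Proposition \ref{mulformodd1}, where the whole picture is reflected and both telescoping blocks sit at $k=h$ rather than $k=h+1$. None of this is a structural obstacle---lining the families up correctly and re-running the same bookkeeping does reproduce the stated formula---but as written your diagonal-case analysis would not produce the right terms.
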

\begin{rems}\label{longtail}  (1)
Since the general standard multiplication formulas for the odd case in Lemmas \ref{normlized-phidiag1} and \ref{standard-phiupper1} contain tail parts which are not explicitly known, we are not able to show in Propositions \ref{mulformdiag}, \ref{mulformodd1}, and \ref{mulformodd2} the general case as a sum of a ``head part" plus a ``tail part". We even do not know if such a product is a linear combination of some $\Bd(\bs{j},r)$!
We will prove this in the next section.

(2)
If $\Ad=(A|O)$, the multiplication formulas in Propositions \ref{mulformzerocor} and \ref{mulformeven-2} are identical to the type $A$ formulas given in \cite[Lem.~5.3]{BLM} or \cite[Th.~14.8]{DDPW}.

(3) In every multiplication formula in Propositions \ref{mulformzerocor} --\ref{mulformodd2}, all coefficients depend on the entries of $\Ad$ and $\bs{j}$ and are independent of $r\geq|A|.$
\end{rems}

\section{$\Uvqn$-type generators and relations in $\qSchvsQ$}\label{Quantum relations}
By the Schur--Weyl--Olshanski duality in Proposition \ref{DW2}, the $\mathbb Q(\up)$-superalgebra $\bs{\mathcal Q}_\up(n,r)$ as a homomorphic image of $\Uvqn$ possesses a set of generators which satisfy the defining relations (QQ1)--(QQ6) in Definition \ref{defqn}. We now use the long multiplication formulas in previous section to directly construct these generators in $\qSchvsQ$ and show that they satisfy (QQ1)--(QQ6). This approach in fact provides more. First, it builds the $\Uvqn$ structure from Hecke--Clifford and queer $\up$-Schur superalgebras. Second, this construction provides a new basis for $\Uvqn$ and all structure constants of generators relative to the new basis.

For any $r\geq1$, define the following elements in $\qSchvsQ$: for all $i\in[1,n]$ and $h\in[1,n)$,
\begin{equation}\label{degree-r generator}
\begin{aligned}
&G^\pm_{i,r}=\ABJS(O, O, \pm \ep_{i},r), \qquad
X_{h,r}=\ABJS(E_{h, h+1}, O, \bs{0},r) , \qquad
Y_{h,r}=\ABJS(E_{h+1, h}, O, \bs{0},r) , \\
&G_{\bar i,r}=\ABJS(O, E_{i,i}, \bs{0},r), \qquad
 X_{\bar h,r}=\ABJS( O, E_{h,h+1}, \bs{0} ,r),  \qquad Y_{\bar h,r}=\ABJS(O, E_{h+1,h},  \bs{0},r).
 \end{aligned}
\end{equation}
Clearly, the nonzero matrices involved in these elements define certain Chevalley generators for the queer Lie superalgebra $\mathfrak q_n(\mathbb Q)$.

\begin{lem}\label{mulform_ef-r}
For any integer {$h$} with {$ 1 \le h \le n-1$}, the following equations hold in {$\qSchvsQ$}:
{
\begin{itemize}
\item[{\rm(1)}]$
X_{\ol{h},r} Y_{\ol{h},r} + Y_{\ol{h},r} X_{\ol{h},r}
=  \frac{G_{h,r} G_{h+1,r} - G_{h,r}^{-1} G_{h+1,r}^{-1}}{{v} - {v}^{-1}}
	 + ({v} - {v}^{-1}) G_{\ol{h},r} G_{\ol{h+1},r}.$
\item[{\rm (2)}] $X_{h,r} Y_{\ol{h},r}    - Y_{\ol{h},r} X_{h,r}
= G_{\ol{h},r}  G_{h+1,r}^{-1}   -  G_{h,r}^{-1} G_{\ol{h+1},r}.$
\item[{\rm (3)}] $X_{\ol{h},r} Y_{h,r} - Y_{h,r} X_{\ol{h},r}
=  G_{h+1,r} G_{\ol{h},r}  - G_{\ol{h+1},r} G_{h,r} .$
\end{itemize}}
\end{lem}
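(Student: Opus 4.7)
The plan is to prove each identity by direct application of the long multiplication formulas of Propositions~\ref{mulformzerocor}--\ref{mulformodd2} to the products on the left-hand side, and then to reassemble the surviving terms into the Cartan-type products furnished by Proposition~\ref{mulformzerocor}. Every factor on a left-hand side involves a matrix with at most one nonzero off-diagonal entry (one of $E_{h,h+1}, E_{h+1,h}$) and has $\bs j = \bs 0$, so the long sums collapse dramatically: most summation labels $k$ produce matrices outside $\MNZNS(n)$ (or with $|A| > r$) and therefore vanish by the convention preceding~\eqref{SMF}, leaving only $k \in \{h, h+1\}$ and a handful of $\lambda$-dependent diagonal contributions.

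For identity~(2), I would compute $X_{h,r} \cdot Y_{\ol h,r}$ from Proposition~\ref{mulformeven-2}(1) with $\Ad = (O|E_{h+1,h})$, and $Y_{\ol h,r} \cdot X_{h,r}$ from Proposition~\ref{mulformodd2} with $\Ad = (E_{h,h+1}|O)$. The SDP hypothesis required by the latter holds for every $\lambda \in \Lambda(n, r-1)$, because $A + \lambda$ has $E_{h,h+1}$ as its only off-diagonal entry, so every corner matrix $\llcm^{h,k}$ vanishes and Theorem~\ref{CdA1} applies. After subtracting, the contributions supported on diagonal base matrices and on $(\mu+E_{h,h+1}|O)$-type bases cancel, and the only survivors $(O|E_{h,h})(-\ep_{h+1}, r)$ and $(O|E_{h+1,h+1})(-\ep_h, r)$ reassemble via Proposition~\ref{mulformzerocor} into $G_{\ol h,r} G_{h+1,r}^{-1} - G_{h,r}^{-1} G_{\ol{h+1},r}$. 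Identity~(3) then follows by the symmetric computation using Propositions~\ref{mulformeven-2}(2) and~\ref{mulformodd1}, with the roles of $X$ and $Y$ (and $h,h+1$) interchanged.

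Identity~(1) is the main obstacle, since both summands on the left are products of two odd elements. I would apply Proposition~\ref{mulformodd1} to $X_{\ol h,r}\cdot Y_{\ol h,r}$ with $\Ad = (O|E_{h+1,h})$ and Proposition~\ref{mulformodd2} to $Y_{\ol h,r}\cdot X_{\ol h,r}$ with $\Ad = (O|E_{h,h+1})$; each formula carries an overall sign $(-1)^{\wp(\Ad)} = -1$ on its left-hand side that must be absorbed when extracting the product. After summing the two products and cancelling, the surviving terms split into two families: those labelled by diagonal base matrices, which telescope into $\frac{G_{h,r}G_{h+1,r} - G_{h,r}^{-1}G_{h+1,r}^{-1}}{\up-\up^{-1}}$ once the $[2]$-denominators arising from the $[a_{h,k}]_{\up^2}$ coefficients are collected; and those with base $E_{h,h}+E_{h+1,h+1}$, which reassemble into $(\up-\up^{-1})\,G_{\ol h,r} G_{\ol{h+1},r}$.

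The main technical difficulty throughout will be the bookkeeping of signs and exponents: the parity signs $(-1)^{\SOE{\widetilde a}_{h,k}}$ must combine correctly with the $\up$-power exponents $d_h(A,k)$, $f^{\ol i}_{h,k}$, $g^{\ol i}_{h,k}$ and $\OG^{-,\ol i}_{h,k}$ of~\eqref{f_hk} and~\eqref{OG_h,k}, so that the $\lambda$-sums over $\Lambda(n, r-\snorm{A})$ telescope into the $\up$-integers $[\lambda_h + \lambda_{h+1} + c]$ appearing in the $G_{h,r}^{\pm 1}G_{h+1,r}^{\pm 1}$ and $G_{\ol h,r} G_{\ol{h+1},r}$ expansions on the right-hand side. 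The sparsity of the initial matrices keeps the case analysis finite and tractable, so no further combinatorial input beyond Propositions~\ref{mulformzerocor}--\ref{mulformodd2} is needed.
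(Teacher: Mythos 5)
Your overall strategy---expand both factors via the long multiplication formulas, observe the cancellations, and reassemble the survivors through Propositions~\ref{mulformzerocor} and~\ref{mulformdiag}---is the right skeleton and matches the paper's. Your treatment of part~(2) is sound: for $X_{h,r}Y_{\ol h,r}$, Proposition~\ref{mulformeven-2}(1) needs no SDP hypothesis, and for $Y_{\ol h,r}X_{h,r}$ with $\Ad=(E_{h,h+1}|O)$, the matrix $A+\lambda=E_{h,h+1}+\lambda$ does satisfy SDP on the $h$-th row (all lower-left corners $\llcm^{h,k}$ vanish, by Theorem~\ref{CdA1}), so Proposition~\ref{mulformodd2} applies.

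However, there is a genuine gap in your plan for parts~(1) and~(3): your use of Proposition~\ref{mulformodd1} for $X_{\ol h,r}\cdot Y_{\ol h,r}$ (with $\Ad=(O|E_{h+1,h})$) and for $X_{\ol h,r}\cdot Y_{h,r}$ (with $\Ad=(E_{h+1,h}|O)$) is not legitimate, because the SDP hypothesis of that proposition \emph{fails}. In both cases $A=E_{h+1,h}$, and for $\lambda$ with $\lambda_{h+1}\geq 1$ the relevant up-matrix $(A+\lambda)^+_{h,h+1}=E_{h+1,h}+\lambda+E_{h,h+1}-E_{h+1,h+1}$ has a nonzero entry at $(h+1,h)$, so by Theorem~\ref{CdA1} the lower-left corner $\llcm^{h,h+1}$ is nonzero and SDP fails at $(h,h+1)$. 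So the ``head'' formula $\sdpCHe(h,\Ad(\bs 0,r))$ cannot be asserted for these products via Proposition~\ref{mulformodd1}; there could a priori be tail terms indexed by $B\prec A^+_{h,h+1}$, and $A^+_{h,h+1}$ is not diagonal, so the tail is not forced to vanish. (You also cannot appeal to Corollary~\ref{diag-com} to control the tail, since its proof rests on Corollary~\ref{common_form}, which in turn uses Theorem~\ref{qqschur_reltion-r}---and the lemma you are trying to prove is one of the ingredients of that theorem, so the argument would be circular.) The paper sidesteps this obstruction entirely by using the precomputed special-case formulas of Lemma~\ref{phiupper2} (items~(1),~(2) and~(4), transcribed from \cite[Prop.~7.1]{DGLW}), which give $[\mu|E_{h,h+1}]\cdot[\mu|E_{h+1,h}]$, $[\mu|E_{h,h+1}]\cdot[\mu+E_{h+1,h}|O]$, etc.\ directly, without any SDP assumption. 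You should replace your appeals to Proposition~\ref{mulformodd1} with the appropriate items of Lemma~\ref{phiupper2} (expanded over $\mu\in\Lambda(n,r-1)$ to reconstitute the long element), after which the rest of your plan---including the assembly of the diagonal survivors into $\frac{G_{h,r}G_{h+1,r}-G_{h,r}^{-1}G_{h+1,r}^{-1}}{\up-\up^{-1}}$ and the use of Proposition~\ref{mulformdiag} for $(\up-\up^{-1})G_{\ol h,r}G_{\ol{h+1},r}$---goes through.
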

\begin{proof}
{\rm(1)} \
By definition,
$$Y_{\ol{h},r} X_{\ol{h},r}=\ABJRS(O, E_{h+1, h}, \bs{0}, r) \cdot \ABJRS(O, E_{h, h+1}, \bs{0}, r)  =
		\sum_{
			\la,\mu \in \CMN(n,r) }
	[\mu | E_{h+1, h}]
		\cdot
		 [\lambda | E_{h, h+1}] $$
Since $[\mu | E_{h+1, h}]
		\cdot
		 [\lambda | E_{h, h+1}] \neq0$ implies $\co(\mu | E_{h+1, h}  ) = \ro(\lambda | E_{h, h+1})$ or { equivalently} $\la=\mu$.
Applying Lemma \ref{phiupper2}{\rm (4)} (and \eqref{def_ajr}) gives
\begin{align*}
Y_{\ol{h},r} X_{\ol{h},r}& =
		\sum_{		\mu \in  \CMN(n,r-1)  } (
	-[{\mu} - E_{h,h} |   E_{h, h+1} +  E_{h+1, h}]
	+\up^{-\mu_h}[ {\mu}_{h+1} + 1 ] [ {\mu} + E_{h+1, h+1} | O] ) \\
&= -\sum_{	 \substack{ \mu \in  \CMN(n,r-1) \\ \mu_h > 0} }
	[{\mu} - E_{h,h} |   E_{h, h+1} +  E_{h+1, h}]
+\sum_{ \mu \in  \CMN(n,r-1)  }
		\up^{-\mu_h}[ {\mu}_{h+1} + 1 ] [ {\mu} + E_{h+1, h+1} | O]\\
&=	-\ABJRS(O, E_{h, h+1} + E_{h+1, h}, \bs{0}, r )+\frac{1}{\up-\up^{-1}}  (\AJRS(\Od,-\ep_{h}+\ep_{h+1}, r)
	-    \AJRS(\Od, -\ep_{h}-\ep_{h+1}, r)).
\end{align*}
Here the second term is obtained by setting $\la=\mu+\ep_{h+1}$ and noting
 $$ \sum_{		\mu \in  \CMN(n,r-1)  }
	 	{  {\up}^{- \mu_{h}  }}[{\mu}_{h+1} + 1]
		[ {\mu} +  E_{h+1,h+1}| O]= \sum_{		\la \in  \CMN(n,r)  }
	 	 {\up}^{ {-\la}_{h}  }\Big(\frac{\up^{\la_{h+1}}}{\up-\up^{-1}}-\frac{\up^{-\la_{h+1}}}{\up-\up^{-1}}\Big)
		[ \la| O].$$

Similarly,  by Lemma \ref{phiupper2} {\rm(2)} and
  \eqref{def_ajr},
we have
\begin{align*}
X_{\ol{h},r}Y_{\ol{h},r} & =(O|E_{h, h+1})( \bs{0}, r) \cdot \ABJRS(O, E_{h+1, h}, \bs{0}, r)
 =
		\sum_{
			\mu \in \CMN(n,r-1)}
	 [\mu | E_{h, h+1} ]
		\cdot
		 [\mu | E_{h+1, h}]\\
&= \sum_{		\mu \in  \CMN(n,r-1)  }
	 	 {\up}^{ {\mu}_{h+1}  }[{\mu}_{h} + 1]
		[ {\mu} +  E_{h,h}| O]+\sum_{		\mu \in  \CMN(n,r-1)  }
	[\mu -  E_{h+1, h+1}   | E_{h, h+1} + E_{h+1, h} ] \\
&\qquad\quad-\sum_{		\mu \in  \CMN(n,r-1)  }
	(\up-\up^{-1}) [ \mu-  E_{h+1, h+1}    | E_{h,h} + E_{h+1,h+1}]\\
&= \frac{ 1}{ {\up} - \up^{-1} }(
		\AJRS({O},  \ep_{h} +   \ep_{h+1}, r)
		-
		\AJRS({O},  -\ep_{h}+ \ep_{h+1}, r))
		+\ABJRS( O ,  E_{h, h+1} + E_{h+1, h}, \bs{0}, r)\\
&\qquad\qquad-(\up-\up^{-1}) 	\ABJRS( O , E_{h,h} + E_{h+1,h+1} , \bs{0}, r).\end{align*}
On the other hand,  observe that for any $\lambda\in \Lambda(n,r-1)$ the matrix $\lambda+E_{h+1,h+1}$ satisfies the SDP condition on the $h$th row by Theorem \ref{CdA1} and then, by
applying Proposition \ref{mulformdiag},  we have
$$ G_{\ol{h},r} G_{\ol{h+1},r }=\ABJS(O, E_{h,h}, \bs{0},r)\ABJS(O, E_{h+1,h+1}, \bs{0},r)=-\ABJRS( O , E_{h,h} + E_{h+1,h+1} , \bs{0}, r).$$
Putting all together, we obtain (1).

{\rm(2)} Similar to the proof of (1), expanding $Y_{\ol{h},r} X_{h,r}$ and applying
 Lemma \ref{phiupper2}{\rm (3)} yields
\begin{align*}
Y_{\ol{h},r} X_{h,r}
= 	\ABJRS(E_{h, h+1}, E_{h+1, h}, \bs{0}, r)
 + 	\ABJRS(O, E_{h+1, h+1}, -\ep_{h}, r).
\end{align*}
Meanwhile, directly applying Proposition \ref{mulformeven-2} (1) gives
\begin{align*}
	X_{h,r} Y_{\ol{h},r} =\ABJRS(E_{h, h+1}, O, \bs{0}, r) \ABJRS(O, E_{h+1, h}, \bs{0},r  )
&=
\ABJRS(  E_{h, h+1}, E_{h+1, h}, \bs{ 0 }, r )
	 +
	\ABJRS( O,  E_{h, h},  -\ep_{h+1}, r ).
\end{align*}
Thus, (2) follows from Proposition \ref{mulformzerocor}.

 Finally, by Lemma \ref{phiupper2} {\rm {(1)}} and Proposition \ref{mulformeven-2},  part {\rm (3)} can be proved similarly.
\end{proof}

\begin{lem}\label{relation_eei-r}
For any integer {$h$} with {$ 1 \le h \le n-1$}, the following equations hold in {$\qSchvsQ$}:
\begin{align*}
{\rm (1)} \qquad
& X_{h,r} X_{h+1,r} - {v} X_{h+1,r} X_{h,r}
	= X_{\ol{h},r} X_{\ol{h+1},r} + {v} X_{\ol{h+1},r} X_{\ol{h},r}, \\
{\rm (2)} \qquad
& Y_{h,r} Y_{h+1,r} - {v} Y_{h+1,r} Y_{h,r}
	= - (Y_{\ol{h},r} Y_{\ol{h+1},r} + {v} Y_{\ol{h+1},r} Y_{\ol{h},r}),\\
{\rm (3)} \qquad
& X_{h,r} X_{\ol{h+1},r} - {v} X_{\ol{h+1},r} X_{h,r}
	= X_{\ol{h},r} X_{{h+1},r} - {v} X_{{h+1},r} X_{\ol{h},r}, \\
{\rm (4)} \qquad
& Y_{h,r} Y_{\ol{h+1},r} - {v} Y_{\ol{h+1},r} Y_{h,r}
	= Y_{\ol{h},r} Y_{{h+1},r} -{v} Y_{{h+1},r} Y_{\ol{h},r}.
\end{align*}
\end{lem}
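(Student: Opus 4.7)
The plan is to derive each of the four identities by expanding the long generators in \eqref{degree-r generator} via \eqref{def_ajr} and invoking the finite-level identities of Lemma~\ref{relation_eei} termwise. Writing
\[ X_{h,r} = \sum_{\lambda \in \Lambda(n,r-1)} [\lambda + E_{h,h+1}|O], \qquad X_{\bar h,r} = \sum_{\lambda \in \Lambda(n,r-1)} [\lambda | E_{h,h+1}],\]
and similarly with index $h+1$, the column--row compatibility \eqref{co=ro} forces each summand of $X_{h,r}X_{h+1,r}$ and of $X_{\bar h,r}X_{\overline{h+1},r}$ to be indexed by a single $\lambda \in \Lambda(n,r-1)$. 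The change of variable $\mu = \lambda + \alpha_h$ in the sums for $X_{h+1,r}X_{h,r}$ and $X_{\overline{h+1},r}X_{\bar h,r}$ converts those products into the ``shifted'' products on the right-hand (respectively left-hand) side of Lemma~\ref{relation_eei}(1), at the cost of restricting the summation range to $\lambda$ with $\lambda_{h+1}\geq 1$ (which is harmless under the convention that $[\Md]=0$ when $\Md\notin\MNZN(n)$).

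Consequently, the $\lambda$-th summand of $X_{h,r}X_{h+1,r} - v X_{h+1,r}X_{h,r}$ is exactly the right-hand side of Lemma~\ref{relation_eei}(1), while that of $X_{\bar h,r}X_{\overline{h+1},r} + v X_{\overline{h+1},r}X_{\bar h,r}$ is exactly the left-hand side; summing over $\lambda \in \Lambda(n,r-1)$ yields identity~(1). An identical termwise matching applied to Lemma~\ref{relation_eei}(2) proves identity~(3).

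For parts~(2) and~(4), the required lower-triangular analogs of Lemma~\ref{relation_eei}, involving products of the form $[\mu|E_{h+1,h}][\mu'|E_{h+2,h+1}]$ and $[\mu+E_{h+1,h}|O][\mu'|E_{h+2,h+1}]$, are not stated in the excerpt. I would derive them by the same machinery that produced Lemma~\ref{phiupper2} and Lemma~\ref{relation_eei}: expand each product via the standardized Hecke--Clifford commutation relations \eqref{Hecke} together with the sign rule \eqref{Phi_structure}, then collect. Once these analogs are in hand, the summation-and-substitution argument of the preceding two paragraphs applies verbatim to deliver~(2) and~(4).

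The main obstacle is the sign flip on the right-hand side of~(2) compared with~(1). This flip reflects the odd-odd parity interaction $(-1)^{\wp(\Bd)\wp(\Ad)}$ in \eqref{Phi_structure}, which traces back to the Clifford sign in the last relation $\mathcal{T}_i c_{i+1} = c_i\mathcal{T}_i - (v-v^{-1})(c_i - c_{i+1})$ of \eqref{Hecke}. Verifying that the desired minus sign emerges cleanly in the lower-triangular analog of Lemma~\ref{relation_eei} is the only essentially new piece of computation required; alternatively, one may instead introduce an anti-automorphism of $\qSchvsQ$ interchanging $X_{h,r}\leftrightarrow Y_{h,r}$ and $X_{\bar h,r}\leftrightarrow Y_{\bar h,r}$ (up to explicit signs and powers of $v$ on each basis vector $[\Ad]$), under which identities~(1) and~(3) transform directly into~(2) and~(4), with the sign flip automatic from the parity twist.
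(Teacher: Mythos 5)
For parts (1) and (3), your termwise expansion via \eqref{def_ajr} and \eqref{co=ro}, followed by the substitution $\mu=\lambda+\alpha_h$ and the invocation of Lemma~\ref{relation_eei}, is essentially the paper's own argument; this portion is correct.

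For parts (2) and (4), you correctly flag that a naive mirror of the argument requires lower-triangular analogs of Lemma~\ref{relation_eei}, and you propose either deriving them from scratch at the Hecke--Clifford level or constructing an anti-automorphism exchanging $X$'s and $Y$'s. Neither is what the paper does, and both are more work than is needed. The paper instead observes that in the lower-triangular case the relevant base matrices ($\lambda+E_{h+2,h+1}$ on row $h$, $\lambda+E_{h+1,h}$ on row $h+1$, and $\lambda+2E_{h+1,h}$ etc.) all satisfy the SDP condition by Theorem~\ref{CdA1}, because there are no entries strictly below-and-left of the nonzero entry being tested. This makes the long odd multiplication formula of Proposition~\ref{mulformodd2} and the even one of Proposition~\ref{mulformeven-2}(2) directly applicable, so all four products $Y_{\ol{h},r}Y_{\ol{h+1},r}$, $Y_{\ol{h+1},r}Y_{\ol{h},r}$, $Y_{h,r}Y_{h+1,r}$, $Y_{h+1,r}Y_{h,r}$ collapse to explicit two- or three-term expressions, from which the identity $Y_{h,r}Y_{h+1,r}-\up Y_{h+1,r}Y_{h,r}=-\up(E_{h+2,h}|O)(\bs{0},r)=-(Y_{\ol{h},r}Y_{\ol{h+1},r}+\up Y_{\ol{h+1},r}Y_{\ol{h},r})$ follows by inspection. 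This is a cleaner route: no new Hecke--Clifford computations and no anti-automorphism are required, and the sign in (2) emerges automatically from the signs already built into Proposition~\ref{mulformodd2}. In short, your (1) and (3) are on target, but your plan for (2) and (4) leaves a genuine computational gap that the paper closes differently; it is worth noticing that the asymmetry between the upper- and lower-triangular cases is precisely explained by the SDP criterion of Theorem~\ref{CdA1}, which is favorable for $F$-type matrices and is exactly why the paper can bypass any analog of Lemma~\ref{relation_eei} in the $Y$ direction.
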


\begin{proof}
We first prove (1).
By definition and noting \eqref{co=ro},
\begin{equation*}\label{relation_eei-r-pf-1}
\begin{aligned}
&X_{\ol{h},r} X_{\ol{h+1},r} + {v} X_{\ol{h+1},r} X_{\ol{h},r}\\
=&\sum_{\lambda,\mu\in\Lambda(n,r-1)}[\lambda|E_{h,h+1}][\mu|E_{h+1,h+2}]
+{ \up}\sum_{\xi,\gamma\in\Lambda(n,r-1)}[\xi|E_{h+1,h+2}][\gamma|E_{h,h+1}]\\
=&\sum_{\substack{\lambda\in\Lambda(n,r-1)}}[\lambda|E_{h,h+1}][\lambda|E_{h+1,h+2}]
+\up\sum_{\lambda\in\Lambda(n,r-1)}[\lambda+\alpha_h|E_{h+1,h+2}][\lambda-\alpha_{h+1}|E_{h,h+1}].
\end{aligned}
\end{equation*}
With our convention in \eqref{SMF}, when $\lambda_{h+1}=0$, the $(h+1)$-th entries of $\lambda+\alpha_h$ and $\lambda-\alpha_{h+1}$ are less than $0$, then
\begin{equation*}
\begin{aligned}
&[\lambda+\alpha_h|E_{h+1,h+2}]=0,[\lambda-\alpha_{h+1}|E_{h,h+1}]=0\\
&[ {\lambda}  + \alpha_{h} + E_{h+1, h+2} |O  ]=0,[{\lambda} -\alpha_{h+1} + E_{h, h+1}  | O]=0.
\end{aligned}
\end{equation*}
Thus, by Lemma \ref{relation_eei} (1),
we have
 \begin{equation*}
\begin{aligned}
X_{\ol{h},r} X_{\ol{h+1},r} + {v} X_{\ol{h+1},r} X_{\ol{h},r}&=\sum_{\substack{\lambda\in\Lambda(n,r-1)}}\big([  {\lambda} + E_{h, h+1} | O]
		[{\lambda}  +  E_{h+1, h+2} | O] \\
		&\quad\;-  {\up}  [ {\lambda}  + \alpha_{h} + E_{h+1, h+2} |O  ]
		[{\lambda} -\alpha_{h+1} + E_{h, h+1}  | O]\big).
\end{aligned}
\end{equation*}
On the other hand, using \eqref{def_ajr} and Lemma \ref{phiupper-even-norm}(2),  we obtain
\begin{equation*}
\begin{aligned}
&X_{h,r} X_{h+1,r} - {v} X_{h+1,r} X_{h,r}\\
=&(E_{h,h+1}|O)(\bs{0},r)(E_{h+1,h+2}|O)(\bs{0},r)-\up(E_{h+1,h+2}|O)(\bs{0},r)(E_{h,h+1}|O)(\bs{0},r)\\
=&{ \sum_{\lambda,\mu\in\Lambda(n,r-1)}[\lambda+E_{h,h+1}|O][\mu+E_{h+1,h+2}|O]}
{ -\up}\sum_{\xi,\gamma\in\Lambda(n,r-1)}[\xi+E_{h+1,h+2}|O][\gamma+E_{h,h+1}|O]\\
=&{ \sum_{\substack{\lambda\in\Lambda(n,r-1)}}([  {\lambda} + E_{h, h+1} | O]
		[{\lambda} +  E_{h+1, h+2} | O] -  {\up}  [ {\lambda}  + \alpha_{h} + E_{h+1, h+2} |O  ]
		[{\lambda} -\alpha_{h+1} + E_{h, h+1}  | O])}\\
=&X_{\ol{h},r} X_{\ol{h+1},r} + {v} X_{\ol{h+1},r} X_{\ol{h},r},
\end{aligned}
\end{equation*}
as desired.

We now prove (2).
First, applying Proposition \ref{mulformodd2} to the products $Y_{\ol{h},r} Y_{\ol{h+1},r}$ and $Y_{\ol{h+1},r}Y_{\ol{h},r}$ (as
$(\lambda+E_{h+2,h+1})$ and $(\lambda+E_{h+1,h})$ satisfy the SDP condition on the $h$-th row  and $(h+1)$-th row, respectively) yields
\begin{equation*}
\begin{aligned}
Y_{\ol{h},r} Y_{\ol{h+1},r}&=(O|E_{h+1,h})(\bs{0},r)(O|E_{h+2,h+1})(\bs{0},r)=-(O|E_{h+1,h}+E_{h+2,h+1})(\bs{0},r),\\
Y_{\ol{h+1},r}Y_{\ol{h},r}&=
(E_{h+2,h}|O)(\bs{0},r)+\up^{-1}(O|E_{h+1,h}+E_{h+2,h+1})(\bs{0},r).
\end{aligned}
\end{equation*}
Thus, $- (Y_{\ol{h},r} Y_{\ol{h+1},r} + {v} Y_{\ol{h+1},r} Y_{\ol{h},r})=-\up(E_{h+2,h}|O)(\bs{0},r).$
Second, applying Proposition \ref{mulformeven-2}(2) gives
\begin{equation*}
\begin{aligned}
Y_{h,r} Y_{h+1,r}&=(E_{h+1,h}|O)(\bs{0},r)(E_{h+2,h+1}|O)(\bs{0},r)=(E_{h+1,h}+E_{h+2,h+1})(\bs{0},r),\\
Y_{h+1,r} Y_{h,r}&=
(E_{h+2,h}|O)(\bs{0},r)+\up^{-1}(E_{h+1,h}+E_{h+2,h+1})(\bs{0},r).
\end{aligned}
\end{equation*}
Thus, $Y_{h,r} Y_{h+1,r} - {v} Y_{h+1,r} Y_{h,r}=-\up(E_{h+2,h}|O)(\bs{0},r)=- (Y_{\ol{h},r} Y_{\ol{h+1},r} + {v} Y_{\ol{h+1},r} Y_{\ol{h},r})$, as desired.

Finally, similar to the proof of formula (1), using \eqref{def_ajr} and Lemma \ref{relation_eei} (2), we can prove (3). Meanwhile, similar to the proof of (2),  $(\lambda+E_{h+2,h+1})$ satisfies SDP condition on the $h$-th row and $(\lambda+E_{h+1,h})$ satisfies SDP condition on the $(h+1)$-th row, then applying Proposition \ref{mulformeven-2}(2) and Proposition \ref{mulformodd2} proves (4).
\end{proof}

\begin{thm}\label{qqschur_reltion-r}
For each $r\geq 1$, there is  a superalgebra  homomorphism $\bs{\xi}_{n,r}: \Uvqn \to \qSchvsQ $ defined by
$${\genE}_{j} \mapsto X_{j,r}, \;
{\genE}_{\ol{j}} \mapsto X_{\ol{j},r}, \;
{\genF}_{j} \mapsto Y_{j,r}, \;
{\genF}_{\ol{j}} \mapsto Y_{\ol{j},r},  \; {\genK}_{i}^{\pm 1} \mapsto G_{i,r}^{\pm 1}, \;
{\genK}_{\ol{i}} \mapsto G_{\ol{i},r},$$
for all $1 \le i \le n, 1 \le j \le n-1$.
\end{thm}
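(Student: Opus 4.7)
The plan is to verify directly that the elements $X_{h,r}, X_{\ol{h},r}, Y_{h,r}, Y_{\ol{h},r}, G_{i,r}^{\pm 1}, G_{\ol{i},r}$ defined in \eqref{degree-r generator} satisfy all the defining relations (QQ1)--(QQ6) of $\Uvqn$ listed in Definition \ref{defqn}. Since $\Uvqn$ is presented by generators and relations, once every relation is verified in $\qSchvsQ$, the universal property immediately yields the desired superalgebra homomorphism $\bs{\xi}_{n,r}$.

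First I would dispose of (QQ1) and (QQ2). The even $K$-relations in (QQ1) and all the weight-shift relations (QQ2) follow immediately from Proposition \ref{mulformzerocor}, which describes products of $\AJRS(\Od,\pm\ep_i,r)$ against any long element $\Ad(\bs j,r)$. The odd $K$-anticommutator $G_{\ol{i},r}G_{\ol{j},r}+G_{\ol{j},r}G_{\ol{i},r}$ in (QQ1) is handled by applying Proposition \ref{mulformdiag} twice: for $\la\in\Lambda(n,r-1)$, the matrix $\la+E_{i,i}$ satisfies the SDP condition on row $i$ by the last assertion of Theorem \ref{CdA1}, so the head-formula applies without correction. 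The off-diagonal terms cancel in pairs, and the $i=j$ diagonal contribution reproduces $2(G_{i,r}^2-G_{i,r}^{-2})/(\up^2-\up^{-2})$ via the explicit $[a_{i,i}]_{\up^2}$ factor and the $\up^{\pm 2j_i}$ poles.

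Next, the ``cross'' relations in (QQ4) are largely already proven: parts (1), (2), (3) of Lemma \ref{mulform_ef-r} cover the three anticommutator / commutator relations involving at least one odd generator, and the first, even relation $X_{h,r}Y_{j,r}-Y_{j,r}X_{h,r}=\delta_{h,j}(G_{h,r}G_{h+1,r}^{-1}-G_{h,r}^{-1}G_{h+1,r})/(\up-\up^{-1})$ is obtained by direct expansion via Proposition \ref{mulformeven-2} (this is formally the classical type $A$ computation on the even slot, cf. \cite{BLM,DDPW}). For (QQ5), Lemma \ref{relation_eei-r} already establishes the four non-trivial identities; the remaining commutations for $|i-j|>1$ or $|i-j|\ne 1$ follow from Propositions \ref{mulformeven-2}, \ref{mulformodd1}, \ref{mulformodd2} applied to single-matrix-unit inputs, where the required SDP condition is automatic. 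The squares $X_{\ol i,r}^{2}$ and $Y_{\ol i,r}^2$ are computed by one further direct application of Propositions \ref{mulformodd1} and \ref{mulformodd2}.

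The principal obstacle lies in (QQ3) and in (QQ6). The eight relations of (QQ3) each mix $G_{\ol i,r}$ with one of the $X/Y$-generators and demand invoking Proposition \ref{mulformdiag} in tandem with Proposition \ref{mulformeven-2}, \ref{mulformodd1}, or \ref{mulformodd2}. The delicate point is to keep track of the sign prefactors $(-1)^{\wp(\Ad)}$, the tilde-exponents ${\SOE{\widetilde a}}_{h,k}$, and the various $\up$-powers $d_h(A,k),\, f_{h,k}^{\ol 0/\ol 1},\, g_{h,k}^{\ol 0/\ol 1},\, \OG_h(A,k)$, and to show after expansion that the mixed terms collapse to the expected expressions $G_{\ol i,r}X_{i,r}-\up X_{i,r}G_{\ol i,r}=X_{\ol i,r}G_{i,r}^{-1}$ etc. For (QQ6), all matrices appearing are supported on a $3\times 3$ block, so the Serre relations reduce to an identity in the rank-two subalgebra; the purely even Serre relations hold because the even long elements $\AJRS(A,\bs 0,r)$ with $\Ad=(A|O)$ span a copy of the $\up$-Schur algebra $\mathcal S_\up(n,r)$ (by Remark \ref{longtail}(2) these are identical to the BLM formulas), and the mixed even-odd Serre relations then follow by combining the even Serre relations with the relations in (QQ3) and (QQ5) already verified. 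Assembling these verifications produces the homomorphism $\bs\xi_{n,r}$.
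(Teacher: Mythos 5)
Your overall plan — verify all defining relations (QQ1)--(QQ6) in $\qSchvsQ$ and appeal to the universal property of the presentation — is exactly the paper's strategy, and you correctly identify the role of Propositions~\ref{mulformzerocor}, \ref{mulformdiag}, \ref{mulformeven-2}, \ref{mulformodd1}, \ref{mulformodd2} and Lemmas~\ref{mulform_ef-r}, \ref{relation_eei-r} for (QQ1)--(QQ5). Two points, however, need to be flagged.

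First, a minor omission: for (QQ4), Lemma~\ref{mulform_ef-r} covers only the $i=j$ case of the three odd relations. The $i\neq j$ case still requires fresh computations (the paper carries these out directly via Propositions~\ref{mulformodd1} and \ref{mulformodd2}, checking for instance that $X_{\bar i,r}Y_{\bar j,r}=\mp(O|E_{i,i+1}+E_{j+1,j})(\bs0,r)=-Y_{\bar j,r}X_{\bar i,r}$). You gloss over this.

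Second, and more seriously, your treatment of (QQ6) is a genuine gap. You claim that ``the mixed even-odd Serre relations then follow by combining the even Serre relations with the relations in (QQ3) and (QQ5) already verified.'' This is not demonstrated and is not what the paper does. The paper does \emph{not} derive (QQ6) as a formal consequence of the other verified relations; rather it performs an explicit computation in $\qSchvsQ$. The obstacle is that the middle factor $X_{i+1,r}X_{\bar i,r}X_{i+1,r}$ does not satisfy the SDP hypothesis of Proposition~\ref{mulformodd1}, so the head-formula cannot be applied directly. The paper's trick is to split the coefficient $(\up+\up^{-1})$ into $\up+\up^{-1}$, group, and substitute the (QQ5) identity $X_{\ol i,r}X_{i+1,r}-\up X_{i+1,r}X_{\ol i,r}=X_{i,r}X_{\ol{i+1},r}-\up X_{\ol{i+1},r}X_{i,r}$ to replace the problematic middle product; one is then left with four triple products $X_{i+1,r}X_{i,r}X_{\ol{i+1},r}$, $X_{i+1,r}X_{\ol{i+1},r}X_{i,r}$, $X_{i,r}X_{\ol{i+1},r}X_{i+1,r}$, $X_{\ol{i+1},r}X_{i,r}X_{i+1,r}$, all of whose factors \emph{do} satisfy the SDP condition, and these are computed explicitly via \eqref{XiXjbar}, \eqref{XjbarXi}, Proposition~\ref{mulformeven-2}(1), and Proposition~\ref{mulformodd1}, after which the four contributions cancel. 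If instead you try to ``substitute out'' $\genE_{\bar j}$ via Remark~\ref{induction_N} and reduce to the even Serre relation, the commutators of $\genK_i\genK_{\bar i}$ past $\genE_i$ reintroduce $\genE_{\bar i}$-terms via (QQ3), and it is not at all clear that the resulting combinatorics closes up without reproducing the same SDP obstruction. You should either carry out that bookkeeping in full or adopt the paper's rewriting trick; as stated, the (QQ6) step is missing.
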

\begin{proof}
We need to verify all the relations (QQ1)--(QQ6) in Definition \ref{defqn} with $\sfK_i^{\pm1}, \sfE_j,\sfF_j, \sfK_{\bar i},\sfE_{\bar j},\sfF_{\bar j}$ replaced by $G_{i,r}^{\pm 1},
X_{j,r},
Y_{j,r},
G_{\ol{i},r},
 X_{\ol{j},r},
 Y_{\ol{j},r},
 $
respectively. By Remark \ref{longtail}(2), relations involving only even generators (called even relations below) can be checked as in the $q$-Schur algebra case.
It suffices to check all the relations involving some odd generators.

{\bf For the non-even relations in (QQ1)},  $G_{i,r}G_{\bar j,r}=G_{\bar j,r}G_{i,r}$ follows from Proposition \ref{mulformzerocor}(a).

By Theorem \ref{CdA1}, any diagonal matrix statisfies SDP condition at any non-zero diagonal entry. Thus, by Proposition \ref{mulformdiag} with $A=(O|E_{j,j})$ so $\tilde{a}_{i,i}^{\bar1}=\begin{cases} 0,&\text{ if }i<j;\\
1,&\text{ if }i>j,\end{cases}$ we have, for $i,j\in[1,n]$,
\begin{align*}
\ABJS( O, E_{i,i}, \bs{ 0 },r)  \ABJS( O, E_{j,j}, \bs{ 0 } ,r)
	 =\begin{cases} -\ABJS( O, E_{j,j}+ E_{i,i}, \bs{ 0 },r ),&\text{ if }i<j; \\
 \ABJS( O, E_{j,j}+ E_{i,i}, \bs{ 0 } ,r),&\text{ if }i>j;\\
\frac{1}{(\up^2 - \up^{-2}) }
		\Big( \ABJS( O, O, 2 \ep_{j},r )  - \ABJS( O, O, -2 \ep_{j},r )
		  \Big),&\text{ if }i=j.\end{cases}.
\end{align*}
Hence,
$
G_{\ol{i},r} G_{\ol{j},r} + G_{\ol{j},r} G_{\ol{i},r}=0,~
G_{\ol{k},r}^2= \frac{  G_{k,r}^{2} - G_{k,r}^{-2}  }{{v}^2 - {v}^{-2}  },
$
proving (QQ1).

{\bf For the non-even relations in (QQ2)}, all follow from Proposition \ref{mulformzerocor}.

{\bf For the non-even relations in (QQ3)}, we divide the 12 relations into four subsets $\mathcal R_i$ ($1\leq i\leq 4$), where subset $\mathcal R_i$ consists of the two relations in the $i$-th line and the $i$-th relation in the last line of (QQ3).

By Theorem \ref{CdA1},
for any {$\lambda \in \CMN(n, r-1)$} and $i\in[1,n]$, $j\in[1,n)$, {$E_{i, i} + \lambda$}, $E_{j, j+1} + \lambda$
{$E_{j+1, j} + \lambda$} satisfy the SDP condition at every non-zero entry on the $i$-th row. So, in the following computations, Propositions \ref{mulformdiag}, \ref{mulformodd1} and \ref{mulformodd2} are all applicable.

{\bf Case $\mathcal R_1$.} We apply Proposition \ref{mulformdiag} to compute the following product:
$$G_{\ol{i},r} X_{j,r}=\ABJS( O, E_{i,i}, \bs{ 0 },r) \ABJS(E_{j, j+1}, O,  \bs{0}, r)=
\begin{cases} \up \ABJS( E_{i, i+1}, E_{i,i}, \bs{0},r )
		+    \ABJS( O,  E_{i,i+1}, - \ep_{i},r ),&\text{ if }j=i;\\
		 \ABJS(E_{j,j+1}, E_{i,i}, \bs{0} ,r),&\text{ if }j\neq i.
		\end{cases}$$
Also, by Proposition \ref{mulformeven-2}, $X_{j,r} G_{\ol{i},r}=\begin{cases}  {v} 	\ABJS( E_{i-1,i}, E_{i,i}, \bs{0},r )
		+  	\ABJS( O, E_{i-1,i}, -\ep_{i},r ),&\text{ if }j=i-1;\\
\ABJS(E_{j,j+1}, E_{i,i}, \bs{0} ,r),&\text{ if }j\neq i-1.
\end{cases}
$	
Thus, the three relations in $\mathcal R_1$ follow.		

{\bf Case $\mathcal R_2$.}  Symmetrically, the three relations in $\mathcal R_2$ follow from the multiplication formulas:
$$G_{\ol{i},r} Y_{j,r}
 = \begin{cases}    {\up}^{-1} \ABJS( E_{i,i-1}, E_{i,i}, \bs{ 0 },r )+\ABJS( O, E_{i,i-1}, \ep_{i},r ),&\text{if }j=i-1;\\
\ABJS( E_{j+1, j},  E_{j,j},\bs{0},r ),& \text{if }j\neq i-1.
\end{cases}$$
$$
Y_{j,r} G_{\ol{i},r}=\ABJS(E_{j+1, j}, O, \bs{0},r)  \ABJS( O, E_{i,i}, \bs{ 0 },r) =\begin{cases}   \up^{-1}\ABJS( E_{i+1, i}, E_{i,i}, \bs{ 0 } ,r)
	 + \ABJS(O, E_{i+1,i},  \ep_{i},r ),&\text{if }j=i;\\
\ABJS(E_{j+1,j}, E_{i,i}, \bs{0} ,r),&\text{ if }j\neq i.\\
\end{cases}
$$

{\bf Case $\mathcal R_3$.}  By
Proposition \ref{mulformdiag}, we have
$$G_{\ol{i},r}X_{\ol{j},r}=\begin{cases}
\ABJS(O,E_{j,j+1}+E_{i,i}, \bs{0},r ), &\mbox{if } j<i;\\
-\up\ABJS(O, E_{i,i}+E_{i,i+1},  \bs{0},r )+\ABJS(E_{i,i+1},O,-\ep_{i},r),&\text{if }j=i;\\
-\ABJS(O,E_{j,j+1}+E_{i,i}, \bs{0},r ),&\mbox{if } j> i;
\end{cases}
$$
while, by Proposition \ref{mulformodd1},
$$
X_{\ol{j},r}G_{\ol{i},r}=\begin{cases}-\ABJS(O, E_{i,i}+E_{j,j+1},  \bs{0},r ).&\text{if }j<i-1;\\
-\up\ABJS(O, E_{i,i}+E_{i-1,i},  \bs{0},r )+\ABJS(E_{i-1,i},O,-\ep_{i},r),&\text{if }j=i-1;\\
\ABJS(O, E_{i,i}+{ E_{j,j+1}},  \bs{0},r ).&\text{if }j\geq i.\\
\end{cases}
$$
Thus, the three relations in $\mathcal R_3$ follow.

{\bf Case $\mathcal R_4$.}  The remaining  three relations in $\mathcal R_4$ can be proved similarly, noting
$$G_{\ol{i},r}Y_{\ol{j},r}=\begin{cases}
\ABJS(O,E_{j+1,j}+E_{i,i}, \bs{0},r ), &\mbox{if } j<i-1;\\
\up^{-1}\ABJS(O, E_{i,i-1}+E_{i,i},  \bs{0},r )+\ABJS(E_{i,i-1},O,\ep_{i},r),&\text{if }j=i-1;\\
-\ABJS(O,E_{j+1,j}+E_{i,i}, \bs{0} ,r), &\mbox{if } j\geq i;
\end{cases}
$$
$$Y_{\ol{j},r}G_{\ol{i},r}=\begin{cases}
-\ABJS(O,E_{j+1,j}+E_{i,i}, \bs{0},r ), &\mbox{if } j<i;\\
 -\up^{-1}\ABJS(O, E_{i,i}+E_{i+1,i},  \bs{0} ,r)+\ABJS(E_{i+1,i},O,\ep_{i},r),&\text{if }j=i;\\
\ABJS(O,E_{j+1,j}+E_{i,i}, \bs{0} ,r), &\mbox{if } j> i.
\end{cases}
$$
This completes checking (QQ3).

{\bf For the non-even relations in (QQ4)}, the $i=j$ case is done on Lemma \ref{mulform_ef-r}. We now prove the $i\neq j$ case.
By Proposition \ref{mulformodd1},
$$X_{\bar i,r} Y_{\ol{j},r}=  \ABJS(O,E_{i, i+1}, \bs{0},r)  \ABJS(O, E_{j+1,j}, \bs{0},r )=\begin{cases}
-(O|E_{i,i+1}+E_{j+1,j})(\bs{0},r),&\text{if }i<j;\\
\;\;(O|E_{i,i+1}+E_{j+1,j})(\bs{0},r),&\text{if }i>j,\end{cases}$$
which is the same as $-Y_{\ol{j},r}X_{\bar i,r} $, by Proposition \ref{mulformodd2}. Here the required SDP condition can be checked easily.
Similarly,
$$X_{i,r} Y_{\ol{j},r}=  \ABJS(E_{i, i+1}, O,\bs{0},r)  \ABJS(O, E_{j+1,j}, \bs{0},r )=(E_{i,i+1}|E_{j+1,j})(\bs{j},r)=Y_{\ol{j},r}X_{i,r}$$ and
$X_{\bar i,r} Y_{{j},r}= (E_{j+1,j}|E_{i,i+1})(\bs{0},r)=Y_{{j},r}X_{\bar i,r} $, proving (QQ4).

{\bf For the non-even relations (QQ5)}, we first prove the two relations with square terms. Recall the notation for $\pm$-matrices defined in \eqref{Ahk}.
Since, by Theorem \ref{CdA1},
for any {$\lambda \in \CMN(n, r-1)$} and $k\in[1,n]$,
{${(E_{i, i+1} + \lambda)}^{+}_{i,k}\in M_n(\NN)$} forces $k=i+1$ and ${(E_{i, i+1} + \lambda)}^{+}_{i,i+1}$
satisfies the SDP condition on the $i$-th row. Thus, applying Propositions \ref{mulformodd1} and \ref{mulformeven-2}(1) yields
\begin{align*}
X_{\ol{i},r} ^2
&=    \ABJS( O, E_{i,i+1}, \bs{0},r ) \cdot \ABJS( O, E_{i,i+1}, \bs{0} ,r) = -(\up - \up^{-1}) \ABJS( 2 E_{i,i+1}, O,  \bs{0},r  );\\
X_{i,r} ^2&=[2]	\ABJS( 2 E_{i,i+1}, O, \bs{0},r)=(\up+\up^{-1})\ABJS( 2 E_{i,i+1}, O, \bs{0},r).
\end{align*}
Hence, $
X_{\ol{i},r} ^2=-\frac{\up - \up^{-1}}{\up + \up^{-1}}X_{i,r} ^2.
$

Similarly, since {$E_{i+1, i} + \lambda$} ($\lambda \in \CMN(n, r-1)$) satisfies the SDP condition on the $i$-th row, by Propositions  \ref{mulformeven-2}(2) and  \ref{mulformodd2}, we have
\begin{align*}
  \frac{{v} - {v}^{-1}}{{v} + {v}^{-1}} Y_{i,r}^2
&=  ({v}- {v}^{-1} )
	\ABJS( 2E_{i+1, i}, O,   \bs{0} ,r) =Y_{\ol{i},r}^2 .
\end{align*}

For the relations in the second line of (QQ5), 
applying Propositions \ref{mulformeven-2} and \ref{mulformodd1}, we obtain, for all $i,j\in[1,n]$,
(recall $ X_{i,r}=  \ABJS(E_{i, i+1}, O, \bs{0},r)$ and $  X_{\ol{j},r}=\ABJS(O, E_{j,j+1}, \bs{0},r )$.)
\begin{align}\label{XiXjbar}
X_{i,r} X_{\ol{j},r}
&=\begin{cases}  \ABJS( E_{i,i+1}, E_{j, j+1}, \bs{ 0 },r ),&\text{if }j\neq i+1;\\
\up^{-1}\ABJS( E_{i,i+1}, E_{j, j+1}, \bs{ 0 },r )+{(O|E_{i,i+2})({\bf0},r)},&\text{if }j=i+1,
\end{cases}\\\label{XjbarXi}
X_{\ol{j},r} X_{i,r}&=  \ABJS( E_{i,i+1}, E_{j, j+1}, \bs{ 0 },r ),\;\;\text{if }j\neq i-1.
\end{align}

Hence, $X_{i,r} X_{\ol{j},r} - X_{\ol{j},r} X_{i,r}= 0$ for all $i,j$ with $|i-j|\neq1$. The proof for the $Y$ case is similar.

Also, for $|i-j|>1$ and $\la$, the +-matrix  $(E_{j,j+1}+\la)^+_{i,k}${  (resp., $(E_{i,i+1}+\la)^+_{j,k}$) satisfies SDP condition on $i$-th (resp., $j$-th row)}. By Proposition \ref{mulformodd1},
$$X_{\bar i,r} X_{\ol{j},r}=  \ABJS(O,E_{i, i+1}, \bs{0},r)  \ABJS(O, E_{j,j+1}, \bs{0},r )=\begin{cases}
\;\;(O|E_{i,i+1}+E_{j,j+1})(\bs{0},r),&\text{if }i<j;\\
-(O|E_{i,i+1}+E_{j,j+1})(\bs{0},r),&\text{if }i>j.\end{cases}$$
Hence, $X_{\bar i,r} X_{\ol{j},r}= -X_{\ol{j},r}X_{\bar i,r}.$ A similar argument shows $Y_{\bar i,r} Y_{\ol{j},r}= -Y_{\ol{j},r}Y_{\bar i,r}$.

The last four relations of (QQ5) follows from Lemma \ref{relation_eei-r}, completing (QQ5) checking.

{\bf Finally, we prove the non-even relations in (QQ6),} we only prove the {$j=i-1$} case.
The $j=i+1$ case can be proved similarly. After shiftting indices, we first prove
\begin{align} \label{serre-X}
 X_{i+1,r}^2 X_{\ol{i},r} - ( {v} + {v}^{-1} ) X_{i+1,r} \boxed{X_{\ol{i},r} X_{i+1,r}} + X_{\ol{i},r}  X_{i+1,r}^2 = 0,
\end{align}
Since the SDP condition does not hold for the boxed product, we modify the left hand side slightly. Recall the second relation in line four of (QQ5):
\begin{align*}
  X_{i,r} X_{\ol{i+1},r} - {v} X_{\ol{i+1},r} X_{i,r}
	= X_{\ol{i},r} X_{i+1,r} - {v} X_{i+1,r} X_{\ol{i},r}.
\end{align*}
Thus, the left hand side of \eqref{serre-X} becomes
\begin{align*}
\text{LHS}&= X_{i+1,r}^2 X_{\ol{i},r} - \up^{-1} X_{i+1,r} X_{\ol{i},r} X_{i+1,r} + X_{\ol{i},r}  X_{i+1,r}^2-\up  X_{i+1,r} X_{\ol{i},r} X_{i+1,r}\\
&=	- {v}^{-1}X_{i+1,r} (X_{\ol{i},r} X_{i+1,r} -  {v}X_{i+1,r} X_{\ol{i},r} )
	+ (X_{\ol{i},r}  X_{i+1,r}  - {v} X_{i+1,r} X_{\ol{i},r} )X_{i+1,r}\\
&= - {v}^{-1}X_{i+1,r}  X_{i,r} X_{\ol{i+1},r} +  X_{i+1,r}  X_{\ol{i+1},r} X_{i,r}
	+ X_{i,r} X_{\ol{i+1},r}X_{i+1,r} - {v} X_{\ol{i+1},r} X_{i,r} X_{i+1,r}.
\end{align*}
Applying \eqref{XiXjbar}, \eqref{XjbarXi}, and Proposition \ref{mulformeven-2}{\rm(1)},
 we obtain
\begin{align*}
X_{i+1,r} & (X_{i,r} X_{\ol{i+1},r})\overset{\eqref{XiXjbar}}=\ABJS(E_{i+1, i+2}, O, \bs{0},r)  \cdot\Big(\up^{-1}\ABJS( E_{i,i+1}, E_{i+1, i+2}, \bs{ 0 },r )+{ (O|E_{i,i+2})({\bf0},r)}\Big)\\
&=\up^{-1} \ABJS( E_{i,i+1}+ E_{i+1,i+2}, E_{i+1, i+2}, \bs{0},r ) + \ABJS( E_{i+1,i+2}, E_{i,i+2}, \bs{0} ,r) , \\
X_{i+1,r} & (X_{\ol{i+1},r} X_{i,r})\overset{\eqref{XjbarXi}}=\ABJS(E_{i+1, i+2}, O, \bs{0},r)  \cdot
				\ABJS(E_{i, i+1}, E_{i+1, i+2}, \bs{0},r) \\
&=\ABJS( E_{i, i+1} + E_{i+1,i+2}, E_{i+1,i+2}, \bs{0} ,r) , \\
X_{i,r}& (X_{\ol{i+1},r}X_{i+1,r})\overset{\eqref{XjbarXi}} = \ABJS(E_{i, i+1}, O, \bs{0},r) \cdot
		 \ABJS(E_{i+1, i+2}, E_{i+1, i+2}, \bs{0},r)  \\
&=	\up^{-2}\ABJS( E_{i+1, i+2} + E_{i,i+1}, E_{i+1, i+2}, \bs{0},r )
	+ {v} \ABJS( E_{i,i+2}, E_{i+1, i+2}, \bs{0},r )
	+ \up^{-1}\ABJS( E_{i+1, i+2}, E_{i, i+2}, \bs{0},r ),
\end{align*}
and, by Propositions  \ref{mulformeven-2}{\rm(1)} and \ref{mulformodd1},
$$\aligned X_{\ol{i+1},r}& (X_{i,r} X_{i+1,r})=\ABJS(O, E_{i+1, i+2}, \bs{0},r)  \cdot
\Big(\up^{-1}
		\ABJS(E_{i, i+1}+E_{i+1,i+2}, O, \bs{0},r)+
		\ABJS(E_{i, i+2}, O, \bs{0},r) \Big)\\
&=\up^{-1}\ABJS(E_{i+1, i+2} + E_{i,i+1}, E_{i+1,i+2}, \bs{0} ,r)
	+ \ABJS(  E_{i,i+2}, E_{i+1,i+2}, \bs{0} ,r).\endaligned
	$$
Substituting gives LHS$=0$, proving \eqref{serre-X}.

The $Y$ case can be proved symmetrically by using  Proposition \ref{mulformodd2}  because for any {$\lambda \in \CMN(n, r)$} with {$r>0$},
we have {$d_A = 1$} when {$A=\lambda + E_{i+1, i}$} or {$\lambda + 2E_{i+1, i}$} and then the SDP condition applies to all cases due to Theorem \ref{CdA1}.

This completes the proof of the theorem.\end{proof}

The first application of the theorem is to answer the question raised in Remark \ref{longtail}(2).
Let
\begin{equation}\label{setGr}
 \fsG_{n,\bullet} = \{ G_{i,\bullet}, G_{i,\bullet}^{-1}, G_{\ol{i},\bullet},
	X_{j,\bullet}, X_{\ol{j},\bullet},
	Y_{j,\bullet}, Y_{\ol{j},\bullet}
	\where  1 \le i \le n,\  1 \le j \le n-1
	\}.
\end{equation}
Then, for every $r\geq1$,  $\fsG_{n,r}=\{ G_{i,r}, G_{i,r}^{-1}, G_{\ol{i},r},\ldots\} $ is a subset of $\qSchvsQ$.

\begin{cor}\label{common_form}
For any {$\Ad \in \MNZNS(n)$} with base $A$
and any $Z_\bullet  \in\fsG_{n,\bullet} $,
there exist ${ \scp_{\Bd,\bs{j}}(Z_\bullet,\Ad)} \in \Qv $,
for some $\Bd \in \MNZNS(n)$ and $\bs{j}\in {\ZZ}^n$,
such that, for all $r\geq|A|$, the following holds in $\qSchvsQ$
\begin{equation}\label{general-ZA}
\begin{aligned}
Z_r  \cdot \AJS(\Ad, \bs{0},r)
&=
	\sum_{\Bd, \bs{j}}{ \scp_{\Bd,\bs{j}}(Z_\bullet,\Ad)}  \AJS(\Bd, \bs{j},r ).
\end{aligned}
\end{equation}
\end{cor}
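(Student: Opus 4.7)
The plan is to bootstrap the corollary from the cases where the long multiplication formulas hold unconditionally, using the relations in Remark \ref{induction_N} to bypass the odd generators for which those formulas contain unresolved tail terms.

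First I would dispose of the easy cases. For $Z_r \in \{G_{i,r}^{\pm 1}, X_{h,r}, Y_{h,r}\}$, the identity \eqref{general-ZA} is an immediate specialisation of Propositions \ref{mulformzerocor} and \ref{mulformeven-2}, and all coefficients that appear are manifestly independent of $r$. For $Z_r = G_{\bar n,r}$, I apply Proposition \ref{mulformdiag} in the case $h = n$; the SDP hypothesis on the $n$-th row is automatic for any matrix, by the last clause of Theorem \ref{CdA1}. Crucially, all three propositions are formulated for an arbitrary weight $\bs{j}$, so the same conclusion holds with $\AJS(\Ad,\bs{0},r)$ replaced by any $\AJS(\Ad,\bs{j},r)$, still with $r$-independent coefficients---a strengthened statement I will need for the iterative step.

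Next I would reduce the remaining generators $X_{\bar h,r}$, $Y_{\bar h,r}$ and $G_{\bar i,r}$ (for $i < n$) to the ones already handled. Remark \ref{induction_N} exhibits each of $\sfE_{\bar j}, \sfF_{\bar j}, \sfK_{\bar i}$ inside $\Uvqn$ as a fixed $\Qv$-polynomial in $\{\sfE_j, \sfF_j, \sfK_i^{\pm 1}, \sfK_{\bar n}\}$. Pushing these identities through the superalgebra homomorphism $\bs{\xi}_{n,r}$ of Theorem \ref{qqschur_reltion-r} expresses every such $Z_r$ as a polynomial $Z_r = P_Z(X_{j,r}, Y_{j,r}, G_{i,r}^{\pm 1}, G_{\bar n,r})$ whose coefficients do not depend on $r$. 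Iterating then finishes the proof: applying the factors of $P_Z$ to $\AJS(\Ad,\bs{0},r)$ one at a time from the right, each intermediate product is, by the previous paragraph, a finite linear combination of long elements $\AJS(\Bd,\bs{k},r)$ with $r$-independent coefficients; collecting like terms after all factors have been applied yields \eqref{general-ZA}. En route one should verify that the matrices $\Bd$ produced by the unconditional formulas remain in $\MNZNS(n)$---inspection of Propositions \ref{mulformzerocor}, \ref{mulformeven-2}, \ref{mulformdiag} shows this, since the diagonal shifts of $\SE{A}$ are always absorbed into the summation variable $\lambda$. The set of $\Bd$ arising is finite and bounded in size by $|A|$ plus the total degree of $P_Z$, so under the convention $\AJS(\Bd,\bs{k},r) = 0$ when $|B| > r$, the resulting combination is valid uniformly for all $r \ge |A|$.

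The one genuine difficulty is conceptual rather than computational: a direct attack on $X_{\bar h,r}$, $Y_{\bar h,r}$, or $G_{\bar i,r}$ ($i<n$) would have to confront the inexplicit tail terms of Remark \ref{longtail}(1). The detour through Remark \ref{induction_N} avoids them entirely, at the cost of trading an explicit closed form for the existence of $r$-independent structure constants---which is precisely what the limit construction of $\bs{\xi}_n$ in Section~\ref{sec_generators} requires.
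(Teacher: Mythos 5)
Your proposal is correct and takes essentially the same route as the paper: handle the even generators and $G_{\bar n,r}$ directly from Propositions \ref{mulformzerocor}, \ref{mulformeven-2}, \ref{mulformdiag} (using the automatic SDP condition on the $n$-th row), then reduce the remaining odd generators $X_{\bar j,r}, Y_{\bar j,r}, G_{\bar j,r}$ ($j<n$) to that base case by pushing the descending-index recursions of Remark \ref{induction_N} through $\bs{\xi}_{n,r}$. The paper states this compressed as ``by the theorem and Remark \ref{induction_N} \ldots by an induction on $n$ down from $n-1$ to $1$''; you unpack the same induction into explicit polynomial substitution followed by iterated application of the unconditional long multiplication formulas, and you correctly flag (and justify) the two points the paper leaves tacit: that the propositions apply at arbitrary weight $\bs{j}$, and that the labelling matrices stay in $\MNZNS(n)$ at each step.
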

\begin{proof} If $Z_r$ is even (i.e., $Z_r\in\{G_{i,r}, G_{i,r}^{-1},X_{j,r},Y_{j,r}\}$) or $Z_r=G_{\bar n,r}$, the assertion follows from Propositions \ref{mulformzerocor}, \ref{mulformeven-2}, and \ref{mulformdiag} (the $h=n$ case).
If $Z_r$ is an odd element in
$$\{X_{\bar j,r},Y_{\bar j,r}, G_{\bar j,r}\mid 1\leq j<n\},$$ then, by the theorem and Remark \ref{induction_N}, the existence of the expression can be proved by an induction on $n$ down from $n-1$ to 1.
\end{proof}

The next result continues to address Remark \ref{longtail}(2), writing the general case in Propositions \ref{mulformdiag}, \ref{mulformodd1}, and \ref{mulformodd2} as a sum of a head part and a tail part.
Recall the head parts $\sdpCHk(h,\Ad(\bs{j},r))$, $\sdpCHe(h,\Ad(\bs{j},r))$, and $\sdpCHf(h,\Ad(\bs{j},r))$ introduced in Propositions \ref{mulformdiag}, \ref{mulformodd1}, and \ref{mulformodd2}, respectively.
\begin{cor}\label{diag-com}
Let
$h\in[1,n)$,	{$i \in [1,n]$},  and {$\Ad \in \MNZNS(n)$}  with base matrix $A$.
Then, for all $r\geq|A|$,  the following multiplication formulas hold in {$\qSchvsQ$}:
\begin{align}\label{diag-com-pf}
&G_{\ol{i},r}\cdot\AJS(\Ad,\bs{0},r)=\sdpCHk(i,\Ad(\bs{0},r))+\sum_{\substack{\Bd,\bs{j},\\B \prec A}} { \scp_{\Bd,\bs{j}}(G_{\ol{i},\bullet},\Ad)}\AJS(\Bd,\bs{j},r),\\
\label{upper-com-formula}
&X_{\ol{h},r}\cdot\AJS(\Ad,\bs{0},r)=\sdpCHe(h,\Ad(\bs{0},r))+\sum_{ \substack{\Bd,\bs{j}, \\  \exists k, B\prec{A}^+_{h,k}} }{ \scp_{\Bd,\bs{j}}(X_{\ol{h},\bullet},\Ad)}\AJS(\Bd,\bs{j},r),\\
\label{lower-com-formula}
&Y_{\ol{h},r}\cdot\AJS(\Ad,\bs{0},r)=\sdpCHf(h,\Ad(\bs{0},r))+(\up-\up^{-1})\sdpCHHf(h,\Ad(\bs{0},r))\\
&\hspace{3in}+\sum_{\substack{\Bd,\bs{j},\\  \exists k, B\prec{A}^-_{h,k}}} { \scp_{\Bd,\bs{j}}(Y_{\ol{h},\bullet},\Ad)}\AJS(\Bd,\bs{j},r),\notag
\end{align}
where $\Bd \in \MNZNS(n)$  with base $B$, $\bs{j}\in\ZZ^n$ and $\sdpCHHf(h,\Ad(\bs{0},r))$ is a $\mathbb{Q}(\up)$-linear combination of $\Md(\bs{j}',r)$ with coefficients independent of $r$ with $\Md\in \MNZNS(n)$ being one of the following matrices  for $1\leq l<k\leq n$ ($\ndelta_{i,j}=1-\delta_{i,j}$):
\begin{equation}\label{SFM}
\begin{aligned}
&(\SE{A} -\ndelta_{h,k}E_{h,k} +\ndelta_{h+1,k}E_{h+1, k} \mp \ndelta_{h+1,l}E_{h+1,l}| \SO{A}\pm E_{h+1,l}),\\
&(\SE{A} \mp \ndelta_{h+1,l}E_{h+1,l} |\SO{A} - E_{h,k} + E_{h+1,k}\pm E_{h+1,l}),\\
&(\SE{A} +\ndelta_{h+1,k}2E_{h+1,k}\mp \ndelta_{h+1,l}E_{h+1,l} | \SO{A}  - E_{h,k} - E_{h+1,k}\pm E_{h+1,l}).\\
\end{aligned}
\end{equation}
\end{cor}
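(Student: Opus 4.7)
The strategy is to reduce the long-element formulas \eqref{diag-com-pf}--\eqref{lower-com-formula} to the general (non-SDP) standard-basis formulas of Lemma \ref{normlized-phidiag1}(2) and Lemma \ref{standard-phiupper1}(1), (2), and then to repackage the resulting standard-basis expression back into long elements. I first dispose of the sub-case $h=n$ of \eqref{diag-com-pf}, which is immediate from Proposition \ref{mulformdiag}: by Theorem \ref{CdA1} every matrix satisfies the SDP condition on the $n$-th row, so no tail arises and the long head formula gives the entire product.

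For the remaining three cases, expand
\[
\AJS(\Ad,\bs{0},r) = \sum_{\mu\in\CMN(n,r-|A|)}\bigl[\SE{A}+\mu \bigm| \SO{A}\bigr],
\]
and use \eqref{co=ro} to pair each summand with the unique standard-basis contribution to $Z_r$ whose column sum matches $\ro(A)+\mu$. Thus $Z_r\cdot \AJS(\Ad,\bs{0},r)$ is a sum over $\mu$ of products $[\Dd_{\ol{i},\la_\mu}][\SE{A}+\mu\mid\SO{A}]$ (respectively with $\Ed_{\ol{h},\la_\mu}$ or $\Fd_{\ol{h},\la_\mu}$), where $\la_\mu=\ro(A)+\mu$. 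Applying Lemma \ref{normlized-phidiag1}(2) (respectively Lemma \ref{standard-phiupper1}(1), (2)) to each summand gives a ``head plus tail'' decomposition; the $Y_{\ol{h}}$ case contributes additionally the distinguished $(\up-\up^{-1})\sdpCHHf$ piece. Summing the heads over $\mu$ recovers exactly $\sdpCHk(i,\Ad(\bs{0},r))$ (respectively $\sdpCHe$, $\sdpCHf$) by the same bookkeeping already performed in the proofs of Propositions \ref{mulformdiag}--\ref{mulformodd2}; those proofs invoked SDP only to force the tail to vanish.

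It then remains to reassemble the tail (and, for $Y_{\ol{h}}$, the $\sdpCHHf$) contributions into long elements of the asserted shape. Two observations do the work: first, the preorder $\prec$ depends only on strict upper- and lower-triangular partial sums, and since $A\in\MNZNS(n)$ has zero diagonal one has $M \prec A+\mu \iff M^\pm \prec A$, and analogously $M \prec (A+\mu)^\pm_{h,k} \iff M^\pm \prec A^\pm_{h,k}$; second, in any long element $\Bd(\bs{j},r)=\sum_{\nu}\up^{\nu\cdot\bs{j}}[\SE{B}+\nu\mid\SO{B}]$ with $\Bd\in\MNZNS(n)$, every matrix appearing has $\pm$-part equal to $B$, so distinct bases in $\MNZNS(n)$ yield long elements with \emph{disjoint} standard-basis supports. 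By Corollary \ref{common_form}, $Z_r\cdot\AJS(\Ad,\bs{0},r)$ is a $\QQ(\up)$-linear combination of long elements $\Bd(\bs{j},r)$ with coefficients independent of $r$; subtracting the SDP head (which is itself such a combination) leaves a residue that is again a combination of long elements, and the disjoint-support property uniquely determines the bases. Combining this with the preorder observation forces $B \prec A$ in \eqref{diag-com-pf}, $B \prec A^+_{h,k}$ for some $k$ in \eqref{upper-com-formula}, and $B \prec A^-_{h,k}$ for some $k$ in \eqref{lower-com-formula}.

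The main obstacle lies in the $Y_{\ol{h},r}$ case, where Lemma \ref{standard-phiupper1}(2) produces a distinguished intermediate family $(\up-\up^{-1})\sdpCHHf(h,\Ad)$ whose labelling matrices (as listed in the lemma) have bases equal to $A^-_{h,k}$ modulo a perturbation of the form $\pm E_{h+1,l}$, and so lie neither in the SDP head nor strictly below any $A^-_{h,l}$ in the preorder. Summing these contributions over $\mu$ and invoking the disjoint-support argument once more, they assemble uniquely into $\sdpCHHf(h,\Ad(\bs{0},r))$ as an explicit $\QQ(\up)$-linear combination of long elements $\Md(\bs{j}',r)$ whose bases take the shapes listed in \eqref{SFM}; the bookkeeping of $\up$-powers and signs is intricate but exactly parallels the computation already carried out in Proposition \ref{mulformeven-2}, so no essentially new combinatorial difficulty arises.
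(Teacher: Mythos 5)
Your proposal follows essentially the same route as the paper: expand the long element, apply the non-SDP standard-basis formulas of Lemmas \ref{normlized-phidiag1} and \ref{standard-phiupper1} term-by-term, resum the SDP heads via Propositions \ref{mulformdiag}--\ref{mulformodd2}, and then use Corollary \ref{common_form} together with the observation that long elements over $\MNZNS(n)$ have disjoint standard-basis supports (so the residue inherits long-element form) and that $\preceq$ ignores diagonal entries (so the bases in the residue are forced below $A^-_{h,k}$). Your deferral of the $\sdpCHHf$ shape-verification to a computation ``parallel to Proposition \ref{mulformeven-2}'' is exactly what the paper does in its Case 1 / Case 2 analysis of $(\clubsuit)_{h,k,l}$, so nothing substantive is missing.
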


\begin{proof}We only prove \eqref{lower-com-formula}. The proofs for other two are similar (and simpler).

We first expand the left hand side of \eqref{lower-com-formula} and apply Proposition \ref{standard-phiupper1}(2):
\begin{equation}
\begin{aligned}
&Y_{\ol{h},r}\cdot\AJS(\Ad,\bs{j},r)=
	\sum_{\substack{ \\ \lambda \in  \CMN(n,r-\snorm{A}) }
	}[\la-\ep_h+\ro(A) |E_{h+1, h} ]
	\cdot[\SE{A} + \lambda  | \SO{A}] \\
&=
	\sum_{\substack{ \\ \lambda \in  \CMN(n,r-\snorm{A}) }
	} \Big\{\sdpNHf(h,(\SE{A} + \lambda  | \SO{A}))+(\up-\up^{-1})\text{\rm HH}\overline{\textsc f}(h,(\SE{A}+ \lambda  | \SO{A}))\\
	&\hspace{4.5cm} +\sum_{\Bd\in \MNZ(n,r)\atop \exists k, B\prec{(\lambda+A)}^-_{h,k}\preeq A^-_{h,k}}{ \scp_{F^\star_{\bar{h},\la+\ro(A)},A^\star_\la,\Bd}}[{\Bd}]\Big\},
\end{aligned}
\end{equation}
where  $A^\star_\la=(\SE{A} + \lambda  | \SO{A})$.

By Proposition \ref{mulformodd2}, we have
\begin{equation}\label{lower-com-formula-pf-1}
	\sum_{\substack{ \\ \lambda \in  \CMN(n,r-\snorm{A}) }
	} \sdpNHf(h,(\SE{A} + \lambda  | \SO{A}))\\
=\sdpCHf(h,\Ad({\bf0},r)).
\end{equation}

On the other hand, we claim that
\begin{equation}\label{lower-com-formula-pf-3}
\begin{aligned}
\sum_{\substack{ \lambda \in  \CMN(n,r-\snorm{A}) }}
	 (\up-\up^{-1})\text{\rm HH}\overline{\textsc f}(h,(\SE{A} + \lambda  | \SO{A}))=:(\up-\up^{-1})\sdpCHHf(h,\Ad(\bs{0},r))
\end{aligned}
\end{equation}
is a linear combination of $\Md(\bs{j}',r)$ with coefficients independent of $r$ with $\Md\in \MNZNS(n)$ being one of the matrices in \eqref{SFM}.

Indeed, by  Lemma \ref{standard-phiupper1}(2), the LHS of \eqref{lower-com-formula-pf-3} has the form $$(\up-\up^{-1})(-1)^{\wp(\Ad)}
 \sum_{k=1}^n\sum_{l<k}(\clubsuit)_{h,k,l},$$
where, putting
$\tau_{h,k,l}(\SEE{A}+\lambda|\SOE{A}):=2(\overrightarrow{\bf r}^{l}_{h+1}(A+\lambda){-\overrightarrow{\bf r}_{h+1}^{k-1}(A+\lambda)})+\OG_h((A+\lambda),k)+(A+\lambda)_{h+1,l}$,
\begin{equation}\label{club}
\begin{aligned}
&(\clubsuit)_{h,k,l}=\sum_{\substack{ \lambda \in  \CMN(n,r-\snorm{A}) }}\bigg\{(-1)^{\widetilde{a}^{\bar1}_{h,l}} \up^{\tau_{h,k,l}(\SEE{A}+\lambda|\SOE{A})+(A^{\bar0}+\lambda)_{h,k}}  \Big(\up^{-(A+\lambda)_{h,k}}[{(A^{\bar0}+\lambda)}_{h+1, k} +1]\cdot\\
&([{\Ahkzm -E_{h+1,l}+\lambda| \SO{A}+E_{h+1,l}}]-{[{(A+\lambda)_{h+1,l}}]_{\up^2}}[{\Ahkzm +E_{h+1,l}+\lambda| \SO{A}-E_{h+1,l}}])\\
&+([{{\SE{A} -E_{h+1,l}+\lambda |\Ahkom+E_{h+1,l}}}]-{[{(A+\lambda)_{h+1,l}}]_{\up^2}}[{{\SE{A} +E_{h+1,l}+\lambda |\Ahkom-E_{h+1,l}}}])\\	
&-(\up-\up^{-1})\left[{(A+\lambda)_{h+1, k} +1}\atop 2\right]\big(
	[{\SE{A} + 2E_{h+1,k}-E_{h+1,l}+\lambda |\Ahkomm+E_{h+1,l}}]\\
&\hspace{5cm}-{[{(A+\lambda)_{h+1,l}}]_{\up^2}}[{\SE{A} + 2E_{h+1,k}+E_{h+1,l}+\lambda |\Ahkomm-E_{h+1,l}}]\big)
\Big)\bigg\}.
\end{aligned}
\end{equation}
Since
$\tau_{h,k,l}(\Ad)=2(\overrightarrow{\bf r}^{l}_{h+1}-\overrightarrow{\bf r}_{h+1}^{k-1})+\OG_h(A,k)+{a}_{h+1,l},$
where  $\overrightarrow{\bf r}^{j}_{h+1}:=\overrightarrow{\bf r}^{j}_{h+1}(A)$, it follows that
\begin{equation}
\tau_{h,k,l}(\SEE{A}+\lambda|\SOE{A})=\tau_{h,k,l}(\Ad)+\varepsilon\lambda_h+\varepsilon'\lambda_{h+1},
\end{equation}
with $\varepsilon,\varepsilon'\in\{-1,0,1\}$, depending on $h,k,l$.

Now we compute $(\clubsuit)_{h,k,l}$ in two cases.

{\bf Case 1. $l\neq h+1$. } If $\SO{a}_{h+1,l}=0$, then $ \SO{A}-E_{h+1,l}$, $\Ahkom-E_{h+1,l} $ and $\Ahkomm-E_{h+1,l}$ all have a negative entry. Thus, noting $(A^{\bar0}+\lambda)_{h,k}{ -}(A+\lambda)_{h,k}=-a^{\bar1}_{h,k}$,
\eqref{club} becomes
\begin{equation}
\begin{aligned}
&(\clubsuit)_{h,k,l}
=(-1)^{\widetilde{a}^{\bar1}_{h,l}}\up^{\tau_{h,k,l}(\Ad)}\sum_{\substack{ \lambda \in  \CMN(n,r-\snorm{A}) }}\up^{\lambda\centerdot(\varepsilon\ep_h+\varepsilon' \ep_{h+1})}\big\{\up^{-\SO{a}_{h,k}}[{(A^{\bar0}+\lambda)}_{h+1, k} +1]\cdot\\
& [{\Ahkzm -E_{h+1,l}+\lambda| \SO{A}+E_{h+1,l}}]+\up^{{\color{black}(A^{\bar0}+\lambda)}_{h,k}}[{{\SE{A} -E_{h+1,l}+\lambda |\Ahkom+E_{h+1,l}}}]\\	
&-(\up-\up^{-1})\up^{{\color{black}(A^{\bar0}+\lambda)}_{h,k}}\left[{(A+\lambda)_{h+1, k} +1}\atop 2\right][{\SE{A} + 2E_{h+1,k}-E_{h+1,l}+\lambda |\Ahkomm+E_{h+1,l}}]
\big\}.
\end{aligned}
\end{equation}
{\color{black}Replacing $\Ad$ by $(\SE{A}-E_{h+1,l}|\SO{A}+E_{h+1,l})$ and $(E_{h,h+1}|O)$ by $(E_{h+1,h}|O)$  when we write $\mathcal{Y}_1,\mathcal{Y}_2,\mathcal{Y}_3$ as linear combinations of $\Bd(\bs{j},r)$ with $\Bd \in \MNZNS(n)$ and $\bs{j}\in\ZZ^n$ in the proof of Proposition \ref{mulformeven-2}(1), one proves that $(\clubsuit)_{h,k,l}$ is a linear combination of $\Md(\bs{j}',r)$ with $\Md$ of the form \eqref{SFM}}
for $\ndelta_{h+1,l}=1$.

Similarly,
if $\SO{a}_{h+1,l}=1$, then \eqref{club} becomes
\begin{equation}
\begin{aligned}
&(\clubsuit)_{h,k,l}=(-1)^{\widetilde{a}^{\bar1}_{h,l}}\up^{\tau_{h,k,l}(\Ad)}\sum_{\substack{ \lambda \in  \CMN(n,r-\snorm{A}) }}\up^{\lambda\centerdot(\varepsilon\ep_h+\varepsilon' \ep_{h+1})}[a_{h+1.l}]_{\up^2}\cdot\\
&\Big\{-\up^{-\SO{a}_{h,k}}[{(A^{\bar0}+\lambda)}_{h+1, k} +1]\cdot [{\Ahkzm +E_{h+1,l}+\lambda| \SO{A}-E_{h+1,l}}]\\
&-\up^{(A^{\bar0}+\lambda)_{h,k}}[{{\SE{A} +E_{h+1,l}+\lambda |\Ahkom-E_{h+1,l}}}]\\	
&+(\up-\up^{-1})\up^{(A^{\bar0}+\lambda)_{h,k}}\left[{(A+\lambda)_{h+1, k} +1}\atop 2\right][{\SE{A} + 2E_{h+1,k}+E_{h+1,l}+\lambda |\Ahkomm-E_{h+1,l}}]
\Big\}.
\end{aligned}
\end{equation}
Like the $\SO{a}_{h+1,l}=0$ case, $(\clubsuit)_{h,k,l}=\text{a lin. comb. of }\Md(\bs{j}',r)$,
where $\Md$ is of the form \eqref{SFM} for $\ndelta_{h+1,l}={ 1}$.

{\bf Case 2. $l=h+1$. }
Since $l<k$, $(A+\lambda)_{h,k}=a_{h,k}$, $(\SE{A}+\lambda)_{h,k}=\SE{a}_{h,k}$, $(A+\lambda)_{h+1,k}=a_{h+1,k}$, and $(A+\lambda)_{h+1,l}=\lambda_{l}+\SO{a}_{h+1,l}$. Thus, in this case, { \eqref{club}} takes the form
\begin{equation*}
\begin{aligned}
(\clubsuit)_{h,k,l}&=(-1)^{\widetilde{a}^{\bar1}_{h,l}}~\cdot \up^{\tau_{h,k,l}(\Ad)+a^{\bar0}_{h,k}}\cdot~\Big\{
\up^{-a_{h,k}}[\SEE{a}_{h+1, k} +1]\\
&\big\{\sum_{\lambda}\up^{\lambda\centerdot(\varepsilon\ep_h+\varepsilon'\ep_{h+1})}([{\Ahkzm -E_{h+1,h+1}+\lambda| \SO{A}+E_{h+1,h+1}}]\\
&\hspace{6cm}-{[{\lambda}_{h+1}+1]_{\up^2}}[{\Ahkzm +E_{h+1,h+1}+\lambda| \SO{A}-E_{h+1,h+1}}])\big\}\\
+&\big\{\sum_{ \lambda }\up^{\lambda\centerdot(\varepsilon\ep_h+\varepsilon'\ep_{h+1})}([{{\SE{A} -E_{h+1,h+1}+\lambda |\Ahkom+E_{h+1,h+1}}}]\\
&\hspace{2cm}-{[{\lambda}_{h+1}+1]_{\up^2}}[{{\SE{A} +E_{h+1,h+1}+\lambda |\Ahkom-E_{h+1,h+1}}}])\big\}-(\up-\up^{-1})\left[{a_{h+1, k} +1}\atop 2\right]\\
	&\big\{\sum_{\lambda}\up^{\lambda\centerdot(\varepsilon\ep_h+\varepsilon'\ep_{h+1})}(
	[{\SE{A} + 2E_{h+1,k}-E_{h+1,h+1}+\lambda |\Ahkomm+E_{h+1,h+1}}]\\
&\hspace{4cm}-{[{\lambda}_{h+1}+1]_{\up^2}}[{\SE{A} + 2E_{h+1,k}+E_{h+1,h+1}+\lambda |\Ahkomm-E_{h+1,h+1}}])\big\}
\Big\},
\end{aligned}
\end{equation*}
where $\la$ runs over $\CMN(n,r-\snorm{A})$. {\color{black}Notice that all matrices appearing in above expression for $(\clubsuit)_{h,k,l}$ have the form of $(\lambda+M^{\bar{0}}|M^{\bar{1}})$ with $\Md=(M^{\bar{0}}|M^{\bar{1}})$ being the form of the matrices in \eqref{SFM} with $\ndelta_{h+1,l}=0$}.
Now, a similar argument to the proof of Proposition \ref{mulformdiag} (the $h=k$ case) shows that $(\clubsuit)_{h,k,l}$ has the required form.
This completes the proof of the claim and hence of \eqref{lower-com-formula-pf-3}.

On the other hand, by Corollary \ref{common_form}, we see that $Y_{\ol{h},r}\cdot\AJS(\Ad,{\color{black}\bs{0}},r)$ is a linear combination of $\Md(\bs{j},r)$ with $\Md\in\MNZ(n,r)$ and $\bs{j}\in\ZZ^n$. Further, by Proposition \ref{standard-phiupper1}(2), we may write $Y_{\ol{h},r}\cdot\AJS(\Ad,{\color{black}\bs{0}},r)=\Sigma_1+\Sigma_2$,
where $\Sigma_1$ (resp., $\Sigma_2$) is a linear combination of $\Md(\bs{j},r)$ with $M={A}^-_{h,k}$ (resp., $M\prec{A}^-_{h,k}$ for some $k$). {\color{black}Meanwhile,} since every term $\Bd(\bs{j},r)$ appearing in $\sdpCHf(h,\Ad({\bf0},r))$ and $\sdpCHHf(h,\Ad(\bs{0},r)$  satisfies $B={A}^-_{h,k}$ for some $k$ with $1\leq k\leq n$ {\color{black} by Proposition \ref{mulformodd2} and \eqref{lower-com-formula-pf-3}}, equating the coefficients with respect to the standard basis $\{[\Md]\}_\Md$ gives
$$\aligned
\Sigma_1&=\sdpCHf(h,\Ad({\bf0},r))+(\up-\up^{-1})\sdpCHHf(h,\Ad(\bs{0},r)),\\
\Sigma_2&=\sum_{\substack{ \\ \lambda \in  \CMN(n,r-\snorm{A}) }}\sum_{\Bd\in \MNZ(n,r)\atop \exists k, B\prec{A}^-_{h,k}}{ \scp_{{ F^\star_{\bar{h},\la+\ro(A)}},A^\star_\la,\Bd}}[{\Bd}].
\endaligned$$
This completes the proof of \eqref{lower-com-formula}.
\end{proof}

\section{A triangular relation and its associated monomial basis}\label{TriRelationQS}
 In this section, we shall establish a triangular relation (Theorem \ref{triang-QqS}) in the queer $\up$-Schur superalgebra (cf. \cite[Lem.~3.8]{BLM} for $\up$-Schur algebras and \cite[Th.~7.4]{DG} for $\up$-Schur superalgebras). This is equivalent to
 show that the transition matrix from a certain monomial basis to the standard basis is a triangular matrix.
  To achieve this, we need to determine the leading term in each step of the multiplications in a monomial basis element ${\bf m}^{(\Ad)}$ in Theorem \ref{triang-QqS-F}. There are three pairs of preparatory results: Propositions--Corollaries \ref{triang_upper-QqS}--\ref{InsertL}, \ref{middleTR}--\ref{triang_diag-QqS-DF}, and \ref{triang_lower-QqS}--\ref{InsertL2}, serving this purpose.

For non-negative integers {$k \le l$} and elements $\ft_i,i\in[k,l]$ in a ring, we make the following convention for the orders of products:
\begin{equation}\label{eq_product}
\begin{aligned}
 \prod_{i\in[k,l]}^<\ft_i=		\prod_{k \le i \le l}{\ft}_i:={\ft}_{k} {\ft}_{k+1} \cdots {\ft}_{l} , \quad
 \prod_{i\in[k,l]}^>\ft_i=		\prod_{l \ge  i \ge k} {\ft}_i := {\ft}_{l} {\ft}_{l-1} \cdots {\ft}_{k}.
\end{aligned}
\end{equation}

For $1\leq k\leq n, 1\leq h\leq n-1$, $p>0$, $\lambda\in\Lambda(n,r)$ and $A\in M_n(\mathbb{N})$, let

\begin{equation}\label{Ehpla}
\aligned
\Ed_{h,\lambda,p}&:=(\lambda + pE_{h, h+1}-pE_{h+1,h+1}|O),\\ \Fd_{h,\lambda,p}&:=(\lambda+ pE_{h+1,h}-pE_{h,h} |O),
 \endaligned
 \qquad\;
 \aligned A^{+,p}_{h,k}&:=A+pE_{h,k}-pE_{h+1,k},\\ A^{-,p}_{h,k}&:=A-pE_{h,k}+pE_{h+1,k}.\endaligned
 \end{equation}
 Then, in the notations in \eqref{EVEN} and \eqref{Ahk}, we actually have
$$
\begin{aligned}
&\Ed_{h,\lambda}= \Ed_{h,\lambda,1},\quad\; \Fd_{h,\lambda}=\Fd_{h,\lambda,1}, \quad\;
A^+_{h,k}= A^{+,1}_{h,k},\quad\;A^-_{h,k}= A^{-,1}_{h,k}.
\end{aligned}
$$ Thus, $ A^{+,p}_{h,k}$ (resp., $A^{-,p}_{h,k}$) is a {\it $p$-up}  (resp., {\it $p$-down}) matrix.

The multiplication formulas given in {  Lemma \ref{phiupper-even-norm}} are ``order 1'' standard multiplication formulas. The following lemma can be used to derive the higher order ones.

\begin{lem}\label{pEh}
 Fix $1\leq h\leq n-1$ and $p>0$. For any $\lambda\in\Lambda(n,r)$, we have
\begin{align}\label{divpEh}
[\Ed_{h,\lambda,p}]&=\frac1{[p]^!}\prod_{p>i\geq0}[\Ed_{h,\la+i\bsal_h}]=\frac1{[p]}[\Ed_{h,\la+(p-1)\bsal_h,1}][\Ed_{h,\la,p-1}]\qquad{\color{black}(\text{in the case }\lambda_{h+1}\geq p)},\\
[\Fd_{h,\lambda,p}]&=\frac1{[p]^!}\prod_{p>i\geq0}[\Fd_{h,\la+i\bsal_h}]=\frac1{[p]}[\Fd_{h,\la+(p-1)\bsal_h,1}][\Fd_{h,\la,p-1}]\qquad {\color{black}(\text{in the case }\lambda_{h}\geq p)} \label{divpFh}.
\end{align}
\end{lem}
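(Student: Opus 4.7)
The plan is to prove both formulas by induction on $p$, using Lemma \ref{phiupper-even-norm}(2)--(3) to compute the product on the right-hand side of the second equality and showing that, miraculously, all but one term in the expansion vanishes.

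For the $\sdpCHe$ case, the base case $p=1$ is trivial. For the inductive step, I would apply Lemma \ref{phiupper-even-norm}(2) with base matrix $A = \lambda + (p-1)E_{h,h+1} - (p-1)E_{h+1,h+1}$ (corresponding to $\Ed_{h,\lambda,p-1}$) and check which terms in the three sums survive. The crucial observation is that $\SO{A}=O$ for this matrix, so the second family of terms (modifying $\SO{A}$ by adding $E_{h,k}$ and subtracting $E_{h+1,k}$) and the third family (modifying $\SO{A}$ by subtracting $E_{h,k}+E_{h+1,k}$) both yield matrices with negative entries in the odd part, hence all vanish under our convention. This leaves only the first family $\up^{f^{\bar0}_{h,k}}[\SEE{a}_{h,k}+1]\,[\SE{A}+E_{h,k}-E_{h+1,k}|O]$. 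Within that family, the entry $a^{\bar0}_{h+1,k}$ must be positive for the target matrix to be admissible, and the only column where $A$ has a positive entry in row $h+1$ is $k=h+1$ (where $a^{\bar0}_{h+1,h+1}=\lambda_{h+1}-(p-1)>0$, using the hypothesis $\lambda_{h+1}\ge p$).

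Once this reduction is made, the surviving term is
$$[\Ed_{h,\lambda+(p-1)\alpha_h,1}]\,[\Ed_{h,\lambda,p-1}]=\up^{f^{\bar0}_{h,h+1}}\,[a^{\bar0}_{h,h+1}+1]\,[\Ed_{h,\lambda,p}].$$
Since $a^{\bar0}_{h,h+1}+1=(p-1)+1=p$ and $f^{\bar0}_{h,h+1}=f_h(A,h+1)+a^{\bar1}_{h+1,h+1}=0+0=0$ (no entries lie strictly to the right of column $h+1$ in $A$, and $\SO{A}=O$), we conclude $[\Ed_{h,\lambda+(p-1)\alpha_h,1}]\,[\Ed_{h,\lambda,p-1}]=[p]\,[\Ed_{h,\lambda,p}]$, which is the second equality in \eqref{divpEh}. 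Iterating this identity and invoking the inductive hypothesis on $[\Ed_{h,\lambda,p-1}]$ yields the first (telescoped) equality.

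The $\sdpCHf$ case \eqref{divpFh} runs in complete parallel, using Lemma \ref{phiupper-even-norm}(3) in place of (2). The same $\SO{A}=O$ mechanism kills the odd-part-modifying terms and forces $k=h$ to be the only surviving column; the coefficient $\up^{g^{\bar0}_{h,h}}[\SEE{a}_{h+1,h}+1]$ reduces to $[p]$ by the analogous computation $g_h(A,h)=0$ and $a^{\bar0}_{h+1,h}+1=p$ (using $\lambda_h\ge p$). The anticipated main obstacle is purely bookkeeping: verifying that every extraneous term in the multiplication formula vanishes and that the power of $\up$ is exactly zero; no serious combinatorial or structural difficulty arises because the matrix $A$ is essentially diagonal with a single off-diagonal block, which drastically simplifies the quantities $\BK(h,k)$ and $\AK(h,k)$ that appear in $f^{\bar0}_{h,k}$ and $g^{\bar0}_{h,k}$.
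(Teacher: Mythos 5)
Your proof is correct and follows exactly the same route as the paper's (very terse) argument: apply Lemma \ref{phiupper-even-norm}(2) (resp.\ (3)) to the product $[\Ed_{h,\ro(A),1}][\Ad]$ with $\Ad=\Ed_{h,\lambda,p-1}$, observe that $\SO{A}=O$ annihilates the two odd-modifying families and the unique nonzero off-diagonal entry of $A$ confines the surviving column to $k=h+1$ (resp.\ $k=h$), and verify the $\up$-exponent vanishes---this is precisely the ``one checks easily'' identity the paper records before invoking induction. One remark worth flagging: your own computation of $\co(\Fd_{h,\mu,1})=\mu$ and $\ro(\Fd_{h,\lambda,p-1})=\lambda-(p-1)\bsal_h$ shows that the printed \eqref{divpFh} carries a sign typo, with the middle factor needing to be $[\Fd_{h,\lambda-(p-1)\bsal_h,1}]$ rather than $[\Fd_{h,\lambda+(p-1)\bsal_h,1}]$ for the row/column matching $\co(B)=\ro(A)$ to hold; your argument proves the corrected form.
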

\begin{proof}By { Lemma \ref{phiupper-even-norm}(2)}, one checks easily that, for any $1\leq j\leq p$,
$$\aligned
{[\lambda+j\alpha_h-E_{h+1,h+1}+E_{h,h+1}|O]}\cdot &[j]![\lambda-jE_{h+1,h+1}+jE_{h,h+1}|O]\\
=&[j+1]![\lambda-(j+1)E_{h+1,h+1}+(j+1)E_{h,h+1}|O].\endaligned$$
Thus, \eqref{divpEh} follows from induction.  \eqref{divpFh} can be proved similarly by {  Lemma \ref{phiupper-even-norm}(3)}.
\end{proof}

The following facts can be easily checked by the definition of the preorder $\preceq$ in \eqref{preorder}.
\begin{lem}\label{preorder-rem}
For any $A=(a_{i,j}), M=(m_{i,j})\in M_n(\mathbb{N})_r$, $1\leq h\leq n-1$,
and $j,k\in[1,n]$,
\begin{enumerate}
\item if $a_{h+1,j}a_{h+1,k}\neq0$ (resp. $a_{h,j}a_{h,k}\neq0$) and $j<k$, then $A^+_{h,j}\prec A^+_{h,k}$ (resp.,
 $A^-_{h,k}\prec A^-_{h,j}$).

\item if  $M\prec A$ and $m_{h+1,k}>0$ (resp., $m_{h,k}>0$), then $M^+_{h,k}\prec A^+_{h,k}$ (resp.,$M^-_{h,k}\prec A^-_{h,k}$).
 \end{enumerate}

\end{lem}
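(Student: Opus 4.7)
The plan is to verify the two claims by direct inspection of the defining inequalities of the preorder $\preceq$ in \eqref{preorder}, computing the effect of the operations $A\mapsto A^+_{h,k}=A+E_{h,k}-E_{h+1,k}$ and $A\mapsto A^-_{h,k}$ on the partial sums
$U_{s,t}(X):=\sum_{i\le s,\,j'\ge t}x_{i,j'}\ (s<t),\qquad L_{s,t}(X):=\sum_{i\ge s,\,j'\le t}x_{i,j'}\ (s>t).$
A routine counting gives the key identities
$U_{s,t}(A^+_{h,k})-U_{s,t}(A)=[s=h,\,k\ge t],\qquad L_{s,t}(A^+_{h,k})-L_{s,t}(A)=-[s=h{+}1,\,k\le t],$
and similarly with signs reversed for $A^-_{h,k}$. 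Only the single row index $s=h$ (resp.\ $s=h+1$) is affected by the 1-up operation on the upper (resp.\ lower) partial sums.

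For claim (1), subtracting the identities above for $(h,j)$ and $(h,k)$ yields
$U_{s,t}(A^+_{h,k})-U_{s,t}(A^+_{h,j})=[s=h]\bigl([k\ge t]-[j\ge t]\bigr)=[s=h,\,j<t\le k],$
$L_{s,t}(A^+_{h,k})-L_{s,t}(A^+_{h,j})=[s=h{+}1,\,j\le t<k].$
Since $j<k$, both quantities are $\ge0$, so $A^+_{h,j}\preceq A^+_{h,k}$. The minus case is obtained by the same computation with the roles of $h$ and $h{+}1$ swapped (and the asymmetry $k<j$ replaced by $k<j$ vs.\ $j<k$ accounted for by a sign), yielding $A^-_{h,k}\preceq A^-_{h,j}$. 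For strict inequality $\prec^\pm$, I will exhibit an off-diagonal witness among the four positions $(h,j),(h+1,j),(h,k),(h+1,k)$ at which $A^+_{h,k}$ and $A^+_{h,j}$ differ. The hypothesis $a_{h+1,j}a_{h+1,k}\ne0$ guarantees that the changes $-1$ at $(h+1,k)$ and $+1$ at $(h+1,j)$ are actual changes; since $j<k$, at least one of the four positions is off the diagonal (in fact $(h+1,k)$ is off-diagonal unless $h+1=k$, in which case $(h,k)$ suffices), and the $\pm$-normalization records this difference.

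For claim (2), the decisive observation is that the perturbation $E_{h,k}-E_{h+1,k}$ is identical on both sides, so $A^+_{h,k}-M^+_{h,k}=A-M$. Hence
$U_{s,t}(A^+_{h,k})-U_{s,t}(M^+_{h,k})=U_{s,t}(A)-U_{s,t}(M)\ge0$ for $s<t,$
and analogously for $L_{s,t}$ with $s>t$, giving $M^+_{h,k}\preceq A^+_{h,k}$ from $M\preceq A$. The hypothesis $m_{h+1,k}>0$ is only needed to ensure $M^+_{h,k}\in M_n(\mathbb{N})$. For strict inequality, the same cancellation gives $(A^+_{h,k})^\pm-(M^+_{h,k})^\pm=A^\pm-M^\pm$ (the diagonal parts of both perturbations vanish under $^\pm$, and any off-diagonal parts cancel), so $M^\pm\ne A^\pm$ implies $(M^+_{h,k})^\pm\ne(A^+_{h,k})^\pm$. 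The minus case is identical.

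The calculations are entirely routine; the only subtle point is handling the strictness in (1), where the $\pm$-normalization forces a case analysis on whether $(h,k),(h+1,k),(h,j),(h+1,j)$ lie on the diagonal. The hypothesis $a_{h+1,j}a_{h+1,k}\ne0$ together with $j<k$ is exactly what is needed to always produce an off-diagonal witness.
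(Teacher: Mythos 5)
Your proof is correct and is the direct verification the paper has in mind (the paper states only that the lemma ``can be easily checked by the definition of the preorder'' and omits the computation). The two Iverson-bracket identities for the effect of $A\mapsto A^\pm_{h,k}$ on $U_{s,t}$ and $L_{s,t}$ are exactly right, the cancellation in part~(2) giving $A^+_{h,k}-M^+_{h,k}=A-M$ is the correct key observation, and the linearity of $X\mapsto X^\pm$ correctly transfers strictness.

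One small imprecision worth fixing: you write that the hypothesis $a_{h+1,j}a_{h+1,k}\ne 0$ ``guarantees that the changes $-1$ at $(h+1,k)$ and $+1$ at $(h+1,j)$ are actual changes'' and ``is exactly what is needed to always produce an off-diagonal witness.'' Neither is quite the right description of its role. The difference $A^+_{h,k}-A^+_{h,j}=E_{h,k}-E_{h+1,k}-E_{h,j}+E_{h+1,j}$ is supported on four \emph{distinct} positions, at most one per column can be diagonal, so at least two off-diagonal witnesses exist purely for combinatorial reasons and independently of any positivity of entries of $A$. The positivity hypothesis is needed only so that $A^+_{h,j}$ and $A^+_{h,k}$ actually lie in $M_n(\mathbb N)$ (i.e.\ the $-1$'s do not create negative entries); it plays no role in the strictness argument. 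You do state the analogous fact correctly for part~(2), so it would be cleaner to say the same thing in part~(1).
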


We extend the order $\preceq$ on the matrix set $M_n(\NN)$ defined in \eqref{preorder} to $M_n(\mathbb N|\mathbb N_2)$.
For $A^\star=(A^{\ol0}|A^{\ol1})\in \MNZN(n)$, let
\begin{equation}\label{Tr}
\Tr(A^{\ol1}) = \{ i \mid \SOE{a}_{i,i} = 1, 1\leq i\leq n \}
\end{equation} denote the {\it diagonal support} of $A^{\ol1}$.  We follow the definition given in \cite[\S6, (23)]{GLL}.

\begin{defn}\label{preorder-2}
For any $A^\star, B^\star\in\MNZN(n)$ with base matrix $A,B$, respectively, define $A^\star\prec^\star B^\star$ if ${A\prec B}$ or
${A\preeq B}$ but $\Tr(A^\star)\subset \Tr(B^\star).$
\end{defn}

\medskip
We first determine the leading terms relative to $\preceq$ for the products of the form $[\Bd]\cdot[\Ad]$, where $\Bd=\Ed_{h,\lambda,p}$ or $\Ed_{\ol h,\lambda}$ or something mixed, where $\Ad$ has an upper triangular base matrix  $A=(a_{i,j})$ satisfying further  $a_{i,j}=0$ for $i<j$ and $j>k$ for some fixed $1\leq k\leq n$, i.e,  $A$ has the $k^\vartriangle$-{\it shape}
\begin{equation}\label{eq:special matrix-upper}
A=\begin{pmatrix}T_k&0\\0&D_{n-k}
\end{pmatrix} \text{ where $T_k$ is } k\times k \text{ upper triangular matrix, and }D_{n-k}\text{ diagonal.}
\end{equation}

Parallelly, the leading term for $[\Cd]\cdot[\Ad]$, with $\Cd=\Fd_{h,\lambda,p}$ or $\Fd_{\ol h,\lambda}$ or something mixed,  will be discussed in Proposition \ref{triang_lower-QqS}  with $\Ad$ being of the form \eqref{eq:special matrix-lower}.

In the following results, for $\Md=(M^{\bar0}|M^{\bar1})$, the term $f_\Md[\Md]$ in the expression of the form
$$f_\Md[\Md]+(\text{lower terms})_\prec\;\;(\text{resp}., (\text{lower terms})_{\prec^\star})$$
is called the leading term, while``lower terms" relative to $\prec$ (resp., $\prec^\star$) stands for a linear combination of $[\Bd]$ with $\Bd\in\MNZ(n,r)$ whose base $ B\prec M^{\bar0}+M^{\bar1}$ (resp., with $\Bd\prec^\star\Md$).
\begin{prop}\label{triang_upper-QqS}
For $\Ad=  ({\SEE{a}_{i,j}} | {\SOE{a}_{i,j}})\in\MNZ(n,r)$ with base matrix $A=(a_{i,j})$, assume that $A$ has the $k^\vartriangle$-shape in \eqref{eq:special matrix-upper} for some
 $1\leq k\leq n$ and
 $\ro(A)=\lambda$.
Then the following holds in $\qSchvsQ$ for all $h< k$.

(1) If
{\color{black}$\SO{a}_{h+1,k}=0$ and $\SE{a}_{h+1,k}>0$, then, for all $p\in[1,\SE{a}_{h+1,k}]$, }
\begin{equation*}\label{pEhA}
\begin{aligned}
{[\Ed_{h,\lambda,p}]\cdot[\Ad]}=\left[{\SE{a}_{h,k}+p}\atop p\right][(\SE{A})^{+,p}_{h,k}|\SO{A}]+(\text{lower terms})_\prec.
\end{aligned}
\end{equation*}

(2) If $\SO{a}_{h,k}=\SO{a}_{h+1,k}=0$ and $\SE{a}_{h+1,k}>0$, then
\begin{equation*}\label{oddEhA}
{[\Ed_{\ol{h},\lambda}]\cdot[\Ad]}=(-1)^{\wp(\Ad)+\SO{\tilde{a}}_{h-1,k}}[\SE{A}-E_{h+1,k}|\SO{A}+E_{h,k}]+(\text{lower terms})_\prec.
\end{equation*}

(3) If $\SO{a}_{k,k}=0$, $\SE{a}_{k,k}>0$ and assume, in addition, $a_{i,k}=0$, for some $1\leq l<k\leq n$ and all $l\leq i<k$. Then,
for any $p=p^{\bar{0}}+p^{\bar{1}}\in[1,\SE{a}_{k,k}]$ with $p^{\ol0}\in\NN$, $p^{\bar{1}}\in\{0,1\}$, we have
\begin{equation*}
\begin{aligned}
\Big([\Ed_{\bar l,\lambda+\SE{p}\alpha_{l,k}+\SO{p}\alpha_{l+1,k}}]^{\SO{p}}&\cdot[\Ed_{l,\lambda+p\alpha_{l+1,k},\SE{p}}]
\prod_{l<h<k}[\Ed_{h,\lambda+p\alpha_{h+1,k},p}]\Big)\cdot[\Ad]\\
&=[\SE{A}-pE_{k,k}+\SE{p}E_{l,k}|\SO{A}+\SO{p}E_{l,k}]+(\text{lower terms})_\prec,
\end{aligned}
\end{equation*}
where $\alpha_{i,j}=\epsilon_i-\epsilon_j$, for $i\neq j$, and $\alpha_{i,i}=0$.
\end{prop}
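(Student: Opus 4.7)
The plan is to prove the three parts in order, combining induction with the standard multiplication formulas of Lemmas~\ref{phiupper-even-norm} and~\ref{standard-phiupper1}, together with the preorder propagation guaranteed by Lemma~\ref{preorder-rem}. For part~(1), I would argue by induction on $p$. The base case $p=1$ follows directly from Lemma~\ref{phiupper-even-norm}(2): the $k^\vartriangle$-shape forces every column $j$ with $a_{h+1,j}>0$ to lie in $[h+1,k]$, and among the resulting matrices $A^+_{h,j}$ the one at $j=k$ dominates all others in $\prec$ by Lemma~\ref{preorder-rem}(1), with coefficient $[\SE{a}_{h,k}+1]=\left[{\SE{a}_{h,k}+1}\atop 1\right]$ matching the asserted $\up$-binomial. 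For the inductive step, apply Lemma~\ref{pEh} to get $[p]\cdot[\Ed_{h,\lambda,p}]\cdot[\Ad]=[\Ed_{h,\lambda+(p-1)\bsal_h}]\cdot([\Ed_{h,\lambda,p-1}]\cdot[\Ad])$, substitute the inductive hypothesis, and apply the base case once more; the leading coefficient telescopes as $\tfrac{1}{[p]}[\SE{a}_{h,k}+p]\left[{\SE{a}_{h,k}+p-1}\atop{p-1}\right]=\left[{\SE{a}_{h,k}+p}\atop p\right]$, while the lower terms stay lower by Lemma~\ref{preorder-rem}(2).

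For part~(2), I would invoke the general form of Lemma~\ref{standard-phiupper1}(1). Under the hypotheses $\SO{a}_{h,k}=\SO{a}_{h+1,k}=0$ and $\SE{a}_{h+1,k}>0$, only the type-1 summand at column $k$ contributes the relevant matrix $[\SE{A}-E_{h+1,k}|\SO{A}+E_{h,k}]$, with coefficient $(-1)^{\wp(\Ad)+\SOE{\widetilde a}_{h-1,k}}\up^{f_h(A,k)+a^{\ol 1}_{h+1,k}}$; the $k^\vartriangle$-shape kills the $\up$-exponent, since rows $h$ and $h+1$ have no entries beyond column $k$. All other head contributions at columns $k'\ne k$ yield bases $A^+_{h,k'}\prec A^+_{h,k}$ by Lemma~\ref{preorder-rem}(1), and the tail terms from Lemma~\ref{standard-phiupper1}(1) are already $\prec A^+_{h,k}$ by construction.

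For part~(3), I would apply the operators sequentially, starting from the rightmost factor $[\Ed_{k-1,\lambda,p}]$ and proceeding leftward. At each intermediate stage, the accumulated leading matrix is $(\SE{A}-pE_{k,k}+pE_{h+1,k}|\SO{A})$ for some $h$ decreasing from $k-1$ to $l$; this matrix remains $k^\vartriangle$-shaped (modifications are confined to column $k$ at rows $\le k$) and still satisfies the hypotheses of~(1) at the row immediately above. Under the extra assumption $a_{i,k}=0$ for $l\le i<k$, one has $\SE{a}_{h,k}=0$ at each step, so the $\up$-binomial coefficient produced by~(1) collapses to $\left[{p}\atop p\right]=1$ throughout. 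The penultimate factor $[\Ed_{l,\ldots,\SE{p}}]$ then applies~(1) with $p$ replaced by $\SE{p}$ (again with trivial coefficient), depositing $\SE{p}$ boxes at $(l,k)$; and if $\SO{p}=1$, one application of $[\Ed_{\ol l,\ldots}]$ transports the remaining single box from $(l+1,k)$ in the even part to $(l,k)$ in the odd part via~(2). The product of all coefficients then evaluates to the claimed leading scalar.

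The main obstacle I anticipate is the bookkeeping in part~(3): at each stage one must verify that lower terms accumulated earlier, after multiplication by the next operator, remain below the newly accumulated leading term. The crucial input will be a propagated invariant---that every intermediate accumulated leading matrix differs from $A$ only in column~$k$ and remains $k^\vartriangle$-shaped---so that Lemma~\ref{preorder-rem}(2) applies uniformly to all box-moves in column~$k$ while the $k^\vartriangle$-shape argument from~(1) and~(2) controls any stray side contributions at columns $\ne k$.
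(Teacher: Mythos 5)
Your proposal follows essentially the same route as the paper's proof: induction on $p$ via Lemma~\ref{pEh} and Lemma~\ref{phiupper-even-norm}(2) for part~(1), a direct reading of Lemma~\ref{standard-phiupper1}(1) for part~(2), and a downward sequential application for part~(3) with Lemma~\ref{preorder-rem} and Corollary~\ref{InsertL} controlling the lower terms. The propagated invariant you identify at the end (that the intermediate leading matrix differs from $A$ only in column~$k$ and keeps the $k^\vartriangle$-shape) is exactly what the paper exploits, and your observations that $f_h(A,k)=0$ under the shape hypothesis and that the $\up$-binomial collapses to~$1$ at every even step are both correct.

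The one concrete step you gloss over is the sign in the final odd step of part~(3). Your sentence ``The product of all coefficients then evaluates to the claimed leading scalar'' hides a nontrivial verification: when $\SO{p}=1$, part~(2) applied to the intermediate matrix $P^\star=(\SE{A}-pE_{k,k}+\SO{p}E_{l+1,k}+\SE{p}E_{l,k}\mid\SO{A})$ produces the coefficient $(-1)^{\wp(P^\star)+\SOE{\widetilde{a}}_{l-1,k}}$, and the stated formula in~(3) requires this to equal~$1$. Since $P^\star$ has the same odd part as $\Ad$, one has $\wp(P^\star)=\wp(\Ad)$, so the needed identity is $\wp(\Ad)\equiv\SOE{\widetilde{a}}_{l-1,k}\pmod 2$. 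The paper derives this from the additional hypothesis $a_{i,k}=0$ for $l\le i<k$ (together with $\SO{a}_{k,k}=0$ and the $k^\vartriangle$-shape, which kill all later odd entries in column~$k$ and off-diagonal entries in columns beyond~$k$), giving $\wp(\Ad)=\SOE{\widetilde{a}}_{l,k}=\SOE{\widetilde{a}}_{l-1,k}$. You should spell this out; without it the leading scalar in~(3) is not actually pinned down to~$1$.
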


\begin{proof}
(1)  We first prove (1) by induction on $p$.
If $p=1$, by Lemma \ref{phiupper-even-norm}(2),  the shape of $A$ together with  $\SO{a}_{h+1,k}=0$ ({ $h+1\leq k$}) implies that the terms $[\Bd]$ with base $B=A^+_{h,k}$ appearing in the RHS of Lemma \ref{phiupper-even-norm}(2) is unique. Thus, there exist $\scc_\Bd^{\lambda,1}\in\sZ$ such that 
\begin{equation}\label{1EhA}
\begin{aligned}
{[\Ed_{h,\lambda,1}][A^\star]}&=[\SE{a}_{h,k}+1][(\SE{A})^{+,1}_{h,k}|\SO{A}]+\sum_{\substack{\Bd\in\MNZ(n,r), B\approx A^+_{h,j},j<k}}\scc_\Bd^{\lambda,1}[\Bd]\\
&=[\SE{a}_{h,k}+1][(\SE{A})^{+,1}_{h,k}|\SO{A}]
+\sum_{\substack{\Bd\in\MNZ(n,r), B\prec A^+_{h,k}}}\scc_\Bd^{\lambda,1}[\Bd],
\end{aligned}
\end{equation}
 where the second equality follows from Lemma \ref{preorder-rem}(1). Thus, (1) holds for $p=1$.

 Now assume $p>1$ and (1) holds for  $p-1$. Then, by induction,
\begin{equation}\label{triang_upper-QqS-pf-2-2}
\begin{aligned}
{[\Ed_{h,\lambda,p-1}][\Ad]}=\left[{\SE{a}_{h,k}+p-1}\atop p-1\right][(\SE{A})^{+,p-1}_{h,k}|\SO{A}]
+\sum_{\substack{\Md\in\MNZ(n,r)\\ M\prec A^{+,p-1}_{h,k}}}\scc_{\Md}^{\lambda,p-1}[\Md].\\
\end{aligned}
\end{equation}
The leading matrix $P^\star:=((\SE{A})^{+,p-1}_{h,k}|\SO{A})$ also satisfies $\SO{a}_{h+1,k}=0$ and its base matrix  $P=A^{+,p-1}_{h,k}$ has the same shape \eqref{eq:special matrix-upper} as $A$. Thus, for $\la^{(p-1)}:=\ro(A^{+,p-1}_{h,k})
=\lambda+(p-1)\alpha_h$, by applying \eqref{1EhA} to $P^\star$,
we have
\begin{equation*}\label{triang_upper-QqS-pf-2-3}
\begin{aligned}{[E^\star_{h,\la^{(p-1)},1}][P^\star]}=
[E^\star_{h,\la^{(p-1)},1}][(\SE{A})^{+,p-1}_{h,k}|\SO{A}]
={[\SE{a}_{h,k}+p]}[(\SE{A})^{+,p}_{h,k}|\SO{A}]
+\sum_{\substack{\Bd\in\MNZ(n,r)\\B\prec A^{+,p}_{h,k}}}\scc^{\lambda^{(p-1)},1}_{\Bd}[\Bd].
\end{aligned}
\end{equation*}
Multiplying the both sides of \eqref{triang_upper-QqS-pf-2-2} by $\frac1{[p]}[E^\star_{h,\la^{(p-1)},1}]$ and noting
$\left[{\SE{a}_{h,k}+p-1}\atop p-1\right]\frac{[\SE{a}_{h,k}+p]}{[p]}=\left[{\SE{a}_{h,k}+p}\atop p\right]$, we obtain by \eqref{divpEh}
\begin{equation}
\begin{aligned}
{[\Ed_{h,\lambda,p}][\Ad]}=\left[{\SE{a}_{h,k}+p}\atop p\right][\SE{A}&-pE_{h+1,k}+pE_{h,k}|\SO{A}]+\sum_{\substack{\Bd\in\MNZ(n,r)\\B\prec A^{+,p}_{h,k}}}{ {\scc'}^{\la^{(p-1)},1}_{\Bd}}[\Bd]\\
&+
\sum_{\substack{\Md\in\MNZ(n,r)\\ M\prec A^{+,p-1}_{h,k}}}\frac{1}{[p]}\scc^{\lambda,p-1}_{\Md}[E^\star_{h,\la^{(p-1)},1}][\Md]
\end{aligned}
\end{equation}
where  ${\scc'}^{\lambda^{(p-1)},1}_{\Bd}=\frac{1}{[p]}\left[{\SE{a}_{h,k}+p-1}\atop p-1\right]\scc^{\lambda^{(p-1)},1}_{\Bd}\in \mathbb{Q}(\up)$. Now, (1) follows from another application of Lemma \ref{phiupper-even-norm}(2) which tells that, for every $M\prec A^{+,p-1}_{h,k}$,
$[E^\star_{h,\la^{(p-1)},1}][\Md]$ is a linear combination of some $[\Bd]$ with $B\prec M^{+}_{h,k}\prec(A^{+,p-1}_{h,k})^+_{h,k}=A^{+,p}_{h,k}$ (Lemma \ref{preorder-rem}(2)).

For later use in this proof, we remark that the last order relation can be generalised to higher orders by a similar induction via Lemma \ref{pEh}.

\begin{cor}[{\bf Part 1}]\label{InsertL}
Maintain the assumptions on  $\Ad$ in Proposition \ref{triang_upper-QqS}(1) and assume $\Md\in\MNZ(n,r)$ with $M\prec A$.
Then in $\qSchvsQ$ we have
\begin{equation}\label{EhP}
{[\Ed_{h,\ro(M),p}][M^\star]}=\text{lin. comb. of }[\Bd], B\prec A^{+,p}_{h,k}.
\end{equation}
\end{cor}

(2) We now prove part (2) of the proposition. If $\Ad$ satisfies $\SO{a}_{h,k}=\SO{a}_{h+1,k}=0$ and $\SE{a}_{h+1,k}>0$ ($h+1\leq k$) and has shape
\eqref{eq:special matrix-upper} then, by Lemma \ref{standard-phiupper1} (1),
\begin{equation*}
\begin{aligned}
{[\Ed_{\bar h,\lambda}][\Ad]}&=(-1)^{p(\Ad)+\SO{\tilde{a}}_{h-1,k}}[\SE{A}-E_{h+1,k}|\SO{A}+E_{h,k}]+\sum_{\substack{\Bd\in\MNZ(n,r)\\B\approx A^{+}_{h,j},j<k}}\ol{\scc}_{\Bd}[\Bd]+\sum_{\substack{\Bd\in\MNZ(n,r)\\B\prec A^{+}_{h,j},j\leq k}}  \ol{\scc}_\Bd[\Bd]\\
&=(-1)^{p(\Ad)+\SO{\tilde{a}}_{h-1,k}}[\SE{A}-E_{h+1,k}|\SO{A}+E_{h,k}]+\sum_{\substack{\Bd\in\MNZ(n,r)\\B\prec A^{+}_{h,k}}}\ol{\scc}_{\Bd}[\Bd],\;\;(\ol{\scc}_\Bd\in\sZ)
\end{aligned}
\end{equation*}
since $B\prec A^{+}_{h,j}\prec A^{+}_{h,k} $ for $j<k$ by Lemma \ref{preorder-rem}. This proves (2).

  Here is the second part of Corollary \ref{InsertL}.

 \medskip
\noindent
{\bf Corollary \ref{InsertL} (Part 2).} {\it Maintain the assumption on  $\Ad$ in Proposition \ref{triang_upper-QqS}(2). Then, for
$\Md\in\MNZ(n,r)$ with $M\prec A$, we have in $\qSchvsQ$}
\begin{equation}\label{oddEhP}
{[\Ed_{\bar h,\ro(M)}][M^\star]}=\text{lin. comb. of }[\Bd], B\prec A^{+}_{h,k}.
\end{equation}

(3) Finally, we prove (3).
For  $1\leq l<k$, we shall firstly show by downward induction on $l$ that the following holds,  for some ${\mathfrak{f}}_{\Cd}\in\mathbb{Q}(\up)$,
\begin{equation}\label{upper-QqS-cor-pf-11}
\prod_{l<h<k}[\Ed_{h,\lambda+p\alpha_{h+1,k},p}][\Ad]=[\SE{A}+pE_{l+1,k}-pE_{k,k}|\SO{A}]+\sum_{\substack{\Cd\in\MNZ(n,r)\\C\prec A-pE_{k,k}+pE_{l+1,k}}}{\mathfrak{{f}}}_{\Cd}[\Cd].
\end{equation}
 Clearly \eqref{upper-QqS-cor-pf-11} is trivially true in the case $l=k-1$ (so, $l+1=k$), taking all ${\mathfrak{f}}_{\Cd}=0$.  In the case $l=k-2$, since $\SO{a}_{k,k}=0={a}_{k-1,k}$ (additional assumption), and $a_{k,k}=a^{\bar{0}}_{k,k}\geq p$.
Applying Proposition \ref{triang_upper-QqS}(1) yields
\begin{equation}\label{upper-QqS-cor-pf-1}
{[\Ed_{k-1,\lambda,p}][\Ad]}=[\SE{A}+pE_{k-1,k}-pE_{k,k}|\SO{A}]+\sum_{\substack{\Bd\in\MNZ(n,r)\\B\prec A+pE_{k-1,k}-pE_{k,k}}}\mathfrak{f}^{k-1,p}_{\Bd}[\Bd],
\end{equation}
proving the case for $l=k-2$.

Assume now $l<k-2$ and (3) is true for $l+1$:
\begin{equation}\label{upper-QqS-cor-pf-5}
\begin{aligned}
{\prod_{l+1<h<k}[\Ed_{h,\lambda+p\alpha_{h+1,k},p}][\Ad]}=[\SE{A}+pE_{l+2,k}-pE_{k,k}|\SO{A}]+\sum_{\substack{\Md\in\MNZ(n,r)\\M\prec A-pE_{k,k}+pE_{l+2,k}}}{\mathfrak{f}'}_{\Md}[\Md].
\end{aligned}
\end{equation}
Multiplying the both sides of \eqref{upper-QqS-cor-pf-5} by $[\Ed_{l+1,\lambda+p\alpha_{l+2,k},p}]$ gives the LHS of \eqref{upper-QqS-cor-pf-11} and the right hand side $[\Ed_{l+1,\gamma,p}][\SE{A}+pE_{l+2,k}-pE_{k,k}|\SO{A}]+$(a linear comb. of $[\Ed_{l+1,\gamma,p}][\Md]$), where $\gamma:=\ro(A-pE_{k,k}+pE_{l+2,k})=\lambda+p\alpha_{l+2,k}$.

Observe that the matrix $(\SE{A}+pE_{l+2,k}-pE_{k,k}|\SO{A})$ in \eqref{upper-QqS-cor-pf-5} satisfies { $\SE{a}_{l+1,k}=\SO{a}_{l+1,k}=\SO{a}_{l+2,k}=0, \SE{a}_{l+2,k}>0$} and moreover the base matrix
$A+pE_{l+2,k}-pE_{k,k}$ is of the form as \eqref{eq:special matrix-upper}. So, applying Proposition \ref{triang_upper-QqS}(1) gives rise to
\begin{equation*}\label{upper-QqS-cor-pf-6}
\begin{aligned}
{[\Ed_{l+1,\gamma,p}][\SE{A}+pE_{l+2,k}-pE_{k,k}|\SO{A}]}=[\SE{A}+pE_{l+1,k}-pE_{k,k}|\SO{A}]+\sum_{\substack{P^\star\in\MNZ(n,r)\\ P\prec A+pE_{l+1,k}-pE_{k,k}}}{\mathfrak{f}}^{l+1,p}_{P^\star}[P^\star],
\end{aligned}
\end{equation*}
for some ${\mathfrak{f}}^{l+1,p}_{P^\star}\in\mathbb{Q}(\up)$.  On the other hand, for $\Md$ with $M\prec {}'\!A:=A+pE_{l+2,k}-pE_{k,k}$ in \eqref{upper-QqS-cor-pf-5}, we have $\ro(M)=\gamma$ and, by
\eqref{EhP} and noting ${}'\!A^{+,p}_{l+1,k}=A+pE_{l+1,k}-pE_{k,k}$,
\begin{equation*}\label{upper-QqS-cor-pf-7}
{[\Ed_{l+1,\gamma,p}]{ [\Md]}}=\sum_{\substack{N^\star\in\MNZ(n,r)\\ N\prec A+pE_{l+1,k}-pE_{k,k}}}{\mathfrak{f}'}^{l+1,p}_{N^\star}[N^\star].
\end{equation*}
Hence, \eqref{upper-QqS-cor-pf-11} follows.

To complete the proof of (3), we now multiply the RHS of \eqref{upper-QqS-cor-pf-11} by the remaining { factor  $[\Ed_{\bar l,\lambda+\SE{p}\alpha_{l,k}+\SO{p}\alpha_{l+1,k}}]^{\SO{p}}[\Ed_{l,\lambda+p\alpha_{l+1,k},\SE{p}}]$. We shall first consider the multiplication with the leading term in RHS of \eqref{upper-QqS-cor-pf-11}. }
Observe that the leading matrix $(\SE{A}-pE_{k,k}+pE_{l+1,k}|\SO{A})$ in  \eqref{upper-QqS-cor-pf-11} is of the form \eqref{eq:special matrix-upper} with $a^{\bar 1}_{l,k}=a^{\bar 1}_{l+1,k}=0$ and
$a^{\bar 0}_{l+1,k}=p\geq p^{\bar 0}, a^{\bar 0}_{l,k}=0$. Moreover, its base matrix 
is $A-pE_{k,k}+pE_{l+1,k}$ with $\ro(A-pE_{k,k}+pE_{l+1,k})=\lambda+p\alpha_{l+1,k}$.
Then applying (1) yields
\begin{equation}\label{upper-QqS-cor-pf-1}
\begin{aligned}
{[\Ed_{l,\lambda+p\alpha_{l+1,k},{\SE{p}}}]}&[\SE{A}-pE_{k,k}+pE_{l+1,k}|\SO{A}]=[\SE{A}-pE_{k,k}+\SO{p}E_{l+1,k}+\SE{p}E_{l,k}|\SO{A}]\\
&+\sum_{\substack{\Cd\in\MNZ(n,r)\\C\prec A-pE_{k,k}+\SO{p}E_{l+1,k}+\SE{p}E_{l,k}}}\mathfrak{f}^{{l,{\SE{p}}}}_{\Cd}[\Cd].
\end{aligned}
\end{equation}

If $\SO{p}=0$, then $p=\SE{p}$ and {  the multiplication of $[\Ed_{\bar l,\lambda+\SE{p}\alpha_{l,k}+\SO{p}\alpha_{l+1,k}}]^{\SO{p}}[\Ed_{l,\lambda+p\alpha_{l+1,k},\SE{p}}]$ with the leading term in RHS of \eqref{upper-QqS-cor-pf-11} is completed. For each $[C^\star]$ appearing in the lower terms in RHS of \eqref{upper-QqS-cor-pf-11}, using \eqref{EhP} in Corollary \ref{InsertL} gives
\begin{equation}\label{upper-QqS-cor-pf-lower}
[\Ed_{l,\lambda+p\alpha_{l+1,k},\SE{p}}][\Cd]=\sum_{\substack{D^\star\in\MNZ(n,r)\\ D\prec A-pE_{k,k}+\SO{p}E_{l+1,k}+\SE{p}E_{l,k}}} {\mathfrak{f}'}^{{l,{\SE{p}}}}_{\Dd}[D^\star].
\end{equation}
 This together with  \eqref{upper-QqS-cor-pf-1} proves (3) in this situation.}

If $\SO{p}=1$, then we need further to compute the multiplication of  the {\color{black}RHS} of \eqref{upper-QqS-cor-pf-1} by $[\Ed_{\bar l,\lambda+\SE{p}\alpha_{l,k}+\SO{p}\alpha_{l+1,k}}]$. Thus, we apply (2) for the leading term and \eqref{oddEhP} for the lower terms.

By the additional assumption on matrix $A$, the parity value $\parity{\Ad}=\SO{\tilde{a}}_{l,k}$. Hence, for $P^\star:=(\SE{A}-pE_{k,k}+\SO{p}E_{l+1,k}+\SE{p}E_{l,k}|\SO{A})$ with base matrix $P$, we have
$\parity{P^\star}=\parity{\Ad}=\SO{\tilde{a}}_{l,k}$ and $\ro(P)=\lambda+\SE{p}\alpha_{l,k}+\SO{p}\alpha_{l+1,k}$. Then, by (2), we have
\begin{equation}\label{upper-QqS-cor-pf-3}
\begin{aligned}
{[\Ed_{\bar l,\ro(P)}][P^\star]}
=[\SE{A}-{p}E_{k,k}+\SE{p}E_{l,k}|\SO{A}+\SO{p}E_{l,k}]+\sum_{\substack{\Bd\in\MNZ(n,r)\\B\prec A-pE_{k,k}+pE_{l,k}}}{\mathfrak{f}'}^{\overline{l}}_{\Bd}[\Bd].
\end{aligned}
\end{equation}

For any $\Cd$ in \eqref{upper-QqS-cor-pf-1}, $C\prec P$ and
$P^+_{l,k}=A-pE_{k,k}+\SO{p}E_{l+1,k}+\SE{p}E_{l,k}+E_{l,k}-E_{l+1,k}=A-pE_{k,k}+pE_{l,k}$.
Thus, by \eqref{oddEhP},
\begin{equation}\label{upper-QqS-cor-pf-4}
\begin{aligned}
{[\Ed_{\overline{l},\ro(P)}][\Cd]}=\sum_{\substack{\Bd\in\MNZ(n,r)\\B\prec A-pE_{k,k}+pE_{l,k}}}{\mathfrak{\tilde{f}}}^{\overline{l}}_{\Bd}[\Bd].
\end{aligned}
\end{equation}
{  For each $[C^\star]$ appearing in the lower terms in RHS of \eqref{upper-QqS-cor-pf-11} in this case, using Corollary \ref{InsertL}(1)--(2) one can obtain a formula similar to \eqref{upper-QqS-cor-pf-lower}. Then}
combining \eqref{upper-QqS-cor-pf-1}, \eqref{upper-QqS-cor-pf-3} and \eqref{upper-QqS-cor-pf-4} proves (3) in this situation.
\end{proof}

We now determine the leading term of the products $[\Dd_{\bar h, \ro(A)}][A^{\bar0}|A^{\bar1}]$, where both $A^{\bar0}$ and $A^{\bar1}$ are upper triangular.

\begin{prop}\label{middleTR}
Suppose $\Ad=  ({\SEE{a}_{i,j}} | {\SOE{a}_{i,j}})\in\MNZ(n,r)$ has an upper triangular base $A$ (i.e., ${\SEE{a}_{i,j}}+{\SOE{a}_{i,j}}=0\;\forall i>j$) and assume $\SE{a}_{h,h}>0$ and $\SO{a}_{h,h}=0$
for some $h\in[1,n]$. Then the following holds in $\qSchvsQ$:
\begin{equation}\label{triang_diag-QqS-D}
\begin{aligned}
{{[{\Dd_{\bar h,\lambda}}]} [\Ad]}={(-1)}^{\wp(\Ad)+{\widetilde{a}}^{\ol{1}}_{h,h} } {\up}^{d_{h}(A,h) }&
	[{\SE{A} -E_{h,h}|\SO{A}+ E_{h,h}}]+(\text{lower terms})_{\prec^\star}.
\end{aligned}
\end{equation}
In particular, for $h=n$, we have
\begin{equation}\label{triang_diag-QqS-D-n}
{{[{\Dd_{\bar n,\lambda}}]} [\Ad]}=[{\SE{A} -E_{n,n}|\SO{A}+ E_{n,n}}].
\end{equation}
\end{prop}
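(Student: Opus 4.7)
The plan is to derive both assertions directly from Lemma \ref{normlized-phidiag1}, exploiting that $A$ is upper triangular together with the hypothesis $\SE{a}_{h,h}>0$ and $\SO{a}_{h,h}=0$. Begin with the special case $h=n$: by Theorem \ref{CdA1}, every matrix satisfies the SDP condition on its $n$-th row, so Lemma \ref{normlized-phidiag1}(1) applies and yields $[\Dd_{\bar n,\lambda}][\Ad]=\sdpNHk(n,\Ad)$. Upper-triangularity forces $a_{n,k}=0$ for $k<n$, killing every summand with $k<n$. At $k=n$, the ``second-type'' contribution $[a_{n,n}]_{\up^2}[\SE{A}+E_{n,n}|\SO{A}-E_{n,n}]$ vanishes because $\SO{a}_{n,n}=0$ places $\SO{A}-E_{n,n}$ outside $M_n(\Ng)$, following our convention stated after \eqref{SMF}. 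The surviving coefficient $(-1)^{\wp(\Ad)+\widetilde{a}^{\ol{1}}_{n,n}}\up^{d_n(A,n)}$ collapses to $1$: by \eqref{tnuA}--\eqref{parity} one has $\widetilde{a}^{\ol{1}}_{n,n}=|A^{\ol{1}}|$, whose parity equals $\wp(\Ad)$, while $d_n(A,n)=\BK(n,n)-\AK(n,n)=0$ by upper-triangularity of row $n$. This gives \eqref{triang_diag-QqS-D-n} and, a fortiori, \eqref{triang_diag-QqS-D} in the case $h=n$.

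For $h<n$, the SDP condition need not hold on the $h$-th row (for $k>h$ with $a_{h,k}>0$ it would demand further vanishing entries below the diagonal, which upper-triangularity of $A$ does not guarantee), so we instead invoke the general version Lemma \ref{normlized-phidiag1}(2):
\[
[\Dd_{\bar h,\lambda}][\Ad]=\sdpNHk(h,\Ad)+\sum_{\Bd,\;B\prec A}\scp_{\Dd_{\bar h,\lambda},\Ad,\Bd}[\Bd].
\]
Every tail term has base $B\prec A$, so it is automatically $\prec^\star$-lower than any matrix of base $A$ and is absorbed into $(\text{lower terms})_{\prec^\star}$. Inside $\sdpNHk(h,\Ad)$, summands with $k<h$ vanish by upper-triangularity, and the $k=h$ second-type summand $[a_{h,h}]_{\up^2}[\SE{A}+E_{h,h}|\SO{A}-E_{h,h}]$ vanishes since $\SO{a}_{h,h}=0$. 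The surviving $k=h$ contribution is exactly the asserted leading term
\[
\Md:=(\SE{A}-E_{h,h}|\SO{A}+E_{h,h}),\qquad \text{with coefficient } (-1)^{\wp(\Ad)+\widetilde{a}^{\ol{1}}_{h,h}}\up^{d_h(A,h)}.
\]

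It remains to place the $k>h$ contributions of $\sdpNHk(h,\Ad)$ into $(\text{lower terms})_{\prec^\star}$. The matrix $\Md$ has base $A$ and, because $\SO{a}_{h,h}=0$ implies $h\notin \Tr(A^\star)$, diagonal support $\Tr(\Md)=\Tr(A^\star)\cup\{h\}\supsetneq\Tr(A^\star)$. Each $k>h$ summand has shape $(\SE{A}\mp E_{h,k}|\SO{A}\pm E_{h,k})$ with $k\neq h$, so it modifies only an off-diagonal $(h,k)$ entry: its base still equals $A$ (hence $\preeq A$) and its diagonal support still equals $\Tr(A^\star)$, strictly smaller than $\Tr(\Md)$. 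Definition \ref{preorder-2} then forces $B^\star\prec^\star \Md$, and such terms contribute to $(\text{lower terms})_{\prec^\star}$, establishing \eqref{triang_diag-QqS-D}. The only conceptually delicate point, in contrast to the purely even BLM situation \cite{BLM}, is that several surviving terms inside $\sdpNHk(h,\Ad)$ share the common base $A$, so the coarser preorder $\prec$ is insufficient; the refinement to $\prec^\star$ by diagonal support is precisely what isolates the leading contribution.
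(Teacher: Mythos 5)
Your proof is correct and follows essentially the same route as the paper: invoke Lemma~\ref{normlized-phidiag1}(2), kill the $k<h$ terms by upper-triangularity and the $k=h$ second-type term by the convention $[\SE{A}+E_{h,h}|\SO{A}-E_{h,h}]=0$ (since $\SO{a}_{h,h}=0$), absorb the $\prec$-lower tail, and then isolate the $k>h$ contributions by comparing diagonal supports via Definition~\ref{preorder-2}, exactly as the paper does. You also include an explicit verification that the coefficient collapses to $1$ when $h=n$, which the paper leaves implicit; this is a welcome bit of extra care rather than a deviation.
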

\begin{proof} By the hypothesis,  $A^\star_h:=({\SE{A} -E_{h,h}|\SO{A}+ E_{h,h}})\in\MNZ(n,r)$ and its base is $A$. On the other hand, $({\SE{A} +E_{h,h}|\SO{A}- E_{h,h}})\notin\MNZ(n,r)$ which means $[{\SE{A} +E_{h,h}|\SO{A}- E_{h,h}}]=0$ by the convention right after \eqref{SMF}.
Thus, the hypothesis and Lemma \ref{normlized-phidiag1}(2) imply
\begin{equation*}\label{triang_diag-QqS-D-1}
\begin{aligned}
{[{\Dd_{\bar h,\lambda}}][\Ad]}
&={(-1)}^{\wp(\Ad)+{\widetilde{a}}^{\ol{1}}_{h,h} } {\up}^{d_{h}(A,h) }
	[{\SE{A} -E_{h,h}|\SO{A}+ E_{h,h}}]\\
&+\sum_{ k>h}
{(-1)}^{\wp(\Ad)+{\widetilde{a}}^{\ol{1}}_{h,k} } {\up}^{d_{h}(A,k) }
\Big\{
	[{\SE{A} -E_{h,k}|\SO{A}+ E_{h,k}}]
	+[a_{h,k}]_{\up^2}
	[{\SE{A} +  E_{h,k}| \SO{A} - E_{h,k}}]\Big\}\\
&+\sum_{\Bd,{B} \prec { {A} }} { \scp_{{ {\Dd_{\bar h,\lambda}}},\Ad,\Bd}}  [{\Bd}].
\end{aligned}
\end{equation*}
If $h=n$, then $A$ satisfies the SDP condition. Thus, both summations do not occur (one may apply Lemma \ref{normlized-phidiag1}(1)
directly), proving \eqref{triang_diag-QqS-D-n}.

It remains to prove that $A^\star_h$ is a leading term when $h<n$.
For those $\Bd$ with $B\prec A$, by Definition \ref{preorder-2},  we have
$\Bd\prec^\star A^\star_h$.  If (at most) one of the matrices $({\SE{A} -E_{h,k}|\SO{A}+ E_{h,k}})$ and $({\SE{A} +  E_{h,k}| \SO{A} - E_{h,k}})$, for $k>h$, is in $\MNZ(n,r)$, call it $A^\star_k$. We have, by \eqref{Tr},
\begin{equation*}
\Tr(A^\star_k)=\Tr(\Ad)\subset\Tr(\Ad)\cup\{h\}=\Tr(A^\star_h).
\end{equation*}
Hence, by Definition \ref{preorder-2}, $A^\star_k\prec^\star A^\star_h$. This proves \eqref{triang_diag-QqS-D}.
\end{proof}
With a similar argument, we also have the following.
\begin{cor}\label{triang_diag-QqS-DF}Maintain the assumption on $\Ad$ and $h$ as in Proposition \ref{middleTR}.
For any $\Md\in\MNZ(n,r)$
with $M^\star\prec^\star \Ad$, we have in $\qSchvsQ$
\begin{equation*}\label{triang_diag-QqS-F}
[\Dd_{\bar h,\ro(M)}][\Md]=\text{lin. comb. of }[\Bd], \Bd\prec^\star(\SE{A}-E_{h,h}|\SO{A}+E_{h,h}).
\end{equation*}
\end{cor}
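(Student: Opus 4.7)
The plan is to replicate the argument of Proposition \ref{middleTR} with $\Md$ in the role of $\Ad$ and then compare every resulting term against $A^\star_h:=(\SE{A}-E_{h,h}|\SO{A}+E_{h,h})$ in the order $\prec^\star$ of Definition \ref{preorder-2}. Applying Lemma \ref{normlized-phidiag1}(2) gives
$$[\Dd_{\bar h,\ro(M)}][\Md]=\sdpNHk(h,\Md)+\sum_{\Bd,\,B\prec M}\scp \cdot [\Bd],$$
with $\sdpNHk(h,\Md)$ assembled from the matrices $M^\star_{-,k}:=(\SE{M}-E_{h,k}|\SO{M}+E_{h,k})$ and $M^\star_{+,k}:=(\SE{M}+E_{h,k}|\SO{M}-E_{h,k})$, each of base $M$.

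First I would split according to the two clauses of Definition \ref{preorder-2} producing $\Md\prec^\star\Ad$. In the case $M\prec A$, every tail index gives $B\prec M\prec A$ and every head matrix has base $M\prec A$, so each term lies below $A^\star_h$ under $\prec^\star$ via the first clause, and the conclusion is immediate.

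The delicate case is $M\preeq A$ with $\Tr(\Md)\subsetneq\Tr(\Ad)$, where bases merely satisfy $M\preeq A$ and I must lean on the second clause. Upper triangularity of $A$ with $\SO{a}_{h,h}=0$ forces $h\notin\Tr(\Ad)$, hence $h\notin\Tr(\Md)$ (so $\SO{m}_{h,h}=0$, killing $M^\star_{+,h}$) and $\SE{m}_{h,k}=\SO{m}_{h,k}=0$ for $k<h$ (killing those head terms). For $k>h$ the off-diagonal modification leaves $\Tr(M^\star_{\pm,k})=\Tr(\Md)\subsetneq\Tr(\Ad)\subsetneq\Tr(A^\star_h)$. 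For $k=h$ only $M^\star_{-,h}$ survives, and $\Tr(M^\star_{-,h})=\Tr(\Md)\cup\{h\}\subsetneq\Tr(\Ad)\cup\{h\}=\Tr(A^\star_h)$, the strict inclusion following because any element of $\Tr(\Ad)\setminus\Tr(\Md)$ is distinct from $h$. For the tail, $B\prec M\preeq A$ combined with $B^{\pm}\neq M^{\pm}=A^{\pm}$ yields $B\prec A$, so $\Bd\prec^\star A^\star_h$ by the first clause.

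The main obstacle, modest but essential, is the diagonal-support bookkeeping in the second case: verifying that $\Tr(\Md)\subseteq\Tr(\Ad)$ eliminates $M^\star_{+,h}$ and that the strict inclusion in $\Tr$ propagates correctly from $\Md\prec^\star\Ad$ to $M^\star_{-,h}\prec^\star A^\star_h$ after the Clifford entry at $(h,h)$ is toggled.
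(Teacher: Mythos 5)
Your proof is correct and fills in exactly the ``similar argument'' the paper leaves implicit after Proposition~\ref{middleTR}: expand via Lemma~\ref{normlized-phidiag1}(2) with $\Md$ in place of $\Ad$, then case-split on the two clauses of Definition~\ref{preorder-2} and track both the base matrix and the diagonal support $\Tr$. One tiny presentational point: when you claim $\SE{m}_{h,k}=\SO{m}_{h,k}=0$ for $k<h$ in the $M\preeq A$ case, you are tacitly using that $M^\pm=A^\pm$ together with the upper triangularity of $A$ forces $M$ upper triangular; stating that intermediate deduction explicitly would make the ``killing those head terms'' step airtight.
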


Finally, we determine the leading term of the products $[\Cd]\cdot[\Ad]$, where $\Cd=\Fd_{h,\lambda,p}$ or $\Fd_{\ol h,\lambda}$ or something mixed and the base matrix $A$ satisfies  $a_{i,j}=0$ for $i>j$ and $j<k$ for certain $1\leq k\leq n$, i.e $A$ has the $k_\triangledown$-shape:
\begin{equation}\label{eq:special matrix-lower}
A=\begin{pmatrix}T_{k-1}&B\\0&C
\end{pmatrix} \text{ where $T_{k-1}$ is upper triangular and $B,C$ arbitrary}.
\end{equation}

Recall the $^-$-matrix (of higher order) $A^{-,p}_{h,k}=A-pE_{h,k}+pE_{h+1,k}$ defined in \eqref{Ehpla}. Note that $A^{-,p}_{h,k}$ is the base matrix of $((\SE{A})^{-,p}_{h,k}|\SO{A})$.
\begin{prop}\label{triang_lower-QqS}
Assume $\Ad=  ({\SEE{a}_{i,j}} | {\SOE{a}_{i,j}})\in\MNZ(n,r)$ and its base matrix $A=(a_{i,j})$ satisfies $a_{i,j}=0$ for $i>j$and $j<k$ (i.e., $A$ has the shape in \eqref{eq:special matrix-lower}), for some $k\in[1, n]$. Then, putting  $\ro(A)=\lambda$, the following holds in $\qSchvsQ$
for any given $h\in[1,n]$ satisfying $h\geq k$.

(1)  If $\SO{a}_{h,k}=0$ in the case $h>k$ (no requirement in the case $h=k$)  and $a_{h,k}^{\bar{0}}>0$ then
\begin{equation*}\label{pFhA}
\begin{aligned}
{[\Fd_{h,\lambda,p}][\Ad]}=\up^{-p\SO{a}_{h,k}}\left[{\SE{a}_{h+1,k}+p}\atop p\right][(\SE{A})^{-,p}_{h,k}|\SO{A}]+(\text{lower terms})_{\prec^\star}.
\end{aligned}
\end{equation*}

(2) If $\SO{a}_{h,k}=0$ in case $h>k$, $\SO{a}_{h+1,k}=0$ {\color{black}and $\SE{a}_{h,k}>0$}, then
\begin{equation*}\label{oddFhA}
{[\Fd_{\bar h,\lambda}][\Ad]}=(-1)^{p(\Ad)+{\SO{\tilde{a}}_{h,k}}}\up^{-\SO{a}_{h,k}}[\SE{A}-E_{h,k}|\SO{A}+E_{h+1,k}]+(\text{lower terms})_{\prec^\star}.
\end{equation*}

(3)  If $\SE{a}_{k,k}>0$ and, in addition, assume $a_{i,k}=0$ for all $i$ with $k<i\leq l+1$ for some $l$ with $k\leq l<n$.
Then, for any $p=\SE{p}+\SO{p}\in[1,\SE{a}_{k,k}]$ with $\SE{p}\in\NN$ and $\SO{p}\in\{0,1\}$, we have
\begin{equation*}
\begin{aligned}
{[\Fd_{\bar l,\lambda+\SE{p}\alpha_{l+1,k}+\alpha_{l,k}}]}^{\SO{p}}&[\Fd_{l,\lambda+p\alpha_{l,k},\SE{p}}]
\prod_{l>h\geq k}[\Fd_{h,\lambda+p\alpha_{h,k},p}][\Ad]=(-1)^{p^{\bar1}(\parity{\Ad}+\SO{\tilde{a}}_{k,k})}\up^{-{p}\SO{a}_{k,k}}\\
&\cdot[\SE{A}-pE_{k,k}+\SE{p}E_{l+1,k}|\SO{A}+\SO{p}E_{l+1,k}]+
(\text{lower terms})_{\prec^\star}.
\end{aligned}
\end{equation*}
\end{prop}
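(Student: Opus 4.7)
The plan is to parallel the proof of Proposition~\ref{triang_upper-QqS}, replacing Lemma~\ref{phiupper-even-norm}(2), Lemma~\ref{standard-phiupper1}(1), and identity~\eqref{divpEh} by their $\Fd$-counterparts Lemma~\ref{phiupper-even-norm}(3), Lemma~\ref{standard-phiupper1}(2), and \eqref{divpFh}; the preorder $\prec$ should be refined to $\prec^\star$ (Definition~\ref{preorder-2}) since diagonal entries of $\SE{A}$ are now allowed to be non-zero.

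For Part~(1), I would proceed by induction on $p$. When $p=1$, applying Lemma~\ref{phiupper-even-norm}(3) to $[\Fd_{h,\la}][\Ad]$, the shape~\eqref{eq:special matrix-lower} forces $\AK(h,k)=\AK(h+1,k)=0$, hence $g_h(A,k)=0$, so the coefficient of $[(\SE{A})^-_{h,k}|\SO{A}]$ reads off cleanly as $\up^{-\SO{a}_{h,k}}[\SEE{a}_{h+1,k}+1]$. The two further candidate matrices at index~$k$ in Lemma~\ref{phiupper-even-norm}(3) will either vanish (using the hypothesis $\SO{a}_{h,k}=0$ when $h>k$ together with the positivity requirements), or, when $h=k$ and $\SO{a}_{h,h}=1$, will share the base $A^-_{h,h}$ with the leading matrix but have strictly smaller diagonal support, so lie below in the $\prec^\star$ order. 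Summands at $j>k$ with $a_{h,j}\neq 0$ satisfy $A^-_{h,j}\prec A^-_{h,k}$ by Lemma~\ref{preorder-rem}(1), while those at $j<k$ vanish by the shape. Running in parallel with this argument, I would establish an $\Fd$-analog of Corollary~\ref{InsertL}: for $\Md\prec^\star\Ad$ with $\ro(M)=\la$, $[\Fd_{h,\la,p}][\Md]$ is a combination of $[\Bd]$ with $\Bd\prec^\star((\SE{A})^{-,p}_{h,k}|\SO{A})$. The inductive step then uses \eqref{divpFh}, written as $[\Fd_{h,\la,p}]=\tfrac{1}{[p]}[\Fd_{h,\la-(p-1)\bsal_h,1}][\Fd_{h,\la,p-1}]$ with row-sums adjusted so the composition is defined, reducing to the $p=1$ case and this auxiliary statement.

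For Part~(2), I would apply Lemma~\ref{standard-phiupper1}(2). Again $\OG_h(A,k)=0$ under the shape, and the hypotheses $\SO{a}_{h,k}=0$ (for $h>k$), $\SO{a}_{h+1,k}=0$, $\SE{a}_{h,k}>0$ single out the head term $(-1)^{\wp(\Ad)+\SO{\tilde{a}}_{h,k}}\up^{-\SO{a}_{h,k}}[\SE{A}-E_{h,k}|\SO{A}+E_{h+1,k}]$ as the leading one, with the other head candidates at~$k$ either vanishing or (when $h=k$) having strictly smaller diagonal support, and those at $j\neq k$ handled exactly as in Part~(1). The key technical observation is that the entire $\text{HH}\overline{\textsc{f}}$ contribution of Lemma~\ref{standard-phiupper1}(2) vanishes under the shape~\eqref{eq:special matrix-lower}: each of its summands carries either a factor $[a_{h+1,l}]_{\up^2}$ with $l<k\leq h+1$ forcing $a_{h+1,l}=0$, or a subtraction $-E_{h+1,l}$ from $\SE{A}$ whose $(h+1,l)$-entry is already zero. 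The tail terms are $\prec^\star$-lower by construction, and I would record a parallel $\Fd_{\bar{h}}$-analog of Corollary~\ref{InsertL} for use in Part~(3).

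For Part~(3), I would use upward induction on $l\in[k,n-1]$, paralleling the downward induction in Proposition~\ref{triang_upper-QqS}(3). When $l=k$ the product over $h$ is empty and the assertion reduces to Part~(1) (with $p$ replaced by $\SE{p}$) followed, if $\SO{p}=1$, by Part~(2); the relevant hypotheses follow from the additional assumption $a_{k+1,k}=0$. For the inductive step, I would multiply the $(l-1)$-formula on the left by $[\Fd_{l-1,\la+p\bsal_{l-1,k},p}]$ and apply Part~(1) with $h=l-1$: the added assumption $a_{i,k}=0$ for $k<i\leq l+1$ guarantees both that the intermediate base matrix still has shape~\eqref{eq:special matrix-lower} on the $(l-1)$-th row and that Lemma~\ref{preorder-rem}(1) applies. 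Lower terms from the inductive hypothesis are absorbed by the $\Fd$-analog of Corollary~\ref{InsertL} from Parts~(1)--(2). The final two factors $[\Fd_{l,\la+p\bsal_{l,k},\SE{p}}]$ and (if $\SO{p}=1$) $[\Fd_{\bar{l},\la+\SE{p}\bsal_{l+1,k}+\bsal_{l,k}}]$ are then handled by Parts~(1) and~(2), producing the claimed leading matrix with aggregate coefficient $(-1)^{\SO{p}(\wp(\Ad)+\SO{\tilde{a}}_{k,k})}\up^{-p\SO{a}_{k,k}}$. The hard part will be the bookkeeping in this last step: verifying that the intermediate matrices continue to satisfy the shape~\eqref{eq:special matrix-lower} and the SDP-like odd-entry hypotheses needed to invoke Parts~(1)--(2), and tracking the telescoping of the signs $(-1)^{\SO{\tilde{a}}}$ and powers $\up^{-\SO{a}}$; the vanishing of $\text{HH}\overline{\textsc{f}}$ established in Part~(2) is the key simplification that lets the final odd multiplication produce a single clean leading term.
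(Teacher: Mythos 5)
Your overall plan is sound and follows the same route as the paper — induction on $p$ for Part (1), the $\prec^\star$-refinement via diagonal support for the diagonal-entry case, an $\Fd$-analog of Corollary~\ref{InsertL} (the paper's Corollary~\ref{InsertL2}), and induction on $l$ for Part (3). Your correction of the row-sum shift in \eqref{divpFh} (writing $\la-(p-1)\bsal_h$ so the composition is defined) is also right, and your telescoping of the sign and $\up$-power in Part (3) checks out.

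There is, however, one genuine error. You claim that the entire $\text{HH}\overline{\textsc{f}}(h,\Ad)$ contribution of Lemma~\ref{standard-phiupper1}(2) vanishes under the shape~\eqref{eq:special matrix-lower}, on the grounds that each summand carries a factor tied to $a_{h+1,l}$ with $l<k$. This is not correct: in Lemma~\ref{standard-phiupper1}(2), the inner index $l$ ranges over $1\le l<k''$ for the outer dummy $k''\in[1,n]$ (which the paper denotes $k$, clashing with the fixed $k$ of the proposition). Your argument only kills the summands with $l<k$, and the summands with $k''<k$ (which vanish because $a_{h,k''}=0$ by the shape). It does nothing to the range $k''>k$ and $k\le l<k''$, where $a_{h,k''}$ and $a_{h+1,l}$ need not vanish — and indeed $\text{HH}\overline{\textsc{f}}$ is generically non-zero there. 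The correct observation, which is what the paper uses, is that the surviving summands all have base matrix $A^-_{h,k''}$ with $k''>k$, so by Lemma~\ref{preorder-rem}(1) they satisfy $A^-_{h,k''}\prec A^-_{h,k}$ and hence belong to the $\prec^\star$-lower terms. Since you also appeal to this purported vanishing in Part (3) ("the key simplification"), that sentence should be replaced by the $\prec$-comparison argument; the final conclusion of Part (3) is unaffected, because a strictly lower term is just as good as a vanishing one for isolating the leading term.
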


\begin{proof}
  (1) We first prove (1)
by induction on $p$ via \eqref{divpFh}. We need to consider two cases: (i) $h>k$ and $\SO{a}_{k,k}=0$ and (ii) $h=k$ and $\SO{a}_{k,k}\not=0$.
The proof of {  Case (i)} is entirely similar to the proof of Proposition \ref{triang_upper-QqS}(1) with the $E$-notations replaced by the $F$-notations, noting that the hypothesis implies $\SO{a}_{h,k}=0$, for all $h\geq k$.

We now assume $h=k$ and $\SO{a}_{k,k}=1$. We again apply induction on $p$. If $p=1$, applying Lemma  \ref{phiupper-even-norm}(3)
\begin{equation}\label{triang_lower-QqS-pf-11}
\begin{aligned}
{[\Fd_{k,\lambda,1}][\Ad]}&=\up^{-\SO{a}_{k,k}}[\SE{a}_{k+1,k}+1][\SE{A}-E_{k,k}+E_{k+1,k}|\SO{A}]+\up^{\SE{a}_{k,k}}[\SE{A}|\SO{A}-E_{k,k}+E_{k+1,k}]\\
\\
&-(\up-\up^{-1})\up^{\SE{a}_{k,k}}\left[a_{k+1,k}+1\atop 2\right][\SE{A}+2E_{k+1,k}|\SO{A}-E_{k,k}-E_{k+1,k}]\}\\
&+\sum_{\substack{\Bd\in\MNZ(n,r)\\B\approx A^-_{k,j},j>k}}{\mathfrak{f}^{{k,1}}_{\Bd}}[\Bd]\;\;\big(\mathfrak{f}^{{k,1}}_{\Bd}\in\mathbb{Q}(\up)\big)
\end{aligned}
\end{equation}
has three terms with the same leading base matrix $A^-_{k,k}$. Since
 \begin{equation*}
 \begin{aligned}
 &\Tr(\SE{A}|\SO{A}-E_{k,k}+E_{k+1,k})=\Tr(\SE{A}+2E_{k+1,k}|\SO{A}-E_{k,k}-E_{k+1,k})=\Tr(\Ad)\setminus \{k\}\\
 &\Tr(\SE{A}-E_{k,k}+E_{k+1,k}|\SO{A})=\Tr(\Ad),
 \end{aligned}
 \end{equation*}
 it follows from Definition \ref{preorder-2} that
$$
 \begin{aligned}
 (\SE{A}|\SO{A}-E_{k,k}+E_{k+1,k})\prec^\star(\SE{A}-E_{k,k}+E_{k+1,k}|\SO{A}),\\
 (\SE{A}+2E_{k+1,k}|\SO{A}-E_{k,k}-E_{k+1,k})\prec^\star(\SE{A}-E_{k,k}+E_{k+1,k}|\SO{A}).
 \end{aligned}
 $$
 Hence \eqref{triang_lower-QqS-pf-11} can be rewritten as
 \begin{equation*}\label{triang_lower-QqS-pf-12}
\begin{aligned}
{[\Fd_{k,\lambda,1}][\Ad]}&=\up^{-\SO{a}_{k,k}}[\SE{a}_{k+1,k}+1][\SE{A}-E_{k,k}+E_{k+1,k}|\SO{A}]+\sum_{\substack{\Bd\in\MNZ(n,r)\\\Bd\prec^\star (\SE{A}-E_{k,k}+E_{k+1,k}|\SO{A})}}\mathfrak{f}^{{k,1}}_{\Bd}[\Bd],
\end{aligned}
\end{equation*}
proving the inductive base.

Assume now $p>1$ and (1) is correct for $p-1$ (under the conditions $h=k$ and $\SO{a}_{k,k}=1$):
 \begin{equation}\label{triang_lower-QqS-pf-13}
\begin{aligned}
{[\Fd_{k,\lambda,p-1}][\Ad]}=&\up^{-(p-1)\SO{a}_{k,k}}\left[{\SE{a}_{k+1,k}+p-1}\atop {p-1}\right][\SE{A}-(p-1)E_{k,k}+(p-1)E_{k+1,k}|\SO{A}]\\
&+\sum_{\substack{\Cd\in\MNZ(n,r)\\\Cd\prec^\star (\SE{A}-(p-1)E_{k,k}+(p-1)E_{k+1,k}|\SO{A})}}{\mathfrak{f}^{{k,p-1}}_{\Cd}}[\Cd].
\end{aligned}
\end{equation}

Observe that the base matrix of $P^\star:=(\SE{A}-(p-1)E_{k,k}+(p-1)E_{k+1,k}|\SO{A})$ appearing in \eqref{triang_lower-QqS-pf-13} has the form as \eqref{eq:special matrix-lower} and its row vector is $\mu:=\lambda-(p-1)\alpha_{k}$. Then we can apply the proof for the $p=1$ case to obtain
\begin{equation}\label{triang_lower-QqS-pf-14}
\begin{aligned}
&
[\Fd_{k,\mu,1}][P^\star]=\up^{-\SO{a}_{k,k}}[\SE{a}_{k+1,k}+p][\SE{A}-pE_{k,k}+pE_{k+1,k}|\SO{A}]+\sum_{\substack{\Bd\in\MNZ(n,r)\\\Bd\prec^\star { (\SE{A}-pE_{k,k}+pE_{k+1,k}|\SO{A})}}}{ {\fkf'}^{{k,1}}_{B^\star}}[\Bd].
\end{aligned}
\end{equation}

For any lower term $[\Cd]$ appearing in {  \eqref{triang_lower-QqS-pf-13}}, since $\Cd\prec (\SE{A}-(p-1)E_{k,k}+(p-1)E_{k+1,k}|\SO{A})$,
it follows by Lemma  \ref{phiupper-even-norm}(3) that
\begin{equation}\label{triang_lower-QqS-pf-15}
\begin{aligned}{[\Fd_{k,\mu,1}][\Cd]}=\sum_{\substack{\Bd\in\MNZ(n,r)\\\Bd\prec^\star (\SE{A}-pE_{k,k}+pE_{k+1,k}|\SO{A})}}{ {\fkf''}^{{k,1}}_{B^\star}}[\Bd].
\end{aligned}
\end{equation}
Combining  \eqref{triang_lower-QqS-pf-13}, \eqref{triang_lower-QqS-pf-14} and \eqref{triang_lower-QqS-pf-15} proves (1) (in case (ii)).

(2) The proof of (2) is a bit more complicated, comparing with Proposition \ref{triang_upper-QqS} (2).
According to the assumption, the matrix $\Ad$ satisfies $a_{i,j}=0$ for $i>j$ and  $j<k$ and moreover, $\SO{a}_{h,k}=0$, for $h>k$, $\SO{a}_{h+1,k}=0$, and $\SE{a}_{h,k}>0$. Then, by Lemma \ref{standard-phiupper1}(2), there exist  some $\mathfrak{f}^{{\bar h}}_{A^{\prime\star}}\in\mathbb{Q}(\up)$,
\begin{equation}\label{triang_lower-QqS-pf-16}
\begin{aligned}
&\quad\,[\Fd_{\bar h,\lambda}][\Ad]=(-1)^{\wp(\Ad)+\SO{\tilde{a}}_{h,k}}\up^{-\SO{a}_{h,k}}[\SE{A}-E_{h,k}|\SO{A}+E_{h+1,k}]\\
&+{ \delta_{h,k}}(-1)^{\wp(\Ad)+\SO{\tilde{a}}_{h,k}} \up^{\SE{a}_{h,k}}[{\SE{a}_{h+1,k}+1}]
		[{\SE{A} + E_{h+1, k} |\SO{A} - E_{h,k}}]+\sum_{\substack{A^{\prime\star}\in\MNZ(n,r)\\ A'\approx A^-_{h,j},{\color{black}j>k}}}{\mathfrak{f}^{{\bar h}}_{A^{\prime\star}}}[A^{\prime\star}]\\
		&+(\up-\up^{-1})\text{\rm HH}\overline{\textsc f}(h,\Ad)
+\sum_{\substack{\Bd\in\MNZ(n,r)\\ B\prec A^-_{h,j}, j>k}}{ \scp_{\Fd_{\bar h,\lambda},\Ad,\Bd}}[\Bd],
\end{aligned}
\end{equation}
where $\text{\rm HH}\overline{\textsc f}(h,\Ad)$ is a linear combination of the basis vectors $[C^\star_{k',l}]=[\SE{C}_{k'}\pm E_{h+1,l}|\SO{C}_{k'}\mp E_{h+1,l}]$
which have base matrix $C_{k',l}=A^-_{h,k'}$ with $1\leq l<k'\leq n$.

 Since $a_{h,k'}=a_{h+1,k'}=0$ for $k'<k {  (\leq h)}$, these matrices $A^-_{h,k'}$ and $C^\star_{k',l}$ are only defined (i.e., belong to $M_n(\NN|\NN_2)$) for $k\leq k'$ and $k\leq l$, respectively. Hence, $\text{\rm HH}\overline{\textsc f}(h,\Ad)$ is a linear combination of the basis vectors $[C^\star_{k',l}]$, for $k\leq l<k'$, with
  base matrix  $C_{k',l}=A^-_{h,k'}\prec A^-_{h,k}$ as $k'>k$.
In other words,
   $\text{\rm HH}\overline{\textsc f}(h,\Ad)$ is a linear combination of $[\Cd]$ with $C\prec A^-_{h,k}$ and hence, by definition, $C^\star\prec^\star (A^{\bar 0}-E_{h,k}|A^{\bar 1}+E_{h+1,k})$, the leading matrix.

Meanwhile, for these $M^\star$ appearing in the two summations in \eqref{triang_lower-QqS-pf-16}, since $M\approx A^-_{h,j}$ or $M\prec A^-_{h,j}$ with $j>k$, we clearly  have $M^\star\prec^\star (\SE{A}-E_{h,k}|\SO{A}+E_{h+1,k})$ as $A^-_{h,j}\prec A^-_{h,k}$.

Finally, it remains to deal with the term in \eqref{triang_lower-QqS-pf-16} involving $\delta_{h,k}$ when $h=k$ and $\SO{a}_{k,k}=1$. Though  $(\SE{A}-E_{h,k}|\SO{A}+E_{h+1,k})$ and $({\SE{A} + E_{h+1, k} |\SO{A} - E_{h,k}})$ have the same base matrix $A^-_{h,k}$,
but we have
$$\Tr({\SE{A} + E_{k+1, k} |\SO{A} - E_{k,k}})=\Tr(\Ad)\setminus \{k\}\subset \Tr(\SE{A}-E_{k,k}|\SO{A}+E_{k+1,k}).$$
Hence,
$$
({\SE{A} + E_{h+1, k} |\SO{A} - E_{h,k}})\prec^\star (\SE{A}-E_{h,k}|\SO{A}+E_{h+1,k}).
$$
Hence, this proves that \eqref{triang_lower-QqS-pf-16} has the required form as in (2).

Like Corollary \ref{InsertL}, similar arguments for proving (1) and (3) gives the following result.
\begin{cor}\label{InsertL2}
Keep the hypotheses of Proposition \ref{triang_lower-QqS} on the matrix $A$ and assume $\Md\in M_n(\NN|\NN_2)_r$ satisfying $\Md\prec^\star\Ad$. Then  we have (with the hypothesis in Proposition \ref{triang_lower-QqS}(1))
\begin{equation}\label{FhP}
\begin{aligned}
{[\Fd_{h,\ro(M),p}][M^\star]}=\text{lin. comb. of }[\Bd], \Bd\prec^\star((\SE{A})^{-,p}_{h,k}|\SO{A}),\;\;
\forall  p\in[1,a_{h,k}^{\bar{0}}],
\end{aligned}
\end{equation}
and (with the hypothesis in Proposition \ref{triang_lower-QqS}(2))
\begin{equation}\label{oddFhP}
\begin{aligned}
{[\Fd_{\bar h,\ro(M)}][M^\star]}=\text{lin. comb. of }[\Bd], \Bd\prec^\star(\SE{A}-E_{h,k}|\SO{A}+E_{h+1,k}).
\end{aligned}
\end{equation}
\end{cor}

(3) Finally, with the preparation of (1) and (2) and Corollary \ref{InsertL2}, the proof of (3) is entirely similar to the proof of Proposition \ref{triang_upper-QqS}(3) using the extended order $\preceq^\star$.
\end{proof}

We are now ready to introduce the aforementioned monomials and associated triangular relations in $\qSchvsQ$.
We need a linear order over the following set to define certain monomials (cf. the order on \cite[p.562]{DDPW}):{
$$\mathscr{I}_n:=\{(i,j)\mid 1\leq i, j\leq n\}=\mathscr I_n^<\sqcup\mathscr I_n^=\sqcup\mathscr I_n^>.$$
where $\mathscr I_n^<=\{(i,j)\in\mathscr I_n\mid i< j\}$, $\mathscr I_n^==\{(i,j)\in\mathscr I_n\mid i=j\}$} and $\mathscr I_n^>=\{(i,j)\in\mathscr I_n\mid i> j\}$.

\begin{defn}\label{order}  We linearly order $\mathscr I_n^<$ by setting,
 for $(i,j), (i',j')\in\mathscr{I}^<$,
 $$(i,j)<(i',j')\iff\text{either } j>j' \text{ or, if }j=j', \text{ then } i>i'.$$
 Then, linearly order $\mathscr I_n^==\{(i,i)\mid i\in[1,n]\}$ by setting
 $(i,i)<(i',i')\iff  i<i' ,$
 and linearly order $\mathscr I_n^>$ by setting,
 for $(i,j), (i',j')\in\mathscr{I}^>$,
 $$(i,j)<(i',j')\iff \text{either } j<j' \text{ or, if }j=j', \text{ but } i<i'.$$
Finally, {  for $(i,j)\in\mathscr{I}^<$, $(i',i')\in\mathscr{I}_n^=$ and  $(i'',j'')\in\mathscr{I}^>$, we set $(i,j)>(i',i')>(i'',j'')$.}
\end{defn}
For example, the ordering on
$\mathscr I_4$ is given by $(2,1)<(3,1)<(4,1)<(3,2)<(4,2)<(4,3)<(1,1)<(2,2)<(3,3)<(4,4)<(3,4)<(2,4)<(1,4)<(2,3)<(1,3)<(1,2)$.

 For any given $\Ad=(\SE{a}_{i,j}|\SO{a}_{i,j})\in\MNZ(n,r)$ with base matrix $A=(a_{i,j})$,  we now introduce the elements $\bfm^{(\Ad)}_{i,j}$
 (see Definition \ref{E^A_ij} below), for all $i,j\in\mathscr I_n$, and use the linear order on $\mathscr I_n$ to define the following products:
\begin{equation}\label{m(A)}
\bfm^{(\Ad)}={\prod_{{1} \le j \le {n-1}}} \Big({\prod_{j+1\leq i\leq n}} \bfm^{(\Ad)}_{i,j}\Big)
	\cdot{ \prod_{1\leq j\leq n} \bfm^{(\Ad)}_{j,j}\cdot
	{\prod_{{n} \ge j \ge {1}}}  \big ( {\prod_{{j-1} \ge i \ge {1}}} \bfm^{(\Ad)}_{i,j}\big) },
\end{equation}
where product order follows the convention in \eqref{eq_product}. In other words, we have
$$\begin{aligned}
\mathbf{m}^{(\Ad)}&
:=(\bfm^{(\Ad)}_{2,1}\bfm^{(\Ad)}_{3,1}\cdots \bfm^{(\Ad)}_{n,1})(\bfm^{(\Ad)}_{3,2}\bfm^{(\Ad)}_{4,2}\cdots \bfm^{(\Ad)}_{n,2})\cdots(\bfm^{(\Ad)}_{n,n-1})\\
&\cdot(\bfm^{(\Ad)}_{1,1}\cdots \bfm^{(\Ad)}_{n-1,n-1}\bfm^{(\Ad)}_{n,n})\cdot (\bfm^{(\Ad)}_{n-1,n}\cdots \bfm^{(\Ad)}_{2,n} \bfm^{(\Ad)}_{1,n})\cdots( \bfm^{(\Ad)}_{2,3}\bfm^{(\Ad)}_{1,3})(\bfm^{(\Ad)}_{1,2}).
\end{aligned}$$
\begin{defn}[for $\bfm^{(\Ad)}_{i,j}$] \label{E^A_ij}
We set $\bfm^{(\Ad)}_{i,j}=1$, for all $(i,j)\in\mathscr I_n$ with $ a_{i,j}=0$, and define $\bfm^{(\Ad)}_{i,j}$ downward recursively on the well-ordered subset
$\mathscr{I}_n(\Ad):=\{(i,j)\in\mathscr{I}_n\mid a_{i,j}\neq0\}\cup{ \{(1,2)\}}:$
\begin{enumerate}
\item For the largest element ${ (1,2)}$ in $\mathscr{I}_n(\Ad)$, set {  $$\bfm^{(\Ad)}_{1,2}=\begin{cases}[\Ed_{\ol{1},\co(A)-\SE{a}_{1,2}\alpha_1}]^{\SO{a}_{1,2}}[\Ed _{1,\co(A),\SE{a}_{1,2}}],&\text{if }{a}_{1,2}>0;\\
[\Dd_{\co(A)}],&\text{if }{a}_{1,2}=0.\end{cases}$$}
\item Suppose, for $(i',j')\in \mathscr{I}_n(\Ad)$ and $(i',j')\leq{ (1,2)}$, $\bfm^{(\Ad)}_{i',j'}$ is defined with $\gamma_{i',j'}=\ro(\bfm^{(\Ad)}_{i',j'})$,
and $(i,j)$ is an immediate predecessor of $(i',j')$ in $\mathscr{I}_n(\Ad)$. Then, set 
\begin{equation*}\label{EAij}
\bfm^{(\Ad)}_{i,j}\!=\!\begin{cases}[\Ed_{\bar i,\gamma_{i',j'}+\SE{a}_{i,j}\alpha_{i,j}+\SO{a}_{i,j}\alpha_{i+1,j}}]^{\SO{a}_{i,j}}[\Ed_{i,\gamma_{i',j'}+a_{i,j}\alpha_{i+1,j},\SE{a}_{i,j}}]\prod_{i<h<j}[\Ed_{h,\gamma_{i',j'}+a_{i,j}\alpha_{h+1,j},a_{i,j}}], \!&\mbox{if }i<j;\\
[\Dd_{\bar j,\gamma_{i',j'}}]^{\SO{a}_{j,j}},\! &\text{if }i=j;\\
{[\Fd_{\overline{i-1},\gamma_{i',j'}+\SE{a}_{i,j}\alpha_{i,j}+\SO{a}_{i-1,j}\alpha_{i,j}}]^{\SO{a}_{i,j}}[\Fd_{i-1,\gamma_{i',j'}+p\alpha_{i-1,j},\SE{a}_{i,j}}]
\prod_{i> h\geq j}[\Fd_{h,\gamma_{i',j'}+a_{i,j}\alpha_{h-1,j},a_{i,j}}]}, \!&\mbox{if }i>j,
\end{cases}
\end{equation*}
where $\alpha_{i,j}=\ep_i-\ep_{i+1}$ for all $i,j\in[1,n]$.
\end{enumerate}
 \end{defn}
 Note from the definition that the right most term of $\bfm^{(\Ad)}_{i,j}$ for $i<j$ is $[\Ed_{h,\gamma_{i',j'},a_{i,j}}]$ as $\alpha_{j,j}=0$ and $\co(\Ed_{h,\gamma_{i',j'},a_{i,j}})=\gamma_{i',j'}$.
Associated with $\Ad=(\SE{a}_{i,j}|\SO{a}_{i,j})\in\MNZ(n,r)$ and $(l,k)\in\mathscr{I}_n$, we may describe the leading term of the partial product $\prod_{ (l,k)\leq (i,j)}\bfm^{(\Ad)}_{i,j}$ over the interval $[(l,k),(1,2)]:=\{(i,j)\in\mathscr{I}_n\mid (l,k)\leq(i,j)\}$.

\begin{defn}[for matrices $\Adlk(l,\star,k)$]\label{leadingM} We start with $[D^\star_\mu]=[\mu|O]$ with $\co(A)=\mu$ and set
$\Adlk(1,\star,2)$ to be the matrix defining the leading term of  $\bfm_{1,2}^{(\Ad)}[D^\star_\mu]$. For $(l,k)<(1,2)$, define
 $\Adlk(l,\star,k)=(\SE{b}_{i,j}|\SO{b}_{i,j})$ by moving certain entries from $\Ad$ to the corresponding positions in $\Adlk(l,\star,k)$ as follows:

If $l<k$ (i.e., $(l,k)\in\mathscr I_n^<$), then the entries of $\Adlk(l,\star,k)$ are defined by setting
\begin{itemize}
\item[(1)]   $\SE{b}_{i,j}=\SE{a}_{i,j}$ and $\SO{b}_{i,j}=\SO{a}_{i,j}$ for $i<j<k$ and for $j=k$ but $i\leq l$;
\item[(2)]
$\SE{b}_{j,j}=\mu_j-\sum_{i<j}a_{i,j}={ \sum_{i\geq j}a_{i,j}}$ for $j<k$, $\SE{b}_{k,k}=\sum_{i> l}a_{i,k}$, and $\SE{b}_{j,j}=\mu_j$ for $j>k$;
\item[(3)]the remaining entries are 0.
\end{itemize}

If $l=k$ (i.e., $(l,k)\in\mathscr I_n^=$), set \begin{equation}\label{spe-matrix-lower-diag}
\Adlk(k,\star,k)=(\Adlk(n-1,{\bar0},n)-\sum_{j\geq k}\SO{a}_{j,j}E_{j,j}|\Adlk(n-1,{\bar1},n)+\sum_{j\geq k}\SO{a}_{j,j}E_{j,j}).
 \end{equation}

If $l>k$ (i.e., $(l,k)\in\mathscr I_n^>$), the entries of $\Adlk(l,\star,k)=(\SE{b}_{i,j}|\SO{b}_{i,j})$ are defined by setting
\begin{itemize}
\item[(a)] $b^{\bar0}_{i,j}=0=b_{i,j}^{\bar1}$, for $i>j$ and $j<k$ or for  $j=k$ but $k<i<l$;
\item[(b)] $\SE{b}_{j,j}=\sum_{i> j}a_{i,j}+\SE{a}_{j,j}$ for $j<k$;  $\SE{b}_{k,k}=\SE{a}_{k,k}+\sum_{k<i< l}a_{i,k}\;\;(a_{i,j}=\SE{a}_{i,j}+\SO{a}_{i,j})$;
\item[(c)] all remaining entries are the same as the corresponding entries of $\Ad$.
\end{itemize}
\end{defn}
\noindent
{\bf Observe from the definition the following:}

The least $(l,k)$ satisfying  $l<k$ is $(n-1,n)$. Thus, if we use the canonical decomposition
\begin{equation}\label{pzm}
M=M_-+M_0+M_+
\end{equation} of a square matrix $M$ into a sum of lower triangular, diagonal, and upper triangular parts, then the entries in the upper triangular part
$A^{\star}_+=({A}^{\bar0}_+|A^{\bar1}_+)$ are all in the corresponding positions of $\Adlk(n-1,{\star},n)$.

The least $(l,k)$ satisfying  $l=k$ is $(1,1)$. Note that $\Adlk(1,\star,1)$ is the matrix such that the entires in $A^{\bar0}_+$ and $A^{\bar1}_0+A^{\bar1}_+$ are all in the corresponding positions of $\Adlk(1,\star,1)$.

Finally, the least $(l,k)$ satisfying  $l>k$ is $(2,1)$. By the definition, we see that $\Adlk(2,\star,1)=\Ad$.

\begin{thm}\label{triang-QqS}
For  any $\Ad=(\SE{a}_{i,j}|\SO{a}_{i,j})\in\MNZ(n,r)$ with base matrix $A=(a_{i,j})$,
we have
\begin{equation*}\label{triang-QqS-F}
{\bf m}^{(\Ad)}=(-1)^{\sum_{l>k}a^{\bar1}_{l,k}(\wp(\Ad)-\SO{\widetilde{a}}_{l,k})+\sum_{i\leq j,j>k}\SO{a}_{k,k}\SO{a}_{i,j}}\up^{\sum_{l>k}a^{\bar1}_{k,k}(a_{k,l}-a_{l,k})}[\Ad]+
(\text{lower terms})_{\prec^\star}.
\end{equation*}
In particular, the set $\{{\bf m}^{(\Ad)}\mid \Ad\in\MNZ(n,r)\}$ forms a basis for $\qSchvsQ$.
\end{thm}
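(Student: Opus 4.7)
The plan is to prove the identity by induction along the linear order $\leq$ on $\mathscr{I}_n$ from Definition \ref{order}, using the one-to-one correspondence between factors of $\mathbf{m}^{(\Ad)}$ and elements of $\mathscr{I}_n(\Ad)$. For each $(l,k)\in\mathscr{I}_n(\Ad)$, I would introduce the partial product
\[
\mathbf{n}^{(l,k)} := \prod^{<}_{(l,k)\leq (i,j)} \bfm^{(\Ad)}_{i,j}
\]
over those factors whose label is at least $(l,k)$, ordered exactly as in \eqref{m(A)}. Because each $\bfm^{(\Ad)}_{i,j}$ is a (possibly divided-power) standard basis element whose row and column weights are prescribed by the auxiliary compositions $\gamma_{i',j'}$ of Definition \ref{E^A_ij}, the partial product lies in a single $(\lambda,\co(A))$-block of $\qSchvsQ$. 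The claim reduces to showing that, for every $(l,k)\in\mathscr{I}_n(\Ad)$,
\[
\mathbf{n}^{(l,k)} = \sigma_{l,k}\, \up^{\pi_{l,k}}\,[\Adlk(l,\star,k)] + (\text{lower terms})_{\prec^\star}
\]
for suitable accumulated signs $\sigma_{l,k}\in\{\pm1\}$ and $\up$-exponents $\pi_{l,k}$; at the minimal element $(2,1)$, the coefficient $\sigma_{2,1}\up^{\pi_{2,1}}$ matches the scalar stated in the theorem, since $\Adlk(2,\star,1)=\Ad$ by Definition \ref{leadingM}.

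The base case is $(1,2)$, where $\mathbf{n}^{(1,2)}=\bfm^{(\Ad)}_{1,2}$: if $a_{1,2}=0$ the factor is $[\Dd_{\co(A)}]$, and otherwise one rewrites $\bfm^{(\Ad)}_{1,2}=\bfm^{(\Ad)}_{1,2}\cdot[\Dd_{\co(A)}]$ and applies Proposition \ref{triang_upper-QqS}(3) to the diagonal matrix $\co(A)$ (for which the zero-pattern hypothesis $a_{i,2}=0$, $i<2$, holds vacuously) to identify the leading term as $[\Adlk(1,\star,2)]$. For the inductive step, let $(l,k)<(1,2)$ be the immediate predecessor in $\mathscr{I}_n(\Ad)$ of some $(l',k')$. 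Then $\mathbf{n}^{(l,k)}=\bfm^{(\Ad)}_{l,k}\cdot\mathbf{n}^{(l',k')}$, so by the inductive hypothesis it suffices to evaluate $\bfm^{(\Ad)}_{l,k}\cdot[\Adlk(l',\star,k')]$ modulo $\prec^\star$-lower terms. Three cases arise: when $l<k$, the base of $\Adlk(l',\star,k')$ has the $k^{\vartriangle}$-shape \eqref{eq:special matrix-upper} and the additional zero-pattern required in Proposition \ref{triang_upper-QqS}(3) is met, producing $\Adlk(l,\star,k)$ as the new leading matrix with scalar $1$; when $l=k$, the base is upper triangular with $\SE{a}_{k,k}>0$ and $\SO{a}_{k,k}=0$, so Proposition \ref{middleTR} produces $\Adlk(k,\star,k)$ with the signed scalar recorded there and, via Definition \ref{preorder-2}, adjoins $k$ to the diagonal support $\Tr$; when $l>k$, the base has the $k_{\triangledown}$-shape \eqref{eq:special matrix-lower} and Proposition \ref{triang_lower-QqS}(3) yields $\Adlk(l,\star,k)$ with its prescribed signed scalar. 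In all three cases the respective Corollary \ref{InsertL}, Corollary \ref{triang_diag-QqS-DF}, or Corollary \ref{InsertL2} ensures that the previously accumulated lower terms are carried into linear combinations of basis elements still $\prec^\star\Adlk(l,\star,k)$.

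The main obstacle will be the shape bookkeeping: in order to invoke each proposition, one must verify that the base of the intermediate leading matrix $\Adlk(l',\star,k')$ satisfies the precise zero-pattern and SDP-type hypotheses. This is exactly what Definition \ref{leadingM} is engineered for: the matrix $\Adlk(l',\star,k')$ contains only entries of $\Ad$ already ``processed'' in the induction, with the remaining column/row mass concentrated on the diagonal, so the column or row crossed by the next factor is diagonal below or above the active row as required. The secondary technical point is tracking the accumulated signs and $\up$-powers: summing the contributions $(-1)^{\wp(\Ad)+\widetilde{a}^{\ol{1}}_{l,k}}\up^{-\SO{a}_{l,k}}$ from Proposition \ref{triang_lower-QqS} applied iteratively along each column, together with the parity shift produced when an odd diagonal factor $[\Dd_{\bar k,\cdot}]^{\SO{a}_{k,k}}$ is transported past subsequent odd Clifford factors, reproduces the combinatorial exponent $\sum_{l>k}a^{\ol{1}}_{l,k}(\wp(\Ad)-\SO{\widetilde{a}}_{l,k})+\sum_{i\leq j,\,j>k}\SO{a}_{k,k}\SO{a}_{i,j}$ and the $\up$-exponent $\sum_{l>k}a^{\ol{1}}_{k,k}(a_{k,l}-a_{l,k})$ stated in the theorem. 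Once this triangular relation is established, the basis assertion is immediate: the transition matrix from $\{\mathbf{m}^{(\Ad)}\}_\Ad$ to the $\QQ(\up)$-basis $\{[\Ad]\}_\Ad$ of $\qSchvsQ$ (provided by Corollary \ref{sdbasis} after base change) is $\prec^\star$-triangular with invertible diagonal entries, and is therefore itself invertible over $\QQ(\up)$, so the monomials $\mathbf{m}^{(\Ad)}$ form a $\QQ(\up)$-basis of $\qSchvsQ$.
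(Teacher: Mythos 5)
Your proposal follows essentially the same route as the paper's proof: the same partial products $\bfm^{(\Ad)}|_{(l,k)}=\prod_{(l,k)\le(i,j)\le(1,2)}\bfm^{(\Ad)}_{i,j}$, the same induction along the well-ordered set $(\mathscr{I}_n(\Ad),\le)$, the same base case $(1,2)$ via Proposition~\ref{triang_upper-QqS}(3), the same three-case split ($l<k$, $l=k$, $l>k$) handled by Propositions~\ref{triang_upper-QqS}, \ref{middleTR}, \ref{triang_lower-QqS} and the lower-term propagation via Corollaries~\ref{InsertL}, \ref{triang_diag-QqS-DF}, \ref{InsertL2}. The only material you defer is the explicit bookkeeping of the per-step scalars $\mathfrak{f}^{i,j}_{\Ad}$ and their product at $(2,1)$, which the paper spells out case by case; that is the genuine computational content of the theorem, but you correctly identify where it lives and what has to be verified.
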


\begin{proof}
 For a fixed $(l,k)\in\mathscr{I}_n$, let
 \begin{equation}\label{partial prod}
 \bfm^{(\Ad)}|_{(l,k)}:=\prod_{ (l,k)\leq (i,j)\leq(1,2)}\bfm^{(\Ad)}_{i,j}.
 \end{equation}
  be the product
  over the interval $[(l,k),(1,2)]$ and, for $(i,j)\in\mathscr{I}_n$, let
 $$\mathfrak{f}^{i,j}_{\Ad}=\begin{cases}1,&\text{if } i< j\text{ or }i=j=n;\\
 { (-1)^{\SO{a}_{i,i}{\sum_{s\leq t,t>i}\SO{a}_{s,t}}}\up^{\sum_{t>i}\SO{a}_{i,i}a_{i,t}},} &\text{if }{  i=j< n;}\\
  (-1)^{\SO{a}_{i,j}(\parity{\Ad}-\SO{\tilde{a}}_{i,j})}\up^{-{a}_{i,j}\SO{a}_{j,j}},& \text{if } i>j.\end{cases}$$
 We claim that
\begin{equation}\label{triang-QqS-pf-2}
\bfm^{(\Ad)}|_{(l,k)}=\lc^{(l,k)}[\Adlk(l,\star,k)]+(\text{lower terms})_{\prec^\star}.
\end{equation}
Here $\lc^{(l,k)}=\Big(\prod_{(i,j)\geq (l,k)}\mathfrak{f}^{i,j}_{\Ad}\Big)$ is the leading coefficient at $(l,k)$ and
``$(\text{lower terms})_{\prec^\star}$'' means a linear combination of $[\Md]$  with $\Md\prec^\star\Adlk(l,\star,k)$, for $\Md\in\MNZ(n,r)$.

To prove the claim, we apply induction on the well-ordered set $(\mathscr{I}_n,\leq)$ (or, more precisely, on $(\mathscr{I}_n(\Ad),\leq)$).

For the inductive base $(l,k)=(1,2)$, if $a_{1,2}=0$, then, by Definition \ref{E^A_ij}(1), $$\bfm^{(\Ad)}_{1,2}=[\Dd_{\co(A)}]=[\Adlk(1,\star,2)].$$ If $a_{1,2}>0$, applying Proposition \ref{triang_upper-QqS}(3) yields
$$\bfm^{(\Ad)}_{1,2}=[\Ed_{\ol{1},\co(A)-\SE{a}_{1,2}\alpha_1}]^{\SO{a}_{1,2}}[\Ed _{1,\co(A),\SE{a}_{1,2}}]
=[\Adlk(1,\star,2)]+(\text{lower terms})_{\prec^\star}.
$$

Assume now $(l',k')\leq(1,2)$ and the claim is true at $(l',k')$:
\begin{equation}\label{triang-QqS-pf-3}
\bfm^{(\Ad)}|_{(l',k')}=\lc^{(l',k')}[\Adlk({l'},\star,{k'})]+(\text{lower terms})_{\prec^\star}.
\end{equation}

Let $(l,k)$ be the immediate predecessor of $(l',k')$ in $\mathscr{I}_n(\Ad)$.
We now prove that the claim is true at $(l,k)$. In other words, $\bfm^{(\Ad)}|_{(l,k)}=\bfm^{(\Ad)}_{l,k}\cdot\bfm^{(\Ad)}|_{(l',k')}$ has the leading term
$[\Adlk(l,\star,k)]$. There are three cases to consider.

{\bf Case 1 ($l<k$)}.
In this case, for any  $(l,k)<(i,j)<(1,2)$, we have {  $i< j$} and so $\mathfrak{f}^{i,j}_{\Ad}=1$. By the definition linear ordering on $\mathscr{I}_n(\Ad)$, $(l',k')$ must be one of the following two situations:
  \begin{enumerate}
\item[(1a)]  if $l=1$, then {  $k>2$ and$(l',k')=(k-2,k-1)$};
\item[(1b)]  if $l>1$,  then $(l',k')=(l-1,k)$.
\end{enumerate}

In both cases, by Definition \ref{leadingM}(1)--(3), the matrices involved have the shape as in \eqref{eq:special matrix-upper}. Thus,
applying Proposition \ref{triang_upper-QqS} yields,
\begin{equation*}\label{them-case1-3}
\bfm^{(\Ad)}_{l,k}[\Adlk(l',\star,k')]=[\Adlk(l,\star,k)]+(\text{lower terms})_{\prec}.
\end{equation*}
For a lower term $[\Cd]$ with $C\prec \Adlk(l', ,k')$, the base of $\Adlk(l',\star ,k')$, by Definition \ref{preorder-2}, repeatedly applying  Corollary \ref{InsertL} gives
\begin{equation*}\label{them-case1-4}
\bfm^{(\Ad)}_{l,k}[\Cd]=\text{lin. comb. of }[\Md], M \prec \Adlk(l, ,k),
\end{equation*}
proving \eqref{triang-QqS-pf-2} in this case.

{\bf Case 2 ($l=k$).} By the definition of linear ordering on $\mathscr{I}_n(\Ad)$, $(l',k')$ must be one of the following two situations:
  \begin{enumerate}
\item[(2a)]  if $l=n$, then $k=n$ and $(l',k')=(n-1,n)$;
\item[(2b)]  if $l<n$,  then $(l',k')=(k+1,k+1)$.
\end{enumerate}

The proof of (2a) uses \eqref{triang_diag-QqS-D-n} with leading coefficient 1. This case can be done similarly by Proposition \ref{middleTR} and Corollary \ref{triang_diag-QqS-DF}.

If $k<n$ and $(l',k')=(k+1,k+1)$, by \eqref{spe-matrix-lower-diag}, the base matrix $B$ of $\Bd=\Adlk(k+1,\star,k+1)$ is with $b_{i,j}=0$ for $i>j$ and $\SO{b}_{k,k}=0$, moreover, $(\SE{B}-\SO{a}_{k,k}E_{k,k}|\SO{B}+\SO{a}_{k,k}E_{k,k})=\Adlk(k,\star,k)$.
By Proposition \ref{middleTR}, we have
$$\begin{aligned}\bfm^{(\Ad)}_{k,k}[\Adlk(k+1,\star,k+1)]=(-1)^{a_{k,k}^{\bar1}(\wp(\Adlk(k+1,\star,k+1))+\SO{\tilde b}_{k,k})}\up^{a_{k,k}^{\bar1}\sum_{t>k}b_{k,t}}[\Adlk(k,\star,k)]+\text{(lower terms)}_{\prec^\star}.
\end{aligned}
$$
Since $\wp(\Adlk(k+1,\star,k+1))=\sum_{i<j}\SO{a}_{i,j}+\sum_{j>k}\SO{a}_{j,j}$,  $\SO{\tilde b}_{k,k}=\sum_{i<j\leq k}\SO{a}_{i,j}$ and
$\sum_{t>k}b_{k,t}=\sum_{t>k}a_{k,t}$,
$$\begin{aligned}(-1)^{a_{k,k}^{\bar1}(\wp(\Adlk(k+1,\star,k+1))+\SO{\tilde b}_{k,k})}\up^{a_{k,k}^{\bar1}\sum_{t>k}b_{k,t}}&=(-1)^{a_{k,k}^{\bar1}(\sum_{i<j}\SO{a}_{i,j}+\sum_{j>k}\SO{a}_{j,j}+\sum_{i<j\leq k}\SO{a}_{i,j})}\up^{a_{k,k}^{\bar1}\sum_{t>k}a_{k,t}}\\
&=(-1)^{a_{k,k}^{\bar1}\sum_{i\leq j,j>k}\SO{a}_{i,j}}\up^{a_{k,k}^{\bar1}\sum_{t>k}a_{k,t}}=\mathfrak{f}^{k,k}_{\Ad}.
\end{aligned}
$$

For lower term $[\Cd]$ with $\Cd\prec \Adlk(k+1,\star,k+1)$ in  \eqref{triang-QqS-pf-3}, applying Corollary \ref{triang_diag-QqS-DF}
proves  \eqref{triang-QqS-pf-2} in this case.

{\bf Case 3 $l>k$.} By the definition linear ordering on $\mathscr{I}_n(\Ad)$, $(l',k')$ must be one of the following two situations:
  \begin{enumerate}
\item[(3a)]  if $l=n$, then $k=n-1$ and $(l',k')=(1,1)$ or $k<n-1$ and $(l',k')=(k+2,k+1)$;
\item[(3b)]  if $l<n$,  then $(l',k')=(l+1,k)$.
\end{enumerate}

If $(l',k')=(1,1)$, the base matrix $B$ of $\Bd=\Adlk(1,\star,1)$ has the shape as in \eqref{eq:special matrix-lower}, and $$(\SE{B}-a_{n,n-1}E_{n-1,n-1}+\SE{a}_{n,n-1}E_{n,n-1}|\SO{B}+\SO{a}_{n,n-1}E_{n,n-1})=\Adlk(n,\star,n-1).$$
Applying Proposition \ref{triang_lower-QqS}(3), due to $\parity{\Adlk(1,\star,1)}+\SO{\tilde{b}}_{n-1,n-1}=2\SO{\tilde{b}}_{n-1,n-1}+\parity{\Ad}-\SO{\tilde{a}}_{n,n-1}$, $\SO{b}_{n-1,n-1}=\SO{a}_{n-1,n-1}$ and definition of $\mathfrak{f}^{n,n-1}_{\Ad}$ above, we have
\begin{equation}
\begin{aligned}
\bfm^{(\Ad)}_{n,n-1}[\Adlk(1,\star,1)]
&=(-1)^{\SO{a}_{n,n-1}(\parity{\Ad}-\SO{\tilde{a}}_{n,n-1})}\up^{-{a}_{n,n-1}\SO{a}_{n-1,n-1}}[\Adlk(n,\star,n-1)]+(\text{lower terms})_{\prec^\star}.
\end{aligned}
\end{equation}
Thus, the first case in (3a) follows from Corollary \ref{InsertL2}.

If $(l',k')=(k+2,k+1)$, by Definition \ref{leadingM}(a)--(c), the base matrix $B$ of $\Bd=\Adlk(k+2,\star,k+1)$ has the shape as in \eqref{eq:special matrix-lower}, and $(\SE{B}-a_{n,k}E_{k,k}+\SE{a}_{n,k}E_{n,k}|\SO{B}+{\color{black}\SO{a}_{n,k}}E_{n,k})=\Adlk(n,\star,k)$. Thus, this subcase can be proved similarly by Proposition \ref{triang_lower-QqS}(3)
and Corollary \ref{InsertL2}.

Finally, if $(l',k')=(l+1,k)$, the base matrix $B$ of $\Bd=\Adlk(l+1,\star,k)$ still has the shape as in \eqref{eq:special matrix-lower}, and $(\SE{B}-a_{l,k}E_{k,k}+\SE{a}_{l,k}E_{n,k}|\SO{B}+\SO{a}_{l,k}E_{l,k})=\Adlk(l,\star,k)$. Applying Proposition \ref{triang_lower-QqS}(3),
\begin{equation*}
\begin{aligned}
\bfm^{(\Ad)}_{l,k}[\Adlk(l+1,\star,k)]&=(-1)^{\SO{a}_{l,k}(\parity{\Adlk(l+1,\star,k)}+\SO{\tilde{b}_{k,k}})}\up^{-a_{l,k}\SO{b}_{k,k}}[\Adlk(l,\star,k)]
+(\text{lower terms})_{\prec^\star}\\
&=\mathfrak{f}^{l,k}_{\Ad}{\color{black}[\Adlk(l,\star,k)]}+(\text{lower terms})_{\prec^\star},
\end{aligned}
\end{equation*}
noting that $\parity{\Adlk(l+1,\star,k)}+\SO{\tilde{b}_{k,k}}=2\SO{\tilde{b}}_{k,k}+\parity{\Ad}-\SO{\tilde{a}}_{l,k}$, $\SO{b}_{k,k}=\SO{a}_{k,k}$ and $\mathfrak{f}^{l,k}_{\Ad}=(-1)^{\SO{a}_{l,k}(\parity{\Ad}-\SO{\tilde{a}}_{l,k})}\up^{-a_{l,k}\SO{a}_{k,k}}.$

Since $\lc^{(l,k)}=\mathfrak{f}^{l,k}_{\Ad}\lc^{(l',k')}$, applying Corollary \ref{InsertL2} proves the claim \eqref{triang-QqS-pf-2} in this $l>k$ case. This completes the proof of the theorem.
\end{proof}

\begin{cor}\label{generators}
The $\mathbb Q(\up)$-algebra
$\qSchvsQ$ is generated by
\begin{equation}
\{[\Ed_{h,\lambda}],[\Ed_{\bar{h},\lambda}],[\Fd_{h,\lambda}],[\Fd_{\bar{h},\lambda}],[\Dd_\lambda], [\Dd_{\bar i,\lambda}]\mid \lambda\in\Lambda(n,r),1\leq h<n, i\in [1,n]\}.
\end{equation}
\end{cor}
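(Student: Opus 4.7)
The plan is to reduce the statement to Theorem \ref{triang-QqS} together with the factorisation of $[\Ed_{h,\la,p}]$ and $[\Fd_{h,\la,p}]$ supplied by Lemma \ref{pEh}. Let $\mathcal A \subseteq \qSchvsQ$ denote the $\QQ(\up)$-subalgebra generated by the set $\fsG := \{[\Ed_{h,\la}],[\Ed_{\bar h,\la}],[\Fd_{h,\la}],[\Fd_{\bar h,\la}],[\Dd_\la],[\Dd_{\bar i,\la}]\}$. By Theorem \ref{triang-QqS}, the transition matrix from the monomial family $\{\bfm^{(\Ad)}\}_{\Ad\in\MNZ(n,r)}$ to the standard basis $\{[\Ad]\}_\Ad$ is triangular (with respect to $\prec^\star$) with non-zero leading coefficients, so $\{\bfm^{(\Ad)}\}_\Ad$ is itself a basis of $\qSchvsQ$. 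It therefore suffices to prove that every $\bfm^{(\Ad)}$ belongs to $\mathcal A$.

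Second, by the definition \eqref{m(A)} each $\bfm^{(\Ad)}$ is an ordered product of the factors $\bfm^{(\Ad)}_{i,j}$, so I would further reduce to showing $\bfm^{(\Ad)}_{i,j}\in\mathcal A$ for every $(i,j)\in\mathscr I_n$. I unwind Definition \ref{E^A_ij} case by case. When $a_{i,j}=0$, the factor is $1\in\mathcal A$. When $(i,j)=(1,2)$ with $a_{1,2}=0$, the factor equals $[\Dd_{\co(A)}]\in\fsG$. The remaining cases split into the three strata $i<j$, $i=j$, $i>j$:
\begin{itemize}
\item For $i<j$, $\bfm^{(\Ad)}_{i,j}$ is an ordered product of a (possibly trivial, since $\SO{a}_{i,j}\in\{0,1\}$) factor $[\Ed_{\bar i,\bullet}]$, a divided power $[\Ed_{i,\bullet,\SE{a}_{i,j}}]$, and divided powers $[\Ed_{h,\bullet,a_{i,j}}]$ for $i<h<j$.
\item For $i=j$, $\bfm^{(\Ad)}_{j,j}=[\Dd_{\bar j,\bullet}]^{\SO{a}_{j,j}}$, which is either $1$ or a single element of $\fsG$.
\item For $i>j$, $\bfm^{(\Ad)}_{i,j}$ is an ordered product of (possibly trivial) $[\Fd_{\overline{i-1},\bullet}]$, $[\Fd_{i-1,\bullet,\SE{a}_{i,j}}]$, and $[\Fd_{h,\bullet,a_{i,j}}]$ for $i>h\ge j$.
\end{itemize}

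Third, the only items appearing above that are not a priori in $\fsG$ are the divided higher powers $[\Ed_{h,\mu,p}]$ and $[\Fd_{h,\mu,p}]$ with $p\ge 2$. For these I would invoke Lemma \ref{pEh}, which gives the explicit identities
\[
[\Ed_{h,\mu,p}]=\tfrac{1}{[p]!}\prod_{0\le i<p}^{<}[\Ed_{h,\mu+i\bsal_h}],\qquad
[\Fd_{h,\mu,p}]=\tfrac{1}{[p]!}\prod_{0\le i<p}^{<}[\Fd_{h,\mu+i\bsal_h}],
\]
exhibiting them as $\QQ(\up)$-scalar products of elements of $\fsG$, hence in $\mathcal A$.

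There is no real obstacle in this argument: once Theorem \ref{triang-QqS} is available, the claim reduces to a bookkeeping exercise that follows directly from Definition \ref{E^A_ij} and Lemma \ref{pEh}. The only point that needs care is the scalar $\frac{1}{[p]!}$, which is harmless because we are working over $\QQ(\up)$; consequently the same statement would \emph{not} immediately descend to the integral form $\qSchvsZ$, and any integral refinement would require using divided powers $[\Ed_{h,\la}]^{(p)}$ and $[\Fd_{h,\la}]^{(p)}$ in place of the single factors in $\fsG$.
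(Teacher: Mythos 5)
Your proposal is correct and follows essentially the same route as the paper: use Lemma \ref{pEh} to resolve the higher divided powers $[\Ed_{h,\mu,p}]$ and $[\Fd_{h,\mu,p}]$ into products of the degree-one generators, and use the triangular relation in Theorem \ref{triang-QqS} (together with the fact that $\{[\Ad]\}$ is a basis, Corollary \ref{sdbasis}) to conclude that the monomials $\bfm^{(\Ad)}$, and hence every $[\Ad]$, lie in the subalgebra generated by $\fsG$. Your version is merely more explicit in unwinding Definition \ref{E^A_ij} factor-by-factor, and your closing remark about why the argument does not immediately descend to $\qSchvsZ$ is a correct and worthwhile observation.
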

\begin{proof}By Lemma \ref{pEh}, $[p]^![\Ed_{h,p,\lambda}]$ (resp., $[p]^![\Fd_{h,p,\lambda}]$) is a product of some $[\Ed_{h,\mu}]$
(resp., $[\Fd_{h,\mu}]$). Thus,  Theorem \ref{triang-QqS} implies that, for any $\Ad\in\MNZ(n,r)$, $[\Ad]$ is in the subalgebra generated by  $$\{[\Ed_{h,\lambda}],[\Ed_{\bar{h},\lambda}],[\Fd_{h,\lambda}],[\Fd_{\bar{h},\lambda}],  [\Dd_{\bar i,\lambda}], [\Dd_\lambda]\mid \lambda\in\Lambda(n,r),1\leq h<n, i\in [1,n]\}.$$
Now the assertion follows from Corollary \ref{sdbasis}.
\end{proof}

\section{Mapping $\Uvqn$ onto a superalgebra of formal infinite series}\label{sec_generators}\label{defining relations}
Recall from Proposition \ref{mulformzerocor} that {$\AJRS({O}, \bs{0}, r)$} is the identity element in $\qSchvsQ$.
Define the superalgebra
$$
{\bs{\mathcal  Q}_\up^s(n)}  := \prod_{r \ge 1 } {\qSchvsQ}.
$$
The elements $(x_r)_{r\geq1}$ in $\bsSQvn$
are written as {\it formal infinite series} $\sum_{r\geq1}x_r$
 which is homogenous of degree $\bar i$ if every $x_r$ is homogenous of degree $\bar i$.
In particular, the identity element
\begin{equation}\label{id elt}
1=\sum_{\la\in\NN^n\setminus\{\bs{0}\}}[\Dd_\la]=\sum_{r\ge1}1_r,\qquad [\Dd_\la][\Dd_\mu]=\delta_{\la,\mu}[\Dd_\la],\;\forall \la,\mu\in\NN^n\setminus\{\bs{0}\},
\end{equation}
 where $1_r=\sum_{\la\in\Lambda(n,r)}[\Dd_\la]$ is the identity element of $\qSchvsQ$.
For any  $\Ad \in \MNZNS(n), \bs{j} \in {\ZZ}^{n}$ (see \eqref{Mnpm}),
let
\begin{equation}\label{A(j)}
\AJS(\Ad, \bs{j}) := \sum_{r \ge 1 } \AJRS(A, \bs{j}, r)=\sum_{r\geq|A|}\sum_{\substack{\lambda \in \CMN(n, r-\snorm{A})} }
	 {v}^{\lambda* \bs{j}} [ \SE{A} + \lambda | \SO{A} ]
\in {\bsSQvn},
\end{equation}
a formal infinite series,
 and form the superspace
\begin{equation}\label{Anv}
    \USnv = \tspan_{\Qv} \{\AJS(\Ad, \bs{j})\  \where  \Ad \in \MNZNS(n), \bs{j} \in {\ZZ}^{n} \} \subset {\bsSQvn}.
\end{equation}
Note that $1=(O|O)({\bf0})\in \USnv $.

The linear independence of the elements $\Ad(\bs{j})$ can be proved by a method
similar  to the  proof of \cite[Proposition 4.1{\rm(2)}]{DF2} or \cite[Th.~8.2]{DLZ}.
\begin{lem}\label{unv_basis}
The set {$\mathfrak{L}=\{  \AJS(\Ad, \bs{j})  \where   \Ad \in \MNZNS(n), \bs{j} \in {\ZZ}^{n} \} $} is a $\Qv$-basis of {$\USnv$}.
\end{lem}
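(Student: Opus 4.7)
The plan is to establish linear independence of $\mathfrak{L}$ by working componentwise in the product $\bsSQvn=\prod_{r\geq 1}\qSchvsQ$ and then extracting $c_{\Ad,\bs{j}}=0$ via a Vandermonde-type argument. Suppose there is a finite linear relation $\sum_{(\Ad,\bs{j})} c_{\Ad,\bs{j}}\,\AJS(\Ad,\bs{j})=0$ in $\USnv$. Since equality in $\bsSQvn$ means equality in every factor, projecting onto the $r$-th component and invoking the definition \eqref{A(j)} yields, for every $r\geq 1$,
\begin{equation*}
\sum_{\Ad,\,|A|\leq r}\ \sum_{\lambda\in\CMN(n,r-|A|)}\Bigl(\sum_{\bs{j}} c_{\Ad,\bs{j}}\,\up^{\lambda\centerdot\bs{j}}\Bigr)[\SE{A}+\lambda\,|\,\SO{A}]\ =\ 0.
\end{equation*}

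First I would observe that the map $(\Ad,\lambda)\mapsto(\SE{A}+\lambda\,|\,\SO{A})$, from pairs with $\Ad\in\MNZNS(n)$ and $\lambda\in\CMN(n,r-|A|)$ into $\MNZ(n,r)$, is injective. Indeed, the defining condition $\Ad\in\MNZNS(n)$ forces the diagonal of $\SE{A}$ to vanish, so the diagonal of the $\SE{}$-part of the image equals $\lambda$, the off-diagonal entries of $\SE{A}$ are directly read off the $\SE{}$-part, and $\SO{A}$ is recovered from the $\SO{}$-part. Hence $(\Ad,\lambda)$ is determined by $(\SE{A}+\lambda\,|\,\SO{A})$. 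By the linear independence of the standard basis $\{[\Md]\}$ of $\qSchvsQ$ furnished by Corollary~\ref{sdbasis}, the displayed relation collapses to
\begin{equation*}
\sum_{\bs{j}} c_{\Ad,\bs{j}}\,\up^{\lambda\centerdot\bs{j}}\ =\ 0
\end{equation*}
for every $r\geq 1$, every $\Ad\in\MNZNS(n)$ with $|A|\leq r$, and every $\lambda\in\CMN(n,r-|A|)$.

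Next I would exploit the freedom in $r$: with $\Ad$ fixed, letting $r$ range over all integers $\geq|A|$ allows $\lambda$ to range over all of $\NN^n$ (given any $l_1,\dots,l_n\in\NN$, set $r=|A|+\sum_i l_i$). Let $\{\bs{j}_1,\ldots,\bs{j}_m\}$ be the finite set of weights with $c_{\Ad,\bs{j}_k}\neq 0$ for this $\Ad$, and form the Laurent polynomial $P(y_1,\ldots,y_n)=\sum_{k=1}^m c_{\Ad,\bs{j}_k}\prod_{i=1}^n y_i^{(\bs{j}_k)_i}$. Clearing denominators by a monomial $\prod_i y_i^{M_i}$ produces an ordinary polynomial which vanishes at every point $(\up^{l_1},\ldots,\up^{l_n})$ with $l_i\in\NN$. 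Since $\up$ is transcendental over $\QQ$, this set is Zariski dense in affine $n$-space, so the polynomial is identically zero. Distinctness of the exponent vectors $\bs{j}_k$ then forces $c_{\Ad,\bs{j}_k}=0$ for all $k$, completing the proof.

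The main subtlety, rather than a genuine obstacle, is the interplay between the finiteness of the original relation and the need to exhaust all $\lambda\in\NN^n$; this is handled cleanly because a relation in $\bsSQvn$ simultaneously holds in every component, so arbitrarily large $r$ are at our disposal. The overall structure mirrors \cite[Prop.~4.1(2)]{DF2} and \cite[Th.~8.2]{DLZ}; the only supplementary input needed in the present super setting is the injectivity observation in the second paragraph, which is immediate from the $\MNZNS(n)$-condition.
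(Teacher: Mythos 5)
Your proof is correct and follows exactly the route the paper refers to via \cite[Prop.~4.1(2)]{DF2} and \cite[Th.~8.2]{DLZ}: project to the $r$-th component, use linear independence of the standard basis to isolate the coefficient of each $[\SE{A}+\lambda\,|\,\SO{A}]$, then finish with a Vandermonde-type argument on the exponential sums $\sum_{\bs{j}} c_{\Ad,\bs{j}}\,\up^{\lambda\centerdot\bs{j}}$ as $\lambda$ ranges over $\NN^n$. The injectivity of $(\Ad,\lambda)\mapsto(\SE{A}+\lambda\,|\,\SO{A})$, which you correctly ground in the zero-diagonal condition defining $\MNZNS(n)$, is the one point specific to the super setting, and you handle it cleanly.
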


We introduce the following subset of generators in $\USnv$:
\begin{equation}\label{setG}
 \fsG_n = \{ G_{i}, G_{i}^{-1}, G_{\ol{i}},
	X_{j}, X_{\ol{j}},
	Y_{j}, Y_{\ol{j}}
	\where  1 \le i \le n,\  1 \le j \le n-1
	\},
\end{equation}
where
\begin{equation}\label{raw generator}
\aligned
&G_{i}^{\pm 1}
	= \ABJS(O, O, \pm \ep_{i}), \qquad
X_{j} = \ABJS(E_{j, j+1}, O, \bs{0}) , \qquad
Y_{j} = \ABJS(E_{j+1, j}, O, \bs{0}) , \\
& G_{\ol{i}} 	=  \ABJS(O, E_{i,i}, \bs{0}), \qquad
 X_{\ol{j}} =    \ABJS( O, E_{j,j+1}, \bs{0} ),  \qquad
Y_{\ol{j}} 	=   \ABJS(O, E_{j+1,j},  \bs{0} ).
 \endaligned
\end{equation}
For notational clarity, we also write $ \ol{G}_i=G_{\ol{i}} ,\ol{X}_i= X_{\ol{j}} $ and $\ol{Y}_i= Y_{\ol{j}} $.

The following result is immediate from Corollary \ref{common_form} together with the fact that the coefficients $\scp_{\Bd,\bs{j}}(Z_\bullet,\Ad)$ are independent of $r\geq |A|$.
\begin{lem}\label{rem:prod alg}
For any {$\Ad \in \MNZNS(n)$} and $Z  \in\fsG_n $, there exist some $\scp_{\Bd,\bs{j}}(Z,\Ad) \in \Qv $ such that in $\bsSQvn$
\begin{equation}\label{general-ZA}
\begin{aligned}Z  \cdot \AJS(\Ad, \bs{0})
&=	\sum_{\Bd, \bs{j}'} \scp_{\Bd, \bs{j}'}(Z,\Ad )  \AJS(\Bd, \bs{j}' ),
\end{aligned}
\end{equation}
where $\Bd \in \MNZNS(n)$,  $\bs{j}'\in {\ZZ}^n$. In particular, for any $\bs{j}\in\mathbb Z^n$, there exists $a_{\bs j}\in\ZZ$ such that
\begin{equation}\label{AjAl}
 \AJS(\Ad, \bs{j}) = \up^{a_{\bs j}} \cdot (\prod_{h=1}^{n}G_{h}^{{j}_h}  ) \cdot \AJS(\Ad, \bs{0}).
 \end{equation}
\end{lem}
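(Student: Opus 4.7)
The first assertion is essentially the ``infinite-series lift'' of Corollary~\ref{common_form}. The plan is to write $Z=\sum_{r\ge 1}Z_r\in\bsSQvn$, where $Z_r$ denotes the corresponding element of $\fsG_{n,r}$ (see \eqref{setGr} and \eqref{degree-r generator}), and compute the product componentwise. Since multiplication in $\bsSQvn=\prod_{r\ge 1}\qSchvsQ$ is the componentwise one, we have
\begin{equation*}
Z\cdot\AJS(\Ad,\bs{0})=\sum_{r\ge|A|} Z_r\cdot\AJS(\Ad,\bs{0},r),
\end{equation*}
and Corollary~\ref{common_form} expresses each summand as a finite linear combination $\sum_{\Bd,\bs{j}'}\scp_{\Bd,\bs{j}'}(Z_\bullet,\Ad)\AJS(\Bd,\bs{j}',r)$, where crucially the index set of nonzero $(\Bd,\bs{j}')$ \emph{and} the scalars $\scp_{\Bd,\bs{j}'}(Z_\bullet,\Ad)\in\QQ(\up)$ are independent of $r$. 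Inspection of Propositions \ref{mulformzerocor}--\ref{mulformodd2} shows that the set of such $(\Bd,\bs{j}')$ is finite (bounded by a constant depending only on $n$, not on $r$), so we may freely interchange the sums.

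Hence the key step is:
\begin{equation*}
Z\cdot\AJS(\Ad,\bs{0})=\sum_{r\ge|A|}\sum_{\Bd,\bs{j}'}\scp_{\Bd,\bs{j}'}(Z_\bullet,\Ad)\AJS(\Bd,\bs{j}',r)=\sum_{\Bd,\bs{j}'}\scp_{\Bd,\bs{j}'}(Z_\bullet,\Ad)\AJS(\Bd,\bs{j}'),
\end{equation*}
which proves \eqref{general-ZA} with $\scp_{\Bd,\bs{j}'}(Z,\Ad):=\scp_{\Bd,\bs{j}'}(Z_\bullet,\Ad)$. No obstacle is expected here; the only thing to verify carefully is the uniform finiteness of the sums, which is a direct consequence of the shape of the long-element multiplication formulas.

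For the second assertion \eqref{AjAl}, I would apply Proposition~\ref{mulformzerocor}(1) with $\bs{k}=\bs{j}$ (acting by $\AJS(O^\star,\bs{j})$ on $\AJS(\Ad,\bs{0})$), obtaining
\begin{equation*}
\AJS(O^\star,\bs{j})\cdot\AJS(\Ad,\bs{0})=\up^{\ro(A)\centerdot\bs{j}}\AJS(\Ad,\bs{j}).
\end{equation*}
It remains to rewrite $\AJS(O^\star,\bs{j})$ in terms of $G_h^{\pm1}$. Using Proposition~\ref{mulformzerocor}(b) iteratively (and noting that the $G_h^{\pm 1}$ mutually commute, being the even Cartan generators that are all based on $O^\star$), one checks by induction on $\sum_h|j_h|$ that $\prod_{h=1}^{n}G_h^{j_h}=\AJS(O^\star,\bs{j})$, where negative powers are handled via the identity $G_h\cdot G_h^{-1}=\AJS(O^\star,\ep_h)\cdot\AJS(O^\star,-\ep_h)=\AJS(O^\star,\bs{0})=1$. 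Combining these gives $\AJS(\Ad,\bs{j})=\up^{-\ro(A)\centerdot\bs{j}}\bigl(\prod_{h=1}^{n}G_h^{j_h}\bigr)\cdot\AJS(\Ad,\bs{0})$, so $a_{\bs{j}}=-\ro(A)\centerdot\bs{j}\in\ZZ$ as required.
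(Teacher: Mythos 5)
Your proposal is correct and follows essentially the same route as the paper: the first assertion is the componentwise lift of Corollary~\ref{common_form} using the $r$-independence of the structure constants, and the second is obtained by iterating Proposition~\ref{mulformzerocor} for the Cartan elements $G_h^{\pm 1}$, exactly as the paper does via \eqref{mult-GA}. Your explicit identification $\prod_h G_h^{j_h}=\AJS(O^\star,\bs{j})$ and the value $a_{\bs{j}}=-\ro(A)\centerdot\bs{j}$ are both correct and merely spell out what the paper leaves implicit.
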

\begin{proof} The last assertion follows from the fact
\begin{align}\label{mult-GA}
G_i^{\pm} \cdot\AJS(\Ad, \bs{j})
	=v^{\pm\ro(A)*\ep_i} \AJS(\Ad, \bs{j} \pm \ep_i),
	\quad
\AJS(\Ad, \bs{j})\cdot G_i^{\pm} =v^{\pm\co(A)*\ep_i} \AJS(\Ad, \bs{j} \pm \ep_i),
\end{align}
see Proposition \ref{mulformzerocor}.
\end{proof}

We are now ready to show the Main Theorem \ref{mthm} in the introduction by the following two steps: (1) Prove the image of $\bs{\xi}_n$ is $\USnv$, and (2) Prove $\bs{\xi}_n$ is injective. We complete (1) in this section and leave (2) to the next section.

For any {${}'\!\!\Ad=(a^{\bar 0}_{i,j}|a^{\bar 1}_{i,j}) \in \MNZNS(n)$}, define
\begin{equation}\label{def_udl}
\scm_{i,j}^{{}'\!\!\Ad}=\begin{cases}
{\ol{X}_{i}}^{\SOE{a}_{i,j}}
	X_{i}^{(\SEE{a}_{i,j})}    X_{i+1}^{({a}_{i,j})} \cdots   X_{j-2}^{({a}_{i,j})} X_{{j-1}}^{({a}_{i,j})},
	 &\mbox{for all } 1 \le i < j \le n; \\
\ol{ G}_{i}^{\SOE{a}_{i,i}} ,& \mbox{for all} 1\leq i=j\leq n;\\
 {\ol{Y}_{i-1}}^{\SOE{a}_{i,j}}
	Y_{i-1}^{(\SEE{a}_{i,j})}    Y_{i-2}^{({a}_{i,j})} \cdots   Y_{j+1}^{({a}_{i,j})} Y_{{j}}^{({a}_{i,j})} ,
	 &\mbox{for all } 1 \le j<i \le n.
	 \end{cases}
\end{equation}
Here, for any $Z\in \fsG_n $ and positive integer $a$, $Z^{(a)}=\frac{Z^{a}}{[a]!}$ denotes the divided power {\color{black} and we set $Z^{(a)}=1$ in the case $a=0$}.

We also define some monomials in the generators in $\mathscr G_n$, using the product order convention in \eqref{eq_product}.
For $'\!\!\Ad$ as above  and $\bs j=(j_1,\ldots,j_n)\in\ZZ^n$, use the well-ordered set $(\mathscr{I}_n,\leq)$ (Definition \ref{order}) to define
\begin{equation}\label{MA}
\aligned
{ {\scm}^{{}'\!\!\Ad,\bf0} } &:=\prod_{j\in[1,n)}^< \Big({\prod_{j+1\leq i\leq n}} \scm^{{}'\!\!\Ad}_{i,j}\Big)
	\cdot
	{ \prod_{1\leq j\leq n}\scm^{{}'\!\!\Ad}_{j,j} \cdot\prod_{j\in[1,n]}^>  \big ( {\prod_{{j-1} \ge i \ge {1}}} \scm^{{}'\!\!\Ad}_{i,j}\big)} =\prod_{{\color{black}(i,j)\in(\mathscr{I}_n,\leq)}}{\color{black}\scm}_{i,j}^{{}'\!\!\Ad},\\
\scm^{{}'\!\!\Ad,\bs j}&:=G_1^{j_1}\cdots G_n^{j_n}{\scm}^{{}'\!\!\Ad,\bf0}.
\endaligned
\end{equation}
Here, for $j=1$, the product $\prod_{0\geq i\geq1}$ is understood as 1.

By Proposition \ref{mulformeven-2} and Remark \ref{longtail}(2),
we have, for $a\in\NN, i\in[1,n)$,
\begin{equation}\label{divided-power}
X_i^{(a)}=(aE_{i,i+1}|O)(\bs{0}), \;\;Y_i^{(a)}=(aE_{i+1,i}|O)(\bs{0}).
\end{equation}
Thus, if $\Ad={}'\!\!\Ad+(\diag(a_{1,1}^{\bar0},\cdots,a_{n,n}^{\bar0})|O)\in\MNZ(n,r)$ has base $A$, then, by \eqref{id elt}
\begin{equation}\label{M to m}
\scm^{{}'\!\!\Ad,\bf0}\cdot[\Dd_{{ \co(A)}}]={\bf m}^{(\Ad)},\;\;\text{the monomial defined in \eqref{m(A)}},
\end{equation}
and
$\scm_r^{{}'\!\!\Ad,\bf0}:={\scm}^{{}'\!\!\Ad,\bf0}\cdot 1_r$ is the $r$-th component of $\scm^{{}'\!\!\Ad,\bf0}$ in $\qSchvsQ$.

Note that \eqref{M to m} can  be seen as follows. First,
$\scm^{{}'\!\!\Ad}_{1,2} [\Dd_{\co(A)}]=\bfm_{1,2}^{(\Ad)}$ (see Definition \ref{E^A_ij}). Assume $(i',j')\geq(1,2)$ and, for
$\scm^{{}'\!\!\Ad,\bf0}|_{(i',j')}$ as similarly defined in \eqref{triang-QqS-pf-2}, $\scm^{{}'\!\!\Ad,\bf0}|_{(i',j')}\cdot[\Dd_{{\co(A)}}]={\bf m}^{(\Ad)}|_{(i',j')}$. If $\ro({\bf m}^{(\Ad)}|_{(i',j')})=\gamma_{i',j'}$ and $(i,j)$ is
 immediate predecessor of $(i',j')$, then
$
\scm^{{}'\!\!\Ad}_{i,j}\cdot[\Dd_{\gamma_{i',j'}}]=\bfm^{(\Ad)}_{i,j}.
$ This proves  \eqref{M to m} by induction.

\begin{thm}\label{triangular_relation_q}
(1) For any {$\Ad \in \MNZNS(n)$},  we have in $\SQvnR$
\begin{align}\label{longTR}
{\scm}^{\Ad,\bf0}
	=\lc^\Ad \AJS(\Ad, \bs{0}) +
		\sum_{\!\!\Bd \prec \Ad , \  \bs{j}\in \ZZ^{n} }
		\mathfrak{f}^{\Ad}_{\Bd, \bs{j}} \AJS(\Bd, \bs{j}),
\end{align}
for some  $\mathfrak{f}^{\Ad}_{\Bd, \bs{j}} \in \Qv$, where $$\lc^\Ad={ (-1)^{\sum_{l>k}\SO{a}_{l,k}(\parity{\Ad}-\SO{\tilde{a}}_{l,k})+\sum_{i\leq j,j>k}\SO{a}_{k,k}\SO{a}_{i,j}}\up^{\sum_{l>k}\SO{a}_{k,k}(a_{k,l}-a_{l,k})}}.$$

(2) The $\Qv$-space  {$\USnv$} is a subalgebra of $\SQvnQ$ generated by the set $\fsG_n$ defined in \eqref{setG}. Moreover, the set
$\{\scm^{\Ad,\bs j}\mid \Ad\in M^\pm_n(\NN|\NN_2),\bs j\in\ZZ^n\}$ forms a basis for $\USnv$.

(3) There is  a superalgebra  epimorphism $\bs{\xi}_n: \Uvqn \to \USnv$ defined by
$${\genE}_{j} \mapsto X_{j}, \;
{\genE}_{\ol{j}} \mapsto X_{\ol{j}}, \;
{\genF}_{j} \mapsto Y_{j}, \;
{\genF}_{\ol{j}} \mapsto Y_{\ol{j}},  \; {\genK}_{i}^{\pm 1} \mapsto G_{i}^{\pm 1}, \;
{\genK}_{\ol{i}} \mapsto G_{\ol{i}},$$
for all $1 \le i \le n, 1 \le j \le n-1$.

\end{thm}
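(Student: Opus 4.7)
The plan is to lift Theorem \ref{triang-QqS} to $\bsSQvn$ by formal summation over weights, then deduce parts (2) and (3) from (1). Fix $\Ad\in\MNZNS(n)$ with base $A$ and, for each $r\geq|A|$ and $\la\in\Lambda(n,r-|A|)$, form $\widehat{\Ad}_\la:=\Ad+(\diag(\la)|O)\in\MNZ(n,r)$. By \eqref{M to m} applied to $\widehat{\Ad}_\la$ (whose even diagonal is $\la$, so that the \emph{primed} matrix there is $\Ad$ itself), we have $\scm^{\Ad,\bs{0}}\cdot[\Dd_{\co(A)+\la}]=\bfm^{(\widehat{\Ad}_\la)}$; and for $\mu\notin\co(A)+\NN^n$ the column-sum constraints built into the factors \eqref{def_udl} force $\scm^{\Ad,\bs{0}}\cdot[\Dd_\mu]=0$. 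Summing over $\mu\in\Lambda(n,r)$ yields $\scm^{\Ad,\bs{0}}\cdot 1_r=\sum_{\la}\bfm^{(\widehat{\Ad}_\la)}$. Since the leading coefficient $\lc^{\widehat{\Ad}_\la}$ in Theorem \ref{triang-QqS} depends only on the off-diagonal entries of the base and on $\SO A$, it equals $\lc^\Ad$ uniformly in $\la$, giving
\begin{equation*}
\scm^{\Ad,\bs{0}}\cdot 1_r \;=\; \lc^\Ad\sum_{\la\in\Lambda(n,r-|A|)}[\widehat{\Ad}_\la] \;+\; \sum_{\la}\,\sum_{\Md\prec^\star\widehat{\Ad}_\la}c^{\la}_{\Md}[\Md].
\end{equation*}

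In parallel, iterating Lemma \ref{rem:prod alg} along the factorisation $\scm^{\Ad,\bs{0}}=\prod_{(i,j)}\scm^\Ad_{i,j}$ (and using \eqref{AjAl} to absorb the $G_i^{\pm1}$-prefactors) shows $\scm^{\Ad,\bs{0}}\in\USnv$ and produces a finite expansion $\scm^{\Ad,\bs{0}}=\sum_{\Bd,\bs j}\beta^\Ad_{\Bd,\bs j}\AJS(\Bd,\bs{j})$ with constants $\beta^\Ad_{\Bd,\bs j}\in\Qv$ independent of $r$. Taking $r$-components of this expansion and matching with the previous display on the standard basis---which is linearly independent for $r$ sufficiently large by Corollary \ref{sdbasis}---fixes the $\beta^\Ad_{\Bd,\bs j}$. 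The basis element $[\widehat{\Ad}_\la]$ arises from $\AJS(\Bd,\bs j)|_r$ only when $\Bd=\Ad$, so the identity $\sum_{\bs j}\beta^\Ad_{\Ad,\bs j}\up^{\la\cdot\bs j}=\lc^\Ad$ valid for all large $\la$, together with the linear independence of the characters $\la\mapsto\up^{\la\cdot\bs j}$, forces $\beta^\Ad_{\Ad,\bs{0}}=\lc^\Ad$ and $\beta^\Ad_{\Ad,\bs j}=0$ for $\bs j\neq\bs{0}$. The residual condition $\Md\prec^\star\widehat{\Ad}_\la$, grouped over $\la$, then forces every remaining nonzero $\Bd$ to satisfy $\Bd\prec^\star\Ad$ in $\MNZNS(n)$ (Definition \ref{preorder-2}), proving (1).

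Part (2) follows formally. Closure of $\USnv$ under multiplication is a routine iteration of Lemma \ref{rem:prod alg} (first for a generator times a long element, then extending to arbitrary $\bs j$ via \eqref{mult-GA}). The triangular relation (1) is invertible by descending induction on $\prec^\star$: each $\AJS(\Ad,\bs{0})$ is thereby expressed as a $\Qv$-polynomial in $\fsG_n$, and right multiplication by the invertible $G_i^{j_i}$'s recovers $\AJS(\Ad,\bs j)$ via \eqref{AjAl}, so $\fsG_n$ generates $\USnv$. Combining (1) with Lemma \ref{unv_basis} and with \eqref{AjAl}, the transition matrix from $\{\scm^{\Ad,\bs j}\}_{\Ad,\bs j}$ to the basis $\{\AJS(\Ad,\bs j)\}_{\Ad,\bs j}$ is triangular with invertible diagonal entries $\lc^\Ad$, so the monomials themselves form a basis.

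For (3), set $\bs{\xi}_n(x):=(\bs{\xi}_{n,r}(x))_{r\geq 1}$ using the homomorphisms of Theorem \ref{qqschur_reltion-r}; each component is a superalgebra homomorphism sending the generators of $\Uvqn$ into $\fsG_n$, hence so is $\bs{\xi}_n$, and surjectivity onto $\USnv$ is immediate from (2). The main obstacle lies in part (1): the coefficients $c^\la_{\Md}$ carry intricate $\la$-dependence---already visible in the HH$\overline{\textsc f}$-type contributions of Corollary \ref{diag-com}---and cannot reasonably be tracked combinatorially. The escape is to argue purely at the formal level and invoke Lemma \ref{rem:prod alg} to supply the level-independence of the $\beta^\Ad_{\Bd,\bs j}$, thereby sidestepping the super-combinatorial obstructions recorded in Remark \ref{longtail}.
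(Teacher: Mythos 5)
Your proof follows the paper's argument closely: lift Theorem \ref{triang-QqS} by summing over $\lambda\in\Lambda(n,r-|A|)$ to identify the leading term $\lc^\Ad\AJS(\Ad,\bs{0})$, invoke Lemma \ref{rem:prod alg} to recognize the remainder as a finite $\Qv$-combination of $\AJS(\Bd,\bs j)$'s, and then deduce (2) and (3) by inverting the triangular transition and composing the $\bs\xi_{n,r}$'s. The one place you add genuine content is the character-independence step (comparing coefficients of $[\widehat{\Ad}_\la]$ and using linear independence of $\la\mapsto\up^{\la\cdot\bs j}$) to pin down $\beta^\Ad_{\Ad,\bs 0}=\lc^\Ad$ and $\beta^\Ad_{\Ad,\bs j}=0$ for $\bs j\neq\bs 0$; the paper states this more tersely, simply reading the leading coefficient off the computation.
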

\begin{proof} (1) With the notation introduced above, we have, by Theorem \ref{triang-QqS},
$$\aligned
\scm^{\Ad,\bf0}1_r&=\sum_{\lambda\in\Lambda(n,r-|A|)}{\scm}^{\Ad,\bf0}\cdot[\Dd_{\co(A+\la)}]=\sum_{\lambda\in\Lambda(n,r-|A|)}{\bf m}^{(\Ad+\la)}\\
&=\sum_{\lambda\in\Lambda(n,r-|A|)}\mathfrak{f}_{(\SE{A}+\lambda|\SO{A})}[\SE{A}+\lambda|\SO{A}]+\sum_{\lambda\in\Lambda(n,r-|A|)}\sum_{\substack{C^\star\in\MNZ(n,r)\\ C^\star\prec^\star (\SE{A}+\lambda|\SO{A})}}{ \fkf_{A^\star+\la,C^\star}}[C^\star]\\
&=\lc^\Ad \Ad({\bf0},r)+\sum_{\lambda\in\Lambda(n,r-|A|)}\sum_{\substack{C^\star\in\MNZ(n,r)\\ C^\star\prec^\star \Ad}}{ \fkf_{A^\star+\la,C^\star}}[C^\star],
\endaligned
$$
where $\lc^\Ad=\mathfrak{f}_{(\SE{A}+\lambda|\SO{A})}={ (-1)^{\sum_{l>k}\SO{a}_{l,k}(\parity{\Ad}-\SO{\tilde{a}}_{l,k})+\sum_{i\leq j,j>k}\SO{a}_{k,k}\SO{a}_{i,j}}\up^{\sum_{l>k}\SO{a}_{k,k}(a_{k,l}-a_{l,k})}}$ is independent of $r$. Thus,
$$\scm^{\Ad,\bf0}=\lc^\Ad \Ad({\bf0})+\sum_{r\geq|A|}\sum_{\lambda\in\Lambda(n,r-|A|)}\sum_{\substack{C^\star\in\MNZ(n,r)\\ C^\star\prec^\star \Ad}}{ \fkf_{A^\star+\la,C^\star}}[C^\star].$$
On the other hand, by Lemma \ref{rem:prod alg}, $\scm^{\Ad,\bf0}$ is a linear combination of $\Bd(\bs{j})$. Consequently, the triple summation above is a linear combination of $\Bd(\bs{j})$ with $\Bd\prec^\star\Ad$, proving \eqref{longTR}.

(2) By \eqref{longTR} and \eqref{AjAl}, there exists $a_\Ad\in\ZZ$ such that
$${\scm}^{\Ad,\bs j}
	=\pm\up^{a_\Ad} \AJS(\Ad, \bs{j}) +
		\sum_{\!\!\Bd \prec^\star \Ad , \  \bs{j}'\in \ZZ^{n} }
		\mathfrak{f'}^{\Ad}_{\Bd, \bs{j}'} \AJS(\Bd, \bs{j}').$$
This relation shows that every $A(\bs j)$ is in the subalgebra generated by $\fsG_n$ and all ${\scm}^{\Ad,\bs j} $ forms a basis.

(3) Note that the map $\bs\xi_n$ is induced by the homomorphisms $\bs\xi_{n,r}$ given in Theorem \ref{qqschur_reltion-r}. So, it is well-defined. The surjectivity follows from (2).
\end{proof}

\begin{cor}\label{surj-xi-nr}
For each $r\geq 0$, the homomorphism $\bs{\xi}_{n,r}:\Uvqn \to \qSchvsQ $ in Theorem \ref{qqschur_reltion-r} is surjective.  Moreover $\qSchvsQ$ is generated by
$
\fsG_{n,r} $ (see \eqref{setGr}) and spanned by $\{\Ad(\bs{j},r)\mid \Ad\in \MNZNS(n), \bs{j}\in\mathbb{Z}^n\}$.

\end{cor}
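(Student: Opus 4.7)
The proof is a direct consequence of Theorem~\ref{triangular_relation_q} together with a Vandermonde/character-independence argument.

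First I would note that $\bs\xi_{n,r}=\pi_r\circ\bs\xi_n$, where $\pi_r\colon\bsSQvn\to\qSchvsQ$ is projection onto the $r$-th factor. Both composites are superalgebra homomorphisms out of $\Uvqn$, and they agree on every Drinfeld--Jimbo generator $\sfE_j,\sfF_j,\sfK_i^{\pm1},\sfE_{\bar j},\sfF_{\bar j},\sfK_{\bar i}$: by the definition~\eqref{A(j)} of the infinite series one has $\pi_r(X_j)=X_{j,r}$, and likewise for the remaining elements of $\fsG_n$. By Theorem~\ref{triangular_relation_q}(3) together with Lemma~\ref{unv_basis}, the image of $\bs\xi_n$ is $\USnv$ with basis $\{\AJS(\Ad,\bs j)\mid \Ad\in\MNZNS(n),\ \bs j\in\ZZ^{n}\}$, so applying $\pi_r$ termwise shows that the image of $\bs\xi_{n,r}$ is the $\Qv$-span of $\{\AJRS(\Ad,\bs j,r)\mid \Ad\in\MNZNS(n),\ \bs j\in\ZZ^{n}\}$.

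It remains to check that this span is all of $\qSchvsQ$, i.e.\ that every standard basis vector $[\Cd]$ with $\Cd=(\SE{C}|\SO{C})\in\MNZ(n,r)$ lies in it. Decompose $\SE{C}=\SE{C}_{\mathrm{off}}+\diag(\mu_1,\ldots,\mu_n)$ with $\mu_i=\SE{C}_{i,i}$ and set $\Ad:=(\SE{C}_{\mathrm{off}}|\SO{C})$; then $\Ad\in\MNZNS(n)$, $\Cd=(\SE{A}+\mu|\SO{A})$, and $\mu\in\CMN(n,r-|A|)$. By~\eqref{def_ajr}, $\AJRS(\Ad,\bs j,r)=\sum_{\lambda\in\CMN(n,r-|A|)}\up^{\lambda\cdot\bs j}[\SE{A}+\lambda|\SO{A}]$, so the transition from the family $\{\AJRS(\Ad,\bs j,r)\}_{\bs j}$ to the finite family $\{[\SE{A}+\lambda|\SO{A}]\}_\lambda$ is given by the character matrix $(\up^{\lambda\cdot\bs j})_{\lambda,\bs j}$. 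Because the maps $\lambda\mapsto \up^{\lambda\cdot\bs j}$ for distinct $\lambda$ are pairwise distinct characters of $\ZZ^{n}$, they are linearly independent as functions on $\ZZ^{n}$ (Artin--Dedekind independence of characters); hence one may choose finitely many $\bs j$'s making the corresponding square submatrix invertible, and invert it to express $[\Cd]=[\SE{A}+\mu|\SO{A}]$ as a $\Qv$-linear combination of $\AJRS(\Ad,\bs j,r)$'s.

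This proves simultaneously that $\{\AJRS(\Ad,\bs j,r)\mid \Ad\in\MNZNS(n),\ \bs j\in\ZZ^{n}\}$ spans $\qSchvsQ$ and that $\bs\xi_{n,r}$ is surjective. Since the image of $\bs\xi_{n,r}$ is contained in the subalgebra generated by $\fsG_{n,r}$ (the images of the Drinfeld--Jimbo generators of $\Uvqn$), that subalgebra must equal $\qSchvsQ$, yielding the generation statement. There is no real obstacle here; the only input not already produced by Theorem~\ref{triangular_relation_q} is the classical character-independence lemma, which is entirely routine.
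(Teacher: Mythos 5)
Your argument is correct, and it takes a genuinely different route from the paper. Both proofs begin by observing that $\bs\xi_{n,r}=\pi_r\circ\bs\xi_n$, so $\mathrm{im}(\bs\xi_{n,r})=\pi_r(\USnv)$, and both use Theorem~\ref{triangular_relation_q} to identify $\mathrm{im}(\bs\xi_n)=\USnv$. Beyond that point the paper works with the monomial basis: it notes that each $G_{\co(A)}=\prod_i\left[G_i\atop\lambda_i\right]$ lies in $\mathrm{im}(\bs\xi_n)$ (as the image of a product of Gaussian binomials in $\sfK_i^{\pm1}$) with $\pi_r(G_\la)=[\Dd_\la]$, and then uses \eqref{M to m} to conclude that $\pi_r\big(\scm^{{}'\!\!\Ad,\bf0}G_{\co(A)}\big)=\mathbf m^{(\Ad)}$, which by Theorem~\ref{triang-QqS} runs over a \emph{basis} of $\qSchvsQ$. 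You instead stay with the spanning family $\{\AJRS(\Ad,\bs j,r)\}$: fixing a single $\Ad$, you invert the character/Vandermonde matrix $(\up^{\lambda\cdot\bs j})_{\lambda\in\CMN(n,r-|A|),\,\bs j}$ to recover each $[\SE A+\mu|\SO A]$ from finitely many long elements. Both are correct; the paper's route has the side benefit of exhibiting the explicit monomial spanning set $\{\mathbf m^{(\Ad)}\}$ of $\qSchvsQ$, whereas yours is more elementary and avoids invoking Theorem~\ref{triang-QqS} and the Gaussian-binomial computation, at the cost of a (routine) appeal to Artin--Dedekind independence of characters. One minor note: the characters should be read as $\bs j\mapsto\up^{\lambda\cdot\bs j}$ (functions on $\ZZ^n$ indexed by $\lambda$), not $\lambda\mapsto\up^{\lambda\cdot\bs j}$, but your intent and conclusion are clearly right.
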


\begin{proof}We first observe that if $\pi_r$ denotes the canonical projection from $\bsSQvn$ onto its $r$-th component, then $\bs\xi_{n,r}=\pi_r|_{\USnv}\circ\bs\xi_n$.
Note also that, for $\lambda\in\Lambda(n,r)$, each element $G_\la:=\prod_{i=1}^n\left[G_{i}\atop \lambda_i\right]$ with ${\left[G_{i}\atop \lambda_i\right]}=\prod_{t=1}^i\frac{G_{i}\up^{-t+1}-G_{i}\up^{t-1}}{\up^t-\up^{-t}}$ has an inverse image in $\Uvqn$ and
$\pi_r(G_\la)=[\lambda|O]=[\Dd_\lambda]$, by \cite[(13.9.3)]{DDPW}. Thus, the set
\begin{equation}
\{\scm^{{}'\!\!\Ad,\bf0}G_{\co(A)}\mid\Ad\in M_n(\NN|\NN_2)_r\}\subset\bsSQvn
\end{equation}
is a subset of the image of $\bs\xi_n$. By \eqref{M to m} and Theorem \ref{triang-QqS} and noting ${}'\!\!\Ad=\Ad-(\diag(A^{\bar0})|O)$,
$$\pi_r\{\scm^{{}'\!\!\Ad,\bf0}G_{\co(A)}\mid\Ad\in M_n(\NN|\NN_2)_r\}=\{\mathbf m^{(\Ad)}\mid \Ad\in M_n(\NN|\NN_2)_r\}$$
forms a basis for $\qSchvsQ $. Hence, $\bs\xi_{n,r}$ is surjective. The last two assertions follow easily.
\end{proof}
 \begin{rem}
Using the formulas in Lemma \ref{phiupper-even-norm}, Lemma \ref{normlized-phidiag1} and Proposition \ref{mulformzerocor}, it is straightforward to check that the ideal of $\Uvqn$ generated by the elements in \eqref{SvRelations}
is contained in ${\ker}(\bs{\xi}_{n,r})$. Then by the surjectivity of $\bs{\xi}_{n,r}$ and a dimension comparison, one can deduce that ${\ker}(\bs{\xi}_{n,r})$ actually coincides with $\ker(\Phi_r)$ in Proposition \ref{DW2}.  We may use the $\bs{\sH}_r^c$-module isomorphism
$\bs V(n|n)^{\otimes r}\cong \bigoplus_{\lambda\in\Lambda(n,r)}x_\la\bs\sH_r^c$ to induce an isomorphism $\bs{\omega}_r:\qSchvsQ\to \text{End}_{\bs\sH_r^c}(\bs V(n|n)^{\otimes
 r})$.  It is natural to expect that  $\Phi_r=\bs\omega_r\circ\bs\xi_{n,r}$.
\end{rem}

\section{Completing the proof of the main Theorem \ref{mthm}}\label{sec_generators}

In this section, we shall prove that {$ \bs{\xi}_n$} in Theorem \ref{triangular_relation_q}(3) is injective and, hence, we obtain an isomorphism between {$\Uvqn$} and {$\USnv$}. Thus, the main theorem in subsection 1.4 follows.

In the quantum $\mathfrak{gl}_n$ case, a monomial basis which is triangularly related to a PBW type basis is used to prove the type $A$ map similar to $\bs\xi_n$ is injective, see \cite[5.7]{BLM}. Though this approach might still work in the queer case, we provide below a different approach.

We first introduce a PBW type basis for $\Uvqn$. Following \cite[Th.~6.2]{Ol} and \cite[Rem.5.6, Lem. 5.7]{DW1},
for {$1 \le i \le n-1$}, set\footnote{Here  {${\RZ}_{i,j}$}  is in fact  the quantum root vector which is denoted by {${{X}}_{i,j}$} in \cite{DW1}.}
\begin{align*}
{\RZ}_{i, i+1} = {\genE}_{i}, \quad
{\RZ}_{i+1, i} = {\genF}_{i}, \quad
{\ol{\RZ}}_{i, i+1} = \ol{\genE}_{{i}}= {\genE}_{\ol{i}}, \quad
{\ol{\RZ}}_{i+1, i} =\ol {\genF}_{{i}}= {\genF}_{\ol{i}},\quad \ol{\sfK}_a=\sfK_{\bar a}
\end{align*}
for {$|j-i| > 1$} and then introduce the following quantum root vectors:
\begin{align*}
{\RZ}_{i, j} =
\left\{
\begin{aligned}
&{\RZ}_{i, k} {\RZ}_{k, j} - {v}  {\RZ}_{k, j} {\RZ}_{i, k},   &\mbox{ if } i<j, \\
&{\RZ}_{i, k} {\RZ}_{k, j} - {v}^{-1}  {\RZ}_{k, j} {\RZ}_{i, k},   &\mbox{ if } i>j,
\end{aligned}
\right.
\quad
{\ol{\RZ}}_{i, j} =
\left\{
\begin{aligned}
&{\RZ}_{i, k} {\ol{\RZ}}_{k, j} - {v}  {\ol{\RZ}}_{k, j} {\RZ}_{i, k},  &\mbox{ if } i<j, \\
&{\ol{\RZ}}_{i, k} {{\RZ}}_{k, j} - {v}^{-1}  {{\RZ}}_{k, j} {\ol{\RZ}}_{i, k},  &\mbox{ if } i>j,
\end{aligned}
\right.
\end{align*}
where {$k$} is any number strictly between {$i$} and {$j$} and it is known that {${\RZ}_{i, j}$} and {${\ol{\RZ}}_{i, j}$} does not depend on the choice of $k$.

Set
\begin{equation}\label{eq:set-q-roots}
\aligned
\widetilde{\fsG}_n&=\{\mathsf{K}_a,\mathsf{K}_a^{-1}, \ol{\mathsf{K}}_{a}, \mathsf{E}_{j},\mathsf{E}_{\bar{j}},\mathsf{F}_{j},\mathsf{F}_{\bar{j}}|1\leq a\leq n, 1\leq j\leq n-1\},\\
\scrR_n&=\{\mathsf{K}_a,\mathsf{K}_{a}^{-1},\ol{\mathsf{K}}_a, \mathsf{E}_{i,j},\mathsf{E}_{j,i},\ol{\mathsf{E}}_{i,j},\ol{\mathsf{E}}_{j,i}|1\leq a\leq n, 1\leq i<j\leq n\}.
\endaligned
\end{equation}
 Then $\widetilde{\fsG}_n\subseteq \scrR_n$ and $\widetilde{\fsG}_n$ is the inverse of \eqref{setG} satisfying  $\bs\xi_n(\widetilde{\fsG}_n)=\mathscr G_n$.

For any {$\Ad=(A^{\bar0}|A^{\bar1}) =(a_{i,j}^{\bar0}|a_{i,j}^{\bar1})\in \MNZNS(n)$},
let
\begin{align}\label{PBWbs}
{\ttb}^{\Ad,\bf{0}}
={\prod_{{1} \le j \le {n-1}}} \
	 \Big({\prod_{{j+1}\le i\le n}}  {\RZ}_{i,j}^{{\SEE{a}_{i,j}}} {\ol{\RZ}}_{i,j}^{\SOE{a}_{i,j}}\Big){
	\cdot \Big(\prod_{1\leq j\leq n}\ol{\genK}_{j}^{\SOE{a}_{j,j}}\Big)\cdot {\prod_{{n} \ge j \ge {1}}}
			\big({\prod_{{j-1} \ge i \ge {1}}} {\RZ}_{i,j}^{{\SEE{a}_{i,j}}} {\ol{\RZ}}_{i,j}^{\SOE{a}_{i,j}}\big)} .
\end{align}

By the matrix decomposition in \eqref{pzm}, let
$$A^\star_-=(A^{\bar0}_-|A^{\bar1}_-),\quad A^\star_0=(O|A^{\bar1}_0),\quad A^\star_+=(A^{\bar0}_+|A^{\bar1}_+),$$ then we may rewrite
${\ttb}^{\Ad,\bf{0}}={\ttb}^{\Ad,\bf{0}}_-\cdot {\ttb}^{\Ad,\bf{0}}_0 \cdot{\ttb}^{\Ad,\bf{0}}_+$, where ${\ttb}^{\Ad,\bf{0}}_\bullet$ is the product associated with $A^\star_\bullet$ for all $\bullet\in\{-,0,+\}$.

For {$ \bs{j}=(j_i) \in \ZZ^n$} and $\Ad$ as above, define ${\ttb}^{\Ad,\bs{j}}=\prod_{i=1}^n {\genK}_{i}^{j_i} \cdot{\ttb}^{\Ad,\bf{0}}$.
These elements form a PBW type basis for $\Uvqn$.

\begin{prop}[{\cite{Ol},\cite[Prop. 5.8]{DW1}}]\label{pbw_basis}
The set
$$ \mathfrak{B}= \{ \ttb^{\Ad,\bs{j}}
	\where
	\Ad \in \MNZNS(n), \ \bs{j} \in \ZZ^n
	\},
$$
containing $\scrR_n$, forms a {$\Qv$}-basis for {$\Uvqn$}.
\end{prop}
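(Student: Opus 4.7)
The plan is to establish Proposition \ref{pbw_basis} in two standard steps following the usual PBW strategy: first show that $\mathfrak{B}$ spans $\Uvqn$, then establish linear independence by specialization.

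For the spanning part, I would first set up a triangular decomposition $\Uvqn = \Uvqn^- \cdot \Uvqn^0 \cdot \Uvqn^+$, where $\Uvqn^\pm$ is the subalgebra generated by $\{\genE_j, \genE_{\bar j}\}$ (resp.\ $\{\genF_j, \genF_{\bar j}\}$) for $1\leq j\leq n-1$, and $\Uvqn^0$ is generated by the Cartan-type elements $\{\genK_i^{\pm 1}, \genK_{\bar i}\mid 1\leq i\leq n\}$. The relations (QQ2)-(QQ4) allow one to push Cartan elements to the middle and negative elements to the left, reducing the problem to producing PBW spanning sets inside each factor. For $\Uvqn^0$, the super-commutation relation in (QQ1) forces the odd Cartan elements to square into the even Cartan subalgebra, so $\ol{\genK}_1^{\epsilon_1}\cdots\ol{\genK}_n^{\epsilon_n}\prod_i\genK_i^{j_i}$ with $\epsilon_i\in\{0,1\}$ spans. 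For $\Uvqn^+$ (resp.\ $\Uvqn^-$), I would derive Levendorskii--Soibelman-type $\up$-commutation formulas for pairs $(\RZ_{i,j},\RZ_{k,l})$, $(\RZ_{i,j},\ol{\RZ}_{k,l})$, $(\ol{\RZ}_{i,j},\ol{\RZ}_{k,l})$ by induction on $|i-j|+|k-l|$, using the inductive definitions of the root vectors and the Serre relations (QQ5)-(QQ6). These commutators produce only root vectors of smaller height, so a diamond-lemma-style reordering gives the desired PBW-ordered spanning.

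For linear independence, I would pass to an integral $\mathcal{Z}$-form $\UZ \subset \Uvqn$ that contains all the PBW monomials $\ttb^{\Ad,\bs{j}}$, then specialize $\up\mapsto 1$ to land in the classical enveloping superalgebra $U(\mathfrak{q}_n)$. Under this specialization, $\RZ_{i,j}$ (resp.\ $\ol{\RZ}_{i,j}$) maps to the standard classical root vector $E_{i,j}$ in $\mathfrak{gl}_n$ (resp.\ its odd counterpart $E_{i,j}\otimes c$ in $\mathfrak{q}_n$), and the monomials $\ttb^{\Ad,\bs{j}}$ specialize to the classical PBW basis (due to Sergeev). Since the classical family is $\QQ$-linearly independent, the lifts are $\Qv$-linearly independent. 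As an alternative more in keeping with the paper's philosophy, linear independence could also be deduced by constructing a faithful representation: combining the homomorphisms $\bs\xi_{n,r}$ for all $r$ and observing that distinct PBW monomials have distinct actions on suitable weight vectors in $\bigoplus_r \bs V(n|n)^{\otimes r}$ (using the action of the Cartan piece to separate weights, then an induction on root-vector height to separate within each weight space).

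The main obstacle will be the systematic derivation of the $\up$-commutation relations among odd root vectors $\ol{\RZ}_{i,j}$ at arbitrary distances: the Clifford-type anticommutation interacts nontrivially with the $\up$-commutation, and one must first verify that the inductive definitions of $\RZ_{i,j}$ and $\ol{\RZ}_{i,j}$ are independent of the choice of intermediate index $k$, which amounts to a family of quantum Jacobi identities that must be extracted directly from (QQ4)-(QQ6). Once these coherence identities are in place, the reordering algorithm terminates and the rest is bookkeeping.
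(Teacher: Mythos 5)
The proposition is stated as a citation in the paper — the result is attributed to \cite{Ol} and \cite[Prop.~5.8]{DW1} and no proof is given in the text — so there is no in-paper argument to compare yours against; it is a known input, not a new claim of the paper. With that caveat, your outline is a reasonable reconstruction of the standard PBW route and is consistent with how the cited sources proceed: establish the triangular decomposition, derive Levendorskii--Soibelman-type $\up$-commutation relations among root vectors to get spanning, and deduce linear independence by specialization at $\up=1$ to Sergeev's classical PBW basis for $U(\mathfrak{q}_n)$. You are also right to flag the coherence identities (independence of $\RZ_{i,j},\ol{\RZ}_{i,j}$ from the choice of intermediate index $k$) as a genuine obstacle that must be checked before the rewriting algorithm is sound; the paper simply asserts this independence and points to the references.

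One caution on your alternative linear-independence argument via the maps $\bs\xi_{n,r}$: within this paper's logical architecture, Proposition \ref{pbw_basis} is a prerequisite for the triangular decomposition \eqref{lem:tri-decom} and the weight-space bookkeeping that culminate in Theorem \ref{injective}, which is precisely the statement that $\bs\xi_n$ is injective. Establishing faithfulness of $\bigoplus_r \bs V(n|n)^{\otimes r}$ on the PBW span in order to prove Proposition \ref{pbw_basis} would be doing the work of Theorem \ref{injective} by other means, and as stated (``observing that distinct PBW monomials have distinct actions'') it is not an observation but essentially the whole difficulty. The specialization route is the one to use; leave the alternative as the eventual corollary it becomes once Theorem \ref{injective} is in hand.
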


Let $\bs U_v^+$ (resp. $\bs U_v^-$) be the subalgebra of {$\Uvqn$}
 generated by $\mathsf{E}_j, \mathsf{E}_{\bar{j}}$ (resp. $\mathsf{F}_j, \mathsf{F}_{\bar{j}}$ ) with $1\leq j\leq n-1$
 and let $\bs U_v^0$ be the subalgebra of {$\Uvqn$} generated by ${\genK}_i^{\pm},  {\genK}_{\bar{i}}$ with $1\leq i\leq n$.
 Then the proposition above implies that $\Uvqn$ admits the following triangular decomposition (see also \cite[Th.~ 2.3]{GJKK}):
\begin{equation}\label{lem:tri-decom}
\Uvqn=\bs U_v^- \bs U_v^0 \bs U_v^+\cong \bs U_v^-\otimes \bs U_v^0\otimes \bs U_v^+,
\end{equation}
and $\bs U_v^-,\bs U^0_v, \bs U_v^+$ have bases, respectively,
\begin{equation}\label{pmpbw}
\{\ttb^{\Ad,\bf0}\mid \Ad\in\Xi^-_n\},\quad\{\ttb^{\Ad,\bs j}\mid \Ad\in\Xi^0_n,\bs j\in\ZZ^n\},\quad\{\ttb^{\Ad,\bf0}\mid \Ad\in\Xi_n^+\},
\end{equation}
 where $\Xi_n^-$ (resp., $\Xi_n^0$, $\Xi_n^+$) is the subset of $\MNZNS(n)$ consisting of $\Ad=(A^{\bar0}|A^{\bar1})$ such that both $A^{\bar0}$ and $A^{\bar1}$ are lower triangular (resp., diagonal, upper triangular) (cf. \eqref{pzm}).
Note that we have the decomposition
\begin{equation}\label{eq:matrix-decomp}
{ \MNZNS(n)=\Xi_n^-+\Xi_n^0+\Xi_n^+.}
\end{equation}

Let $\bs U^0_\sfK$ (resp., $\bs U^0_{\ol{\sfK}}$) be the subalgebra generated by $\sfK_j$ (resp., $\ol{\sfK}_j$) for all $j\in[1,n]$.
There is a $\bs U^0_\sfK$-module structure on $\Uvqn$ defined by conjugation action  (see \cite[p.~837]{GJKK}):
$$\sfK_i*u:=\sfK_i u\sfK_i^{-1},\;\;\text{for all }u\in\Uvqn.$$
We now describe its weight subspaces.

Recall that the queer Lie superalgebra $\mathfrak{q}(n)$ admits a Cartan subalgebra $\mathfrak h$ (see \cite[\S1.2.6]{CW}). Let $\mathfrak h_{\bar 0}$ be the even part of $\mathfrak h$ with $h_i,\;i\in[1,n],$ as the standard basis.\footnote{Using the notation in \eqref{Mnpm}, $h_i=D_i^{\star\!\!\!\!\square}$ for $D_i=(E_{i,i}|O)$.} Let $\ep_i$ be its dual basis defined by $\ep_i(h_j)=\delta_{i,j}$ and define  roots
 $\alpha_{i,j}=\ep_i-\ep_{j}$ for $1\leq i\neq j\leq n$. Then, we obtain the root lattice $\mathsf{Q}=\sum_{j=1}^{n-1}\mathbb{Z}\alpha_j$, where $\alpha_j=\alpha_{j,j+1}$.
Let $\mathsf{Q}^{+}=\sum_{j=1}^{n-1}\mathbb{Z}^+\alpha_j$ and $\mathsf{Q}^{-}=-\mathsf{Q}^{+}$.

For any $\alpha\in\mathsf Q$, define the $\alpha$-{\it weight space}
$$\Uvqn_\alpha=\{u\in\Uvqn\mid \sfK_i*u:=\sfK_i u\sfK_i^{-1}=\up^{\alpha(h_i)}u\},$$
and write $\wt(u)=\alpha$ if $0\neq u\in\Uvqn_\alpha$.
Then, by Definition \ref{defqn}, we have
\begin{equation}\label{eq:degree}
\mathsf{K}^{\pm}_i,\mathsf{K}_{\bar{i}}\in\Uvqn_0,\quad\mathsf{E}_j,\mathsf{E}_{\bar j}\in\Uvqn_{\alpha_j}, \quad \mathsf{F}_j,\mathsf{F}_{\bar j}\in\Uvqn_{-\alpha_j}\;\; ( i\in[1, n], j\in[1,n)).
\end{equation}
Thus, an induction on $|i-j|$ shows that $\wt(\sfE_{i,j})=\alpha_{i,j}=\wt(\ol{\sfE}_{i,j})$. Hence, by \eqref{PBWbs},
\begin{equation}\label{wtB}
\wt(\Ad):=\wt(\ttb^{\Ad,\bs j})=\wt(\ttb^{\Ad,\bf0})=\sum_{1\leq i, j\leq n}a_{i,j}\alpha_{i,j},
\end{equation}
where $A=(a_{i,j})$ is the base of $\Ad$.
Hence, we have $\Uvqn=\bigoplus_{\alpha\in\mathsf Q}\Uvqn_\alpha$. Similarly, we have weight space decompositions
\begin{equation}\label{+wtsp}
\bs U_\up^+=\bigoplus_{\alpha\in\mathsf Q^+}(\bs U_\up^+)_\alpha\;\text{ and }\;\bs U_\up^-=\bigoplus_{\alpha\in\mathsf Q^-}(\bs U_\up^-)_\alpha.
\end{equation}
By \eqref{pmpbw}, we have immediately the following
\begin{cor} \label{wtspbs}
For $\ep\in\{+,-\}$ and $\alpha\in\mathsf Q^\ep$, each weight subspace $(\bs U_\up^\ep)_\alpha$ has a basis
{\rm\begin{equation*}
\{\ttb^{\Ad,\bf0}\mid \Ad\in\Xi^\ep_n,\texttt{wt}(\Ad)=\alpha\}.
\end{equation*}}
\end{cor}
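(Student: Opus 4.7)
The proof is essentially a direct consequence of the material already assembled, so the plan is short.

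First, I would observe that every PBW monomial $\ttb^{\Ad,\bf 0}$ with $\Ad\in\Xi_n^\ep$ is a weight vector for the conjugation action of $\bs U^0_\sfK$. The base cases $\wt(\sfE_j)=\alpha_j=\wt(\ol\sfE_j)$ and $\wt(\sfF_j)=-\alpha_j=\wt(\ol\sfF_j)$ come from \eqref{eq:degree}. Then an induction on $|i-j|$ applied to the recursive definitions of $\sfE_{i,j}$ and $\ol\sfE_{i,j}$ (and symmetrically for the lower triangular root vectors) yields $\wt(\sfE_{i,j})=\alpha_{i,j}=\wt(\ol\sfE_{i,j})$. Since the conjugation action is multiplicative in the sense that $\sfK_i*(xy)=(\sfK_i*x)(\sfK_i*y)$, the monomial $\ttb^{\Ad,\bf 0}$ lies in the weight space of weight $\wt(\Ad)=\sum_{i,j}a_{i,j}\alpha_{i,j}$, exactly as declared in \eqref{wtB}.

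Second, I would combine this with the basis statement in \eqref{pmpbw}. For $\ep\in\{+,-\}$, we have
$$
\bs U_\up^\ep=\bigoplus_{\Ad\in\Xi_n^\ep}\mathbb Q(\up)\,\ttb^{\Ad,\bf 0}
=\bigoplus_{\alpha\in\mathsf Q^\ep}\Big(\bigoplus_{\substack{\Ad\in\Xi_n^\ep\\ \wt(\Ad)=\alpha}}\mathbb Q(\up)\,\ttb^{\Ad,\bf 0}\Big),
$$
simply by grouping the PBW basis according to the weight value. On the other hand, \eqref{+wtsp} gives $\bs U_\up^\ep=\bigoplus_{\alpha\in\mathsf Q^\ep}(\bs U_\up^\ep)_\alpha$. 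Comparing these two direct-sum decompositions termwise (each PBW monomial lies in exactly one weight summand), we conclude that $(\bs U_\up^\ep)_\alpha$ has the proposed basis $\{\ttb^{\Ad,\bf 0}\mid \Ad\in\Xi_n^\ep,\ \wt(\Ad)=\alpha\}$.

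There is no real obstacle: the statement is a bookkeeping consequence of Proposition \ref{pbw_basis} and the homogeneity of the root vectors, both of which are in hand. The only point requiring a line of argument is the homogeneity of $\sfE_{i,j}$ and $\ol\sfE_{i,j}$, which is immediate from their inductive construction.
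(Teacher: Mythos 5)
Your argument is correct and follows exactly the route the paper takes: the weight homogeneity $\wt(\ttb^{\Ad,\bf0})=\wt(\Ad)$ is established via the induction on $|i-j|$ in \eqref{wtB}, and the corollary is then read off by grouping the PBW basis of \eqref{pmpbw} according to weight. The paper treats the deduction as immediate, and your write-up simply makes those bookkeeping steps explicit.
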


We are ready to introduce a monomial basis for $\Uvqn$.
For any {$\Ad=(a_{i,j}^{\bar0}|a_{i,j}^{\bar1})\in \MNZNS(n)$} with base $A=(a_{i,j})$ and  {$ \bs{j}=(j_i) \in \ZZ^n$}, we use the same product order in \eqref{MA} to define
\begin{equation}\label{eq:mono-queer}
\aligned
{\ttm}^{\Ad, \bs{j}}&=\prod_{i=1}^n {\genK}_{i}^{j_i} \cdot {\prod_{{1} \le j \le {n-1}}} \Big({\prod_{j+1 \le i \le n}} \sfM^{\Ad}_{i,j}\Big)
	\cdot\Big(\prod_{1\leq j\leq n} \sfM^{\Ad}_{j,j}\Big)\cdot
	{\prod_{{n} \ge j \ge {1}}}  \Big ( {\prod_{{j-1} \ge i \ge {1}}} \sfM^{\Ad}_{i,j} \Big ),\;\;\text{where}\\
\ttm^\Ad_{i,j}&=
\begin{cases}
{\ol{\mathsf{E}}_{i}}^{\SOE{a}_{i,j} }
	 \cdot  \mathsf{E}_{i}^{(\SEE{a}_{i,j})}\mathsf{E}_{i+1}^{(a_{i,j})}\cdots \mathsf{E}_{j-1}^{(a_{i,j})}  ,
	 \qquad &\mbox{for all } 1 \le i < j \le n, \\
  \ol{\mathsf{K}}_{i}^{\SOE{a}_{i,i}} ,
	\qquad & \mbox{for all }   1 \le i   \le n, \\
{\ol{\mathsf{F}}_{i-1}}^{\SOE{a}_{i,j} }
	 \cdot  \mathsf{F}_{i-1}^{(\SEE{a}_{i,j})}\mathsf{F}_{i-2}^{(a_{i,j})}\cdots \mathsf{F}_{j}^{(a_{i,j})}  ,
	 \qquad  & \mbox{for all } 1 \le j < i \le n.
\end{cases}
\endaligned
\end{equation}

\begin{thm}\label{injective}The set
$$\fkM=\{ {\mathsf{M}}^{\Ad, \bs{j}}
	\where
	\Ad \in \MNZNS(n), \ \bs{j} \in \ZZ^n
	\}$$
 forms a basis for $\Uvqn$. In particular,
the superalgebra epimorphism $\bs\xi_n:\Uvqn\to\USnv$ given in
Theorem \ref{triangular_relation_q}(3) is an isomorphism.
\end{thm}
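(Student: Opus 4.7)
The plan is to deduce Theorem \ref{injective} by combining the surjectivity of $\bs\xi_n$ (already in Theorem \ref{triangular_relation_q}(3)) with the triangular decomposition \eqref{lem:tri-decom} and a weight-space dimension count, thereby transferring the triangular basis theorem on the Schur side (Theorem \ref{triangular_relation_q}(2)) back to the supergroup side. The starting observation is that, by direct comparison of the generator assignments in \eqref{raw generator}, \eqref{def_udl}, \eqref{MA} with the definition \eqref{eq:mono-queer}, and using that $\bs\xi_n$ respects divided powers of the even generators $\sfE_j,\sfF_j$, one has
\begin{equation*}
\bs\xi_n\bigl(\ttm^{\Ad,\bs{j}}\bigr)=\scm^{\Ad,\bs{j}}\qquad\text{for every }\Ad\in\MNZNS(n),\ \bs{j}\in\ZZ^n.
\end{equation*}

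Linear independence of $\fkM$ in $\Uvqn$ is then immediate: any relation $\sum_i c_i\,\ttm^{\Ad_i,\bs{j}_i}=0$ pushes forward through the (well-defined, but a priori not injective) map $\bs\xi_n$ to $\sum_i c_i\,\scm^{\Ad_i,\bs{j}_i}=0$ in $\USnv$, forcing all $c_i=0$ by Theorem \ref{triangular_relation_q}(2). Notice that this step only uses existence of $\bs\xi_n$, not its injectivity, so there is no circularity.

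For the spanning property, I would use the triangular decomposition $\Uvqn\cong\bs U^-_\up\otimes\bs U^0_\up\otimes\bs U^+_\up$ together with the partition \eqref{eq:matrix-decomp}. Writing $\Ad=A^\star_-+A^\star_0+A^\star_+$ according to \eqref{pzm}, the monomial factors as
\begin{equation*}
\ttm^{\Ad,\bs{j}}=\Bigl(\prod_{i=1}^n\sfK_i^{j_i}\Bigr)\cdot\ttm^{A^\star_-,\bs{0}}\cdot\ttm^{A^\star_0,\bs{0}}\cdot\ttm^{A^\star_+,\bs{0}},
\end{equation*}
with the three factors lying in $\bs U^-_\up,\bs U^0_\up,\bs U^+_\up$ respectively; moreover $\ttm^{A^\star_0,\bs{0}}=\prod_i\ol{\sfK}_i^{\SO{a}_{i,i}}$ agrees (up to sign coming from anticommutation of the $\ol{\sfK}_i$) with the Cartan factor of $\ttb^{\Ad,\bs{j}}$ in \eqref{pmpbw}. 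So the problem reduces to showing that $\{\ttm^{\Ad,\bs{0}}\mid\Ad\in\Xi^\epsilon_n\}$ spans $\bs U^\epsilon_\up$ for $\epsilon\in\{-,+\}$. For the positive part, invoke the weight-space decomposition \eqref{+wtsp} and Corollary \ref{wtspbs}: each weight space $(\bs U^+_\up)_\alpha$ is finite-dimensional with PBW basis indexed by $\{\Ad\in\Xi^+_n\mid\wt(\Ad)=\alpha\}$. A direct weight calculation using $\wt(\sfE_j)=\wt(\sfE_{\bar{j}})=\alpha_j$ from \eqref{eq:degree} shows $\wt(\ttm^{\Ad,\bs{0}})=\sum_{i<j}a_{i,j}\alpha_{i,j}=\wt(\Ad)$ in agreement with \eqref{wtB}, so $\ttm^{\Ad,\bs{0}}\in(\bs U^+_\up)_{\wt(\Ad)}$. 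The finite set $\{\ttm^{\Ad,\bs{0}}\mid\Ad\in\Xi^+_n,\wt(\Ad)=\alpha\}$ thus lies in $(\bs U^+_\up)_\alpha$, has cardinality $\dim(\bs U^+_\up)_\alpha$, and is linearly independent by the preceding step; hence it is a basis of $(\bs U^+_\up)_\alpha$. Summing over $\alpha\in\mathsf{Q}^+$ gives a basis of $\bs U^+_\up$; the case of $\bs U^-_\up$ is symmetric.

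Combining these factors, $\fkM$ is a basis of $\Uvqn$, and then $\bs\xi_n$ sends this basis bijectively onto the basis $\{\scm^{\Ad,\bs{j}}\}$ of $\USnv$, proving the isomorphism and hence Main Theorem \ref{mthm}. The main obstacle one would naively expect, namely a direct triangular straightening between $\ttm^{\Ad,\bs{0}}$ and $\ttb^{\Ad,\bs{0}}$ inside $\bs U^\pm_\up$ itself (which would require delicate manipulation of the queer Serre relations (QQ5)--(QQ6) involving the odd root vectors $\ol{\sfE}_{i,j},\ol{\sfF}_{i,j}$), is avoided entirely: the triangularity is imported from the Schur side through $\bs\xi_n$, where Theorems \ref{triang-QqS} and \ref{triangular_relation_q}(1) already supply it. The only real work in the supergroup is the finite-dimensional dimension count inside each $(\bs U^\pm_\up)_\alpha$, which is a straightforward consequence of the PBW Proposition \ref{pbw_basis}.
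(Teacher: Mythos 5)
Your proposal is correct and follows essentially the same route as the paper: push the monomials forward through $\bs\xi_n$ to get linear independence from Theorem \ref{triangular_relation_q}(2), then use the triangular decomposition \eqref{lem:tri-decom} together with a weight-space dimension count against Corollary \ref{wtspbs} to get spanning, forcing $\bs\xi_n$ to be injective. The only additions you make beyond the paper's terser write-up are the explicit factorization $\ttm^{\Ad,\bs{j}}=\bigl(\prod_i\sfK_i^{j_i}\bigr)\ttm^{A^\star_-,\bf0}\ttm^{A^\star_0,\bf0}\ttm^{A^\star_+,\bf0}$ and the remark about the sign coming from anticommutation of the $\ol{\sfK}_i$, both of which are correct and implicitly used in the paper.
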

\begin{proof}
By definition, we have
$\bs{\xi}_{n}({\mathsf{M}}^{\Ad, \bs{j}})= {\scm}^{\Ad, \bs{j}}$.
Thus, by Theorem \ref{triangular_relation_q}(2), the set $\{ {\mathsf{M}}^{\Ad, \bs{j}}
	\where
	\Ad \in \MNZNS(n), \ \bs{j} \in \ZZ^n
	\}
$ is linearly independent.

The monomial defined in \eqref{eq:mono-queer} are also weight vectors. More precisely, by \eqref{eq:degree}, we have the following weight formulas: for $1\leq i<j\leq n$,
$$
{\wt}(\ttm^\Ad_{i,j})=a_{i,j}^{\bar{1}}(\alpha_i+\alpha_{i+1}+\cdots+\alpha_{j-1})+a_{i,j}^{\bar{0}}(\alpha_i+\alpha_{i+1}+\cdots+\alpha_{j-1})
=a_{i,j}\alpha_{i,j}.
$$
Clearly, same formula holds for $i>j$. Thus, if $\Ad\in\Xi_n^+$, then
$$
\wt({\ttm}^{\Ad, \bs{0}})=\sum_{j=1}^n\sum_{i=1}^{j-1}\wt(\mathsf{M}^\Ad_{i,j})=\sum_{j=1}^n\sum_{i=1}^{j-1}a_{i,j}\alpha_{i,j}=\sum_{1\leq i<j\leq n}a_{i,j}\alpha_{i,j}=\wt(\Ad).
$$
Hence, $(\bs U_\up^+)_\alpha$ contains $\{\sfM^{\Ad,\bf0}\mid \Ad\in\Xi^+_n,\wt(\Ad)=\alpha\}$ which is linearly independent. Consequently, a dimensional comparison with Corollary \ref{wtspbs}, this set forms a basis for the weight space. This shows that the set
$\{\sfM^{\Ad,\bf0}\mid \Ad\in\Xi^+_n\}$ forms a second basis for $\bs U_\up^+$.

Similarly, one proves that the set
$\{\sfM^{\Ad,\bf0}\mid \Ad\in\Xi^-_n\}$ forms a second basis for $\bs U_\up^-$ and clearly,
$\bs U^0_v$ has basis
$
\{\prod_{i=1}^n\mathsf{K}_{i}^{j_i}\prod_{1\leq i\leq n}\ol{\mathsf{K}}_{i}^{t_i}\mid \bs{j}\in\mathbb{Z},\bs{t}\in\mathbb{N}_2^n \}.$
Altogether, by \eqref{lem:tri-decom}, \eqref{eq:matrix-decomp}, and \eqref{eq:mono-queer}, we conclude that $\mathfrak M$ forms a basis for $\Uvqn$ which forces the map $\bs \xi_n$ is injective.
\end{proof}

\noindent{\bf Completing the proof of the main Theorem \ref{mthm}. }
By Theorems \ref{triangular_relation_q} and \ref{injective}, the map $\bs{\xi}_n:\Uvqn\rightarrow\USnv$ is a superalgebra isomorphism. Thus, we identify the two superalgebras. In particular, under this identification, $\Uvqn$ has three bases:
\begin{enumerate}
\item the PBW type basis $\mathfrak B$ given in Proposition \ref{pbw_basis};
\item the BLM type basis $\mathfrak L$ given in Lemma \ref{unv_basis}, and
\item the monomial basis $\mathfrak M$ given in Theorem \ref{injective}.
\end{enumerate}
So, the basis assertion follows.

Finally, the multiplication formula in (1) follows from Proposition \ref{mulformzerocor},
those in (2) and (3) follow from Proposition \ref{mulformeven-2}, and the last one in (4) follows from Proposition \ref{mulformdiag}, the $h=n$ case. \qed

\vspace{0.3cm}
{\it Acknowledgement}:
The work was  partially supported by the UNSW Science FRG
and the Natural Science Foundation of China (\#12071129, \#12471121, \#11871404, \#12122101, \#12071026).
The first author acknowledges the support from UNSW Science FRG.
The second author  acknowledges the support from NSFC-12071129, NSFC-12471121, CSC and ME Key Laboratory of MFA.
The third author would like to thank the support from Professor Yanan Lin in Xiamen University,
and  NSFC-11871404.
The fourth author is supported by NSFC-12122101 and NSFC-12071026.

\noindent

\end{document}